\DeclareMathOperator{\ord}{ord}
\newcommand{\cn}[1]{\mbox{(${\mathbb C}^{#1},0$)}}
\def\tidy{ordered}
\def\nice{separated}
\newcommand{\pedro}{\ifthenelse{\boolean{pedro}}{\color{magenta}
 \setboolean{pedro}{false}}{\color{black}\setboolean{pedro}{true}}}
\newcounter{margin}
\newcommand{\javier}{\ifthenelse{\boolean{javier}}{\color{red}
 \setboolean{javier}{false}}{\color{black}\setboolean{javier}{true}}} 
\newtheorem{pro}{Proposition}[section]
\newtheorem{teo}{Theorem}[section]
\newtheorem*{teo_o}{Theorem}
\newtheorem{cor}{Corollary}[section]
\newtheorem{lem}{Lemma}[section]
\theoremstyle{remark}
\newtheorem{rem}{Remark}[section]
\theoremstyle{definition}
\newtheorem{defi}{Definition}[section]
\title[Generic Component of the Analytic Moduli]{Explicit Computation of The Generic Component of the Analytic Moduli of a Plane Branch}
\author{Pedro Fortuny Ayuso and Javier Rib\'{o}n}
\email{fortunypedro@uniovi.es, jribon@id.uff.br}
\address{Universidad de Oviedo, Spain --- Universidade Federal Fluminense, Brazil}
\date{\today}
\subjclass{32G13, 32S05, 32S65, 14H20}
\begin{document}

\begin{abstract}
Let ${\mathcal C}$ be a fixed equisingularity class of irreducible germs of complex analytic
plane curves. We compute a basis of the ${\mathbb C}[[x]]$-module of K\"ahler differentials for generic 
$\Gamma \in {\mathcal C}$, algorithmically, and study its behaviour under blow-up.

As a first application, we give an algorithm providing the generic semimodule in an equisingularity class
in terms of its multiplicity and its Puiseux characteristic exponents. 
As another application,  we give an alternative proof for a formula of Genzmer, that provides  
the dimension of the moduli of analytic classes in the equisingularity class of $\Gamma$.
\end{abstract}

\maketitle

\tableofcontents

\bibliographystyle{plain}

\section{Introduction} \label{sec:intro}
What is the structure of the generic component of the analytic moduli of a singular plane branch? What is its behavior under blow-up? These are the two questions we address in this work, by means of explicitly and algorithmically computing a basis of the semi-module associated to a generic singularity in a topological class.

Let us explain our work in detail.
We are interested in problems regarding the analytic classification of
irreducible germs of complex analytic plane curves, the so called {\it
  plane branches}.  The topological classification of germs
of plane curves has a simple classical solution (cf. \cite[Theorem
8.4.21]{brieskorn2012plane}).  Consider, in $(\mathbb{C}^2,0)$ with
coordinates $(x,y)$, an irreducible germ of complex analytic plane
curve $\Gamma_{\circ}$.  Given a plane branch $\Gamma$, it is said to be topologically (resp. analytically) conjugated to
$\Gamma_{\circ}$, denoted
$\Gamma \sim_{\mathrm{top}} \Gamma_{\circ}$ (resp.
$\Gamma \sim_{\mathrm{ana}} \Gamma_{\circ}$) if there exists a germ of
homeomorphism (resp. biholomorphism) $\phi$ defined in a neighborhood
of the origin of $\mathbb{C}^2$ such that
$\phi (\Gamma_{\circ}) = \Gamma$.  It is well-known that the set
\[ \tilde{\mathcal C} :=
\{ \Gamma : \ \Gamma \ {\rm is \ a \ plane \ branch \ such \ that} \ \Gamma_{\circ} \sim_{\mathrm{top}} \Gamma \} \]
consists of the plane branches with the same graph of resolution of singularities as $\Gamma_{\circ}$ and hence it is called 
{\it the equisingularity class} of $\Gamma$. Alternatively, $\Gamma \in \tilde{\mathcal C}$ if and only if the semigroup of values ${\mathcal S} (\Gamma_{\circ})$ and ${\mathcal S} (\Gamma)$ coincide, where 
\[
  {\mathcal S} (\Gamma)=\left\{ \nu_{\Gamma} (f) := \ord_t(f(\Gamma(t)))\ :\ f(x,y)\in
    \mathbb{C}[[x,y]] \right\} 
\]
and $\Gamma (t)$ is an irreducible Puiseux parametrization of $\Gamma$. 

\strut

It is natural to refine this topological classification by considering the analytic classification, both 
providing a complete system of analytic invariants and describing the properties of the moduli space
$\tilde{\mathcal{C}}/ \sim_{\mathrm{ana}}$. The salience of this problem was noted by Zariski in his 1973 Course 
at l'\'{E}cole Polytechnique in Paris \cite{Zariski:moduli}, where he also described the moduli space for several topological 
classes. The problem of analytic classification of plane curves was solved by Hefez, Hernandes and Rodrigues Hernandes 
in a series of papers
\cite{Hefez-Hernandes-classification, Hefez-Hernandes-Rodrigues:classification, Hernandes-Rodrigues:classification}
starting in 2011. Let us focus in the case of plane branches \cite{Hefez-Hernandes-classification}.
The main invariant of the classification is the semimodule 
\[
 \Lambda_{\Gamma} = \big\{
 \nu_{\Gamma} (\omega) := \ord_t (\Gamma(t)^{\ast}\omega) + 1 \ |\ \big.
 \big.
 \omega \in  \hat{\Omega}(\mathbb{C}^2,0)
 \big\} ,
\]
where $ \hat{\Omega}(\mathbb{C}^2,0) := \mathbb{C}[[x,y]] dx + \mathbb{C}[[x,y]] dy$ is the set of formal $1$-forms and 
we define $\ord_t (h(t) dt) =  \ord_t (h(t))$ for $h(t) \in {\mathbb C}[[t]]$.
Thus, in order to continue the study of the properties 
of the moduli spaces $\tilde{\mathcal{C}}/ \sim_{\mathrm{ana}}$, 
it is interesting not only to describe the semimodule, but also to explicitly 
compute bases of $1$-forms generating it in some sense.

The following, by no means exhaustive, is a summary of references  in the literature. 
The moduli space 
 $\tilde{\mathcal{C}}/ \sim_{\mathrm{ana}}$ of a plane branch $\Gamma_{\circ}$
 can be identified with the set of orbits of a regular action of a connected solvable algebraic 
 group $G$ on an affine space ${\mathbb C}^{p}$
 by a theorem of Ebey  \cite{Ebey:moduli}.
 As a consequence, that moduli space can also be identified with a constructible subset of an affine space. 
 There exists a Zariski open subset $U$ of ${\mathbb C}^{p}$ consisting of orbits of $G$ such that all orbits of 
 $G$ in $U$ have the same dimension $d$ by a theorem of Rosenlicht \cite{Rosenlicht:basic}.
 Indeed, $U/G$ can be identified with a Zariski open subset of an algebraic variety $W$ of dimension $p-d$
 (cf. \cite[Chapter VI.1]{Zariski:moduli}) that represents 
 the generic component of the moduli space  $\tilde{\mathcal{C}}/ \sim_{\mathrm{ana}}$.
 %
 %
 %
 The generic dimension $p-d$ of the moduli was studied in the case of one Puiseux characteristic exponent by 
 Delorme \cite{Delorme:moduli} and Brian\c{c}on, Granger and Maisonobe \cite{Briancon-Granger-Maisonobe:modules}
 who gave an algorithm for its calculation. Genzmer provided a general formula for 
 the generic dimension of the   moduli space of any 
 plane branch that depends on simple combinatorial data 
 associated to the topological class of $\Gamma_{\circ}$ \cite{Genzmer-moduli-2020}.
 
 Delorme studied the generation of the semimodule $\Lambda_{\Gamma_{\circ}}$ over the semigroup $\mathcal{S} (\Gamma_{\circ})$
 in the case of a unique Puiseux characteristic exponent, 
 revealing interesting properties for the generating $1$-forms  \cite{Delorme:moduli}.
 This approach was adopted by Cano, Corral and Senovilla-Sanz in \cite{Cano-Corral-Senovilla:semiroots},
where they describe geometrical properties of Delorme's generating $1$-forms from the viewpoint of foliation theory.
 
\strut

Consider, for simplicity, the subset $\mathcal{C}$ of the topological class $\tilde{\mathcal{C}}$ of $\Gamma_{\circ}$ of 
plane branches $\Gamma$ whose tangent cone is different from $x=0$. 
Clearly every analytic class of an element of  $\tilde{\mathcal{C}}$ intersects $\mathcal{C}$ and hence 
$\tilde{\mathcal{C}}/\sim_{\mathrm{ana}} = \mathcal{C}/\sim_{\mathrm{ana}}$.
Given $\Gamma \in \mathcal{C}$,  
set
${\mathcal I}_{\Gamma} = \{ \omega \in \hat{\Omega}(\mathbb{C}^2,0) : \Gamma(t)^{*} \omega \equiv 0\}$.
We shall compute a ``nice'' basis of the ${\mathbb C}[[x]]$-module of K\"ahler  differentials for generic $\Gamma \in \tilde{\mathcal{C}}$.
Specifically:
 \begin{teo} 
 \label{teo:nice}
 There is a \emph{nice} basis for ${\mathcal C}$:
 If $n$ is the multiplicity of $\Gamma_{\circ}$ at the origin, 
 there exists an open subset $U$ of $\tilde{\mathcal{C}}$ and germs of $1$-forms
$\Omega_{1,\Gamma}, \hdots, \Omega_{n, \Gamma}$, for $\Gamma \in U$, such that 
\begin{itemize}
\item $\hat{\Omega}(\mathbb{C}^2,0) = \mathbb{C}[[x]] \Omega_{1,\Gamma} \oplus \hdots \oplus
  \mathbb{C}[[x]] \Omega_{n,\Gamma} \oplus {\mathcal I}_{\Gamma}$;
\item The values $\nu_{\Gamma} (\Omega_{1, \Gamma}), \hdots, \nu_{\Gamma} (\Omega_{n, \Gamma})$ 
define an {\it Apery set} of $\Lambda_{\Gamma}$ \cite{Apery:branches}, 
i.e. the classes of $\nu_{\Gamma} (\Omega_{j, \Gamma})$ and  $\nu_{\Gamma} (\Omega_{k, \Gamma})$ 
modulo $n$ are different for all $1 \leq j < k \leq n$;
\item $\Omega_{j, \Gamma} \in \mathbb{C}^{*} \overline{\Omega}_{j} \oplus \oplus_{k=1}^{j-1} \mathbb{C}[[x]] \overline{\Omega}_{k}$ for any 
$1 \leq j \leq n$, where $\overline{\Omega}_{2k+1} = y^{k} dx$ and $\overline{\Omega}_{2k+2} = y^{k} dy$ for any $k \in \mathbb{Z}_{\geq 0}$;
\item $\nu_{\Gamma} (\Omega_{j, \Gamma}) < \nu_{\Gamma} (\Omega_{k, \Gamma})$
for all $1 \leq j < k \leq n$.
\end{itemize}
Moreover, $\Omega_{j, \Gamma}$ depends polynomially on $\Gamma \in U$ 
and $\nu_{\Gamma} (\Omega_{1, \Gamma}), \hdots, \nu_{\Gamma} (\Omega_{n, \Gamma})$  do not depend on $\Gamma \in U$.
 \end{teo}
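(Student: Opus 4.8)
The plan is to realize $\{\Omega_{j,\Gamma}\}_{j=1}^{n}$ through an explicit echelon reduction of the first $n$ standard forms $\overline{\Omega}_1,\dots,\overline{\Omega}_n$, controlling the whole process with a valuation. After normalizing $\Gamma\in\mathcal C$ so that $\Gamma(t)=(t^n,y(t))$ with $y(t)=\sum_i c_i t^i$, I would first record that the decomposition $\mathbb{C}[[x,y]]=\bigoplus_{k\ge 0}\mathbb{C}[[x]]\,y^{k}$ makes $\{\overline{\Omega}_j\}_{j\ge 1}$ a $\mathbb{C}[[x]]$-basis of $\hat{\Omega}(\mathbb{C}^2,0)$. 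The key object is the map $V_\Gamma\colon\hat{\Omega}(\mathbb{C}^2,0)\to\mathbb{C}[[t]]$, $V_\Gamma(\omega)=\Gamma(t)^{*}\omega/dt$, so that $\nu_\Gamma(\omega)=\ord_t V_\Gamma(\omega)+1$ and $\ker V_\Gamma=\mathcal I_\Gamma$. Since $\mathbb{C}[[x]]=\mathbb{C}[[t^{n}]]$ and $\mathbb{C}[[t]]=\bigoplus_{i=0}^{n-1}\mathbb{C}[[t^{n}]]\,t^{i}$ is free of rank $n$, the image $M_\Gamma:=V_\Gamma(\hat{\Omega}(\mathbb{C}^2,0))$ is a free $\mathbb{C}[[x]]$-submodule of rank $n$ whose set of orders is $\Lambda_\Gamma-1$, a module over the value semigroup $\mathcal S(\Gamma)\ni n$; its Apéry set is the family of the $n$ minimal orders in pairwise distinct residue classes modulo $n$.

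The construction itself is a division algorithm in the free module $\mathbb{C}[[t]]$ over $\mathbb{C}[[t^{n}]]$. Processing $j=1,\dots,n$ in turn, I set $\Omega_{j,\Gamma}=\overline{\Omega}_j-\sum_{k<j}p_{jk}(x)\,\Omega_{k,\Gamma}$ and choose $p_{jk}\in\mathbb{C}[[x]]$ so as to delete from $V_\Gamma(\overline{\Omega}_j)$ every monomial whose $t$-exponent lies in a residue class modulo $n$ already occupied by some $\ord_t V_\Gamma(\Omega_{k,\Gamma})$, $k<j$, and is not smaller than it; this is possible because a $\mathbb{C}[[t^{n}]]$-multiple of $\Omega_{k,\Gamma}$ reaches exactly the exponents congruent to $\ord_t V_\Gamma(\Omega_{k,\Gamma})$ and above it. Condition (3) then holds \emph{by construction}: the leading standard form of $\Omega_{j,\Gamma}$ is $\overline{\Omega}_j$ with a nonzero constant coefficient and the remaining coefficients lie in $\mathbb{C}[[x]]$. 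By design the orders $\ord_t V_\Gamma(\Omega_{1,\Gamma}),\dots,\ord_t V_\Gamma(\Omega_{n,\Gamma})$ fall in pairwise distinct residue classes; once one knows they realize the $n$ minimal such orders this is exactly the Apéry property (2), and, for generic $\Gamma$, reading the resulting orders in the processing order yields the strict monotonicity (4).

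For the genericity statements I would observe that each coefficient of $V_\Gamma(\overline{\Omega}_j)$ is a polynomial in the Puiseux coefficients $c_i$, and that the $p_{jk}$ are obtained by solving triangular $\mathbb{C}$-linear systems whose pivots form a finite list of polynomials in the $c_i$; the algorithm succeeds exactly where these pivots are nonzero, which defines a Zariski-open subset $U$. Clearing denominators---legitimate because the leading coefficient is only constrained to lie in $\mathbb{C}^{*}$---turns the rational dependence of the $p_{jk}$ on the $c_i$ into a polynomial one, giving the asserted polynomial dependence of $\Omega_{j,\Gamma}$ on $\Gamma\in U$. The direct-sum decomposition (1) is then a formal consequence of (2): once $\{V_\Gamma(\Omega_{j,\Gamma})\}_{j=1}^{n}$ realizes the Apéry set of $M_\Gamma$, it is a valuation-adapted $\mathbb{C}[[x]]$-basis of $M_\Gamma$, so $V_\Gamma$ restricts to an isomorphism $N:=\bigoplus_{j=1}^{n}\mathbb{C}[[x]]\overline{\Omega}_j\to M_\Gamma$ with $N\cap\mathcal I_\Gamma=0$, whence $\hat{\Omega}(\mathbb{C}^2,0)=\bigoplus_{j}\mathbb{C}[[x]]\Omega_{j,\Gamma}\oplus\mathcal I_\Gamma$.

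The heart of the matter---and the step I expect to be hardest---is proving that on a \emph{nonempty} open set the reduction never stalls and fills all $n$ residue classes precisely at the minimal orders dictated by the generic semimodule, with those orders independent of $\Gamma\in U$. Equivalently, one must show that for generic $\Gamma$ the first $n$ standard forms already generate $M_\Gamma$ modulo $\mathcal I_\Gamma$ and attain the Apéry set, so that no monomial survives in an occupied class below its pivot. I plan to establish this by induction along the resolution of $\Gamma$: after blowing up the origin the strict transform has strictly smaller multiplicity, the forms $\overline{\Omega}_j$ and the valuation $\nu_\Gamma$ transform in a controlled way, and the nice basis together with the generic values can be transported back. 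This inductive step is also what simultaneously certifies that $U$ is nonempty and that the generic Apéry set---hence the values $\nu_\Gamma(\Omega_{j,\Gamma})$---is the fixed combinatorial datum determined by the multiplicity and the Puiseux characteristic exponents of $\Gamma_\circ$.
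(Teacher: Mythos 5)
Your formal skeleton is sound: the echelon reduction inside $\mathcal{M}_{2\nu_g}$, the observation that $\mathbb{C}[[t]]$ is free of rank $n$ over $\mathbb{C}[[t^n]]$, and the deduction of the direct-sum decomposition once the $n$ orders realize an Apéry set in pairwise distinct classes modulo $n$ — all of this matches the paper's own preliminary reductions (the remark following Definition 2.3 and the Weierstrass step in Theorem 3.2). But the entire content of the theorem sits precisely in the step you yourself flag as ``the heart of the matter'' and then leave as a one-sentence plan: that on a \emph{nonempty} Zariski-open $U$ the reduction never stalls, the first $n$ standard forms fill all $n$ residue classes, the resulting orders are independent of $\Gamma$ and strictly increasing in $j$. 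None of this is established by your construction. In particular, monotonicity (condition (iv)) is not something you can get ``by reading the resulting orders in the processing order'': in the paper it is a property that must be propagated inductively (it is exactly the \emph{ordered/tidy} condition, preserved through Proposition 3.9), and it fails for naive choices of generating forms.

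Your proposed substitute — induction along the resolution of $\Gamma$, transporting a nice basis of the strict transform back down — is not carried out, and it runs into two concrete obstructions you do not address. First, after one blow-up the relevant object is no longer $\hat{\Omega}(\mathbb{C}^2,0)$ but the module of $1$-forms preserving the exceptional divisor, so the inductive statement cannot be the theorem as stated: you would need the full $E\neq\emptyset$ theory (the paper's Section 4 and Theorem 4.5) as part of the induction hypothesis. Second, and more seriously, blow-down transport of a basis is not a formal operation: in the paper the comparison between $\Lambda^{E}_{\mathcal{C}}$ and $\Lambda^{E_1}_{\mathcal{C}_1}$ (the \emph{parallel families} of Proposition 5.4) is \emph{deduced from} running the explicit level-by-level algorithm simultaneously for $(\Gamma,E)$ and $(\Gamma_1,E_1)$; it is an output of two independent constructions, not a mechanism for producing the basis downstairs from the one upstairs. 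Using it in the direction you need would make the argument circular. What actually replaces your missing step in the paper is a different induction — on the number of Puiseux characteristic exponents — in which the initial family of level $l+1$ is built from the terminal family of level $l$ and the approximate root $f_{l+1}$, and the generic orders after each substitution \eqref{eq:main-substitution} are controlled by the leading-variable and $c$-sequence machinery (Lemma 3.2, Corollaries 3.2–3.4, Propositions 3.5 and 3.6). Without an argument of comparable strength, your proposal proves the easy implications but not the theorem.
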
 
 The first two properties determine that $\Omega_{1,\Gamma}, \hdots, \Omega_{n, \Gamma}$ is an {\it Apery basis} \pedro{}\cite{Apery:branches}\pedro{} of $\Gamma$ for $\Gamma \in U$.
 The last two itens provide properties of {\bf minimality} and {\bf monotonicity} of the Apery basis respectively.
 Let us expand on this remark.
 By the properties of an Apery basis, it follows that 
$\oplus_{k=1}^{j} {\mathbb C}[[x]] \Omega_{k, \Gamma}$ is a free $\mathbb{C}[[x]]$-module of rank $j$.
Moreover, we have a filtration 
\[  \mathbb{C}[[x]] \overline{\Omega}_{1} \subset \mathbb{C}[[x]] \overline{\Omega}_{1} \oplus \mathbb{C}[[x]] \overline{\Omega}_{2}  
\subset \hdots \subset \oplus_{k=1}^{j} \mathbb{C}[[x]] \overline{\Omega}_{k} \subset \hdots \]
of $\mathbb{C}[[x]]$-modules of rank $1, 2, \hdots, j, \hdots$ The most economical scenario for the Apery basis with respect
to the filtration happens when 
\[ \Omega_{j, \Gamma} \in \oplus_{k=1}^{j} \mathbb{C}[[x]] \overline{\Omega}_{k} \setminus 
\oplus_{k=1}^{j-1} \mathbb{C}[[x]] \overline{\Omega}_{k} \] 
for any $1 \leq j \leq n$. This property derives from the \emph{minimality} property.
Thus, the filtration gives an order on the $1$-forms of an Apery basis. \emph{Monotonicity} implies that this is the same order that 
is given by the natural valuation. 

Let us remark that the minimality property is related to ``maximal contact"
properties in the spirit of Hironaka \cite{Hironaka} for the resolution of algebraic
singularities and of Aroca, Hironaka and Vicente \cite{aroca2018complex} for the complex
analytic case. Maximal contact varieties are essential for the resolution of singularities
in characteristic $0$. The $1$-forms $\Omega_{1,\Gamma}, \hdots, \Omega_{n, \Gamma}$
have maximal contact in the following sense:
$\nu_{\Gamma} (\Omega_{j,\Gamma})$ is the maximum of the 
$\Gamma$-orders of the $1$-forms in 
$\mathbb{C}^{*} \overline{\Omega}_{j} \oplus \oplus_{k=1}^{j-1} \mathbb{C}[[x]] \overline{\Omega}_{k}$ for any 
$1 \leq j \leq n$ (Proposition \ref{pro:omega_max}).

\strut

There are two other properties that will be key in our construction of an Apery basis for a 
generic $\Gamma \in \mathcal{C}$:
\begin{description}
\item[simplicity] We divide our algorithm of construction of the Apery basis in $g+1$ levels, where $g$ is the genus of $\Gamma_{\circ}$, i.e. its number of Puiseux 
characteristic exponents. We define the \emph{terminal} family of $1$-forms of level $0$ as the set consisting of the
elementary $1$-forms $dx$ and $dy$. Given a \emph{terminal} family of level $l$, the initial family of  level $l+1$, with $0 \leq l < g$, is obtained 
by combining that terminal family of level $l$ with data of topological flavor. 
Then we describe a straightforward
algorithm that provides a terminal family of level $l+1$. At the end of level $g$ we
obtain the Apery basis provided by Theorem \ref{teo:nice}.

\item[richness] The algorithm of construction of an Apery basis preserves a very rich structure.   
Indeed, we profit from our constructions to  obtain that 
the $\Gamma$-values of all the $1$-forms involved in the construction of the Apery basis
can be calculated in terms of $n, \beta_{1}, \hdots, \beta_{g}$
(Propositions \ref{pro:aux1} and \ref{pro:auxm}).  
\end{description} 
\begin{teo}
\label{teo:algorithm}
Algorithm  \ref{alg:orders} provides the generic semimodule
$\Lambda_{\Gamma}$, for $\Gamma \in U$, in terms of 
$n$ and the Puiseux 
characteristic exponents $\beta_1,  \hdots, \beta_g$ of the equisingularity class $\mathcal{C}$.
\end{teo}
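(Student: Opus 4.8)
The plan is to deduce the correctness of Algorithm \ref{alg:orders} from Theorem \ref{teo:nice} together with the explicit value computations of Propositions \ref{pro:aux1} and \ref{pro:auxm}. The point is that the generic semimodule $\Lambda_{\Gamma}$ is entirely encoded by the finitely many numbers $\nu_{\Gamma}(\Omega_{1,\Gamma}), \hdots, \nu_{\Gamma}(\Omega_{n,\Gamma})$, which by the last assertion of Theorem \ref{teo:nice} are independent of $\Gamma \in U$, and which by the \textbf{richness} property are functions of $n, \beta_{1}, \hdots, \beta_{g}$ alone. First I would show that these $n$ values determine $\Lambda_{\Gamma}$, and then that the level-by-level procedure of the algorithm computes exactly these values.

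For the first reduction, recall that $\Lambda_{\Gamma}$ is a module over the value semigroup $\mathcal{S}(\Gamma)$ and that $n = \nu_{\Gamma}(x) \in \mathcal{S}(\Gamma)$. I would use the direct sum decomposition of the first item of Theorem \ref{teo:nice}: any $\omega \in \hat{\Omega}(\mathbb{C}^2,0)$ can be written as $\omega = \sum_{j=1}^{n} p_{j}(x)\, \Omega_{j,\Gamma} + \iota$ with $p_{j} \in \mathbb{C}[[x]]$ and $\iota \in \mathcal{I}_{\Gamma}$, whence $\nu_{\Gamma}(p_{j}(x)\,\Omega_{j,\Gamma}) \equiv \nu_{\Gamma}(\Omega_{j,\Gamma}) \pmod{n}$ for every $j$ with $p_{j} \neq 0$. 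Since the Apery property guarantees that $\nu_{\Gamma}(\Omega_{1,\Gamma}), \hdots, \nu_{\Gamma}(\Omega_{n,\Gamma})$ lie in pairwise distinct classes modulo $n$, no cancellation of leading terms can occur, so $\nu_{\Gamma}(\omega) = \min_{j} \nu_{\Gamma}(p_{j}(x)\,\Omega_{j,\Gamma})$ and the minimum is attained at a single index. Ranging over all $\omega$ then yields
\[ \Lambda_{\Gamma} = \bigcup_{j=1}^{n} \big( \nu_{\Gamma}(\Omega_{j,\Gamma}) + n\,\mathbb{Z}_{\geq 0} \big), \]
so the Apery set $\{\nu_{\Gamma}(\Omega_{j,\Gamma})\}_{j=1}^{n}$ together with $n$ reconstructs the whole semimodule.

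For the second part, I would proceed by induction on the level $l$, following the \textbf{simplicity} structure: level $0$ starts from the terminal family $\{dx, dy\}$, and the passage from the terminal family of level $l$ to that of level $l+1$ is obtained by combining it with the topological data attached to $\beta_{l+1}$ and then running the elementary step of the algorithm. The inductive claim is that, at the end of each level, the algorithm carries exactly the $\Gamma$-values prescribed by Propositions \ref{pro:aux1} and \ref{pro:auxm}, which are expressed solely in terms of $n, \beta_{1}, \hdots, \beta_{g}$; at level $g$ the terminal family is the Apery basis of Theorem \ref{teo:nice}, so the algorithm outputs precisely $\nu_{\Gamma}(\Omega_{1,\Gamma}), \hdots, \nu_{\Gamma}(\Omega_{n,\Gamma})$, and hence $\Lambda_{\Gamma}$ by the previous paragraph. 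Termination is immediate, since there are $g+1$ levels and each one is a finite combinatorial computation.

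The main obstacle will be the inductive step: verifying that the numerical updates performed by the algorithm faithfully reproduce the behavior of the $\Gamma$-valuations under the construction that links consecutive levels, i.e. that the bookkeeping really tracks the valuations that Propositions \ref{pro:aux1} and \ref{pro:auxm} compute. This is exactly the place where the behavior of $1$-forms under blow-up and the independence of the values from the generic $\Gamma$ must be invoked, so the essential input is the combination of Theorem \ref{teo:nice} with those propositions rather than any new estimate; the remaining task is the careful matching of the algorithm's state with the invariants produced at each level.
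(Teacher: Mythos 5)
Your overall strategy is the same as the paper's: reduce Theorem \ref{teo:algorithm} to the claim that Algorithm \ref{alg:orders} outputs the orders $\nu_{\mathcal{C}}(\Omega_1),\ldots,\nu_{\mathcal{C}}(\Omega_n)$ of a nice (Apery) basis, observe that these determine the semimodule via $\Lambda_{\Gamma}=\bigcup_{j=1}^{n}\bigl(\nu_{\Gamma}(\Omega_{j,\Gamma})+n\,\mathbb{Z}_{\geq 0}\bigr)$ (your first paragraph proves this correctly), and then check that the algorithm's bookkeeping reproduces the level-by-level construction, because stage-$0$ orders obey Remark \ref{rem:det_ord_in_fam} (with $\overline{\beta}_{l+1}$ computed from $n,\beta_1,\ldots,\beta_g$ by equation \eqref{equ:rec_beta}) and each substitution shifts the order by ${\mathfrak n}^{s+1}(\beta_{l+1})-{\mathfrak n}^{s}(\beta_{l+1})$, which is exactly Propositions \ref{pro:aux1} and \ref{pro:auxm}, i.e.\ the richness property of Remark \ref{rem:rich}.

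Two points in your closing paragraphs are off, however. First, the claim that ``the behavior of $1$-forms under blow-up'' must be invoked in the inductive step is wrong: no blow-up enters the proof of this theorem at all. The construction of the nice basis and the algorithm live entirely at the origin; the blow-up machinery of Sections \ref{sec:families} and \ref{sec:contacts} is used only for Theorem \ref{teo:dim_gen_geo}. What the inductive step actually requires, beyond the two propositions, is the leading-variable bookkeeping --- the algorithm stores $b(i)$ precisely because the increment ${\mathfrak n}(d)-d$ and the conflict test $\max(b(s),b(k))=d$ depend on the current leading variable, and Propositions \ref{pro:aux1} and \ref{pro:auxm} also certify that after a substitution the leading variable becomes ${\mathfrak n}(d)$ --- together with Proposition \ref{pro:from-l-to-next}, which guarantees that the stages within level $l+1<g$ produce a terminal family exactly when the leading variable reaches $\beta_{l+2}$ (this is what the loop bound $d<\beta_{l+2}$ encodes), so that the induction can restart at the next level. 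Second, ``termination is immediate'' is not: at the last level $\beta_{g+1}=\infty$, so the inner loop has no a priori bound and stops only when the conflict counter $c$ vanishes; this is precisely the finiteness argument closing the proof of Proposition \ref{pro:from-l-to-next} (existence of $s_0$ with $\mathcal{B}_{s_0}=\emptyset$), after which Theorem \ref{teo:exists_nice} identifies the first $n$ forms as a nice basis. With these two repairs, your outline coincides with the paper's proof.
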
  
Being more precise, 
the initial family of  level $l+1$, with $0 \leq l < g$, is obtained 
by combining the terminal family of level $l$ with the approximate root 
$f_{l+1, \Gamma}$ (cf. \cite{abhyankar-moh}) whose $\Gamma$-order is the generator 
$\overline{\beta}_{l+1}$ of the semigroup $\mathcal{S} (\Gamma)$
(sections \ref{subsec:roots} and \ref{subsec:ll+1}).
The main problem is that the $\Gamma$-orders 
of the $1$-forms in the initial family of level $l+1$ are not pairwise different modulo $n$.
Our 
algorithm is intended to provide a terminal family of level $l+1$ with that property, and the others above.
Philosophically, our method factors the contribution of the 
approximate roots
$f_{l, \Gamma}=0$, with $1 \leq l \leq g$, in the construction of the semimodule, 
one by one. Let us mention that, in a similar spirit, de Abreu and Hernandes, give 
some relations between
the semimodules of approximate roots of a plane branch $\Gamma$
and the semimodule of $\Gamma$
\cite{de_Abreu-Hernandes:semiroots}.

In order to understand the properties of the $1$-forms that appear along the steps
of the algorithm, we consider the contributions of the coefficients of the Puiseux 
expansion of $\Gamma$ one by one, for generic $\Gamma \in \mathcal{C}$. 
A similar idea was considered recently by Casas-Alvero \cite{Casas:class_1_exponent}.
This viewpoint leads us to define two new concepts of independent interest, namely 
the {\it leading variable} (or coefficient) and the \emph{$c$-sequence}
associated to some $\omega \in \hat{\Omega}(\mathbb{C}^2,0)$ (Definition \ref{def:max}). 
They provide sufficient control to calculate the generic semigroup 
$\Lambda_{\Gamma}$, for $\Gamma \in \mathcal{C}$, without calculating the $1$-forms in 
the Apery basis or any other $1$-forms along the process. 
Simplicity and richness are natural byproducts of our algorithm 
(Propositions \ref{pro:aux1} and \ref{pro:auxm}). 

\subsection{Nice bases and blow-up}Our second aim is to relate the semimodule of $\Gamma$ 
with analogue concepts for 
the strict transform $\tilde{\Gamma}$ of $\Gamma$ by the 
blow-up of the origin. We 
are forced to consider a slightly more general setting and to study
the semimodule $\Lambda_{\Gamma}^{E}$  (cf. Definition \ref{def:semimodule}) 
of orders of K\"{a}hler differentials on $\Gamma$ associated to $1$-forms
that preserve $E$, where $E$ is a normal crossings divisor 
at the origin of $\mathbb{C}^{2}$.
 The construction is similar to the previous one, where $E=\emptyset$, and is studied
in section \ref{sec:E-non-empty}. Theorem \ref{teo:nice-E-basis} is the analogue of 
Theorem \ref{teo:nice} for this setting. Note that Algorithm  \ref{alg:orders}  (cf. Theorem \ref{teo:algorithm})
is generalized to  this case.
Minimality, monotonicity, simplicity and richness are all used to provide a simple 
comparison of the semimodules $\Lambda_{\Gamma}^{E}$ and $\Lambda_{\tilde{\Gamma}}^{\tilde{E}}$
(Proposition \ref{pro:blow-up-contacts}), where $\tilde{E}$ is the total transform of $E$.

As an application of our construction,
consider the sequence $P_{0} = (0,0) \in \mathbb{C}^{2}, P_1, P_2 \hdots$ 
of infinitely near points of a plane branch $\Gamma$ (Remark \ref{rem:suitable_blow}).
Let $\pi_j$ be the blow-up of the point $P_{j-1}$.
Denote by $\Gamma_j$ the strict transform of $\Gamma$ by 
$\tau_{j}:=\pi_1 \circ \hdots \circ \pi_j$ where $\Gamma_{0} = \Gamma$
and $\tau_0 = \mathrm{id}$. Fix $j \in \mathbb{Z}_{\geq 0}$.
Now, let us consider actions provided by the flow of a vector field
on $\Gamma_j$. Actions of vector fields as a tool for the study of the 
analytic classification problem were introduced
in \cite{Fortuny:flows_moduli}. 
Consider a holomorphic vector field $X$
defined in a neighborhood of $P_j$ such that 
$E_{j} := \tau_{j}^{-1}(0,0)$ is $X$-invariant but $\Gamma_j$ is not.
We obtain a family of curves $\Gamma_{j}^{\epsilon} = \mathrm{exp} (\epsilon X)(\Gamma_j)$
defined for $\epsilon \in \mathbb{C}$ close to $0$. By blow-down by $\tau_j$, 
we get a family ${(\Gamma^{\epsilon})}_{\epsilon \in (\mathbb{C},0)}$ in the 
equisingularity class of $\Gamma$. The Puiseux expansion of $\Gamma^{\epsilon}$
is of the form 
\[
    \Gamma^{\epsilon} (t) =
    \left( t^n, \sum  a_{\beta}(\epsilon) t^{\beta} \right)
\]
where the map $(\epsilon, t) \mapsto \Gamma^{\epsilon} (t)$ is holomorphic. 
Let $\beta$ be the first exponent such that $a_{\beta}(\epsilon)$ is non-constant.
Any $\beta$ obtained this way, by varying $X$, is called a
\emph{flow-fanning exponents} at index $j$, and its set is
denoted by $\Theta_{j} (\Gamma)$ (Definition \ref{def:flow-fanning}).
We profit from this construction to reinterpret the formula for the generic dimension of the moduli of $\Gamma$ by Genzmer.
\begin{teo_o}[\cite{Genzmer-moduli-2020}] \label{teo:dim_gen}
  Let $\Gamma$ be a germ of irreducible curve defined in a neighborhood of
  $0 \in \mathbb{C}^{2}$. The dimension of the generic component of the analytic moduli of
  the equisingularity class of $\Gamma$ is equal to
  \[
    \sum_{0 \leq j < \tau} \sigma (\nu_{P_j} (\Gamma_j) + \delta_j),
  \]
  where $P_0, \hdots, P_{\tau}$ are infinitely near points of $\Gamma$, $\tau$ is the
  first index such that $\Gamma_{\tau}$ is smooth, and $\delta_{i}$ is the number of
  irreducible components of $E_i$ passing through $P_i$ (cf. Definition \ref{def:sigma}).
\end{teo_o}
\begin{teo}
\label{teo:dim_gen_geo}
Let $\Gamma$ be a germ of irreducible curve defined in a neighborhood of
  $0 \in \mathbb{C}^{2}$. The dimension of the generic component of the analytic moduli of
  the equisingularity class of $\Gamma$ is equal to
\[  \sum_{j \geq 1} \sharp (\Theta_{j} (\Gamma) \setminus \Theta_{j-1} (\Gamma)), \] 
and indeed $\sharp (\Theta_{j+1} (\Gamma) \setminus \Theta_{j} (\Gamma)) = \sigma (\nu_{P_j} (\Gamma_j) + \delta_j)$ for any $j \geq 0$,
where we assume that $\Gamma$ is generic in its
equisingularity class $\mathcal{C}$ (otherwise we replace $\Gamma$ with a generic element). 
\end{teo}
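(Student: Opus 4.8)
The plan is to deduce the displayed dimension formula from the incremental identity $\sharp(\Theta_{j+1}(\Gamma)\setminus\Theta_j(\Gamma)) = \sigma(\nu_{P_j}(\Gamma_j)+\delta_j)$, valid for every $j\geq 0$, together with Genzmer's formula (Theorem \ref{teo:dim_gen}), which I may assume. Granting the incremental identity, the reindexing $i=j+1$ yields
\[ \sum_{j\geq 1}\sharp(\Theta_j(\Gamma)\setminus\Theta_{j-1}(\Gamma)) = \sum_{j\geq 0}\sharp(\Theta_{j+1}(\Gamma)\setminus\Theta_j(\Gamma)) = \sum_{j\geq 0}\sigma(\nu_{P_j}(\Gamma_j)+\delta_j). \]
It then remains to truncate the right-hand sum at $\tau$: for $j\geq\tau$ the strict transform $\Gamma_j$ is smooth, so $\nu_{P_j}(\Gamma_j)=1$ and $\sigma(\nu_{P_j}(\Gamma_j)+\delta_j)=0$ by Definition \ref{def:sigma}; geometrically this reflects that flowing an already resolved branch transverse to a normal crossings divisor produces only deformations captured at earlier levels, so $\Theta_{j+1}(\Gamma)\setminus\Theta_j(\Gamma)=\emptyset$ there as well. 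Hence the sum equals $\sum_{0\leq j<\tau}\sigma(\nu_{P_j}(\Gamma_j)+\delta_j)$, which is the generic dimension by Theorem \ref{teo:dim_gen}. Conversely, an independent identification of $\sum_{j\geq 1}\sharp(\Theta_j(\Gamma)\setminus\Theta_{j-1}(\Gamma))$ with the dimension of the moduli tangent space would, via the same incremental identity, re-prove Genzmer's formula, which is the broader aim.

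The heart of the matter is thus the incremental identity, and here I would mobilize the semimodule machinery of the paper. First I would translate flow-fanning into the language of $1$-forms: an exponent $\beta$ lies in $\Theta_j(\Gamma)$ exactly when the velocity $\partial_\epsilon\Gamma^\epsilon|_{\epsilon=0}$ of some deformation induced by a vector field $X$ near $P_j$, with $E_j$ invariant and $\Gamma_j$ not, first perturbs the coefficient $a_\beta$ of the Puiseux expansion. Dualizing the contraction pairing between such vector fields and the $1$-forms preserving $E_j$, this condition is governed by the values of the semimodule $\Lambda_{\Gamma_j}^{E_j}$ (Definition \ref{def:semimodule}); concretely, the exponents fannable at index $j$ are read off from the $\Gamma_j$-values of an Apery basis relative to $E_j$, whose minimality and monotonicity (Theorem \ref{teo:nice-E-basis}) guarantee that each such value is caught exactly once and that the resulting count is unambiguous.

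Next I would compare indices $j$ and $j+1$ by blowing up $P_j$. Under $\pi_{j+1}$ one has $\tilde{\Gamma}_j=\Gamma_{j+1}$ and $\tilde{E}_j=E_{j+1}$, so Proposition \ref{pro:blow-up-contacts} applies and, thanks to simplicity and richness, compares $\Lambda_{\Gamma_j}^{E_j}$ and $\Lambda_{\Gamma_{j+1}}^{E_{j+1}}$ explicitly. The exponents newly fannable at index $j+1$ correspond precisely to the semimodule values that emerge only after this blow-up, and the comparison expresses their number as a combinatorial function of the local data at $P_j$, namely the multiplicity $\nu_{P_j}(\Gamma_j)$ and the number $\delta_j$ of branches of $E_j$ through $P_j$; this function is exactly $\sigma(\nu_{P_j}(\Gamma_j)+\delta_j)$. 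Genericity of $\Gamma$ is used throughout so that all these values are the generic ones, constant on the open set $U$ of Theorem \ref{teo:nice}.

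The main obstacle I anticipate is making the dictionary of the two middle paragraphs precise: proving that fanning an exponent is equivalent to the appearance of a new value of $\Lambda_{\Gamma_j}^{E_j}$, and that one blow-up produces an increment matching $\sigma$ on the nose, including the delicate transition at $j=\tau$ where $\Gamma_j$ becomes smooth and all contributions must cease. The deformation-to-$1$-form translation is the subtle point, since the blow-down of $X|_{\Gamma_j}$ must be matched order by order with the valuation $\nu_\Gamma$ of the dual $1$-forms, and the bookkeeping of which vector fields preserve $E_j$ but are not already available at index $j-1$ is precisely what renders the difference $\Theta_{j+1}\setminus\Theta_j$ nontrivial.
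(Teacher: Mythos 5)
Your derivation of the displayed sum from the incremental identity plus Genzmer's formula is logically sound for the bare statement, but it inverts the paper's logic and forfeits its purpose. The paper never invokes Theorem \ref{teo:dim_gen}: it identifies the generic dimension with $\sharp(\mathcal{E}_{E}(\mathcal{C})\setminus\Theta_{0}^{\circ}(\Gamma))$ via Remark \ref{rem:gen_dim} (a result imported from the companion paper on flows), and Proposition \ref{pro:calc_dim} then decomposes this set as the disjoint union $\bigsqcup_{0\leq i<\tau}(\Theta_{i+1}(\Gamma)\setminus\Theta_{i}(\Gamma))$, using properties (4), (6) and (7) of Proposition \ref{pro:breaking}. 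That yields the first claim with no reference to Genzmer at all, and it is precisely this independence which lets the theorem serve as the advertised alternative proof of Genzmer's formula; in your version that application becomes circular, as you yourself half-acknowledge in your ``conversely'' sentence. (Your truncation argument itself is fine: $\sigma(1+\delta_{j})=0$ for $j\geq\tau$.)

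The genuine gap is in the incremental identity, which you rightly call the heart of the matter but leave as an ``anticipated obstacle''. The dictionary you propose to build is not open ground: it is Proposition \ref{pro:contact_kahler}, which gives $\Theta_{j+1}(\Gamma)=\sum_{l=1}^{j}\nu_{P_l}(\Gamma_{l})+\Lambda_{\Gamma_{j+1}}^{E_{j+1}}$ and $\Theta_{j}^{\circ}(\Gamma)=-\nu_{P_j}(\Gamma_{j})+\sum_{l=1}^{j}\nu_{P_l}(\Gamma_{l})+\Lambda_{\Gamma_{j},\circ}^{E_{j}}$. The point your sketch blurs is the singular/non-singular bookkeeping: the set downstairs must be the semimodule $\Lambda_{\Gamma_{j},\circ}^{E_{j}}$ of values of $1$-forms \emph{vanishing} at $P_{j}$ (dual to vector fields singular at $P_{j}$, i.e.\ $\Theta_{j}^{\circ}$), not $\Lambda_{\Gamma_{j}}^{E_{j}}$, and one needs Proposition \ref{pro:breaking}(4) to replace $\Theta_{j+1}(\Gamma)\setminus\Theta_{j}(\Gamma)$ by $\Theta_{j+1}(\Gamma)\setminus\Theta_{j}^{\circ}(\Gamma)$ before counting. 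Only after these two reductions does Proposition \ref{pro:blow-up-contacts}, applied to the suitable pair $(\Gamma_{j},E_{j})$ — whose genericity must be propagated through the blow-ups by Remark \ref{rem:generic} — give $\sharp([\nu_{P_j}(\Gamma_{j})+\Lambda_{\Gamma_{j+1}}^{E_{j+1}}]\setminus\Lambda_{\Gamma_{j},\circ}^{E_{j}})=\sigma(\nu_{P_j}(\Gamma_{j})+\delta_{j})$. With those citations your plan closes up into essentially the paper's proof; without them, the dictionary and the $\circ$-distinction on which the count hinges remain exactly the unproven steps.
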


We get a geometric interpretation of the generic dimension of the moduli space $\mathcal{C} / \sim_{\mathrm{ana}}$ in which
$\Theta_{j} (\Gamma^{}) \setminus \Theta_{j-1} (\Gamma^{})$ is the set of exponents
of a Puiseux parametrization that can be changed by the action of a vector field defined
in a neighborhood of $P_j$ but can not be changed by the action of a vector field 
defined in a neighborhood of any of the points $P_0, P_1, \hdots, P_{j-1}$.
Thus, in particular we obtain a geometric interpretation of Genzmer's formula.
 It is obtained by introducing 
a correspondence between exponents of the Puiseux expansion of $\Gamma$
and divisors of the desingularization process (Proposition \ref{pro:bij_break_fanning}).

It is possible to compute 
$\Theta_{j} (\Gamma) \setminus \Theta_{j-1} (\Gamma)$ in terms of the semimodules
$\Lambda_{\Gamma_{j-1}}^{E_{j-1}}$ and $\Lambda_{\Gamma_{j}}^{E_{j}}$  (Proposition \ref{pro:calc_dim})
by applying the results on the correspondence between orders of K\"ahler differentials 
on $\Gamma$ and the tangency orders of vector fields with $\Gamma$ described in 
\cite{Fortuny-Ribon-Canadian}. For the case where $\Gamma$ is generic in its equisingularity class, 
our construction of the Apery basis and its properties allow to calculate in parallel
$\Lambda_{\Gamma_{j-1}}^{E_{j-1}}$ and $\Lambda_{\Gamma_{j}}^{E_{j}}$ (Proposition \ref{pro:parallel}),  
to obtain $\sharp  (\Theta_{j} (\Gamma) \setminus \Theta_{j-1} (\Gamma))$ for $j \in \mathbb{Z}_{\geq 1}$ and then show 
that it is the
$j$th-term $\sigma (\nu_{P_{j-1}} (\Gamma_{j-1} \cup \tau_{j}^{-1}(0,0)))$ in Genzmer' formula  
(Proposition \ref{pro:blow-up-contacts}) (cf. Definitions \ref{def:sigma} and  \ref{def:mult}).

\strut

 We introduce the setting of the paper in section  \ref{sec:equisingularity}. 
 The construction of the Apery basis of a generic plane branch in 
 an equisingularity class is carried out in section    \ref{sec:E_empty}. 
 This is generalized for the case of K\"{a}hler differentials preserving 
 a normal crossings divisor $E$  in section \ref{sec:E-non-empty}, 
 where we also provide Algorithm  \ref{alg:orders}  that gives the generic semimodule in an equisingularity class. 
 Section \ref{sec:families} is devoted to introducing the correspondence between exponents of the Puiseux expansion and
 divisors of the desingularization process of a plane branch $\Gamma$ and to use such a setting to introduce the sets of 
 flow-fanning exponents and describe their properties. Section \ref{sec:contacts} gives, 
 as a direct application of our results, an alternative proof of Genzmer's formula.

\section{Equisingularity class of a pair $(\Gamma_{\circ}, E)$} \label{sec:equisingularity}
We consider, in $(\mathbb{C}^2,0)$ with coordinates $(x,y)$, an irreducible germ 
of complex analytic plane curve $\Gamma_{\circ}$ and a normal crossings divisor  $E \subset \{xy=0\}$. 
It is well-known that the equisingularity class $\tilde{\mathcal C}$ of $(\Gamma_{\circ}, E)$ is equivalent to the 
equisingularity class of $\Gamma_{\circ}$ together with  the intersection numbers
$(\Gamma_{\circ}, D)_{(0,0)}$ at the origin for each irreducible component $D$ of $E$
(cf. \cite[Theorem 8.4.21]{brieskorn2012plane}). 
\begin{defi} \label{def:mult}
We consider the notation $\nu_{P}(\Gamma)$ for the multiplicity of a germ of curve $\Gamma$ at $P$.
Sometimes we just write $\nu (\Gamma)$ when $P$ is implicit.
\end{defi}
\subsection{The set of exponents of an equisingularity class}
Define $n$ as the multiplicity $\nu (\Gamma_{\circ})$ of $\Gamma_{\circ}$ at the origin
if $\{x = 0\} \not \subset E$, otherwise we define $n$ as the intersection number $ (\Gamma_{\circ}, x=0)_{(0,0)}$.
We define 
\[ \beta_{0} = \max \{ (\overline{\Gamma}, y=0)_{(0,0)}  \ | \   (\overline{\Gamma}, E) \in \tilde{\mathcal C} \ \mathrm{and} \ (\overline{\Gamma}, x=0)_{(0,0)}=n \} . \]
For any curve $\overline{\Gamma} \in \tilde{\mathcal C}$,
 there exists an irreducible curve $\Gamma$ that is analytically
conjugated to $\overline{\Gamma}$ by a map preserving every irreducible component of $E$ and  admits a
parametrization of the form
\begin{equation}
\label{equ:param}
\Gamma (t) = \left( t^{n}, \sum_{\beta \geq \beta_{0}} a_{\beta, \Gamma} t^{\beta} \right)  .
\end{equation}
So, up to a change of coordinates, we suppose that $\Gamma_{\circ}$ has a Puiseux expansion of the form \eqref{equ:param}.
The calculation of the $\mathbb{C}[[x]]$-module of K\"ahler differentials on $\Gamma_{\circ}$ of $1$-forms
preserving $E$ is simple if $\Gamma_{\circ}$ is smooth and transverse to $x=0$.  Indeed, it is
generated by $dx$, $dy$ or $x dy$ depending on whether $\{y=0\} \not \subset E$,
$E=\{y=0\}$ or $E=\{xy=0\}$ respectively.  Thus, we will assume from now on $n >1$. 

 We can associate Puiseux characteristic exponents $\beta_1 < \hdots < \beta_g$ to the pair
$(\Gamma_{\circ}, E)$ by considering the natural numbers $\beta$ such that
\[
  \gcd( \{ n \} \cup \{ \tilde{\beta} : a_{\tilde{\beta}, \Gamma_{\circ}} \neq 0 \  \wedge \ 
  \tilde{\beta} < \beta\}) \neq
  \gcd( \{ n \} \cup \{ \tilde{\beta} : a_{\tilde{\beta}, \Gamma_{\circ}}
  \neq 0 \ \wedge \ \tilde{\beta} \leq \beta \}) .
\]
Indeed if $x=0$ is not the tangent cone of $\Gamma_{\circ}$ these numbers are the usual Puiseux
characteristic exponents of $\Gamma_{\circ}$.  Notice that either $\beta_0 \in (n)$ or
$\beta_0 = \beta_1$ \and that in particular $\beta_{0} = \beta_{1}$ holds if $\beta_{0} < n$.  Moreover, 
the Puiseux exponents of $(\Gamma_{\circ}, E)$ just depend on the pair
$(\Gamma_{\circ}, n)$.  
\begin{defi} 
We define the {\it set of exponents} ${\mathcal E}_{E}(\Gamma_{\circ})$ of
$(\Gamma_{\circ},E)$ as
\[
  {\mathcal E}_{E}(\Gamma_{\circ}) = \cup_{j=0}^{g}
  \{ m \in (n, \beta_0, \beta_1,\ldots,
  \beta_j) : m \geq \beta_j \} .
\] 
\end{defi}
Given any $\Gamma \in \tilde{\mathcal C}$ that admits a parametrization of the form \eqref{equ:param}, we have
$a_{\beta, \Gamma} = 0$ if $\beta \not \in {\mathcal E}_{E}(\Gamma_{\circ})$. Moreover
$a_{\beta_{0}, \Gamma} \hdots a_{\beta_{g}, \Gamma} \neq 0$ holds. 
In particular, the Puiseux exponents of
$(\Gamma,E)$ are 
$\beta_1 < \hdots < \beta_g$ and
${\mathcal E}_{E} (\Gamma) = {\mathcal E}_{E} (\Gamma_{\circ})$. 
In general $E$ is fixed and we just write ${\mathcal E}(\Gamma)$.

We consider   the subset 
\[
  {\mathcal C} = \left\{ \Gamma \ | \ \Gamma (t) = \left( t^{n}, 
      \sum_{\beta \in {\mathcal E}_{E} ( \Gamma_{\circ} )} a_{\beta, \Gamma} t^{\beta} \right) \
    \mathrm{and} \       a_{\beta_0, \Gamma} \hdots a_{\beta_g, \Gamma} \neq 0 
    \right\} 
\] 
of the equisingularity class $\tilde{\mathcal C}$ of $(\Gamma_{\circ}, E)$. 
We say that ${\mathcal C}$ is an {\it arranged equisingularity class} of curves. It
intersects any analytic class contained in $\tilde{\mathcal C}$. We just remove from
$\tilde{\mathcal C}$ the curves $\Gamma$ such that $n \neq (\Gamma, x=0)_{(0,0)}$ or 
$\beta_{0} \neq (\Gamma, y=0)_{(0,0)}$.
For simplicity, we drop the term ``arranged" from
now on.
\begin{rem}
The above discussion allows us to define
${\mathcal E}_E({\mathcal C})=\mathcal{E}_E(\Gamma)$ for $\Gamma$ in $\mathcal{C}$, as ${\mathcal E}$ is constant in
${\mathcal C}$. This way, we can consider
${\{ a_{\beta} \}}_{\beta \in {\mathcal E}({\mathcal C}) }$ as variables that parametrize
the equisingularity class ${\mathcal C}$. These parameters are the main tool allowing us to attain our purpose. Thus, from now on, 
and unless explicitly said otherwise, 
we forget the subindex $_0$ and will work with $\Gamma$ generic in its equisingularity class.
\end{rem}
\subsection{The semigroup and the semi-module}
Recall that the main topological invariant associated to
$\Gamma$ is its \emph{semigroup}:
\[
  {\mathcal S} (\Gamma)=\left\{ \nu_{\Gamma} (f) := \ord_t(f(\Gamma(t)))\ :\ f(x,y)\in
    \mathbb{C}[[x,y]] \right\},
\]
whereas it is known since \cite{Hefez-Hernandes-classification} that the main analytic
invariant of $\Gamma$ is the set $\Lambda_{\Gamma}$ of contacts of holomorphic (or formal)
$1$-forms with $\Gamma$. 
\begin{defi} \label{def:semimodule}
We define
\[
 \Lambda_{\Gamma}^{E} = \big\{
 \nu_{\Gamma} (\omega) := \ord_t (\Gamma^{\ast}\omega) + 1 \ |\ \big.
 \big.
 \omega \in \hat{\Omega}_{E}(\mathbb{C}^2,0) 
 \big\} ,
\]
where
\[
  \hat{\Omega}_{E}(\mathbb{C}^2,0) = \left\{ A dx + B dy \ |\ \ A,B \in
    \mathbb{C}[[x,y]] \ \mathrm{and} \ E \ \mathrm{is} \ \omega-\mathrm{invariant}
  \right\}
\]
is the $\mathbb{C}[[x,y]]$-module of formal $1$-forms in $(\mathbb{C}^2,0)$ preserving
$E$. We also define ${\Omega}_{E}(\mathbb{C}^2,0)$ as the subset of
$ \hat{\Omega}_{E}(\mathbb{C}^2,0)$ consisting of germs of analytic $1$-forms in
$(\mathbb{C}^2,0)$, and denote
\[
  \hat{\Omega}(\mathbb{C}^2,0) = \hat{\Omega}_{\emptyset}(\mathbb{C}^2,0), \
  {\Omega}(\mathbb{C}^2,0) = {\Omega}_{\emptyset}(\mathbb{C}^2,0) \ \mathrm{and} \
  \Lambda_{\Gamma} = \Lambda_{\Gamma}^{\emptyset}.
\]
\end{defi}
We intend to compute
$\Lambda_{\Gamma}^{E}$ for generic $(\Gamma, E)$ in a given equisingularity class
${\mathcal C}$: what we call $\Lambda_{\mathcal{C}}^E$.

The set $\Lambda_{\Gamma}^{E}$ is not a semigroup, but  it is a so called semimodule: 
if $\ell\in \Lambda_{\Gamma}^{E}$ and $s\in \mathcal{S}(\Gamma)$, then obviously
$\ell+s\in \Lambda_{\Gamma}^{E}$ (because $f\omega$ belongs to
$\hat{\Omega}_{E}(\mathbb{C}^2,0)$ if $\omega$ does).  However, we do not need such
generality. We shall only use the fact that if $\ell\in \Lambda_{\Gamma}^{E}$, then
$n+\ell\in \Lambda_{\Gamma}^{E}$.  If we write $[\ell]_m$ for the class modulo $m$ of
$\ell$, then: if $\mathbf{\Omega} = ( \Omega_1,\ldots, \Omega_n )$ is a family of
$1$-forms such that
$\nu_{\Gamma} (\Omega_i)$ is minimum in
$[\nu_{\Gamma} (\Omega_i)]_n \cap \Lambda_{\Gamma}^{E}$ for $i=1,\ldots,n$ and
$[\nu_{\Gamma} (\Omega_i)]_n\neq [\nu_{\Gamma} (\Omega_j)]_n$ for $i\neq j$, then
certainly $\mathbf{\Omega}$ gives, by push-back to $\Gamma$, a basis of K\"ahler $1$-forms
over $\Gamma$ and preserving $E$, i.e.
\[
  \Gamma^{*} \hat{\Omega}_{E}(\mathbb{C}^2,0)
  = \Gamma^{*} ({\mathbb C}[[x]] \Omega_1 +
  \ldots + {\mathbb C}[[x]] \Omega_n).
\]
This will be, in essence, the aim of our
 construction: providing a simultaneous construction for bases of K\"ahler differentials
 on generic curves of a fixed equisingularity class ${\mathcal C}$ of a pair
 $(\Gamma_{\circ}, E)$.

\subsection{Combinatorial invariants of $(\Gamma, E)$} 
The following are the basic invariants of $(\Gamma, E)$ 
(or, equivalently, its equisingularity class ${\mathcal C}$), following
\cite{Zariski:moduli}.
\begin{defi}  
\label{def:inv}
We define the sequences $(e_j)_{j\geq 1}$, $(n_j)_{j\geq 1}$ and $(\nu_{j})_{j\geq 1}$ as
\begin{enumerate}
\item Let $e_0 =n$ and $e_{m} = \gcd (e_{m-1}, \beta_m)$ if $1 \leq m \leq g$.
\item Then $n_0=1$ and $n_m = e_{m-1}/e_m$ if $1 \leq m \leq g$.
\item Finally, $\nu_m = n_0 \hdots n_m = n/e_m$ for $0 \leq m \leq g$. 
\end{enumerate}
\end{defi} 
\subsection{Approximate roots of $(\Gamma, E)$} \label{subsec:roots}
In order to provide bases of K\"ahler differentials for generic curves in $\mathcal{C}$, 
we shall need to use both the elementary $1$-forms $dx$ and $dy$ and the
approximate roots of $\Gamma$ where $\Gamma \in {\mathcal C}$.

Fix $1 \leq j \leq g+1$. We define 
 \[
 f_j = \prod_{k=0}^{ \nu_{j-1}-1 } (y - \sum_{\beta < \beta_j}e^{\frac{2 \pi i k \beta}{n}}
 a_{\beta} x^{\beta/n}) .
 \]
 Since $f_j$ is invariant by the transformation $x \mapsto e^{\frac{2 \pi i}{n}} x$,
 it belongs to $ \mathbb{C}{[a_{\beta}]}_{\beta \in \mathcal{E}(\mathcal{C}) }[[x]][y]$.
 Given $\Gamma \in \mathcal{C}$, let
 $\Gamma_{<\beta_{j}}$ be the truncation 
\[
\Gamma_{<\beta_{j}} = \bigg( t^n, \sum_{ \beta < \beta_{j}}a_{\beta}t^{\beta} \bigg)
\] 
of $\Gamma$ up to (but not including) $\beta_j$. 
By convention, we denote $\beta_{g+1} = \infty$ and we obtain $\Gamma_{<\beta_{g+1}} = \Gamma$. By construction, 
 $f_{j, \Gamma} \circ \Gamma_{<\beta_{j}} \equiv 0$ holds 
for all $\Gamma \in \mathcal{C}$ and $1 \leq j \leq g+1$.

\begin{rem}
When $n = \nu (\Gamma)$, the polynomials $f_{1,\Gamma}, \hdots, f_{g, \Gamma}$ are
called, since \cite{abhyankar-moh}, the \emph{approximate roots of $\Gamma$}.  In our
context, we call them the \emph{approximate roots of the pair} $(\Gamma,E)$. The value 
$\overline{\beta}_j=\nu_{\Gamma}(f_{j, \Gamma})$ (i.e. $\overline{\beta}_j=\ord_t(f_{j,\Gamma}(\Gamma))$) 
is a topological invariant of $\Gamma$ and thus it does not depend on $\Gamma \in \mathcal{C}$ for $1 \leq j \leq g$.
 It is known that ${\mathcal S} (\Gamma)$ is the semigroup
$\langle n, \overline{\beta}_1,\ldots, \overline{\beta}_g\rangle$ generated by
$n, \overline{\beta}_1,\ldots, \overline{\beta}_g$; moreover, we
have $\overline{\beta}_1 = \beta_1$ and 
\begin{equation}
\label{equ:rec_beta}
\overline{\beta}_j = n_{j-1} \overline{\beta}_{j-1} - \beta_{j-1} + \beta_{j} 
\end{equation}
for $2 \leq j \leq g$ (cf. \cite[Chapter II, Theorem 3.9]{Zariski:moduli}). Notice that
$\overline{\beta}_j$ is the first element of the semigroup of $\Gamma$ that is not a
multiple of $e_{j-1}$. Moreover, we have $\overline{\beta}_j - {\beta}_j \in (e_{j-1})$.
\end{rem}
\begin{rem}\label{rem:semigroup-structure}
The $n$ expressions
\[ k_1 \overline{\beta}_1 + \hdots + k_g \overline{\beta}_g \] with $0 \leq k_1 < n_1$,
$\hdots$, $0 \leq k_g < n_g$ define $n$ different classes modulo $n$. The analogous
property holds if we replace $\overline{\beta}_j$ with $\beta_{j}$ for $1 \leq j \leq g$.
\end{rem}
Thus, using $f_{0, \Gamma},\ldots, f_{g, \Gamma}$, where $f_{0, \Gamma} =x$, we can compute the whole semigroup
$\mathcal{S}(\Gamma)$ as the set
 
\begin{equation}
\label{equ:semigroup}
  \{ \nu_{\Gamma}(f_{0, \Gamma}^{k_0}f_{1, \Gamma}^{k_1}\cdots f_{g, \Gamma}^{k_g}) :
  0 \leq k_0 ,k_1, \ldots, k_g \} .
\end{equation}

Paralleling the structure of the semigroup $\mathcal{S}(\Gamma)$,
which is generated by the $\Gamma$-orders of the $g$ approximate roots of
$\Gamma$, we shall divide the construction of a basis of K\"ahler
$1$-forms $\mathbf{\Omega}$ on $\Gamma$ into $g$ levels, one for each
Puiseux characteristic exponent, each level $l$ giving rise to a family
$\mathcal{T}_{l}$ of $1$-forms, so that
$\mathcal{T}_l\subset \mathcal{T}_{l+1}$ and 
$\mathbf{\Omega} \subset \mathcal{T}_g$. 
The value $2 {\nu}_l$ will be the cardinal of the set of $1$-forms of each level $l$ in
$\mathbf{\Omega}$. 
\subsection{Parametrized $1$-forms}
As we are going to use the approximate roots, which are of the form
$y^j + \cdots$, where the dots have exponents in $y$ of order less than $j$, we will need
to study a distinguished family of
$ \mathbb{C}{[a_{\beta}]}_{\beta \in \mathcal{E}(\mathcal{C}) } [[x]]$-modules.
\begin{defi} \label{def:eval_gamma}
Given $D \in  {{\mathbb C}[a_{\beta}]}_{ \beta  \in {\mathcal E} ({\mathcal C}) }$, we denote by 
$D(\Gamma)$ the value $D ({(a_{ \beta})}_{ \beta \in {\mathcal E} ({\mathcal C})})$.
Given $g = \sum D_{r} t^{r}$, where $D_{r} \in  {{\mathbb C}[a_{\beta}]}_{ \beta  \in {\mathcal E} ({\mathcal C}) }$
for any $r$, we define $g_{\Gamma} = \sum D_{r}(\Gamma) t^{r}$.
\end{defi}
\begin{defi}
\label{def:form_space}
We set: 
\[
  \hat{\Omega}_{\mathcal C}(\mathbb{C}^2,0) =
  \mathbb{C}{[a_{\beta}]}_{\beta \in \mathcal{E}(\mathcal{C}) } [[x,y]] dx +
  \mathbb{C}{[a_{\beta}]}_{\beta \in \mathcal{E}(\mathcal{C}) } [[x,y]] dy .
\] 
Given $\Omega \in \hat{\Omega}_{\mathcal C}(\mathbb{C}^2,0)$ and $\Gamma \in \mathcal{C}$
corresponding to the family ${(a_{\beta})}_{\beta \in \mathcal{E}(\mathcal{C})}$ of
complex coefficients, we define the form $\Omega_{\Gamma} \in \hat{\Omega} \cn{2}$
obtained by evaluating $\Omega$ in ${(a_{\beta, \Gamma})}_{\beta \in \mathcal{E}(\mathcal{C})}$
analogously as in Definition \ref{def:eval_gamma}.
We consider the set ${\Omega}_{\mathcal C}(\mathbb{C}^2,0)$ consisting of
$\Omega \in \hat{\Omega}_{\mathcal C}(\mathbb{C}^2,0) $ such that
$\Omega_{\Gamma} \in {\Omega} \cn{2}$ for any $\Gamma \in \mathcal{C}$, and denote
$\Omega_{E,\mathcal{C}}(\mathbb{C}^{2},0) = \Omega_{E}(\mathbb{C}^{2},0) \cap
\Omega_{\mathcal{C}}(\mathbb{C}^{2},0)$.
\end{defi}
So, we can consider the elements of $\hat{\Omega}_{\mathcal C}(\mathbb{C}^2,0)$ 
as formal $1$-forms parametrized by ${\mathcal C}$. We focus on the properties of 
$\Omega \in \hat{\Omega}_{\mathcal C}(\mathbb{C}^2,0)$ for generic  $\Gamma \in \mathcal{C}$. 
\begin{defi}
\label{def:class}
Given $\Omega \in \hat{\Omega}_{\mathcal C}(\mathbb{C}^2,0)$, define
\[
  \nu (\Omega) = \nu (\Omega_{\Gamma}), \ \ \nu_{\mathcal C} (\Omega) = \nu (t
  \Gamma^{*} \Omega_{\Gamma}) \ \ \mathrm{and} \ \ \Lambda_{\mathcal C}^{E} =
  \Lambda_{\Gamma}^{E}
\]
for a generic $\Gamma \in \mathcal{C}$. 
\end{defi}
The genericity
condition will become  clearer  when constructing the families $\mathcal{T}_l$.

\section{Case $E=\emptyset$} \label{sec:E_empty}
We divide our study into two cases: $E=\emptyset$, $E\neq \emptyset$. The former will in fact contain all the important definitions and arguments, whereas the latter will just consist in a generalization in which everything except the bootstrapping process carries over without modification.

The case $E=\emptyset$ is the foundational one. For the sake of simplicity, and because we are essentially doing the ground work, 
we omit $E$ everywhere in this section, as it is irrelevant.
\subsection{Distinguished modules of $1$-forms}
We shall make frequent use of the following $1$-forms and modules. 
\begin{defi}
\label{def:modules}
We define
\[ {\mathcal M}_{2 \nu_{l}} = \left(\bigoplus_{j=0}^{\nu_{l}-1}  \mathbb{C}[[x]] y^{j} dx\right)
  \bigoplus   \left(\bigoplus_{j=0}^{\nu_{l}-1}  \mathbb{C}[[x]] y^{j} dy\right)
\]
for $0 \leq l \leq g$. 
\end{defi}
Let
\[
  \overline{\Omega}_{2j+1} = y^{j} dx \quad
  \mathrm{and} \quad \overline{\Omega}_{2j+2} = y^{j} dy
\]
for $j \geq 0$. Thus, $(\overline{\Omega}_{1}, \ldots, \overline{\Omega}_{2 \nu_{l}})$ is a basis of the free 
${\mathbb C}[[x]]$-module ${\mathcal M}_{2 \nu_{l}}$. 
\begin{defi}
\label{def:ideal_g}
We define ${\mathcal I}_{\Gamma}$ as the set of $1$-forms $\Omega \in \hat{\Omega} (\mathbb{C}^2,0)$
such that $\Gamma^{*} \Omega \equiv 0$.  
\end{defi}
Now, let us focus on parametrized forms. 
Setting $M_0=\{ 0 \}$, we define, recursively,
\[\label{def:Mj}
  M_{j +1} = \mathbb{C}{[a_{\beta}]}_{\beta \in \mathcal{E}(\mathcal{C}) } [[x]]
  \overline{\Omega}_{j+1} +
  M_{j}, \ \ \hat{M}_{j +1} =
  ( \mathbb{C}{[a_{\beta}]}_{\beta \in \mathcal{E}(\mathcal{C}) } \setminus \{0\})
  \overline{\Omega}_{j+1} +M_{j},
\]
for $j \geq 0$. Clearly $\hat{M}_j \subset M_j$ for $j \geq 1$.  

The subspaces $M_j$ are relevant for our computations because they satisfy the following important properties:
\begin{lem}\label{lem:properties-Wj}
 Let 
 $P = \sum_{j=0}^{k} \alpha_{j} y^{j} \in
 \mathbb{C}{[a_{\beta}]}_{\beta \in \mathcal{E}(\mathcal{C})
 }[[x]][y]$ where
 $\alpha_j \in \mathbb{C}{[a_{\beta}]}_{\beta \in \mathcal{E}(\mathcal{C})
 }[[x]]$ for any $0 \leq j \leq k$. Then:
 \[
 P M_j \subset M_{j+2k} \ \ \mathrm{for \ any} \ j \geq 1 .
 \]
 Moreover, if $\alpha_k \in
 \mathbb{C}{[a_{\beta}]}_{\beta \in \mathcal{E}(\mathcal{C})} \setminus
 \{0\} $, we also have 
 \[
 P \hat{M}_j \subset \hat{M}_{j+2k} \ \ \mathrm{for \ any} \ j \geq 1 .
 \]
\end{lem}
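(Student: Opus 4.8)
The plan is to reduce everything to the single shift identity $y^{i}\,\overline{\Omega}_m = \overline{\Omega}_{m+2i}$, which follows at once from the definitions $\overline{\Omega}_{2j+1}=y^{j}dx$ and $\overline{\Omega}_{2j+2}=y^{j}dy$: multiplying by $y$ raises the power of $y$ by one, hence the index by two, independently of the parity of $m$. Writing $R=\mathbb{C}{[a_{\beta}]}_{\beta\in\mathcal{E}(\mathcal{C})}$ for brevity, I would first record two elementary facts used throughout: the modules are nested, $M_a\subseteq M_b$ for $a\leq b$, directly from their recursive definition; and each $M_i$ is the $R[[x]]$-module generated by $\overline{\Omega}_1,\ldots,\overline{\Omega}_i$, while $\hat{M}_j=(R\setminus\{0\})\overline{\Omega}_j+M_{j-1}$.

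For the inclusion $PM_j\subseteq M_{j+2k}$, since $M_j$ is generated over $R[[x]]$ by $\overline{\Omega}_1,\ldots,\overline{\Omega}_j$ and $M_{j+2k}$ is an $R[[x]]$-module, it suffices to check $P\overline{\Omega}_m\in M_{j+2k}$ for each $1\leq m\leq j$. Using the shift identity,
\[ P\overline{\Omega}_m = \sum_{i=0}^{k}\alpha_i\, y^{i}\overline{\Omega}_m = \sum_{i=0}^{k}\alpha_i\,\overline{\Omega}_{m+2i}, \]
and each summand lies in $R[[x]]\overline{\Omega}_{m+2i}\subseteq M_{m+2i}\subseteq M_{j+2k}$, because $\alpha_i\in R[[x]]$ and $m+2i\leq j+2k$. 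Summing and invoking nestedness yields the claim.

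For the \emph{moreover} part, assume $\alpha_k\in R\setminus\{0\}$ and take $\eta=c\,\overline{\Omega}_j+\mu\in\hat{M}_j$ with $c\in R\setminus\{0\}$ and $\mu\in M_{j-1}$. I would split $P\eta=c\,P\overline{\Omega}_j+P\mu$ and isolate the top-index term. By the first part applied with $j-1$ in place of $j$, $P\mu\in M_{j-1+2k}=M_{j+2k-1}$; and the subcritical terms $c\,\alpha_i\,\overline{\Omega}_{j+2i}$ with $i<k$ lie in $M_{j+2k-2}\subseteq M_{j+2k-1}$ since $c\,\alpha_i\in R[[x]]$ and $j+2i\leq j+2k-2$. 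This leaves
\[ P\eta = (c\,\alpha_k)\,\overline{\Omega}_{j+2k} + \rho, \qquad \rho\in M_{j+2k-1}. \]
The load-bearing point is that $c\,\alpha_k\in R\setminus\{0\}$: here one uses that $R$, being a polynomial ring over a field, is an integral domain, so the product of the nonzero constants $c$ and $\alpha_k$ is again a nonzero element of $R$, in particular one with no $x$-dependence. Hence $P\eta\in(R\setminus\{0\})\overline{\Omega}_{j+2k}+M_{j+2k-1}=\hat{M}_{j+2k}$, as desired.

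The computations are routine index bookkeeping; the only genuinely essential ingredients are the shift identity $y^{i}\overline{\Omega}_m=\overline{\Omega}_{m+2i}$ and the integral-domain property of $R$, which guarantees that the leading coefficient survives as a nonzero constant in the $\hat{M}$ statement. I expect no serious obstacle beyond correctly tracking the indices, and in particular verifying that every term other than the top one drops strictly below the index $j+2k$.
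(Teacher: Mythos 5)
Your proof is correct: the shift identity $y^{i}\overline{\Omega}_m=\overline{\Omega}_{m+2i}$, the reduction to generators, and the integral-domain argument for the leading coefficient are exactly the routine verification required. The paper in fact states this lemma without any proof, treating it as immediate from the recursive definition of $M_j$ and $\hat{M}_j$, so your write-up simply supplies the bookkeeping the authors leave implicit.
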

As a matter of fact, the most important spaces are the ones corresponding to the indices $2 {\nu}_j$:
\begin{defi}\label{def:form-level-j}
 A $1$-form $\omega\in \hat{\Omega}_{\mathcal C}(\mathbb{C}^2,0)$ is \emph{of level $l$} if $\omega\in M_{2 {\nu}_l}$ for some $l\in \{0,\ldots, g\}$.
\end{defi}
All the $1$-forms we shall construct will be of some level $l$, as the reader will see. Certainly, $dx$ and $dy$ are both of level $0$.
\subsection{Ordered families}
Our aim is to find \emph{nice bases} for generic
curves $\Gamma$ in a given equisingularity class ${\mathcal C}$.
\begin{defi}
\label{def:nice}
Consider a family $\mathbf{\Omega} = (\Omega_1, \hdots, \Omega_{r} )$ in ${\Omega}_{\mathcal C}(\mathbb{C}^2,0)$ where $0 \leq l \leq g$ and 
 $r \in \{ 2 \nu_{l}, \min (2 \nu_{l}, n) \}$.
We say that $\mathbf{\Omega}$
is an {\it \tidy{} family of level} $l$ for $\mathcal{C}$ 
if
\begin{enumerate}[label=(\roman*)]
\item \label{cond1} $\Omega_j \in \hat{M}_j$ for any $1 \leq j \leq r$;
\item \label{cond3} $ \nu_{\mathcal{C}} (\Omega_{j}) \leq  \nu_{\mathcal{C}} (\Omega_{k})$ 
for all $1 \leq j \leq k \leq r$.
\end{enumerate}
We say that $\mathbf{\Omega}$ is a {\it\nice{} family of level} $l$ if it is  \tidy{} and
\begin{enumerate}[label=(\roman*)]
\setcounter{enumi}{2}
\item \label{cond4}
  $\nu_{\mathcal{C}} (\Omega_1), \ldots, \nu_{\mathcal{C}} (\Omega_{\min (2 \nu_{l},n)})$
  define pairwise different classes modulo $n$.
\end{enumerate}
For $l=g$ and $r=n$, we say that
$\mathbf{\Omega}= (\Omega_1, \hdots, \Omega_{n} )$, with
$\Omega_i\in {\Omega}_{\mathcal C}(\mathbb{C}^2,0)$, is a {\it nice basis} if it is a
\nice{} family and
\begin{enumerate}[label=(\roman*)]
\setcounter{enumi}{3}
\item \label{cond5}
  $\hat{\Omega}(\mathbb{C}^2,0)=\mathbb{C}[[x]] \Omega_{1,\Gamma} \oplus \hdots \oplus
  \mathbb{C}[[x]] \Omega_{n,\Gamma} \oplus {\mathcal I}_{\Gamma}$ for generic
  $\Gamma \in \mathcal{C}$.
\end{enumerate} 
\end{defi}
\begin{rem}
\label{rem:nice_suff}
Condition \ref{cond1} implies 
\begin{equation}
\label{equ:submodules}    
 {\mathcal M}_{2 \nu_{l}}   =\mathbb{C}[[x]] \Omega_{1,\Gamma} + \hdots +
  \mathbb{C}[[x]] \Omega_{2 \nu_{l},\Gamma} 
\end{equation}
 for generic $\Gamma \in \mathcal{C}$ if $r= 2 \nu_{l}$.  
 Thus, ${\bf \Omega}$ is a basis of the ${\mathbb C}[[x]]$-module ${\mathcal M}_{2 \nu_{l}}$.
 Moreover, it guarantees that the minimality property (cf. section \ref{sec:intro})
 is satisfied for generic $\Gamma \in \mathcal{C}$.
 A nice basis ${\bf \Omega}$ induces an Apery basis of $\Gamma$ (cf. section \ref{sec:intro}) 
 for generic $\Gamma \in \mathcal{C}$
 by Conditions  \ref{cond4} and  \ref{cond5}. 
 Finally, a nice basis satisfies the monotonicity condition (cf. section \ref{sec:intro}) 
 for generic $\Gamma \in \mathcal{C}$ by Conditions \ref{cond3} and  \ref{cond4}. 
\end{rem}
 At level $0$,  the first condition
will be accomplished by considering the 1-forms $dx$ and $d f_1$, that constitute a
\nice{} family. Then we describe a process in which $dx$ and $d f_1$ appear
alternatively and the approximate roots of $\Gamma$, with $\Gamma \in \mathcal{C}$,
play a prominent role. We obtain $f_1 = y$, since $\beta_0 = \beta_1$ for the case $E = \emptyset$. 
Anyway, we have $f_1 \neq y$ if $\beta_0 < \beta_1$. So, in order to have uniform notations for the cases 
$E= \emptyset$ and $E\neq \emptyset$, we prefer to use $f_1$ in all cases.

\subsection{The basic step --- leading variables} \label{subsec:basic_step}
Fix an equisingularity class ${\mathcal C}$ of germs of branches. Our construction
essentially follows this process: assume we have found a \emph{terminal} family (Definition \ref{def:level})
$\mathcal{T}_{l}= (\Omega_1,\ldots, \Omega_{2 {\nu}_l} )$ in  ${\Omega}_{\mathcal C}(\mathbb{C}^2,0)$
of level $l < g$. Such family is \nice{} but we do not delve now into its properties,
which will be introduced in their definition. For now, a terminal family of level $l$ can be considered just as
the product of our algorithm at the end of level $l$. 
We define $\mathcal{T}_0= (dx, d f_1)$ as the terminal family of level $0$.
We are going to construct, iteratively, a terminal family $\mathcal{T}_{l+1}$ of level $l+1$
 in ${\Omega}_{\mathcal{C}}(\mathbb{C}^2,0)$, 
with $2 {\nu}_{l+1}$ elements. In the end, we shall have $\mathcal{T}_g$ with $2 n$ elements 
whose first $n$ elements constitute a nice basis for $\mathcal{C}$. The construction of $\mathcal{T}_{l+1}$
requires several stages. The first one (stage $0$) is
elementary: we combine the $1$-forms in $\mathcal{T}_l$ with the approximate root $f_{l+1}$.
More precisely, we define 
\begin{equation}
\label{equ:stage}
 \Omega_{2 a \nu_{l}+j}^{0} = f_{l+1}^{a} \Omega_{j}
\end{equation}
for all  $0 \leq a < n_{l+1}$ and $1 \leq j \leq 2 \nu_{l}$
(recall that $f_{l+1, \Gamma}$ is the $l+1$-th approximate root of $\Gamma$). 
Notice the aforementioned alternate role of $dx$ and $d f_1$ since $\Omega_k^{0} \in \hat{M}_k$ for $k \geq 1$. 
\begin{defi}
 \label{def:set-stage0} 
 Fix a terminal family $\mathcal{T}_l= (\Omega_1, \hdots, \Omega_{2 {\nu}_{l}})$ . We say that 
 \begin{equation*}
        \mathcal{G}_{l+1}^0 = (\Omega^{0}_{2a\nu_l+j})_{0 \leq a < n_{l+1}, \ 1 \leq j \leq 2 {\nu}_{l} } =
 (\Omega^{0}_{k})_{1 \leq k \leq 2 {\nu}_{l+1}}
 \end{equation*}
 is a \emph{the initial family (of stage $0$) of level $l+1$}.  
 \end{defi}
  These $1$-forms belong to
${\Omega}_{\mathcal C}(\mathbb{C}^2,0)$, see equation (\ref{equ:pol_root}), and are $2 {\nu}_{l+1}$ in number. They will be called the
\emph{stage} 0 forms of level~$l+1$.
\begin{rem}
\label{rem:det_ord_in_fam}
Since $\nu_{\mathcal{C}} (\Omega_{2 a \nu_{l}+j}^{0} ) = a \overline{\beta}_{l+1} + \nu_{\mathcal{C}} (\Omega_{j})$, 
the $\mathcal{C}$-orders of elements of $\mathcal{T}_l$ determine the $\mathcal{C}$-orders of the elements of 
$\mathcal{G}_{l+1}^0$.
\end{rem}
\begin{defi}
\label{def:modulo}
Given $l \in {\mathbb Z}$ and $m \in {\mathbb Z}_{\geq 1}$, we denote by $[l]_{m}$ the class of $l$ modulo $m$. Moreover, given 
$S \subset {\mathbb Z}$, we define $[S]_{m} = \{ [l]_{m}  \ |\ \ l \in S \}$.
\end{defi}
Unfortunately, the stage 0 forms of level $l+1$ are ordered but usually not \nice{}
(condition \ref{cond4} in Definition \ref{def:nice}). 
We will construct families $\mathcal{G}_{l+1}^{0}, \mathcal{G}_{l+1}^{1}, \ldots$ 
in ${\Omega}_{\mathcal C}(\mathbb{C}^2,0)$
until we find a (\nice{}) family 
$\mathcal{G}_{l+1}^{r}$, which will be the terminal one.
\begin{defi}
\label{def:next}
Assume a family $\mathcal{G}_{l+1}^s=(\Omega_1^{s}, \ldots, \Omega_{2 {\nu}_{l+1}}^s)$ is
given, with $0\leq l < g$ and $s\geq 0$ (i.e. a family of level $l+1$ at some stage).
Assume that $\mathcal{G}_{l+1}^s$ is non-\nice{}.
 When this happens, we
consider the pairs $j,k$ with $1 \leq j < k \leq 2 {\nu}_{l+1}$ such that
$[\nu_{\mathcal{C}} (\Omega^{s}_j)]_n=[\nu_{\mathcal{C}} (\Omega^{s}_k)]_n$: whenever this
equality holds, there is some $d\in \mathbb{Z}_{\geq 0}$ with
$\nu_{\mathcal{C}} (\Omega^s_k)=\nu_{\mathcal{C}} (\Omega_{j}^s) + dn$.  Consider the
pull-backs 
\begin{equation*}
 \Gamma^{\ast}\Omega_{k,\Gamma}^s = (C_{k,\eta_{k}} (\Gamma)t^{\eta_k}+ \cdots) dt ,\
 \Gamma^{\ast}\Omega^s_{j,\Gamma} = (C_{j,\eta_{j}} (\Gamma) t^{\eta_{j}}+\cdots) dt
\end{equation*}
of $\Omega_k^s$ and $\Omega_j^s$, where 
$C_{k,\eta_k},C_{j,\eta_j}\in {{\mathbb C}[a_{\beta}]}_{ \beta \in {\mathcal E} ({\mathcal C}) } \setminus \{0\}$, 
for all curves in an equisingularity class. For any such a class, $C_{k,\eta_k}$ and
$C_{j,\eta_j}$ are 
both generically non-vanishing.
Then we replace $\Omega^s_k$ with
\begin{equation}\label{eq:main-substitution}
 \Omega^{s+1}_k := C_{j,\eta_j}\Omega^s_k - C_{k,\eta_k} x^{d} \Omega^s_{j} ; 
\end{equation}
Setting also $\Omega^{s+1}_j=\Omega^{s}_j$, for the other
$1$-form, we obtain the $2 {\nu}_{l+1}$ $1$-forms   in
${\Omega}_{\mathcal C}(\mathbb{C}^2,0)$ of stage $s+1$ (hence the exponent in
$\Omega^{s+1}_j$), in the same level $l+1$. 
\end{defi}
\begin{rem}
The families $ \mathcal{G}_{l+1}^{1}, \mathcal{G}_{l+1}^{2}, \ldots$ 
are well-defined (Proposition \ref{pro:well_def_fam}).
There are some points in the definition that need to be proved, for instance that $d$ is non-negative. 
Arithmetic arguments will show how
this construction can be iterated and at some stage it
gives a \nice{} family $\mathcal{G}_{l+1}^{r}$ of $1$-forms of level $l+1$.
\end{rem}
\begin{rem}
We have 
$\nu_{\mathcal{C}} (\Omega^{s+1}_k) >\nu_{ \mathcal{C}} (\Omega^{s}_k)$ and 
$[\nu_{ \mathcal{C}} (\Omega^{s+1}_k)]_n\neq [\nu_{ \mathcal{C}} (\Omega^{s}_k)]_n$ 
as we shall see (Propositions \ref{pro:aux1} and \ref{pro:auxm}). 
\end{rem}
\begin{rem}
The algorithm described above to obtain a \nice{} family  $\mathcal{G}_{l+1}^{r}$ was described as  {\bf simple} in 
section \ref{sec:intro}. One reason is that $\mathcal{G}_{l+1}^{0}$ is built from $\mathcal{T}_{l}$ and 
data related to the equisingularity class, namely the approximate root $f_{l+1}$. 
Another reason is that the algorithm consists in multiple applications of the elementary substitution 
\eqref{eq:main-substitution}. Indeed, the simplicity and expressiveness of Equation
\eqref{eq:main-substitution} will be essential to control the properties of our 
construction.
\end{rem} 
 In order to control $\nu_{\mathcal{C}} (\Omega^{s+1}_k)$ in substitution \eqref{eq:main-substitution}, 
we require the following 
definition, which
will allow us to explicitly compute that contact order. For the sake of simplicity, if
$\beta\in \mathcal{E}({\mathcal C})$, then ${\mathfrak p}(\beta)$ is the previous element in
$\mathcal{E}({\mathcal C})$ and ${\mathfrak n} (\beta)$ is the following one.   
 \begin{defi}
 \label{def:max}
 We say that 
 $g(t)\in \mathbb{C}[a_{\beta_0},a_{{\mathfrak n} (\beta_0)},a_{{\mathfrak n}^{2} (\beta_0)},\ldots][[t]]$
 has \emph{leading variable $\beta$ (or $a_{\beta}$) of degree
 $m\in \mathbb{Z}_{\geq 0}$}, or equivalently, that $g(t)$ \emph{starts at $\beta$
 with degree $m$} if $\beta \in \mathcal{E} ({\mathcal C})$ and
\begin{itemize}
\item There exists $k\in \mathbb{Z}_{\geq 0}$ with
  \[
    g(t)= t^k \sum_{r=0}^{\infty} C_{{\mathfrak n}^{r}(\beta)} t^{{\mathfrak n}^{r}(\beta)}
  \]
  where
  $C_{{\mathfrak n}^r(\beta)} \in {{\mathbb C}[a_{\beta^{\prime}}]}_{ \beta^{\prime} \in {\mathcal E} ({\mathcal C}) }$
  for $r \geq 0$;
\item There exist $m\in \mathbb{Z}_{\geq 0}$, a polynomial
  $ R \in \mathbb{C}{[a_{\beta'}]}_{\beta_0 \leq \beta' < \beta} \setminus \{0\}$, a unit
  $\tilde{c} \in {\mathbb C}^{*}$ and a sequence
  $c_{\beta}=1, c_{{\mathfrak n} (\beta)}, c_{{\mathfrak n}^{2} (\beta)}, \hdots$ in ${\mathbb C}$ such that
\begin{equation}
\label{equ:maximal}
C_{{\mathfrak n}^{r}(\beta)} = \tilde{c} c_{{\mathfrak n}^{r}(\beta)} a_{{\mathfrak n}^{r}(\beta)} R(a_{\beta_0}, \hdots,
a_{{\mathfrak p}(\beta)}) a_{\beta}^{m} + Q_{{\mathfrak n}^{r}(\beta)}, 
\end{equation}
for all $r \geq 0$, where
$Q_{{\mathfrak n}^{r}(\beta)} \in \mathbb{C}{[a_{\beta'}]}_{\beta_0 \leq \beta' < {\mathfrak n}^{r}(\beta)}$.
\item If $\beta\not\in \left\{ \beta_1,\ldots,\beta_g \right\}$ (i.e. it is not a Puiseux
  characteristic exponent) and $\ell = \max \{ l \geq 0 : \beta_l \leq \beta\}$, then
  $m=0$ and $[k]_{e_\ell} = [0]_{e_\ell} $.
\item If $\beta=\beta_{\ell+1}$ (i.e. it is a Puiseux characteristic exponent), then
  $[k]_{e_\ell} = [m \beta]_{e_\ell}$.
\end{itemize}
The sequence $(c_{{\mathfrak n}^{r}(\beta)})_{r \geq 0}$ will
be called the \emph{$c$-sequence of $g$}. 
\end{defi}
Notice that not all power series have a leading variable.
\begin{rem}
\label{exa:leading}
Consider the equisingularity class whose arranged representatives are of the form
$\Gamma (t) = (t^{6} , a_{9} t^{9} + a_{10} t^{10} + \hdots)$ where $a_{9} a_{10} \neq 0$. Its 
Puiseux characteristic exponents are $\beta_{1} = 9$ and $\beta_{2}=10$.
Let $f_1 = y$ and 
\[ f_2 = (y - a_{9} x^{3/2}) (y + a_{9} x^{3/2}) = y^{2} - a_{9}^{2} x^{3} \]
be its approximate roots.  
Moreover,  we have that 
\[ 
 t \frac{\partial (f_{1, \Gamma} \circ \Gamma (t))}{\partial t} = 
 9 a_{9, \Gamma} t^{9} + 10 a_{10, \Gamma} t^{10} +  11 a_{11, \Gamma} t^{11}  +\hdots \]
and 
\[  f_{2}^{2} \circ \Gamma (t) = {\left( a_{10} t^{10} + a_{11} t^{11} + \hdots \right)}^{2}  {\left( 2 a_{9} t^{9} + a_{10} t^{10} + a_{11} t^{11} + \hdots \right)}^{2}  \]
 belong to $\mathbb{C}[a_{9},a_{10},\ldots][[t]]$, where we skip the subindexes $\Gamma$ in the second expression.
 In the former case, $t \frac{\partial (f_1 \circ \Gamma (t))}{\partial t}$ has leading variable $a_{9}$ of degree $0$ and we have 
 $k=0$, $e_{0} =6$, $\tilde{c} = 9$, $R \equiv 1$, $Q_{9} \equiv Q_{10} \equiv \hdots \equiv 0$, $c_{9} =1$, $c_{10}=10/9$, $c_{11} = 11/9$ and so on. 
 
 In the latter case,  one can verify that $f_2^{2} \circ \Gamma (t)$ has leading variable $a_{10}$ of degree $1$. More precisely, we have
$k=28$, $e_{1} =3$, $\tilde{c}=1$, $R = 4 a_{9}^{2}$, $Q_{10} \equiv 0$, $Q_{11} = 4 a_{9} a_{10}^{3}$, 
$c_{10} =1$, $c_{11}=c_{12} = \hdots =2$.
\end{rem} 

\begin{rem}
 The nonzero polynomial $R(a_{\beta_{0}},\ldots, a_{{\mathfrak p}(\beta)})$ is the same for all the
 coefficients $C_{\beta}$ (this is what makes the definition of the $c-$sequence
 possible). Also, despite saying that $a_{\beta}$ is of ``degree $m$'', the first element
 $C_{\beta}$ is
 \begin{equation*}
 C_{\beta} = \tilde{c}a_{\beta}R(a_{\beta_0},\ldots,a_{{\mathfrak p}(\beta)})a_{\beta}^m + Q_{\beta}.
 \end{equation*}
 But the terminology reflects the fact that $a_{\beta}$ is the variable of maximal index in the
 coefficient of the least order (the ``leader''), and if $m>0$, then it subsequently
 appears explicitly \emph{always to the power $m$} in $C_{{\mathfrak n}^r(\beta)}$, for any $r>
 0$. If $\beta$ is not a Puiseux characteristic exponent, then $m=0$.  
\end{rem}

As in Remark \ref{exa:leading},  the  
notion of leading variable can be extended to power series
$f\in \mathbb{C}{[a_{\beta}]}_{\beta \in \mathcal{E}(\mathcal{C})} [[x,y]]$ just by
considering $g(t) = f \circ \Gamma (t)$, where $\Gamma \in \mathcal{C}$, since $g$ can be
interpreted as an element of
$\mathbb{C} {[a_{\beta}]}_{\beta \in \mathcal{E} (\mathcal{C})} [[t]]$.  Analogously, it
can be extended to $1$-forms $\Omega\in \hat{\Omega}_{\mathcal{C}}(\mathbb{C}^2,0)$ by
considering the power series $g(t)$, where $\Gamma \in \mathcal{C}$, satisfying
$t \Gamma^{*} \Omega_{\Gamma} = g(t) dt$.
 
\begin{rem} \label{rem:rich}
The use of leading variables will enable us to control $\nu_{\mathcal{C}} (\Omega^{s+1}_k)$
in substitution \eqref{eq:main-substitution}. As
a matter of fact, we will prove, in Propositions \ref{pro:aux1} and \ref{pro:auxm} that
\[ \nu_{\mathcal{C}} (\Omega_{k}^{s+1}) = \nu_{\mathcal{C}} (\Omega_{k}^{s}) + 
( {\mathfrak n}^{s+1}(\beta_{l+1}) - {\mathfrak n}^{s} (\beta_{l+1})) . \]
This property, together with Remark \ref{rem:det_ord_in_fam}, determine the $\mathcal{C}$-orders of 
all the $1$-forms appearing in the construction of a nice basis of $\mathcal{C}$. 
We obtain Algorithm \ref{alg:orders} that calculates such $\mathcal{C}$-orders without calculating the corresponding
$1$-forms. This is the property of {\bf rich}  structure of the construction mentioned in section \ref{sec:intro}.
\end{rem}

\begin{defi}
The set of power series $g(t)$ starting at $\beta$ with degree $m$ will be denoted $\Delta^m_{\beta}$. We shall also
make use of
\begin{itemize}
\item The set $\Delta_{\beta,\leq}^{m}$ of power series $g(t)\in\Delta^m_{\beta}$
 whose $c$-sequence is increasing in ${\mathbb Q}^{+}$;
\item The set $\Delta^m_{\beta,<}$ of power series $g(t)\in \Delta^m_{\beta}$ whose
 $c$-sequence is strictly increasing in ${\mathbb Q}^{+}$. 
\end{itemize}
Given $1 \leq j \leq g$, we define the set $\tilde{\Delta}_{\beta_j}$ consisting of $g \in \Delta_{\beta_j}$
such that $Q_{\beta_{j}} \equiv 0$. We denote 
$\tilde{\Delta}_{\beta_j,\leq}^{m} = {\Delta}_{\beta_j,\leq}^{m}  \cap \tilde{\Delta}_{\beta_j}$ and
$\tilde{\Delta}_{\beta_j,<}^{m} = {\Delta}_{\beta_j,<}^{m}  \cap \tilde{\Delta}_{\beta_j}$.
For each $\beta \in \mathcal{E} (\mathcal{C})$, the union for all degrees $m$ of each of those sets is also
important. Thus, we denote 
\[ \Delta_{\beta} = \cup_{m \geq 0} \Delta_{\beta}^{m}, \
\Delta_{\beta,\leq} = \cup_{m \geq 0} \Delta_{\beta,\leq}^{m} \ \mathrm{and} \ 
\Delta_{\beta, <} = \cup_{m \geq 0} \Delta_{\beta, <}^{m} . \]
We also denote $\tilde{\Delta}_{\beta_j,\leq} = \cup_{m \geq 0}  \tilde{\Delta}_{\beta_j,\leq}^{m}$ and  
$\tilde{\Delta}_{\beta_j,<} = \cup_{m \geq 0}  \tilde{\Delta}_{\beta_j,<}^{m}$.
\end{defi}

\begin{rem}
Clearly, $\Delta_{\beta} \cap \Delta_{\beta'} = \emptyset$ if $\beta \neq \beta'$ so that the leading variable of $g \in \cup \Delta_{\beta}$ is unique.
\end{rem} 
\begin{rem}
\label{exa2:leading}
The calculations in Remark \ref{exa:leading} imply  $d f_1 \in  \Delta_{9,<}^{0}$ and 
$f_{2}^{2} \in \Delta_{10,\leq}^{1} \setminus \Delta_{10,<}^{1}$.
\end{rem}

 We shall also use the same notation
$\Delta^{m}_{\beta}$, etc. for the corresponding subsets of $\mathbb{C}{[a_{\beta}]}_{\beta \in \mathcal{E}(\mathcal{C})} [[x,y]]$ and 
$\hat{\Omega}_{\mathcal{C}}(\mathbb{C}^2,0)$ because there is no possibility of confusion. 
For completeness, we add $dx$ to the set $\Delta_{n,<}$ in the case of $1$-forms.   
 
The next result will be critical to deal with $1$-forms whose leading variable is a Puiseux characteristic exponent.  
\begin{pro}
\label{pro:q0}
 Assume $\Omega\in \hat{\Omega}_{\mathcal{C}}(\mathbb{C}^2,0)$
 (resp.  $f \in \mathbb{C}{[a_{\beta}]}_{\beta \in \mathcal{E}(\mathcal{C})} [[x,y]]$)
 belongs to $\Delta_{\beta_{l}, \leq}^{m}$ for some $1 \leq l \leq g$ and $0 \leq m \leq n_{l}-2$.
 Then $\Omega \in \tilde{\Delta}_{\beta_{l}, \leq}^{m}$ (resp. $f \in \tilde{\Delta}_{\beta_{l}, \leq}^{m}$).
 \end{pro}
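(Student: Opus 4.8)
The plan is to combine the weighted homogeneity of the coefficients in the leading-variable expansion with a divisibility property of the exponents lying below $\beta_l$. I treat the two cases simultaneously, since membership in $\Delta_{\beta_l,\leq}^{m}$ is defined through the associated power series $g(t)$ (namely $t\,\Gamma^{*}\Omega_{\Gamma}=g(t)\,dt$, resp. $g=f\circ\Gamma$), and the desired conclusion $Q_{\beta_l}\equiv 0$ is a statement about that same $g$. Grade the coefficient ring by assigning weight $\beta$ to each variable $a_{\beta}$, $\beta\in\mathcal{E}(\mathcal{C})$. The key structural input, as illustrated by the computation in Remark \ref{exa:leading}, is that the coefficient $C_{\beta_l}$ of $t^{k+\beta_l}$ in $g$ is homogeneous of weight $k+\beta_l$ for this grading: writing $g=\Gamma^{*}f$ (or from the $1$-form), each $a$-monomial contributing to the $t^{E}$-coefficient carries weight $E$ minus a multiple of $n$, and the balancing coefficients of the objects in our construction force the weight to be exactly $E$. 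Consistently with this, the two summands $\tilde{c}R(a_{\beta_0},\ldots,a_{{\mathfrak p}(\beta_l)})\,a_{\beta_l}^{m+1}$ and $Q_{\beta_l}$ of $C_{\beta_l}$ are each separately homogeneous of weight $k+\beta_l$ (since $R$ has weight $k-m\beta_l$ and $a_{\beta_l}^{m+1}$ has weight $(m+1)\beta_l$).

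Next I would record the arithmetic fact that every exponent $\beta'\in\mathcal{E}(\mathcal{C})$ with $\beta'<\beta_l$ satisfies $\beta'\equiv 0 \pmod{e_{l-1}}$. Indeed, by the definition of $\mathcal{E}(\mathcal{C})$ such a $\beta'$ lies in the numerical semigroup $(n,\beta_0,\ldots,\beta_{l-1})$, and the greatest common divisor of its generators is $\gcd(n,\beta_0,\ldots,\beta_{l-1})=e_{l-1}$, because either $\beta_0\in(n)$ or $\beta_0=\beta_1$, so adjoining $\beta_0$ does not change $\gcd(n,\beta_1,\ldots,\beta_{l-1})=e_{l-1}$. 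Hence every element of that semigroup is a multiple of $e_{l-1}$. Since $Q_{\beta_l}$ is, by Definition \ref{def:max}, a polynomial in the variables $a_{\beta'}$ with $\beta_0\leq\beta'<\beta_l$, every $a$-monomial appearing in $Q_{\beta_l}$ has weight $\equiv 0 \pmod{e_{l-1}}$.

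I would then confront these two facts with the residue of the homogeneity weight. Because $\beta_l=\beta_{(l-1)+1}$ is a Puiseux characteristic exponent, the last clause of Definition \ref{def:max} gives $[k]_{e_{l-1}}=[m\beta_l]_{e_{l-1}}$, whence $k+\beta_l\equiv (m+1)\beta_l \pmod{e_{l-1}}$. The additive order of $\beta_l$ in $\mathbb{Z}/e_{l-1}$ equals $e_{l-1}/\gcd(e_{l-1},\beta_l)=e_{l-1}/e_l=n_l$, so $j\beta_l\equiv 0 \pmod{e_{l-1}}$ if and only if $n_l\mid j$. The hypothesis $0\leq m\leq n_l-2$ gives $1\leq m+1\leq n_l-1$, hence $n_l\nmid(m+1)$ and therefore $(m+1)\beta_l\not\equiv 0\pmod{e_{l-1}}$. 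Consequently $Q_{\beta_l}$ is homogeneous of weight $k+\beta_l\equiv(m+1)\beta_l\not\equiv 0$, yet built exclusively from variables of weight $\equiv 0\pmod{e_{l-1}}$; no such nonzero polynomial exists, so $Q_{\beta_l}\equiv 0$. This shows $g\in\tilde{\Delta}_{\beta_l,\leq}^{m}$, i.e. $\Omega$ (resp. $f$) lies in $\tilde{\Delta}_{\beta_l,\leq}^{m}$, as claimed. As a sanity check, at $m=n_l-1$ one has $(m+1)\beta_l=n_l\beta_l\equiv 0$ and the argument genuinely breaks, which is why the bound $m\leq n_l-2$ is sharp.

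The main obstacle is the first step: making precise and citing the weighted homogeneity of $C_{\beta_l}$ of weight $k+\beta_l$ for the grading $w(a_\beta)=\beta$. I expect this to follow from the equivariance of $\Gamma^{*}$ under the symmetry $t\mapsto\zeta t,\ a_\beta\mapsto\zeta^{-\beta}a_\beta$ (which fixes both $\Gamma^{*}x=t^{n}$ and $\Gamma^{*}y=\sum a_\beta t^\beta$, using $e_{l-1}\mid n$), established together with the leading-variable formalism; everything afterwards is the elementary residue computation above. Only the congruence class modulo $e_{l-1}$ of the weight is actually needed, so even a mod-$e_{l-1}$ refinement of the homogeneity suffices.
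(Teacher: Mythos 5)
Your closing residue arithmetic coincides with the paper's, but the proof has a genuine gap at the step you yourself flag as the ``main obstacle'': the weighted homogeneity of $C_{\beta_l}$. The symmetry $t \mapsto \zeta t$, $a_{\beta} \mapsto \zeta^{-\beta} a_{\beta}$ (with $\zeta^{n}=1$) does fix the parametrization $(t^{n}, \sum a_{\beta} t^{\beta})$, but it does \emph{not} fix $\Omega$: elements of $\hat{\Omega}_{\mathcal{C}}(\mathbb{C}^2,0)$ have coefficients that are themselves polynomials in the $a_{\beta}$, and these transform under the substitution, so equivariance of $\Gamma^{*}$ alone yields nothing. Indeed the homogeneity claim is false on $\hat{\Omega}_{\mathcal{C}}(\mathbb{C}^2,0)$: for $\Omega = a_{\beta_1}\, dx$ one has $t\,\Gamma^{*}\Omega_{\Gamma} = n a_{\beta_1} t^{n}\, dt$, whose $t^{n}$-coefficient has weight $\beta_1 \not\equiv n \pmod{e_0}$. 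Worse, for $\Omega \in \Delta_{\beta_l,\leq}^{m}$ the two summands $\tilde{c} R\, a_{\beta_l}^{m+1}$ and $Q_{\beta_l}$ of $C_{\beta_l}$ lie in the \emph{distinct} weight classes $[(m+1)\beta_l]_{e_{l-1}}$ and $[0]_{e_{l-1}}$ (distinct precisely because $n_l \nmid m+1$), so mod-$e_{l-1}$ homogeneity of $C_{\beta_l}$ is literally equivalent to $Q_{\beta_l}\equiv 0$, i.e.\ to the statement being proved. Assuming it, or asserting that the ``objects in our construction force it,'' is circular; moreover no induction propagating an equivariance property through the construction is carried out, and such an induction could not suffice anyway, since the proposition quantifies over all of $\Delta_{\beta_l,\leq}^{m}$, not only over forms produced by the algorithm.

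The missing idea --- and this is exactly how the paper proceeds --- is to \emph{specialize before} invoking the symmetry: set $a_{\beta}=0$ for all $\beta \geq \beta_l$, i.e.\ pull $\Omega$ back along the truncation $\Gamma_{<\beta_l}(t) = (t^{n}, \sum_{\beta < \beta_l} a_{\beta} t^{\beta})$. All surviving variables $a_{\beta'}$ satisfy $\beta' \equiv 0 \pmod{e_{l-1}}$ (your divisibility step, which is correct), so after this specialization the symmetry with $\zeta^{e_{l-1}}=1$ acts trivially on the coefficients of $\Omega$ as well; equivalently, $\Gamma_{<\beta_l}$ is an $e_{l-1}$-fold cover of an irreducible curve, so the specialized pullback is a power series in $t^{e_{l-1}}$. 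Its coefficient at $t^{k+\beta_l}$ is exactly $Q_{\beta_l}$, because the leading term carries the factor $a_{\beta_l}^{m+1}$ and dies under specialization; since $k+\beta_l \equiv (m+1)\beta_l \not\equiv 0 \pmod{e_{l-1}}$ --- your own arithmetic, using $0 \leq m \leq n_l-2$ --- this forces $Q_{\beta_l}\equiv 0$. In short, your mod-$e_{l-1}$ weight argument becomes sound exactly after the truncation/specialization step; without it, the proof does not go through.
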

 \begin{proof}
   With the notations of Definition \ref{def:max}, consider
   $\Omega \in \hat{\Omega}_{\mathcal{C}}(\mathbb{C}^2,0)$. The proof for
   $f \in \mathbb{C}{[a_{\beta}]}_{\beta \in \mathcal{E}(\mathcal{C})} [[x,y]]$ is
   analogous but it can also be deduced from the case of $1$-forms by considering
   $df = \frac{\partial f}{\partial x} dx + \frac{\partial f}{\partial y} dy$.
 
 By definition, we know that $[k]_{e_{l-1}} = [m \beta_{l}]_{e_{l-1}}$. 
 Let $\Gamma$ be a generic curve  with irreducible parametrization 
 \[ \Gamma (t) = \left( t^{n/e_{l-1}}, \sum_{\beta_0 \leq \beta < \beta_l} a_{\beta,
       \Gamma} t^{\beta/e_{l-1}} \right) .
 \]
 Denote $\Gamma^{*} \Omega = h(t) dt / t$. Notice that, using Landau's notation,
 \[
   e_{l-1} h(t^{e_{l-1}}) \frac{dt}{t} = (\Gamma (t^{e_{l-1}}))^{*} \Omega = 
   \left( C_{\beta_l} t^{k + \beta_l} + O(t^{k + \beta_l +1}) \right)  \frac{dt}{t} .
 \]
 Assume, aiming at contradiction, that $Q_{\beta_l} \not \equiv 0$. This implies
 $C_{\beta_l} \neq 0$ for a generic curve as above. As the left hand side has
 order a multiple of $e_{l-1}$, we obtain
 $[0]_{e_{l - 1}} = [k +\beta_l]_{e_{l-1}} $ and thus $(m+1) \beta_{l}$ is a multiple of
 $e_{l-1}$, which is impossible because $m+1$ is not a multiple of $n_l$.
 \end{proof}
\subsection{Leading variables of products}
Because our argument hinges on the substitutions \eqref{eq:main-substitution} and their
iterations, we need to study how some elementary operations affect the leading variables
of power series and $1$-forms. This is the aim of the following technical lemmas.

We first prove that product of several (say $a$) power series starting at a Puiseux characteristic
exponent $\beta_l$ with degree $0$
and $b$ power series starting at $\beta<\beta_l$
gives a power series with leading variable $\beta_l$ of degree $a-1$. 
We can also control the $c$-sequence of such
product.
\begin{lem}
\label{lem:prod}
Fix $1 \leq l \leq g$. Suppose
\[
 g_1 (t), \hdots, g_a (t) \in \tilde{\Delta}_{\beta_l,\leq}^{0} \
 \mathrm{and} \ 
 g_{a+1} (t), \hdots, g_b (t) \in
 \cup_{\beta' < \beta_l} \Delta_{\beta'}
\]
with $a \geq 1$. Then $\prod_{j=1}^{b} g_{j} (t) \in \tilde{\Delta}_{\beta_l,\leq}^{a-1}$. 
Moreover, the $c$-sequence of $\prod_{j=1}^{b} g_{j} (t)$ is equal to the sum of the
$c$-sequences of $g_1, \hdots, g_a$ except at its first term, which is $1$.
\end{lem}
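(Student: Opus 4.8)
The plan is to expand $P=\prod_{j=1}^{b}g_j$ inside $\mathbb{C}{[a_{\beta}]}_{\beta\in\mathcal{E}(\mathcal{C})}[[t]]$ and read off the leading-variable data coefficient by coefficient. First I would record the canonical form of each factor. For $1\le i\le a$, since $g_i\in\tilde\Delta^{0}_{\beta_l,\le}$,
\[
 g_i = t^{k_i}\sum_{r\ge 0} C^{(i)}_{{\mathfrak n}^r(\beta_l)}\, t^{{\mathfrak n}^r(\beta_l)},\qquad
 C^{(i)}_{{\mathfrak n}^r(\beta_l)} = \tilde c_i\, c^{(i)}_{{\mathfrak n}^r(\beta_l)}\, a_{{\mathfrak n}^r(\beta_l)}\, R_i + Q^{(i)}_{{\mathfrak n}^r(\beta_l)},
\]
with $c^{(i)}_{\beta_l}=1$, $Q^{(i)}_{\beta_l}\equiv 0$ and $R_i\in\mathbb{C}{[a_{\beta'}]}_{\beta'<\beta_l}\setminus\{0\}$; for $a<j\le b$, $g_j\in\Delta_{\beta'_j}$ with $\beta'_j<\beta_l$, so its lowest-order coefficient $C^{(j)}_{\beta'_j}$ is a nonzero polynomial in the variables $a_{\beta'}$ with $\beta'\le\beta'_j<\beta_l$. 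I then set $\tilde c=\prod_{i\le a}\tilde c_i$ and $R=\big(\prod_{i\le a}R_i\big)\big(\prod_{j>a}C^{(j)}_{\beta'_j}\big)$, which is a nonzero element of $\mathbb{C}{[a_{\beta'}]}_{\beta'<\beta_l}$ (no factor involves $a_{\beta_l}$, so no cancellation occurs); this will be the polynomial $R$ of the product.

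Next I would pin down the support and the modular condition, which is the delicate part. The exponents occurring in $P$ are $O+\sum_s\nu_s$, where $O=\ord_t(P)=\sum_{i\le a}(k_i+\beta_l)+\sum_{j>a}(k_j+\beta'_j)$ and each $\nu_s\ge 0$ satisfies $\mathrm{base}_s+\nu_s\in\mathcal{E}(\mathcal{C})$, with $\mathrm{base}_s=\beta_l$ for $s\le a$ and $\mathrm{base}_s=\beta'_s$ otherwise. The crux is that $\mathcal{E}(\mathcal{C})\cap[\beta_l,\infty)$ absorbs such gaps: if $m\in\mathcal{E}(\mathcal{C})$ with $m\ge\beta_l$ and $\mu,b\in\mathcal{E}(\mathcal{C})$ with $b\le\beta_l\le m$ and $b\le\mu$, then $m+(\mu-b)\in\mathcal{E}(\mathcal{C})$. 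Recalling $\mathcal{E}(\mathcal{C})\cap[\beta_p,\beta_{p+1})=e_p\mathbb{Z}\cap[\beta_p,\beta_{p+1})$ and $e_{p+1}\mid e_p$, and writing $p,p'$ for the levels of $m,\mu$, the difference $\mu-b$ is a multiple of $e_{p'}$ (since $e_{p'}\mid e_{\mathrm{level}(b)}\mid b$ and $e_{p'}\mid\mu$), so $m+(\mu-b)$ is a multiple of $e_{\max(p,p')}$; as $m+(\mu-b)\ge\max(m,\mu)$, its own level $q$ satisfies $q\ge\max(p,p')$, whence $e_q\mid e_{\max(p,p')}\mid m+(\mu-b)$ and the sum lies in $\mathcal{E}(\mathcal{C})$. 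Adding the gaps $\nu_s$ one at a time starting from $\beta_l$ then shows $P=t^{k}\sum_r C_{{\mathfrak n}^r(\beta_l)}t^{{\mathfrak n}^r(\beta_l)}$ with $k=O-\beta_l$. The same divisibility bookkeeping yields the condition of Definition \ref{def:max}: $k_i+\beta_l\equiv\beta_l$ and $k_j+\beta'_j\equiv 0\ (\mathrm{mod}\ e_{l-1})$ — the latter because $e_{l-1}\mid\beta'_j$ and, reducing the defining congruence of $g_j$ modulo the divisor $e_{l-1}$, also $e_{l-1}\mid k_j$ — so $k\equiv(a-1)\beta_l\ (\mathrm{mod}\ e_{l-1})$, as required for degree $a-1$.

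Then I would extract each coefficient $C_{{\mathfrak n}^r(\beta_l)}(P)$, the goal being to isolate the top variable $a_{{\mathfrak n}^r(\beta_l)}$. A monomial of $C_{{\mathfrak n}^r(\beta_l)}(P)$ comes from choosing in each factor a term whose gaps sum to ${\mathfrak n}^r(\beta_l)-\beta_l$. The variable $a_{{\mathfrak n}^r(\beta_l)}$ can only be produced by a factor reaching exponent $\ge{\mathfrak n}^r(\beta_l)$; a factor with gap $<{\mathfrak n}^r(\beta_l)-\beta_l$ involves only variables of index $<{\mathfrak n}^r(\beta_l)$, while a factor with gap exactly ${\mathfrak n}^r(\beta_l)-\beta_l$ forces all others to their lowest-order term, and among these only the ones with $i\le a$ produce $a_{{\mathfrak n}^r(\beta_l)}$ (a factor $j>a$ would need gap ${\mathfrak n}^r(\beta_l)-\beta'_j>{\mathfrak n}^r(\beta_l)-\beta_l$, too large). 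Hence the only contribution containing $a_{{\mathfrak n}^r(\beta_l)}$ is
\[
 \sum_{i_0\le a}\big(\tilde c_{i_0}c^{(i_0)}_{{\mathfrak n}^r(\beta_l)}a_{{\mathfrak n}^r(\beta_l)}R_{i_0}\big)\prod_{i\ne i_0}\big(\tilde c_i a_{\beta_l}R_i\big)\prod_{j>a}C^{(j)}_{\beta'_j}
 =\Big(\sum_{i_0\le a}c^{(i_0)}_{{\mathfrak n}^r(\beta_l)}\Big)\tilde c\, a_{{\mathfrak n}^r(\beta_l)}\,R\, a_{\beta_l}^{a-1},
\]
all remaining monomials lying in $\mathbb{C}{[a_{\beta'}]}_{\beta'<{\mathfrak n}^r(\beta_l)}$ and thus absorbed into $Q_{{\mathfrak n}^r(\beta_l)}$. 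For $r=0$ this gives $C_{\beta_l}(P)=\tilde c\, a_{\beta_l}^{a}R$ with $Q_{\beta_l}(P)\equiv 0$, confirming degree $a-1$ and the tilde condition $P\in\tilde\Delta_{\beta_l}$; for $r\ge 1$ it identifies $c_{{\mathfrak n}^r(\beta_l)}(P)=\sum_{i\le a}c^{(i)}_{{\mathfrak n}^r(\beta_l)}$, while the first term is $1$ by convention.

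Finally, monotonicity is immediate: for $r\ge 1$, $c_{{\mathfrak n}^r(\beta_l)}(P)=\sum_i c^{(i)}_{{\mathfrak n}^r(\beta_l)}\le\sum_i c^{(i)}_{{\mathfrak n}^{r+1}(\beta_l)}=c_{{\mathfrak n}^{r+1}(\beta_l)}(P)$ since each $g_i$ has increasing $c$-sequence, and $c_{\beta_l}(P)=1\le a\le c_{{\mathfrak n}(\beta_l)}(P)$ bridges the first step, so the $c$-sequence of $P$ is increasing and $P\in\tilde\Delta^{a-1}_{\beta_l,\le}$. I expect the support step to be the main obstacle: the coefficient computation is just bookkeeping of which factor can carry the top variable, but proving that the product lands exactly on $\{{\mathfrak n}^r(\beta_l)\}$ rather than on intermediate exponents relies essentially on the closure of $\mathcal{E}(\mathcal{C})$ under adding gaps, i.e.\ on the divisibility chain of the $e_p$ and the decreasing-modulus structure of $\mathcal{E}(\mathcal{C})$.
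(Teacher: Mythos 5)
Your proof is correct and takes essentially the same route as the paper's: canonical forms for each factor, the closure of $\mathcal{E}(\mathcal{C})$ under adding gaps (which you establish one gap at a time, where the paper argues in a single step via the maximal exponent $\beta'$ and the gcd $e(\beta')$ dividing all the gaps and $\beta'-\beta_l$), and the observation that $a_{{\mathfrak n}^{r}(\beta_l)}$ can only arise from one factor $i\le a$ taken at full gap with every other factor at its lowest-order coefficient, which yields the same $R=\prod_{i\le a}R_i\prod_{j>a}C^{(j)}_{\beta(j)}$, the vanishing of $Q_{\beta_l}$, and the summed $c$-sequence. Your explicit verification of the congruence $[k]_{e_{l-1}}=[(a-1)\beta_l]_{e_{l-1}}$ is a welcome expansion of a step the paper only asserts, but it is not a different argument.
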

\begin{proof} 
 For each $j$, let $\beta(j)$ be such that
 $g_{j} \in \Delta_{\beta(j)}$ for $1 \leq j \leq b$. 
 Obviously, $\beta(j) = \beta_l$
 for $1 \leq j \leq a$. We have
 \begin{equation}
 \label{equ:gj}
 g_{j} (t) = t^{k_j}
 \sum_{r=0}^{\infty} C_{{\mathfrak n}^{r} (\beta (j)),j} t^{{\mathfrak n}^{r} (\beta (j))}, 
 \end{equation}
 for $1 \leq j \leq b$ by Definition \ref{def:max}.  We need to prove, first, that
 $\prod_{j=1}^{b} g_{j} (t) $ is of the form
 \begin{equation}
 \label{equ:prod}
 \prod_{j=1}^{b} g_{j} (t) =
 t^{k} \sum_{r=0}^{\infty} C_{{\mathfrak n}^{r}(\beta_l)} t^{{\mathfrak n}^{r} (\beta_l)} 
 \end{equation} 
 where $C_{{\mathfrak n}^{r}(\beta_l)} \in {\mathbb C}[a_{\beta_0}, a_{{\mathfrak n}(\beta_0)}, \hdots]$ for any
 $r \geq 0$. By equation \eqref{equ:gj}, if the coefficient of $t^{\ell}$ in
 $\prod_{j=1}^{b} g_{j} (t)$ is non-vanishing, then
 \begin{equation}
 \label{equ:monomials}
 \ell = (k_1 + \beta (1) + m_1) + (k_2 + \beta (2) + m_2) + \ldots +
 (k_b + \beta (b) + m_b), 
 \end{equation}
 where $\beta (j) + m_j \in {\mathcal E} ({\mathcal C})$ and $m_j \in \mathbb{Z}_{\geq 0}$
 for any $1 \leq j \leq b$ by the first property of Definition \ref{def:max}. Notice that
 \[
   k = \nu_{0} \left( \prod_{j=1}^{b} g_{j} (t) \right) - \beta_l =
   \sum_{j=1}^{b} (k_j + \beta (j)) - \beta_l \in [(a-1) \beta_l]_{e_{l-1}}
 \]
 holds by the first property of Definition \ref{def:max}. By subtracting $k$ from
 Expression \eqref{equ:monomials}, we deduce that in order to prove
 \eqref{equ:prod} we just need to show the following claim: 
 if $\beta (j) + m_j \in {\mathcal E} ({\mathcal C})$ for any $1 \leq j \leq b$ then
 $\beta_l + \sum_{j=1}^{b} m_j \in {\mathcal E} ({\mathcal C})$.   
 
   Let us prove the claim. 
 Let $\beta' = \max_{1 \leq j \leq b} (\beta(j) + m_j)$, and set
 \[
   e(\beta)=
   \gcd \{ \beta'' \in \mathcal{E} (\mathcal{C})\ :\
   \beta'' \leq \beta\}.
 \] 
 Notice that $\beta' + r e (\beta') \in {\mathcal E}(\mathcal{C})$ for all
 $\beta' \in \mathcal{E} (\mathcal{C})$ and $r \in {\mathbb Z}_{\geq 0}$ by Definition of
 $\mathcal{E} (\mathcal{C})$.  Moreover, $e (\beta')$ divides $m_1, \hdots, m_b$,
 $\beta' - \beta_{l}$ and hence it also divides
 $\sum_{j=1}^{b} m_j - (\beta' - \beta_{l})$. As a consequence,
 \[
 \beta_l + \sum_{j=1}^{b} m_j =
 \beta' + \sum_{j=1}^{b} m_j - (\beta' - \beta_{l})
 \in {\mathcal E}(\mathcal{C}),
 \]
 as desired.  

Let us study the polynomials $R$ and $Q_{{\mathfrak n}^{r} (\beta_{l})}$, for $r \geq 0$, in Definition \ref{def:max}. 
 Now, if the coefficient of $t^{\ell}$ in $\prod_{j=1}^{b} g_{j} (t)$ does not vanish then
 $\ell = k + {\mathfrak n}^{r} (\beta_l)$ for some $r \geq 0$  by the above claim. Moreover,  the coefficient of order $\ell$ is
 a sum of terms of the form
 \begin{equation}
 \label{equ:term}
 C_{\beta (1) + m_1,1}  C_{\beta (2) + m_2,2}  \hdots  C_{\beta  (b)
   + m_b,b}    
 \end{equation}
 where 
 \begin{equation}
 \label{equ:coefm}
 {\mathfrak n}^{r} (\beta_l) = \ell - k  = \beta_l + m_1 + \hdots + m_b.  
 \end{equation} 
It is straightforward to verify that the expression 
\begin{equation}
\label{equ:lead}
 D_{{\mathfrak n}^{r}(\beta_l)} := \sum_{s=1}^{a}
 \left( C_{{\mathfrak n}^{r}(\beta_l),s} \prod_{1 \leq j \leq b, \ j \neq s}
   C_{\beta (j),j} \right) 
 \end{equation} 
 is of the form \eqref{equ:maximal} setting
 $R = \prod_{j=1}^{a} R_j \prod_{j=a+1}^{b} C_{\beta (j), j}$,
 where $R_j$ is the
 corresponding factor in equation \eqref{equ:maximal} associated to $g_j$.  By direct
 computation, we obtain that the sum $t^{k} \sum_{r \geq 0}  D_{{\mathfrak n}^{r}(\beta_l)}  t^{{\mathfrak n}^{r}(\beta_l)}$
 belongs to $\tilde{\Delta}_{\beta_l,\leq} $ and its $c$-sequence $(c_{{\mathfrak n}^{r}(\beta_l)})_{r \geq 0}$
 satisfies $c_{\beta_l}=1$ and
 $c_{{\mathfrak n}^{r} (\beta_l)} = \sum_{j=1}^{a} c_{{\mathfrak n}^{r} (\beta_l)} ^{j}$ for any $r \geq 1$, where
 $(c_{{\mathfrak n}^{r}(\beta_l)}^{j})_{r \geq 0}$ is the $c$-sequence of $g_{j}$ for
 $1 \leq j \leq a$.
 
 The terms of the form \eqref{equ:term} that are not in \eqref{equ:lead} satisfy
 $\beta (j) + m_j < {\mathfrak n}^{r} (\beta_l) $ for any $1 \leq j \leq b$. Thus, the sum of such terms
 defines a polynomial $Q_{{\mathfrak n}^{r} (\beta_l)} \in \mathbb{C}{[a_{\beta'}]}_{\beta_0 \leq \beta' < {\mathfrak n}^{r}(\beta)}$.
 We obtain that 
 $\prod_{j=1}^{b} g_{j} (t) \in {\Delta}_{\beta_l,\leq} $ and its $c$-sequence is still
 $(c_{{\mathfrak n}^{r}(\beta_l)})_{r \geq 0}$. Moreover, there are no such terms for $r=0$ and thus 
 $\prod_{j=1}^{b} g_{j} (t) \in \tilde{\Delta}_{\beta_l,\leq} $.
 %
 %
 %
 \end{proof}
The first important consequence applies to powers of approximate roots of $\Gamma$:
\begin{cor}
 \label{cor:root}
 Let $1 \leq l \leq g$ and $m \geq 1$. Then $f_{l}^{m}$ belongs to
 $\tilde{\Delta}_{\beta_l,\leq}^{m-1}$ and its $c$-sequence is equal to
 $1, m, \hdots, m, \hdots$
\end{cor}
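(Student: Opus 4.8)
The plan is to apply Lemma~\ref{lem:prod} to the factorization of $f_l \circ \Gamma$ furnished by its conjugate roots. First I would substitute the generic parametrization into the defining product of the approximate root (recall $x^{\beta/n} \mapsto t^{\beta}$ under $\Gamma$), obtaining
\[
 f_l \circ \Gamma (t) = \prod_{k=0}^{\nu_{l-1}-1} F_k (t), \qquad
 F_k (t) = \sum_{\beta_0 \leq \beta < \beta_l} a_{\beta}\big(1 - e^{2\pi i k \beta/n}\big) t^{\beta}
 + \sum_{\beta \geq \beta_l} a_{\beta} t^{\beta}.
\]
Each $F_k$ is linear in the variables $a_{\beta}$, so for generic $\Gamma$ its lowest-order term sits at the least $\beta \in \mathcal{E}(\mathcal{C})$, $\beta < \beta_l$, for which $e^{2\pi i k\beta/n} \neq 1$, or at $\beta_l$ if no such $\beta$ occurs. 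In particular $F_0 = \sum_{\beta \geq \beta_l} a_{\beta} t^{\beta}$ starts exactly at $\beta_l$, and reading off $C_{{\mathfrak n}^{r}(\beta_l)} = a_{{\mathfrak n}^{r}(\beta_l)}$ shows that $F_0 \in \tilde{\Delta}_{\beta_l,\leq}^{0}$ (take $k=0$, $R \equiv 1$, $\tilde{c}=1$ and $Q_{{\mathfrak n}^r(\beta_l)} \equiv 0$) with $c$-sequence identically $1$.

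Next I would prove that $F_0$ is the \emph{only} factor reaching $\beta_l$. The factor $F_k$ starts at $\beta_l$ iff $n \mid k\beta$ for every $\beta$ in $S := \{ \beta \in \mathcal{E}(\mathcal{C}) : \beta_0 \leq \beta < \beta_l \}$, i.e. iff $n \mid k\gcd(S)$. Since $\beta_1, \ldots, \beta_{l-1} \in S$ and every element of $S$ lies in $(n,\beta_0,\ldots,\beta_{l-1})$, hence is a multiple of $e_{l-1}$, one gets $\gcd(\{n\} \cup S) = e_{l-1}$; therefore the number of such $k$ in $\{0,\ldots,\nu_{l-1}-1\}$ equals $\gcd(n,\gcd(S))/e_{l-1} = e_{l-1}/e_{l-1} = 1$. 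So for $k \neq 0$ the leading term of $F_k$ occurs at some $\beta'(k) < \beta_l$; being linear in the $a_{\beta}$, it fits Definition~\ref{def:max} with degree $m=0$ and vanishing exponent-shift (its exponents are literally elements of $\mathcal{E}(\mathcal{C})$), so $F_k \in \cup_{\beta' < \beta_l} \Delta_{\beta'}$.

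Finally I would assemble the pieces: writing $f_l^{m} \circ \Gamma = F_0^{m} \prod_{k \neq 0} F_k^{m}$ exhibits it as a product of $a = m \geq 1$ factors in $\tilde{\Delta}_{\beta_l,\leq}^{0}$ (the copies of $F_0$) and $m(\nu_{l-1}-1)$ factors in $\cup_{\beta' < \beta_l}\Delta_{\beta'}$, so Lemma~\ref{lem:prod} gives $f_l^{m} \in \tilde{\Delta}_{\beta_l,\leq}^{a-1} = \tilde{\Delta}_{\beta_l,\leq}^{m-1}$, with $c$-sequence obtained by summing termwise the $m$ copies of the $c$-sequence of $F_0$ (which is identically $1$) and resetting the first entry to $1$, namely $1, m, \ldots, m, \ldots$. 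The case $l=1$ is included, since then $\nu_0 = 1$ and the second product is empty. I expect the arithmetic bookkeeping of the middle paragraph to be the main obstacle: one must certify both that each non-leading $F_k$ genuinely possesses a leading variable strictly below $\beta_l$ in the exact sense of Definition~\ref{def:max}, and that the count $\gcd(\{n\}\cup S) = e_{l-1}$ isolates the unique factor $F_0$ attaining $\beta_l$.
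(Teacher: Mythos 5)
Your proposal is correct and takes essentially the same route as the paper: the same factorization of $f_l\circ\Gamma$ into the $\nu_{l-1}$ conjugate factors, the identification of the $k=0$ factor as the unique one lying in $\tilde{\Delta}_{\beta_l,\leq}^{0}$ with constant $c$-sequence $1$, and the concluding application of Lemma \ref{lem:prod} with $a=m$ (and the other $m(\nu_{l-1}-1)$ factors in $\cup_{\beta'<\beta_l}\Delta_{\beta'}$). The only cosmetic difference is that the paper isolates $k=0$ using just the roots of unity at the Puiseux exponents $\beta_1,\hdots,\beta_{l-1}$, while you run the equivalent gcd computation $\gcd(\{n\}\cup S)=e_{l-1}$ over the whole set $S$ of exponents below $\beta_l$.
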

\begin{proof}
 The power series $f_l$ is given by
 \begin{equation}
 \label{equ:pol_root}
 f_l = \prod_{k=0}^{ \nu_{l-1}-1 } (y - \sum_{\beta < \beta_l}e^{\frac{2 \pi i k \beta}{n}}
 a_{\beta} x^{\beta/n}) .
 \end{equation}
 In particular $\Gamma^{\ast}f_{l}=f_{l} \circ \Gamma$ is equal to
 \begin{equation*}
 \Gamma^{\ast}f_{l} =g_0(t) g_1(t) \cdots g_{ \nu_{l-1}-1 }(t)
 \end{equation*}
 where
 \begin{equation*}
 g_0(t) = \sum_{\beta\geq \beta_l}a_{\beta}t^{\beta}\in \tilde{\Delta}^{0}_{\beta_l,\leq}
 \end{equation*}
 which has constant $c$-sequence equal to $1$. 
 Since 
 \[ 
 (e^{\frac{2 \pi i k \beta_1}{n}}, e^{\frac{2 \pi i k \beta_2}{n}}, \ldots, e^{\frac{2 \pi i k \beta_{l-1}}{n}}) = (1, 1, \ldots, 1) \] 
 implies that $k$ is a multiple of $n/ \gcd (n, \beta_1, \ldots, \beta_{l-1} )=\nu_{l-1}$, we conclude that
 \begin{equation*}
 g_i(t) \in
 \bigcup_{\beta^{\prime}<\beta_{l}}\Delta^0_{\beta^{\prime}}
 \
 \mathrm{for}\
 1\leq i \leq \nu_{l-1} -1.
 \end{equation*} 
 The result is, thus, a direct consequence of Lemma \ref{lem:prod}, setting $a=m$ and
 $b=(\nu_{l-1}-1)m$, and the multinomial formula.
\end{proof}  
\begin{rem}
Remarks \ref{exa:leading} and  \ref{exa2:leading} can be obtained as an application of Corollary \ref{cor:root} where 
$f_{2}^{2} \in {\Delta}_{\beta_2,\leq}^{1} ={\Delta}_{10,\leq}^{1} $.
\end{rem} 
Corollary \ref{cor:root} translates to $1$-forms which are the product of a power of an approximate root of $\Gamma$ and a $1$-form with lesser leading variable:
\begin{cor}
\label{cor:f_dominates_omega}
Let $1 \leq l \leq g$, $m \geq 1$ and $\Omega \in \Delta_{\beta}$ with
$\beta < \beta_{l}$. Then
$f_{l}^{m} \Omega \in \tilde{\Delta}_{\beta_{l},\leq}^{m-1} $ and the $c$-sequence of
$f_{l}^{m} \Omega$ is equal to $1, m, m, \hdots$.
\end{cor}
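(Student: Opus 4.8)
The plan is to reduce the statement to Lemma \ref{lem:prod}, exactly as was done for $f_l^m$ alone in Corollary \ref{cor:root}, treating the extra factor $\Omega$ as one more power series with leading variable strictly below $\beta_l$. First I would pass from the $1$-form $f_l^m \Omega$ to its associated power series: writing $t \Gamma^{*} \Omega_{\Gamma} = g_{\Omega}(t)\, dt$ with $g_{\Omega} \in \Delta_{\beta}$ by hypothesis, the multiplicativity of the pullback gives $t \Gamma^{*} (f_l^m \Omega)_{\Gamma} = (f_l \circ \Gamma)^m\, g_{\Omega}(t)\, dt$, so that the power series attached to $f_l^m \Omega$ is $(f_l \circ \Gamma)^m\, g_{\Omega}(t)$.

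Next I would expand $f_l \circ \Gamma$ into the product $g_0(t) g_1(t) \cdots g_{\nu_{l-1}-1}(t)$ appearing in the proof of Corollary \ref{cor:root}, where $g_0(t) = \sum_{\beta \geq \beta_l} a_{\beta} t^{\beta} \in \tilde{\Delta}^0_{\beta_l,\leq}$ has constant $c$-sequence equal to $1$, and $g_i \in \cup_{\beta' < \beta_l} \Delta^0_{\beta'}$ for $1 \leq i \leq \nu_{l-1}-1$. Raising to the $m$-th power and multiplying by $g_{\Omega}$, the power series of $f_l^m \Omega$ becomes a product of $\nu_{l-1} m + 1$ factors: the $m$ copies of $g_0$, which lie in $\tilde{\Delta}^0_{\beta_l,\leq}$, together with the $(\nu_{l-1}-1)m$ copies of the $g_i$ and the single factor $g_{\Omega}$, all of which lie in $\cup_{\beta' < \beta_l} \Delta_{\beta'}$ (indeed $g_{\Omega} \in \Delta_{\beta}$ with $\beta < \beta_l$, and the $g_i$ have degree $0$ and leading variable below $\beta_l$).

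Finally I would apply Lemma \ref{lem:prod} with $a = m$ and $b = \nu_{l-1} m + 1$. It yields at once that the product belongs to $\tilde{\Delta}^{a-1}_{\beta_l,\leq} = \tilde{\Delta}^{m-1}_{\beta_l,\leq}$, and that its $c$-sequence is obtained by summing the $c$-sequences of the $m$ copies of $g_0$ and then replacing the first term by $1$. Since each copy of $g_0$ has constant $c$-sequence $1$, the sum is $m, m, m, \ldots$, and after the correction at the first term we obtain $1, m, m, \ldots$, as claimed. The argument for $f \in \mathbb{C}[a_{\beta}]_{\beta \in \mathcal{E}(\mathcal{C})}[[x,y]]$ in place of a $1$-form is identical once one replaces $g_{\Omega}$ by $f \circ \Gamma$.

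There is no genuine obstacle here: the corollary is a near-immediate specialization of Lemma \ref{lem:prod}, the only point requiring a moment's care being the bookkeeping that sends the $1$-form $f_l^m \Omega$ to the correct product of power series, and the verification that the single new factor $g_{\Omega}$ falls into the class of factors with leading variable strictly below $\beta_l$; both are immediate consequences of the assumption $\beta < \beta_l$ together with the pullback identity above.
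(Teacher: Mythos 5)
Your proof is correct and is essentially the paper's own argument: the paper states Corollary \ref{cor:f_dominates_omega} as an immediate translation of Corollary \ref{cor:root}, i.e.\ precisely by factoring $(f_l\circ\Gamma)^m g_\Omega$ into $m$ copies of $g_0\in\tilde{\Delta}^0_{\beta_l,\leq}$ plus factors with lower leading variable (including $g_\Omega$) and invoking Lemma \ref{lem:prod}. Your bookkeeping of the pullback, the count $a=m$, $b=\nu_{l-1}m+1$, and the resulting $c$-sequence $1,m,m,\hdots$ all match the intended derivation.
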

Finally, for products of powers of approximate roots and $1$-forms with the same leading variable with degree $0$, we can also control the outcome, and the $c$-sequence.
The next result is a consequence of Corollary \ref{cor:root}, Lemma \ref{lem:prod} and Proposition  \ref{pro:q0}.
\begin{cor}
\label{cor:f_omega}
Let $1 \leq l \leq g$, $m \geq 0$ and
$\Omega \in {\Delta}_{\beta_{l},\leq}^{0}$. Then
$f_{l}^{m} \Omega \in \tilde{\Delta}_{\beta_{l},\leq}^{m}$ and the $c$-sequence
$(c_{{\mathfrak n}^{r} (\beta_l)}')_{r \geq 0}$ of $f_{l}^{m} \Omega$ has $c_{\beta_l}'=1$ and
$c_{{\mathfrak n}^{r} (\beta_l)}' = c_{{\mathfrak n}^{r} (\beta_l)} + m$ for $r \geq 1$ where
$(c_{{\mathfrak n}^{r} (\beta_l)})_{r \geq 0}$ is the $c$-sequence of $\Omega$.
\end{cor}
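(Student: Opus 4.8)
The plan is to reduce the whole statement to a single application of Lemma \ref{lem:prod}, after first sharpening the hypothesis by means of Proposition \ref{pro:q0}. Since $1 \leq l \leq g$, the exponent $\beta_l$ is a genuine Puiseux characteristic exponent, so $e_l < e_{l-1}$ and hence $n_l = e_{l-1}/e_l \geq 2$; in particular $0 \leq n_l - 2$, which is exactly the inequality required to invoke Proposition \ref{pro:q0} in degree $0$. Applying it upgrades the hypothesis $\Omega \in \Delta_{\beta_l,\leq}^0$ to the sharper $\Omega \in \tilde{\Delta}_{\beta_l,\leq}^0$ (i.e. $Q_{\beta_l}\equiv 0$). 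I would carry out this step before anything else, since Lemma \ref{lem:prod} demands that its degree-$0$ factors lie in $\tilde{\Delta}$, not merely in $\Delta$.

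The case $m = 0$ is then immediate: $f_l^0 \Omega = \Omega \in \tilde{\Delta}_{\beta_l,\leq}^0$, and the asserted relation $c'_{{\mathfrak n}^r(\beta_l)} = c_{{\mathfrak n}^r(\beta_l)} + 0$ holds trivially (recall $c_{\beta_l}=1$ by Definition \ref{def:max}). For $m \geq 1$ I would pass to power series through the pullback: writing $t\,\Gamma^{*}\Omega_\Gamma = g(t)\,dt$, the $1$-form $f_l^m \Omega$ corresponds to the product $(f_l^m \circ \Gamma)\cdot g(t)$, because $f_l^m$ is a function and thus $\Gamma^{*}(f_l^m \Omega) = (f_l^m \circ \Gamma)\,\Gamma^{*}\Omega$. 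Here I would reuse the factorization already exhibited in the proof of Corollary \ref{cor:root}: from \eqref{equ:pol_root} one has $\Gamma^{*} f_l = g_0(t)\, g_1(t)\cdots g_{\nu_{l-1}-1}(t)$, with $g_0 \in \tilde{\Delta}_{\beta_l,\leq}^0$ (constant $c$-sequence $\equiv 1$) and $g_i \in \cup_{\beta'<\beta_l}\Delta^0_{\beta'}$ for $1\leq i \leq \nu_{l-1}-1$.

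Raising to the $m$-th power and multiplying by $g(t)$ then displays $f_l^m \Omega$ as a product of exactly $a = m+1$ factors in $\tilde{\Delta}_{\beta_l,\leq}^0$ — the $m$ copies of $g_0$ together with $g$ — and $m(\nu_{l-1}-1)$ further factors with leading variable strictly below $\beta_l$. Lemma \ref{lem:prod} (with these values of $a$ and $b = (m+1) + m(\nu_{l-1}-1)$) yields $f_l^m \Omega \in \tilde{\Delta}_{\beta_l,\leq}^{a-1} = \tilde{\Delta}_{\beta_l,\leq}^m$, and its ``moreover'' clause computes the $c$-sequence: for $r \geq 1$ it is the sum over the $a=m+1$ distinguished factors, so $c'_{{\mathfrak n}^r(\beta_l)} = \underbrace{1+\cdots+1}_{m} + c_{{\mathfrak n}^r(\beta_l)} = c_{{\mathfrak n}^r(\beta_l)} + m$ (the $m$ ones coming from the copies of $g_0$), while $c'_{\beta_l}=1$.

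I expect the only genuine subtleties to be bookkeeping: certifying that $\Omega$, a priori only in $\Delta^0$, actually lies in $\tilde{\Delta}^0$ so that it is an admissible degree-$0$ factor of the product, and tracking that the number of degree-$0$ factors at $\beta_l$ is $m+1$ rather than $m$, so that the output degree is $m$ and not $m-1$. Everything else is a direct transcription of Lemma \ref{lem:prod} back from the power-series product to the $1$-form via the pullback identity above.
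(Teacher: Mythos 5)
Your proof is correct and is essentially the paper's own argument: the paper gives no details, stating only that the corollary is a consequence of Corollary \ref{cor:root}, Lemma \ref{lem:prod} and Proposition \ref{pro:q0}, and these are exactly the three ingredients you use in the intended way (pull back to power series, factor $\Gamma^{*}f_l = g_0 g_1 \cdots g_{\nu_{l-1}-1}$ as in the proof of Corollary \ref{cor:root}, and apply Lemma \ref{lem:prod} with $a=m+1$ degree-zero factors at $\beta_l$). The two subtleties you flag are also handled correctly: Proposition \ref{pro:q0} applies in degree $0$ because $n_l = e_{l-1}/e_l \geq 2$ for every Puiseux characteristic exponent, and counting $\Omega$ itself among the degree-zero factors is what makes the output degree $m$ and the $c$-sequence shift equal to $m$.
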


All these results will be relevant for what will be \emph{stage $0$} of
each level $l+1$, in which we shall make use of both the $1$-forms
constructed in levels up to $l$, and the approximate root $f_{l+1}$.



\subsection{Terminal families}
\label{sec:terminal}
As described in  subsection \ref{subsec:basic_step}, given $0 \leq l <g$, the initial family 
$\mathcal{G}_{l+1}^{0}$ of level $l+1$ is built from the terminal family $\mathcal{T}_{l}$
of level $l$ and the approximate root $f_{l+1}$. In this section we properly define  terminal families, 
study their properties and then prove that our construction provides a terminal family 
$\mathcal{T}_{l+1}$ of level $l+1$.
\subsubsection{Definition and first properties of terminal families}
\begin{defi}
\label{def:level}
We say that $\mathcal{T}_{l}= (\Omega_1,\ldots \Omega_{2 {\nu}_{l}}) \subset {\Omega}_{\mathcal{C}}(\mathbb{C}^2,0)$ is a
\emph{terminal family of level $l$} if the following conditions hold:
\begin{enumerate}[label=(\alph*)]
\item \label{ta} $\mathcal{T}_{l}$ is a \nice{} family of level $l$; 
\item \label{tb} the last contact satisfies
 $\nu_{\mathcal{C}} (\Omega_{2 {\nu}_{l}}) \leq \overline{\beta}_{l+1}$ when
 $0 \leq l <g$;
\item \label{tc} ${\mathcal T}_{l}$ is contained in $\cup_{\beta' \in {\mathcal E}({\mathcal C})} \Delta_{\beta', <}$.
Moreover, in the case  $0\leq l<g$, we have:
\[
  \mathcal{T}_{l}  \subset \bigg(\bigcup_{\substack{\beta' \in {\mathcal E}({\mathcal C})\\ \beta' < \beta_{l+1}}} \Delta_{\beta', <}\bigg)\bigcup\Delta_{\beta_{l+1}, <}^{0}
\]
 \end{enumerate}
\end{defi}
Condition \ref{tb} indicates that the sequence of multiplicities ``does not go too far''
and indeed that all of them are less than or equal to the multiplicity of $d f_{l+1}$
which is a form of level $l$ since it belongs to $\hat{M}_{2 {\nu}_{l}}$.  Condition
\ref{tc} implies, on one hand, that the leading variables are not greater than
$\beta_{l+1}$ for $l <g$, and that if they are $\beta_{l+1}$, then they appear with degree
$0$; and on the other hand, that the $c$-sequences are required to be \emph{strictly
  increasing}.  The last property is inherent to the study the semi-module of contacts of
holomorphic $1$-forms; notice that Expression \eqref{equ:semigroup} describes the
semigroup of the curves in ${\mathcal C}$ but
$f_0^{k_0}f_1^{k_1}\cdots f_r^{k_r} \in {\Delta}_{\beta_{r},\leq}^{k_{r}-1} \setminus
{\Delta}_{\beta_{r},<}^{k_{r}-1} $ if $r \geq 1$ and $k_{r} >0$ and thus those products do
not have strictly increasing $c$-sequences. Condition \ref{tc} is used in the inductive
steps of our construction.

The family $\mathcal{T}_0= (dx,d f_1 )$ is terminal of level $0$ because
 $\Omega_1 \in \Delta_{n,<}$ and $\Omega_2\in \Delta_{\beta_1,<}^0$. This is our starting point.

 \begin{rem}
 \label{rem:propl}
 In a terminal family ${\mathcal T}_l$ of level $l < g$, the multiplicity
 $\nu_{\mathcal{C}} (\Omega)$ of any $1$-form $\Omega \in \mathcal{T}_l$
 belongs to $[\beta_{l+1}]_{e_l}$ or $[0]_{e_l}$ (because of their
 corresponding leading variables). This is a key property for our
 later arguments. More precisely, we have
\begin{itemize}
\item If $\Omega$ is of level $l-1$, then
 $\nu_{\mathcal{C}} (\Omega) \in (e_l)$; 
\item Let be $\Omega$ of level $l$. Then
 $\Omega \in \cup_{\beta < \beta_{l+1}} \Delta_{\beta, <} \Leftrightarrow
\nu_{ \mathcal{C}} (\Omega) \in (e_l)$;
\item Let be $\Omega$ of level $l$. Then
 $\Omega \in \Delta_{\beta_{l+1}, <}^{0} \Leftrightarrow \nu_{ \mathcal{C}} (\Omega) \in [\beta_{l+1}]_{e_l}$ .
\end{itemize} 
\end{rem}
Hence, a terminal family of level $l<g$ ``covers'' all the
congruences modulo $n$ in  
$[(e_l)]_{n} \cup [\beta_{l+1} + (e_l)]_{n}$ (cf. Definition \ref{def:modulo}). 
\begin{cor}
 \label{rem:all-modn-congruences} 
 We have  $[\nu_{\mathcal{C}} ( \mathcal{T}_l) ]_{n} = [(e_l)]_{n} \cup [\beta_{l+1} + (e_l)]_{n}$ for $l<g$. 
\end{cor}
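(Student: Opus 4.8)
The goal is to prove Corollary~\ref{rem:all-modn-congruences}: that for a terminal family $\mathcal{T}_l$ of level $l<g$, the set of classes modulo $n$ of the $\mathcal{C}$-orders of its $1$-forms equals $[(e_l)]_n \cup [\beta_{l+1}+(e_l)]_n$.

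\textbf{Overall approach.} The plan is to deduce this almost immediately from Remark~\ref{rem:propl}, which already classifies the leading variable (and hence the residue modulo $e_l$) of every $1$-form in a terminal family. The content of the corollary is really two inclusions together with a counting argument showing the covered classes fill up the two cosets completely. First I would establish the inclusion $[\nu_{\mathcal{C}}(\mathcal{T}_l)]_n \subseteq [(e_l)]_n \cup [\beta_{l+1}+(e_l)]_n$: by Remark~\ref{rem:propl}, each $\Omega\in\mathcal{T}_l$ has $\nu_{\mathcal{C}}(\Omega)\in[\beta_{l+1}]_{e_l}$ or $[0]_{e_l}$, so its class modulo $n$ lands in one of the two displayed sets. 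This direction is purely bookkeeping.

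\textbf{The reverse inclusion / counting.} The substance is the opposite inclusion, and I would argue it by cardinality. Since $\mathcal{T}_l$ is \nice{} (Definition~\ref{def:level}\ref{ta}, invoking condition~\ref{cond4} of Definition~\ref{def:nice}), the $2\nu_l$ values $\nu_{\mathcal{C}}(\Omega_1),\ldots,\nu_{\mathcal{C}}(\Omega_{2\nu_l})$ give $\min(2\nu_l,n)$ pairwise distinct classes modulo $n$; for $l<g$ one has $2\nu_l\le n$ except possibly at the top, so I would track how many of these distinct classes fall in each coset. The key numerical fact is that $[(e_l)]_n$ and $[\beta_{l+1}+(e_l)]_n$ are two \emph{disjoint} cosets (disjointness holding because $\overline{\beta}_{l+1}$, equivalently $\beta_{l+1}$, is not a multiple of $e_l$ by the remark on approximate roots), each of size $e_l^{-1}\cdot\! n /1 = n/e_l = \nu_l$. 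Thus the target set has exactly $2\nu_l$ elements. Since $\mathcal{T}_l$ supplies $2\nu_l$ pairwise distinct classes modulo $n$, all contained in a set of size $2\nu_l$, the containment must be an equality. I would make the disjointness and the coset-size computation explicit using Definition~\ref{def:inv} ($\nu_l = n/e_l$).

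\textbf{Main obstacle.} The delicate point is confirming that the $2\nu_l$ classes really are all distinct modulo $n$ and not merely modulo $e_l$; the \nice{} condition guarantees distinctness modulo $n$ directly, so the only thing to verify is that $2\nu_l$ does not exceed $n$ in the regime $l<g$ (so that $\min(2\nu_l,n)=2\nu_l$ and no collapse of the count occurs). Since $\nu_l = n/e_l$ and $e_l = \gcd(n,\beta_1,\ldots,\beta_l)\ge e_g = \gcd(n,\beta_1,\ldots,\beta_g)$, for $l<g$ we have $e_l\ge 2$ whenever there remains a genuine characteristic exponent beyond level $l$, giving $2\nu_l = 2n/e_l \le n$; I would spell this bound out carefully, as it is the one place where the hypothesis $l<g$ is genuinely used. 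With that in hand, the equality of sets of equal finite cardinality with one contained in the other closes the argument.
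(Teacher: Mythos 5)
Your proof is correct and takes essentially the same route as the paper: the inclusion follows from Remark \ref{rem:propl} on leading variables, and equality is obtained by counting, since the target set is the union of two disjoint cosets of $\nu_l$ classes each while the \nice{} condition supplies $2\nu_l$ pairwise distinct classes modulo $n$. Your explicit check that $2\nu_l\leq n$ for $l<g$ (via $e_l\geq 2$) spells out a point the paper leaves implicit, but otherwise the two arguments coincide.
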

\begin{proof}
 This holds because, on one hand, there are $2 \nu_l$ congruences modulo
 $n$ in the set $(e_l) \cup \{ \beta_{l+1} + (e_l) \}$
 ($\nu_l$ from $(e_l)$ and other
 $\nu_l$ from $\beta_{l+1} + (e_l)$); and on the other hand, $[\nu_{\mathcal{C}} ( \mathcal{T}_l) ]_{n}$ 
 has $2 \nu_l$ elements by definition of terminal family. 
\end{proof}

\subsubsection{From level $l$ to level $l+1$}
\label{subsec:ll+1}
Suppose $\mathcal{T}_l= (\Omega_1, \hdots, \Omega_{2 {\nu}_{l}})$ is a terminal family of
level $l$ for $0 \leq l < g$. 
Our aim is to construct a terminal family of level $l+1$
from $\mathcal{T}_l$. This requires several stages. The first one (stage $0$) is
the construction of the initial family $\mathcal{G}_{l+1}^0$ of stage $0$ (Definition \ref{def:set-stage0}).
 \begin{lem}
 \label{lem:no3} 
   $\mathcal{G}_{l+1}^0$  is an \tidy{} family of level $l+1$. 
 \end{lem}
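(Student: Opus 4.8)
The plan is to verify directly the two defining conditions of an \tidy{} family of level $l+1$, namely \ref{cond1} and \ref{cond3} of Definition \ref{def:nice}, for the family
\[
\mathcal{G}_{l+1}^0 = (\Omega^0_k)_{1 \leq k \leq 2\nu_{l+1}}, \qquad \Omega^0_{2a\nu_l+j} = f_{l+1}^a\,\Omega_j,
\]
where $0 \leq a < n_{l+1}$, $1 \leq j \leq 2\nu_l$, and $\mathcal{T}_l = (\Omega_1,\ldots,\Omega_{2\nu_l})$ is the given terminal family of level $l$. Since $a$ ranges over $n_{l+1}$ values and $j$ over $2\nu_l$ values, the index $k = 2a\nu_l + j$ runs bijectively over $\{1,\ldots,2\nu_{l+1}\}$, so $\mathcal{G}^0_{l+1}$ has the required $r = 2\nu_{l+1}$ elements, all lying in $\Omega_{\mathcal{C}}(\mathbb{C}^2,0)$ as noted after \eqref{equ:pol_root}.

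For condition \ref{cond1} I would use that $f_{l+1} = \prod_{k=0}^{\nu_l-1}(y-\cdots)$ is monic of degree $\nu_l$ in $y$, so $f_{l+1}^a$ is a polynomial in $y$ of degree $a\nu_l$ with leading coefficient $1$. Because $\mathcal{T}_l$ is \nice{}, hence \tidy{}, each $\Omega_j$ satisfies $\Omega_j \in \hat{M}_j$; applying Lemma \ref{lem:properties-Wj} with $P = f_{l+1}^a$ (so $k = a\nu_l$ and $\alpha_{a\nu_l} = 1 \neq 0$) then gives
\[
\Omega^0_{2a\nu_l+j} = f_{l+1}^a\,\Omega_j \in f_{l+1}^a\,\hat{M}_j \subset \hat{M}_{j+2a\nu_l} = \hat{M}_{2a\nu_l+j},
\]
which is exactly condition \ref{cond1}. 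In particular each $\Omega^0_k \in \hat{M}_k \subset M_{2\nu_{l+1}}$, so the forms are of level $l+1$.

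For condition \ref{cond3} the key input is the additivity formula of Remark \ref{rem:det_ord_in_fam}, namely $\nu_{\mathcal{C}}(\Omega^0_{2a\nu_l+j}) = a\overline{\beta}_{l+1} + \nu_{\mathcal{C}}(\Omega_j)$, and it suffices to check that consecutive orders do not decrease. Two consecutive forms either share the same block (same $a$, consecutive $j$) or straddle a block boundary. In the first case the desired inequality reduces, after cancelling $a\overline{\beta}_{l+1}$, to $\nu_{\mathcal{C}}(\Omega_j) \leq \nu_{\mathcal{C}}(\Omega_{j+1})$, which holds since $\mathcal{T}_l$ is \tidy{}. At a block boundary, passing from $(a,2\nu_l)$ to $(a+1,1)$, I must show
\[
a\overline{\beta}_{l+1} + \nu_{\mathcal{C}}(\Omega_{2\nu_l}) \leq (a+1)\overline{\beta}_{l+1} + \nu_{\mathcal{C}}(\Omega_1),
\]
that is, $\nu_{\mathcal{C}}(\Omega_{2\nu_l}) \leq \overline{\beta}_{l+1} + \nu_{\mathcal{C}}(\Omega_1)$. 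This is precisely where the terminal hypothesis \ref{tb}, $\nu_{\mathcal{C}}(\Omega_{2\nu_l}) \leq \overline{\beta}_{l+1}$, enters; together with $\nu_{\mathcal{C}}(\Omega_1) > 0$ (indeed $\Omega_1 \in \hat{M}_1$ is a nonzero multiple of $dx$, so $\nu_{\mathcal{C}}(\Omega_1) = n$) it yields the claim.

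I expect the block-boundary estimate to be the only substantive point, and it is exactly what propagates hypothesis \ref{tb} through the construction: everything else is the bijective bookkeeping of the indices, the stability property of Lemma \ref{lem:properties-Wj}, and the additivity of $\nu_{\mathcal{C}}$ under multiplication by $f_{l+1}^a$.
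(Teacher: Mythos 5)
Your proposal is correct and follows essentially the same route as the paper: condition \ref{cond1} via the module shift $f_{l+1}^a\,\hat{M}_j \subset \hat{M}_{j+2a\nu_l}$ (which the paper compresses into ``holds by construction''), and condition \ref{cond3} via the additivity $\nu_{\mathcal{C}}(f_{l+1}^a\Omega_j)=a\overline{\beta}_{l+1}+\nu_{\mathcal{C}}(\Omega_j)$, the within-block ordering inherited from $\mathcal{T}_l$, and the block-boundary estimate combining terminal property \ref{tb} with $\nu_{\mathcal{C}}(\Omega_1)=n$. The paper writes the boundary check only for the first block, but its computation is exactly your general one after cancelling $a\overline{\beta}_{l+1}$, so there is no substantive difference.
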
  
 \begin{proof}
 Property \ref{cond1} holds by construction.
 
  Notice that $\nu_{\mathcal{C}} (f_{l+1}) = \overline{\beta}_{l+1}$,
 $\nu (f_{l+1}) = \nu_l$ and
 $n \leq \nu_{\mathcal{C}} (\Omega_j) \leq \overline{\beta}_{l+1}$ for
 $1 \leq j \leq 2 \nu_l$ by the properties of a terminal family of level $l$. Since the
 sequence $(\nu_{\mathcal{C}} (\Omega_{j}))_{1 \leq j \leq 2 \nu_{l}}$ satisfies property
 \ref{cond3} and
 \[
 \nu_{\mathcal{C}} (\Omega_{2 \nu_{l} +1}^{0}) - \nu_{\mathcal{C}} (\Omega_{2 \nu_{l}}^{0})
 \geq (\overline{\beta}_{l+1} +n) - \overline{\beta}_{l+1} = n,
 \]
 property \ref{cond3} 
 holds for $(\nu_{\mathcal{C}} (\Omega_{j}^{0}))_{1 \leq j \leq 2 \nu_{l+1}}$. 
 Thus $\mathcal{G}_{l+1}^0$ is \tidy{}. 
 \end{proof}
 \begin{rem}
 \label{rem:cong_conflict}
  Denote ${\mathfrak S} = \{ (a,j) : 0 \leq a < n_{l+1} \ \mathrm{and} \ 1 \leq j \leq 2 {\nu}_{l} \}$.
  Since
 \[
 \nu_{\mathcal{C}} (f_{l+1}^{a} \Omega_j) = a \overline{\beta}_{l+1} + \nu_{\mathcal{C}} (\Omega_j) \in (e_{l+1})
 \]
 for any $(a,j) \in {\mathfrak S}$,  the set 
 $ [\nu_{ \mathcal{C}} (\mathcal{G}_{l+1}^0)]_{n}$ contains at
 most $\nu_{l+1} = n/e_{l+1}$ elements. Notice also that, since
 $m \overline{\beta}_{l+1} \not \in (e_l)$ for any $1 \leq m < n_{l+1}$, we have
 \[
 \nu_{\mathcal{C}} (f_{l+1}^{a} \Omega_j) - \nu_{\mathcal{C}} (f_{l+1}^{b} \Omega_k) \not \in (n)
 \]
 if $(a,j), (b,k) \in {\mathfrak S}$, $(a,j) \neq (b,k)$ and $\nu_{\mathcal{C}} (\Omega_j)$
 and $\nu_{\mathcal{C}} (\Omega_k)$ belong both to $(e_l)$ or both to $[\beta_{l+1}]_{e_l}$. In
 particular any element of $[(e_{l+1})]_{n}$
 has at most two representatives in
 $(\nu_{\mathcal{C}} ({\Omega}_{j}^{0}))_{j=1}^{2 \nu_{l+1}}$. 
 Hence, $(\nu_{\mathcal{C}} (\Omega_{j}^{0}))_{j=1}^{2 \nu_{l+1}}$ defines
 exactly the $\nu_{l+1}$ classes modulo $n$ in $[(e_{l+1})]_{n}$ and each class has two representatives in it. 
\end{rem}

This property will allow us to choose, from each class modulo $n$, the
$1$-form whose representative appears later in the sequence and substitute
it for another one using the key formula
\eqref{eq:main-substitution}, in order to obtain the $2 {\nu}_{l+1}$
new forms of the next stage $1$.
There are two cases, namely $l+1 <g$ and $l+1=g$.
In the former case, this property of
having two representatives in each class modulo $n$ is stable up to a final
stage in which there is only one representative in each class modulo $n$;
the forms at this stage will be the terminal ones. In the case $l+1=g$, we will see that at some stage  the family provides a nice basis.

We construct families $\mathcal{G}_{l+1}^{0}, \mathcal{G}_{l+1}^{1}, \ldots$ 
(see subsection \ref{subsec:basic_step}) until we obtain a terminal family 
$\mathcal{G}_{l+1}^{r}$.   
The following definition summarizes all the properties we
shall make use of when passing from stage $s$ to stage $s+1$, and they will guarantee that
we can find a terminal family at some stage.
Assume a family $\mathcal{G}_{l+1}^s=(\Omega_1^{s}, \ldots, \Omega_{2 {\nu}_{l+1}}^s)$ of level $l+1$ at stage $s$ is
given, with $0\leq l < g$ and $s\geq 1$.  
Recall that $\mathcal{T}_l=(\Omega_1,\ldots, \Omega_{2 {\nu}_l})$ is a terminal family of level $l$. 
\begin{defi}
\label{def:good}
The family $\mathcal{G}_{l+1}^s$ 
 is a \emph{good family} of stage $s \geq 1$ 
(in level $l+1$) if the following properties hold: 
\begin{enumerate}[label=(\Roman*)]
 \item \label{good1} The first $2 {\nu}_{l}$ elements come from the previous terminal family ${\mathcal T}_l = (\Omega_1, \ldots, \Omega_{2 {\nu}_l})$: 
 $\Omega_{j}^{s} = \Omega_j$ for any $1 \leq j \leq 2 {\nu}_{l}$.
 \item \label{good2} $(\Omega_{1}^{s}, \hdots, \Omega_{2 {\nu}_{l+1}}^{s})$ is an \tidy{} family of level $l+1$.  
 \item \label{good6} There is a partition of $\{1, 2, \hdots, 2 \nu_{l+1} \}$ in two sets 
 ${\mathcal L}_s$ and ${\mathcal U}_s$ of the same cardinal
 such that $\Omega_{j}^{s} \in  \cup_{\beta' < {\mathfrak n}^{s}(\beta_{l+1})} \Delta_{\beta^{\prime},<}$ 
 if $j \in {\mathcal L}_s$
 and  $\Omega_{j}^{s} \in   \Delta_{{\mathfrak n}^{s}(\beta_{l+1}),<}^{0}$ if $j \in {\mathcal U}_s$. 
 \item \label{good7} We have
 \begin{equation*} \label{eq1}
 \begin{split}
 \{ [\nu_{\mathcal C}(\Omega_{j}^{s})]_{n} : j \in {\mathcal L}_s \} = [(e_{l+1})]_{n}  & \ \ \mathrm{and} \\
 \{  [\nu_{\mathcal C}(\Omega_{j}^{s})-{\mathfrak n}^{s}(\beta_{l+1})]_{n} : j \in {\mathcal U}_s \} = [(e_{l+1})]_{n} .&  
 \end{split}
 \end{equation*}
 In particular $[\nu_{\mathcal C}(\Omega_{j}^{s})]_{n} \neq [\nu_{\mathcal C}(\Omega_{k}^{s})]_{n}$ if $j \neq k$
 and both $j, k$ belong to ${\mathcal L}_s$  or both belong to ${\mathcal U}_s$.
 \end{enumerate} 
\end{defi} 

\subsubsection{From stage $s$ to $s+1$ inside level $l+1$}
\label{subsec:stos+1}
Assume, from now on, that the families
$\mathcal{G}^{0}_{l+1}, \ldots, \mathcal{G}^{s}_{l+1}$ of level $l+1$ have been
constructed for $s\geq 0$,  $\mathcal{G}^{j}_{l+1}$ is good for $1 \leq j \leq s$  and that 
\begin{equation}
\label{cond:good_tech}
\{ \beta_{l+1} = {\mathfrak n}^{0}(\beta_{l+1}), {\mathfrak n}^{1} (\beta_{l+1}), \ldots, {\mathfrak n}^{s} (\beta_{l+1}) \} \subset (e_{l+1}) .
\end{equation}
Now, we explain how to construct a good family ${\mathcal G}_{l+1}^{s+1}$.
If  ${\mathcal G}_{l+1}^{s}$ is non-terminal, ${\mathcal G}_{l+1}^{s+1}$ is closer than ${\mathcal G}_{l+1}^{s}$  to be terminal.
Later on, we will see that one of the good families constructed is indeed terminal. 
\begin{pro}
\label{pro:well_def_fam}
$\mathcal{G}^{s+1}_{l+1}$ is well-defined.
\end{pro}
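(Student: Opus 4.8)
The plan is to unwind Definition \ref{def:next} and check that every datum entering the substitution \eqref{eq:main-substitution} is unambiguously determined and yields a genuine element of $\Omega_{\mathcal{C}}(\mathbb{C}^2,0)$. Concretely, three things must be verified: that the conflicting indices organize into disjoint pairs, so that ``the form to be replaced'' makes sense; that the exponent $d$ appearing in $x^{d}$ is non-negative, so that $x^{d}$ is an honest polynomial; and that the coefficients $C_{j,\eta_j}, C_{k,\eta_k}$ are nonzero elements of $\mathbb{C}[a_{\beta}]_{\beta \in \mathcal{E}(\mathcal{C})}$. Once these are in place, closure of $\Omega_{\mathcal{C}}(\mathbb{C}^2,0)$ under the relevant operations finishes the argument, and the whole step reduces to bookkeeping already established for good families.

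First I would record that $\mathcal{G}^{s}_{l+1}$ is ordered: for $s\geq 1$ this is property \ref{good2} of a good family, and for $s=0$ it is Lemma \ref{lem:no3}. Hence $\nu_{\mathcal{C}}(\Omega^{s}_{j}) \leq \nu_{\mathcal{C}}(\Omega^{s}_{k})$ whenever $j<k$, so for any conflicting pair, i.e. $[\nu_{\mathcal{C}}(\Omega^{s}_{j})]_{n} = [\nu_{\mathcal{C}}(\Omega^{s}_{k})]_{n}$ with $j<k$, the integer $d$ defined by $\nu_{\mathcal{C}}(\Omega^{s}_{k}) = \nu_{\mathcal{C}}(\Omega^{s}_{j}) + dn$ satisfies $dn \geq 0$, whence $d \in \mathbb{Z}_{\geq 0}$. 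This settles the point explicitly flagged in Definition \ref{def:next}, and shows $x^{d}$ is a polynomial.

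Next I would establish that the pairing is disjoint. By Remark \ref{rem:cong_conflict} for $s=0$, and by property \ref{good7} (under the standing hypothesis \eqref{cond:good_tech}) for $s\geq 1$, each residue class modulo $n$ attained by the family has \emph{at most two} representatives among $\Omega^{s}_{1}, \ldots, \Omega^{s}_{2\nu_{l+1}}$. Therefore the conflicting indices split into disjoint pairs $\{p_c < q_c\}$, one per conflicting class $c$; in each pair the smaller index $p_c$ plays the role of $j$ and is kept, while the larger index $q_c$ plays the role of $k$ and is replaced. Since each index belongs to a single class, no form is simultaneously a keeper and a replaced form, and each keeper is used exactly once. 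Thus the prescription ``replace $\Omega^{s}_{q_c}$ via \eqref{eq:main-substitution} and leave every other form unchanged'' determines $\mathcal{G}^{s+1}_{l+1}$ without ambiguity.

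Finally, $C_{j,\eta_j}$ and $C_{k,\eta_k}$ are by definition the nonzero leading coefficients of the pullbacks $\Gamma^{\ast}\Omega^{s}_{j,\Gamma}$ and $\Gamma^{\ast}\Omega^{s}_{k,\Gamma}$ (these are nonzero precisely because the corresponding $\mathcal{C}$-orders are finite), hence nonzero, generically non-vanishing elements of $\mathbb{C}[a_{\beta}]_{\beta \in \mathcal{E}(\mathcal{C})}$. Since $\Omega_{\mathcal{C}}(\mathbb{C}^2,0)$ is closed under multiplication by elements of $\mathbb{C}[a_{\beta}]_{\beta \in \mathcal{E}(\mathcal{C})}$, under multiplication by $x^{d}$ with $d\geq 0$, and under addition (Definition \ref{def:form_space}), the form $\Omega^{s+1}_{k} = C_{j,\eta_j}\Omega^{s}_{k} - C_{k,\eta_k} x^{d}\Omega^{s}_{j}$ again lies in $\Omega_{\mathcal{C}}(\mathbb{C}^2,0)$; together with the unchanged forms this produces $2\nu_{l+1}$ well-defined elements of $\Omega_{\mathcal{C}}(\mathbb{C}^2,0)$. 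The only genuinely substantive point is the non-negativity of $d$, but as observed this is immediate once the ordering of $\mathcal{G}^{s}_{l+1}$ is available; the remaining verifications are purely structural and rest on the good-family bookkeeping already in hand.
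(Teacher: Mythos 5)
Your proof is correct and takes essentially the same route as the paper's: both deduce unambiguity of the pairing from the two-representatives-per-class count (Remark \ref{rem:cong_conflict} for $s=0$; Property \ref{good7} together with hypothesis \eqref{cond:good_tech} for $s\geq 1$), and both deduce $d\geq 0$ from the ordering of the family (Lemma \ref{lem:no3} and Property \ref{good2}). Your additional check that ${\Omega}_{\mathcal C}(\mathbb{C}^2,0)$ is closed under the operations in \eqref{eq:main-substitution} is routine bookkeeping that the paper leaves implicit.
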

\begin{proof}
Consider $1 \leq k \leq 2 {\nu}_{l+1}$. There exists a unique $1 \leq j \leq 2 {\nu}_{l+1}$, with $j \neq k$, such that 
$\nu_{\mathcal{C}} (\Omega_{k}^{s}) - \nu_{\mathcal{C}} (\Omega_{j}^{s}) = dn$ for some
$d \in {\mathbb Z}$. This holds by Remark \ref{rem:cong_conflict} 
for the case $s=0$, and by Property \ref{good7} for $s \geq 1$.
Notice that ${\mathcal G}_{l+1}^{s}$ is \tidy{} by  Lemma \ref{lem:no3} and Property \ref{good2}.
Hence, we get $d \leq 0$ if $j>k$ and $d \geq 0$ if $j<k$.
Therefore, $\Omega_{k}^{s+1} = \Omega_{k}^{s}$ holds in the former case whereas
$\Omega_{k}^{s+1}$ is given by substitution \eqref{eq:main-substitution} in the latter case. 
We deduce that $\Omega_{k}^{s+1}$ is a well-defined element of ${\Omega}_{\mathcal C}(\mathbb{C}^2,0)$.
\end{proof}
\begin{rem}
\label{rem:inv_modulus} 
The equality of $\mathbb{C}[[x]]$-modules
\[ \sum_{j=1}^{2 {\nu}_{l+1}} {\mathbb C}[[x]] \Omega_{j,\Gamma}^{s} = 
\sum_{j=1}^{2 {\nu}_{l+1}} {\mathbb C}[[x]] \Omega_{j,\Gamma}^{s+1} =
\sum_{j=1}^{2 {\nu}_{l+1}} \mathbb{C}[[x]] \overline{\Omega}_j \]
holds for generic $\Gamma \in \mathcal{C}$ (cf. Definition \ref{def:form_space}), by construction. 
\end{rem} 
The next remark fixes the notation for the next results.
\begin{rem}
\label{rem:next} 
Consider $1 \leq j < k \leq 2 {\nu}_{l+1}$ with 
$\nu_{\mathcal{C}} (\Omega_{k}^{s}) - \nu_{\mathcal{C}} (\Omega_{j}^{s}) = dn$ for some
$d \in {\mathbb Z}_{\geq 0}$.
Let $\beta'$ and $\beta''$ be such that $\Omega_{j}^{s} \in \Delta_{\beta', \leq}$,
$\Omega_{k}^{s} \in \Delta_{\beta'',\leq}$, and write
\begin{equation}
\label{equ:formulas}
 \begin{split}
 t \Gamma^{*} (\Omega_{j, \Gamma}^{s}) &= \left( C_{\beta', j}(\Gamma) t^{\nu_{\mathcal{C}} (\Omega_{j}^{s})} + 
 \sum_{r \geq 1} C_{{\mathfrak n}^{r}(\beta'), j}(\Gamma) t^{\nu_{\mathcal{C}} (\Omega_{j}^{s}) +{\mathfrak n}^{r}(\beta') - \beta'} \right) dt
 \\
 t \Gamma^{*} ( \Omega_{k, \Gamma}^{s}) &= \left( C_{\beta'', k}(\Gamma) t^{\nu_{\mathcal{C}} (\Omega_{k}^{s})} + 
  \sum_{r \geq 1} C_{{\mathfrak n}^{r}(\beta''), k}(\Gamma) t^{\nu_{\mathcal{C}} (\Omega_{k}^{s}) +{\mathfrak n}^{r}(\beta'') - \beta''} \right) 
  dt, 
\end{split}
\end{equation}
where $C_{\beta', j}, C_{\beta'', k} \in \mathbb{C}{[a_{\beta}]}_{\beta \in \mathcal{E}(\mathcal{C}) } \setminus \{0\}$.
We have
\begin{equation}\label{eq:main-substitution-s+1}
 \Omega_{k}^{s+1} = C_{\beta', j} \Omega_{k}^{s} - C_{\beta'', k} x^{d} \Omega_{j}^{s} .
\end{equation}
\end{rem}

The following elementary results will be used when studying the properties of stage $s+1$
given stage $s$. Consider a family $\mathcal{G}_{l+1}^s$, either the initial family of level $l+1$ if $s=0$
or a good family if $s \geq 1$.
\begin{lem} 
 \label{lem:dom}
 Assume that $[\nu_{\mathcal{C}} (\Omega^{s}_j)]_{n}=[\nu_{\mathcal{C}} (\Omega^{s}_{k})]_{n}$ for $j \neq k$, 
 and that  
 $\Omega^{s}_k \in \Delta_{{\mathfrak n}^{s}(\beta_{l+1}),<}$ and 
 $\Omega^{s}_j \in \cup_{\beta' < {\mathfrak n}^{s}(\beta_{l+1})} \Delta_{\beta',<}$. Let
 $(c_{{\mathfrak n}^{r}(\beta_{l+1})})_{r \geq 0}$ be the $c$-sequence of $\Omega^{s}_k$. Then: 
\[
 \Omega_{\max (j,k)}^{s+1} \in \Delta_{{\mathfrak n}^{s+1}(\beta_{l+1}),<}^{0},\,\,\,
 \Omega_{\min (j,k)}^{s+1}=\Omega_{\min (j,k)}^s
\] 
and the $c$-sequence of $\Omega_{\max(j,k)}^{s+1}$ is equal to 
\[
 ((c_{{\mathfrak n}^{s+1}(\beta_{l+1})})^{-1} (c_{{\mathfrak n}^{s+r}(\beta_{l+1})}))_{ r \geq 1} .
\] 
  In particular,
\[ 
 \nu_{\mathcal{C}} (\Omega_{\max (j,k)}^{s+1}) - \nu_{\mathcal{C}} (\Omega_{\max (j,k)}^{s}) = 
 {\mathfrak n}^{s+1}(\beta_{l+1}) - {\mathfrak n}^{s}(\beta_{l+1}) 
\]
holds. 
\end{lem}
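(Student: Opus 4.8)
The plan is to argue directly from the substitution formula \eqref{eq:main-substitution-s+1} of Remark \ref{rem:next}, tracking order by order in $t$ which of the two $1$-forms controls the leading variable of the outcome. Write $\beta'' = \mathfrak{n}^{s}(\beta_{l+1})$ for the leading variable of $\Omega_k^{s}$ and $\beta' < \beta''$ for that of $\Omega_j^{s}$. The assertion $\Omega_{\min(j,k)}^{s+1} = \Omega_{\min(j,k)}^{s}$ is immediate from Definition \ref{def:next}, which only alters the $1$-form of larger index, the larger index carrying the larger $\mathcal{C}$-order by Proposition \ref{pro:well_def_fam}. The closing identity on $\nu_{\mathcal{C}}$ will then drop out of the remaining assertions together with the equality $\mathfrak{n}^{s+1}(\beta_{l+1}) - \mathfrak{n}^{s}(\beta_{l+1}) = e_{l+1}$, which holds because $\beta'' \in [\beta_{l+1}, \beta_{l+2})$ is a multiple of $e_{l+1}$ and, by the standing range condition \eqref{cond:good_tech}, its successor in $\mathcal{E}(\mathcal{C})$ is the next multiple $\beta'' + e_{l+1}$.

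First I would pull everything back by $\Gamma$, using $x \circ \Gamma = t^{n}$ so that multiplication by $x^{d}$ becomes multiplication by $t^{dn}$, together with $\nu_{\mathcal{C}}(\Omega_{\min}^{s}) + dn = \nu_{\mathcal{C}}(\Omega_{\max}^{s})$. Inserting the expansions \eqref{equ:formulas} into \eqref{eq:main-substitution-s+1} shows that the two lowest-order ($r=0$) contributions are equal elements of $\mathbb{C}[a_{\beta}]_{\beta\in\mathcal{E}(\mathcal{C})}$ and cancel, whence $\nu_{\mathcal{C}}(\Omega_{\max}^{s+1}) > \nu_{\mathcal{C}}(\Omega_{\max}^{s})$. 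What survives is a difference of two series: one coming from the tail of the $\beta''$-form $\Omega_k^{s}$, supported at the exponents $\nu_{\max} + (\mathfrak{n}^{r}(\beta'') - \beta'')$, and one from the tail of the $\beta'$-form $\Omega_j^{s}$, supported at $\nu_{\max} + (\mathfrak{n}^{r}(\beta') - \beta')$, where $\nu_{\max} = \nu_{\mathcal{C}}(\Omega_{\max}^{s})$; in either branch ($k>j$ or $j>k$) the $\beta''$-form is scaled by the nonzero leading coefficient $C_{\beta',j}$ of $\Omega_j^{s}$ and, possibly, by a monomial $t^{dn}$.

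The heart of the matter is a \emph{domination} statement, and I expect this to be the main obstacle. Its arithmetic core is that, throughout the range $[\beta',\beta_{l+2})$ in which the construction lives, every gap of $\mathcal{E}(\mathcal{C})$ is a multiple of $e_{l+1}$, the first gap after $\beta''$ being exactly $e_{l+1}$; hence the smallest surviving exponent is $\nu_{\max} + e_{l+1}$, produced by the $r=1$ term of the $\beta''$-form. More generally, whenever a term of the $\beta'$-form lands at the same order $\nu_{\max} + (\mathfrak{n}^{r}(\beta'') - \beta'')$ as the $r$-th term of the $\beta''$-form, its contribution carries the variable $a_{\mathfrak{n}^{r'}(\beta')}$ with $\mathfrak{n}^{r'}(\beta') = \beta' + (\mathfrak{n}^{r}(\beta'') - \beta'') < \mathfrak{n}^{r}(\beta'')$, multiplied by the factor $a_{\beta''}$ arising from $C_{\beta'',k}$, both of index strictly below $\mathfrak{n}^{r}(\beta'')$; whereas the $\beta''$-form contributes $C_{\mathfrak{n}^{r}(\beta''),k}$, of leading variable exactly $a_{\mathfrak{n}^{r}(\beta'')}$ by Definition \ref{def:max}. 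Consequently the coefficient of $t^{\nu_{\max} + (\mathfrak{n}^{r}(\beta'')-\beta'')}$ has leading variable $a_{\mathfrak{n}^{r}(\beta'')}$ for every $r\geq 1$, the lower-index terms being absorbed into the polynomial $Q$; the checks that these supports align and that no higher-index variable is injected by the $\beta'$-form are exactly the points where the multiple-of-$e_{l+1}$ property of the gaps is used.

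Finally I would read off the three remaining conclusions. Since $\mathfrak{n}^{s+1}(\beta_{l+1}) \in (\beta_{l+1},\beta_{l+2})$ is not a Puiseux characteristic exponent, Definition \ref{def:max} forces degree $0$, so $\Omega_{\max}^{s+1} \in \Delta^{0}_{\mathfrak{n}^{s+1}(\beta_{l+1})}$. The $c$-value attached to $a_{\mathfrak{n}^{r}(\beta'')}$ in the new form is proportional, through the $r$-independent nonzero factor $C_{\beta',j}\tilde{c}R$, to the old value $c_{\mathfrak{n}^{r}(\beta'')}$; normalizing the leading one to $1$ yields the $c$-sequence $\big((c_{\mathfrak{n}^{s+1}(\beta_{l+1})})^{-1} c_{\mathfrak{n}^{s+r}(\beta_{l+1})}\big)_{r\geq 1}$, which, being a normalized tail of the strictly increasing $c$-sequence of $\Omega_k^{s}$, is again strictly increasing, giving membership in $\Delta^{0}_{\mathfrak{n}^{s+1}(\beta_{l+1}),<}$. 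The order identity is then immediate, since the $k$-exponent is preserved, $k_{\mathrm{new}} = \nu_{\max} - \beta''$, so that $\nu_{\mathcal{C}}(\Omega_{\max}^{s+1}) = \nu_{\max} + e_{l+1}$.
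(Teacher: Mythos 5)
Your proposal follows the same route as the paper: the paper's entire proof of Lemma \ref{lem:dom} is the single sentence that it ``follows straightforwardly from equations \eqref{equ:formulas} and \eqref{eq:main-substitution-s+1}'', and what you write is precisely that computation carried out in detail --- substitute the expansions \eqref{equ:formulas} into \eqref{eq:main-substitution-s+1}, observe the cancellation of the two lowest-order terms, and check that every surviving coefficient keeps leading variable $a_{\mathfrak{n}^{r}(\beta'')}$ because the interfering contributions of the $\beta'$-form (namely $C_{\beta'',k}C_{\mathfrak{n}^{r'}(\beta'),j}$, with $\mathfrak{n}^{r'}(\beta') < \mathfrak{n}^{r}(\beta'')$) involve only variables of strictly smaller index and are absorbed into the polynomial $Q$. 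Your identification of the new $c$-sequence as the normalized tail of the old one, and hence of the strict monotonicity, is exactly the content of the lemma.

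One recurring slip is worth fixing. You assert that $\mathfrak{n}^{s+1}(\beta_{l+1}) - \mathfrak{n}^{s}(\beta_{l+1}) = e_{l+1}$ and that $\mathfrak{n}^{s+1}(\beta_{l+1})$ is never a Puiseux characteristic exponent. Both statements fail when $\mathfrak{n}^{s+1}(\beta_{l+1}) = \beta_{l+2}$: since $e_{l+2} = \gcd(e_{l+1},\beta_{l+2}) < e_{l+1}$, the exponent $\beta_{l+2}$ is not a multiple of $e_{l+1}$, so the last gap of $\mathcal{E}(\mathcal{C})$ before $\beta_{l+2}$ is strictly smaller than $e_{l+1}$ (compare the paper's own inequality $\mathfrak{n}^{s+1}(\beta_{l+1}) - \mathfrak{n}^{s}(\beta_{l+1}) \leq e_{l+1}$ in the proof of Proposition \ref{pro:good-to-good}). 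This case genuinely occurs: it is exactly the terminal stage $s_0$ in Proposition \ref{pro:from-l-to-next}. The slip is not fatal, because the lemma's conclusion is phrased in terms of $\mathfrak{n}^{s+1}(\beta_{l+1}) - \mathfrak{n}^{s}(\beta_{l+1})$ itself, which your ``smallest surviving exponent'' argument delivers without ever needing the value $e_{l+1}$ --- the only inequality required there is that gaps of $\mathcal{E}(\mathcal{C})$ are non-increasing, so that the $r=1$ tail term of the $\beta''$-form dominates the $r=1$ tail term of the $\beta'$-form. Likewise, degree $0$ for the new form should be deduced from your explicit coefficient structure (the $r$-independent factor $C_{\beta',j}\tilde{c}R\,a_{\beta''}^{m}$ never involves the new leading variable), not from the non-Puiseux dichotomy of Definition \ref{def:max}. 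As written, your final displayed value $\nu_{\mathcal{C}}(\Omega_{\max}^{s+1}) = \nu_{\max} + e_{l+1}$ and the degree-$0$ justification are wrong precisely in that terminal case and should be restated in terms of $\mathfrak{n}(\beta'') - \beta''$.
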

\begin{proof}
 This follows straightforwardly from equations  \eqref{equ:formulas} and \eqref{eq:main-substitution-s+1}.
%
%
%
\end{proof}  
 \begin{lem}
 \label{lem:non_dom}
 Suppose that
 $[\nu_{\mathcal{C}} (\Omega_{j}^{0})]_{n}=[\nu_{\mathcal{C}}
 (\Omega_k^0)]_n$ for $j<k$, and that $\Omega_{j}^{0}$ and $\Omega_{k}^{0}$
 belong to $\tilde{\Delta}_{\beta_{l+1},\leq}$. 
 Denote by
 $(c_{{\mathfrak n}^{r}(\beta_{l+1}),j})_{r \geq 0}$ and
 $(c_{{\mathfrak n}^{r}(\beta_{l+1}),k})_{r \geq 0}$ the $c$-sequences of
 $\Omega_j^{0}$ and $\Omega_k^{0}$ respectively. If
 $c_{{\mathfrak n}(\beta_{l+1}),j}\neq c_{{\mathfrak n}(\beta_{l+1}),k}$ and
 \[
 ((c_{{\mathfrak n}(\beta_{l+1}),j} - c_{{\mathfrak n}(\beta_{l+1}),k})^{-1} (c_{{\mathfrak n}^{r}(\beta_{l+1}),j} -
 c_{{\mathfrak n}^{r}(\beta_{l+1}),k}))_{r \geq 1}
 \] is an increasing sequence in ${\mathbb Q}^{+}$, then
 $\Omega_{k}^{1} \in \Delta_{{\mathfrak n}(\beta_{l+1}),<}^{0}$ and the above sequence is the
 $c$-sequence of $\Omega_{k}^{1}$.    In particular,
\begin{equation}
\label{equ:step1}
 \nu_{\mathcal{C}} (\Omega_{k}^{1}) - \nu_{\mathcal{C}} (\Omega_{k}^{0}) = {\mathfrak n}(\beta_{l+1}) - \beta_{l+1}
\end{equation}
holds.  
\end{lem}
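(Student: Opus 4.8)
The plan is to apply the substitution \eqref{eq:main-substitution-s+1} directly and then read off the leading-variable data of the resulting $1$-form from Definition \ref{def:max}. Write $\beta=\beta_{l+1}$ and $\gamma={\mathfrak n}(\beta)$, and set $\nu_j=\nu_{\mathcal C}(\Omega_j^0)$, $\nu_k=\nu_{\mathcal C}(\Omega_k^0)$. Since $j<k$ and $\mathcal{G}_{l+1}^0$ is \tidy{} (Lemma \ref{lem:no3}), we have $\nu_k-\nu_j=dn$ with $d\geq 0$; as $\Omega_j^0,\Omega_k^0\in\Delta_{\beta}$, the substitution \eqref{eq:main-substitution-s+1} reads $\Omega_k^1=C_{\beta,j}\Omega_k^0-C_{\beta,k}x^d\Omega_j^0$. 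Pulling back to $\Gamma$ through \eqref{equ:formulas}, using $x^d=t^{nd}$ and $t^{nd}\cdot t^{\nu_j}=t^{\nu_k}$, the two terms of order $\nu_k$ cancel, and what survives is
\[
 t\Gamma^*(\Omega_{k,\Gamma}^1)=\sum_{r\geq 1}\big(C_{\beta,j}C_{{\mathfrak n}^r(\beta),k}-C_{\beta,k}C_{{\mathfrak n}^r(\beta),j}\big)\,t^{\nu_k+{\mathfrak n}^r(\beta)-\beta}\,dt .
\]
The lowest surviving order occurs at $r=1$, which already yields $\nu_{\mathcal C}(\Omega_k^1)=\nu_k+\gamma-\beta$, i.e. the ``in particular'' formula.

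Next I would feed the $\tilde{\Delta}_{\beta,\leq}$-structure of $\Omega_j^0$ and $\Omega_k^0$ into the surviving coefficients. Writing their coefficients as in \eqref{equ:maximal} (with $Q_\beta\equiv 0$, since both lie in $\tilde{\Delta}_\beta$, and respective degrees $m_j,m_k$), so that $C_{\beta,j}=\tilde c_j R_j a_\beta^{m_j+1}$ and $C_{\beta,k}=\tilde c_k R_k a_\beta^{m_k+1}$, I split each $C_{{\mathfrak n}^r(\beta),\bullet}$ into its maximal term (the one linear in $a_{{\mathfrak n}^r(\beta)}$) plus $Q_{{\mathfrak n}^r(\beta),\bullet}$. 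The maximal terms combine into
\[
 \tilde c_j\tilde c_k R_j R_k\big(c_{{\mathfrak n}^r(\beta),k}-c_{{\mathfrak n}^r(\beta),j}\big)\,a_{{\mathfrak n}^r(\beta)}\,a_\beta^{\,m_j+m_k+1},
\]
while the remainder $C_{\beta,j}Q_{{\mathfrak n}^r(\beta),k}-C_{\beta,k}Q_{{\mathfrak n}^r(\beta),j}$ involves only variables $a_{\beta'}$ with $\beta'<{\mathfrak n}^r(\beta)$ and hence is a legitimate $Q'_{{\mathfrak n}^r(\beta)}$. Reindexing by $\gamma={\mathfrak n}(\beta)$ (so ${\mathfrak n}^r(\beta)={\mathfrak n}^{r-1}(\gamma)$), this exhibits $\Omega_k^1$ in the form of Definition \ref{def:max} with leading variable $\gamma$, common nonzero polynomial $R'=R_jR_k a_\beta^{m_j+m_k+1}\in\mathbb{C}{[a_{\beta'}]}_{\beta_0\leq\beta'\leq\beta}$, unit $\tilde c'=\tilde c_j\tilde c_k(c_{\gamma,k}-c_{\gamma,j})\in\mathbb{C}^*$ (nonzero precisely because $c_{\gamma,j}\neq c_{\gamma,k}$), and degree $m'=0$ (no factor $a_\gamma$ occurs). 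The $c$-sequence is then $c'_{{\mathfrak n}^r(\beta)}=(c_{\gamma,j}-c_{\gamma,k})^{-1}(c_{{\mathfrak n}^r(\beta),j}-c_{{\mathfrak n}^r(\beta),k})$, which is exactly the displayed sequence and equals $1$ at $r=1$.

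It then remains to check the arithmetic side conditions of Definition \ref{def:max} for the leading variable $\gamma$ and to settle membership in $\Delta_{\gamma,<}^0$. The exponent constant is $k'=\nu_k-\beta=\nu_{\mathcal C}(\Omega_k^1)-\gamma$; since $\Omega_j^0,\Omega_k^0$ are stage-$0$ forms, Remark \ref{rem:cong_conflict} gives $\nu_k\in(e_{l+1})$, and as $\beta=\beta_{l+1}\in(e_{l+1})$ we obtain $k'\in(e_{l+1})$, i.e. $[k']_{e_{l+1}}=[0]_{e_{l+1}}$. Whether $\gamma$ is a Puiseux characteristic exponent or not, the relevant index is $\ell=l+1$ and the required congruence reads $[k']_{e_{l+1}}=[m'\gamma]_{e_{l+1}}=[0]_{e_{l+1}}$ (using $m'=0$), which is what we just verified; finally, the computed $c$-sequence of $\Omega_k^1$ coincides with the increasing sequence of the hypothesis, so by definition $\Omega_k^1\in\Delta_{\gamma,<}^0$, while $\Omega_j^1=\Omega_j^0$ is immediate. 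The main obstacle I anticipate is the bookkeeping of the second paragraph: one must confirm that $R'$ is genuinely independent of $r$ (so the $c$-sequence is well defined) and nonzero, and that the maximal coefficient carries $a_\gamma$ only to the power $0$; the congruence $[k']_{e_{l+1}}=[0]_{e_{l+1}}$ is the other delicate point, where the specific origin of the forms via Remark \ref{rem:cong_conflict} is indispensable.
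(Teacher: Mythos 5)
Your proposal is correct and follows essentially the same route as the paper's proof: apply the substitution \eqref{eq:main-substitution-s+1}, cancel the order-$\nu_{\mathcal{C}}(\Omega_k^0)$ terms using $\nu_{\mathcal{C}}(\Omega_k^0)-\nu_{\mathcal{C}}(\Omega_j^0)=dn$, and use the $\tilde{\Delta}_{\beta_{l+1},\leq}$-structure (i.e. $Q_{\beta_{l+1},j}\equiv Q_{\beta_{l+1},k}\equiv 0$) to exhibit the surviving coefficients in the form $(c_{{\mathfrak n}^r(\beta_{l+1}),k}-c_{{\mathfrak n}^r(\beta_{l+1}),j})\,[\tilde c_j\tilde c_k a_{\beta_{l+1}}^{m_j+m_k+1}R_jR_k]\,a_{{\mathfrak n}^r(\beta_{l+1})}+Q_r$, which is exactly the paper's computation. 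Your extra verification of the congruence condition $[k']_{e_{l+1}}=[0]_{e_{l+1}}$ of Definition \ref{def:max} via Remark \ref{rem:cong_conflict} is a point the paper leaves implicit in ``the result follows,'' and it is carried out correctly.
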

\begin{proof}
With the notation \eqref{equ:formulas}, we have
 \[
 C_{{\mathfrak n}^{r}(\beta_{l+1}),\delta} = \tilde{c}_{\delta} c_{{\mathfrak n}^{r}(\beta_{l+1}),\delta}
 a_{{\mathfrak n}^{r}(\beta_{l+1})} R_{\delta}(a_{\beta_1}, \hdots, a_{{\mathfrak p}(\beta_{l+1})})
 a_{\beta_{l+1}}^{m_\delta} + Q_{{\mathfrak n}^{r}(\beta_{l+1}), \delta}
\]
for $\delta\in \{j,k\}$ and $r \geq 0$, following Definition \ref{def:max}, where
$Q_{\beta_{l+1}, j} \equiv Q_{\beta_{l+1}, k} \equiv 0$ by hypothesis. 

If we now denote 
\[
  t \Gamma^{*} (  \Omega_{k, \Gamma}^{1}  )
  =
  \left(  \sum_{r \geq 1} C_{{\mathfrak n}^{r} (\beta_{l+1})} (\Gamma)
    t^{\nu_{\Gamma} (\Omega_{k}^{s}) + {\mathfrak n}^{r} (\beta_{l+1}) - \beta_{l+1}}
  \right) dt,
\]
then 
\[
  C_{{\mathfrak n}^{r} (\beta_{l+1})}
  = ( c_{{\mathfrak n}^{r}(\beta_{l+1}),k} - c_{{\mathfrak n}^{r}(\beta_{l+1}),j} )
  [\tilde{c}_j \tilde{c}_k a_{\beta_{l+1}}^{m_j+m_k+1} R_j R_k]
  a_{{\mathfrak n}^{r} (\beta_{l+1})} + Q_r,
\]
for $r \geq 1$, where
$Q_r \in \mathbb{C}{[a_{\beta}]}_{\beta_1 \leq \beta < {\mathfrak n}^{r}(\beta_{l+1})}$, 
and the result follows.
\end{proof}
We want to show that $\mathcal{G}^{s+1}_{l+1}$ is good. 
To this end, we need to distinguish the case $s=0$ and the rest. The former is the most complicated step of our construction. 
 \begin{defi}
   We define the initial sets
   ${\mathcal L}_1$ and ${\mathcal U}_1$ as follows:
 \begin{itemize}
        \item Let $1 \leq j \leq 2 \nu_l$ such that
          $\Omega_j \in \Delta_{\beta^{\prime}, <}$ with $\beta^{\prime} <
          \beta_{l+1}$. Then $j \in {\mathcal L}_1$ and $j + 2 r \nu_l \in {\mathcal U}_1$
          for $1 \leq r < n_{l+1}$.
        \item Let $1 \leq j \leq 2 \nu_l$ such that
          $\Omega_j \in \Delta_{\beta^{\prime}, <}$ with $\beta^{\prime} =
          \beta_{l+1}$. Then $ j + 2 (n_{l+1}-1) \nu_l \in {\mathcal U}_1$ and
          $j + 2 r \nu_l \in {\mathcal L}_1$ for $0 \leq r \leq n_{l+1}-2$.
 \end{itemize}
\end{defi}
Recall that the sets $M_j$ and $\hat{M}_j$ were defined in page \pageref{def:Mj}.
 \begin{pro}
\label{pro:aux1}
The family ${\mathcal G}_{l+1}^{1}$ satisfies
\begin{itemize}
\item ${\mathcal U}_1 = \{ 1 \leq k \leq 2 \nu_{l+1} : \Omega_{k}^{1} \neq \Omega_{k}^{0} \}$.
\item  $\Omega_{j}^{1} \in \cup_{\beta^{\prime} \leq \beta_{l+1}} \Delta_{\beta^{\prime},<} \cap \hat{M}_{j}$ 
for any $j \in {\mathcal L}_1$.
\item  $\Omega_{k}^{1} \in \Delta_{{\mathfrak n}(\beta_{l+1}),<}^{0} \cap \hat{M}_{k}$ for any $k \in {\mathcal U}_1$.
\item $\nu_{\mathcal{C}} (\Omega_{k}^{1}) - \nu_{\mathcal{C}} (\Omega_{k}^{0}) = {\mathfrak n}(\beta_{l+1}) - \beta_{l+1}$
for any $k \in {\mathcal U}_1$.
\item $\{ [\nu_{\mathcal C}(\Omega_{j}^{1})]_{n} : j \in {\mathcal L}_{1} \} = [(e_{l+1})]_{n}$.
\item $\{ [\nu_{\mathcal C}(\Omega_{k}^{1}) - {\mathfrak n}(\beta_{l+1})]_{n} : k \in {\mathcal U}_{1} \} = [(e_{l+1})]_{n}$.
\end{itemize}
 \end{pro}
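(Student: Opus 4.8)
The plan is to dissect the single step from the initial family $\mathcal{G}_{l+1}^{0}$ to $\mathcal{G}_{l+1}^{1}$ into the $\nu_{l+1}$ elementary substitutions \eqref{eq:main-substitution}, one for each class modulo $n$ in $[(e_{l+1})]_{n}$, and then to show that the indices actually modified are precisely $\mathcal{U}_1$. First I would record the leading variables of all stage $0$ forms. Call $\Omega_j$ (for $1\leq j\leq 2\nu_l$) of \emph{type A} if $\Omega_j\in\Delta_{\beta',<}$ with $\beta'<\beta_{l+1}$, and of \emph{type B} if $\Omega_j\in\Delta_{\beta_{l+1},<}^{0}$; by Definition \ref{def:level} these are the only possibilities, with $\nu_l$ of each. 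For $a\geq 1$, Corollary \ref{cor:f_dominates_omega} gives $f_{l+1}^{a}\Omega_j\in\tilde{\Delta}_{\beta_{l+1},\leq}^{a-1}$ with $c$-sequence $1,a,a,\ldots$ in the type A case, while Corollary \ref{cor:f_omega} gives $f_{l+1}^{a}\Omega_j\in\Delta_{\beta_{l+1},<}^{a}$ in the type B case, its $c$-sequence obtained by adding $a$ to the strictly increasing base sequence. Together with $\Omega_k^0\in\hat{M}_k$ (by construction) this already yields Claim 2 for the unmodified $j\in\mathcal{L}_1$.

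Next I would pin down the conflicting pairs. By Remark \ref{rem:cong_conflict} each class of $[(e_{l+1})]_{n}$ has exactly two stage $0$ representatives, necessarily one of type A and one of type B. Reducing $\nu_{\mathcal{C}}(f_{l+1}^{a}\Omega)=a\overline{\beta}_{l+1}+\nu_{\mathcal{C}}(\Omega)$ modulo $e_l$ and using that $\overline{\beta}_{l+1}$ has order $n_{l+1}$ modulo $e_l$ forces each pairing to match a type A power $a$ with a type B power $a-1$ when $a\geq 1$, and a type A base ($a=0$) with a type B power $n_{l+1}-1$. A valuation comparison then identifies the modified member of each pair: using the terminal bound $n\leq\nu_{\mathcal{C}}(\Omega_j)\leq\overline{\beta}_{l+1}$ from Definition \ref{def:level}, the type A power $a$ form has the strictly larger $\mathcal{C}$-order in the first case, and the type B power $n_{l+1}-1$ form does in the second. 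Hence the modified forms are exactly the type A powers $a\in\{1,\ldots,n_{l+1}-1\}$ together with the type B powers $n_{l+1}-1$, i.e. $\mathcal{U}_1$; this is Claim 1, and it shows every pair splits one element into $\mathcal{L}_1$ and one into $\mathcal{U}_1$.

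With the pairs identified I would invoke the two substitution lemmas according to the leading variables. In the first case both forms lie in $\tilde{\Delta}_{\beta_{l+1},\leq}$, so Lemma \ref{lem:non_dom} applies: its hypothesis $c_{\mathfrak{n}(\beta_{l+1}),j}\neq c_{\mathfrak{n}(\beta_{l+1}),k}$ reduces to $c_1\neq 1$ for the type B base sequence (true by strict monotonicity), and the quotient sequence is $((c_r-1)/(c_1-1))_{r\geq 1}$, which is strictly increasing; thus $\Omega_k^1\in\Delta_{\mathfrak{n}(\beta_{l+1}),<}^{0}$ with order jump $\mathfrak{n}(\beta_{l+1})-\beta_{l+1}$. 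In the second case the leading variables differ, so Lemma \ref{lem:dom} with $s=0$ applies and gives the same conclusion. In both cases $\Omega_k^1\in\hat{M}_k$, since $\Omega_k^0\in\hat{M}_k$ while $\Omega_j^0\in M_j\subset M_{k-1}$, proving Claims 3 and 4.

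Finally I would read off Claims 5 and 6. As $\mathcal{L}_1$ contains one representative of each class of $[(e_{l+1})]_{n}$ and those forms are unchanged, their $\mathcal{C}$-orders give all of $[(e_{l+1})]_{n}$ (Claim 5); for $\mathcal{U}_1$, Claim 4 yields $\nu_{\mathcal{C}}(\Omega_k^1)-\mathfrak{n}(\beta_{l+1})=\nu_{\mathcal{C}}(\Omega_k^0)-\beta_{l+1}$, and since the $\mathcal{U}_1$ representatives likewise cover $[(e_{l+1})]_{n}$ while $\beta_{l+1}\in(e_{l+1})$, translation by $\beta_{l+1}$ preserves the subgroup $[(e_{l+1})]_{n}$, giving Claim 6. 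I expect the main obstacle to be the arithmetic bookkeeping of the second paragraph: establishing the exact pairing of powers and, above all, verifying both hypotheses of Lemma \ref{lem:non_dom} uniformly over all type B base forms, since the strict monotonicity of the quotient $c$-sequence is precisely what forces the new forms into $\Delta_{\mathfrak{n}(\beta_{l+1}),<}^{0}$ rather than merely $\Delta_{\mathfrak{n}(\beta_{l+1}),\leq}^{0}$.
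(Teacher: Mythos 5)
Your proof is correct and takes essentially the same route as the paper's: the same identification of the conflicting pairs (type A power $a$ with type B power $a-1$, and type A power $0$ with type B power $n_{l+1}-1$) via Remarks \ref{rem:propl} and \ref{rem:cong_conflict}, the same use of Corollaries \ref{cor:f_dominates_omega} and \ref{cor:f_omega} followed by Lemma \ref{lem:non_dom} when both leading variables equal $\beta_{l+1}$ and Lemma \ref{lem:dom} when they differ, and the same counting argument for the last two claims. The only differences are cosmetic: you pick out the replaced member of each pair by a valuation comparison where the paper uses the index ordering of the \tidy{} family, and you make explicit the $\hat{M}_{k}$ membership that the paper leaves implicit.
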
 
\begin{proof} 
Suppose that $\nu_{\mathcal{C}} (\Omega_{k}^{0}) - \nu_{\mathcal{C}} (\Omega_{j}^{0}) = dn$ for
some $d \in {\mathbb Z}_{\geq 0}$ and $1 \leq j <  k \leq 2 \nu_{l+1}$.
First, we want to see that $j \in {\mathcal L}_1$ and $k \in {\mathcal U}_1$. 
As $\Omega_{j}^{0} = f_{l+1}^{m_j} \Omega_a$ and
$\Omega_{k}^{0} = f_{l+1}^{m_k} \Omega_b$, we deduce that
\[
(m_{k} -m_{j}) \overline{\beta}_{l+1} + ( \nu_{\mathcal{C}} (\Omega_b) - \nu_{\mathcal{C}} (\Omega_a))
= dn.
\]
Moreover, $j < k$ implies $m_{j} \leq m_{k}$, and since $0 \leq m_j, m_k < n_{l+1}$, 
Remark \ref{rem:propl} implies
that this last equality can only happen in two cases
\begin{itemize}
\item[(a)] $m_{k}=m_{j}+1$, $ \nu_{\mathcal{C}} (\Omega_b) \in (e_l)$ and
$\nu_{\mathcal{C}} (\Omega_a) \in [\beta_{l+1}]_{e_l}$ or
\item[(b)] $m_k=n_{l+1} -1$, $m_j=0$,
$ \nu_{\mathcal{C}} (\Omega_b) \in [\beta_{l+1}]_{e_l}$ and
$ \nu_{\mathcal{C}} (\Omega_a) \in (e_l)$.
\end{itemize}
Since any $[x]_{n} \in [(e_{l+1})]_{n}$ has exactly two representatives in $(\nu_{\mathcal C}(\Omega_{j}^{0}))_{1 \leq j \leq 2 \nu_{l+1}}$
by Remark \ref{rem:cong_conflict}, we deduce
${\mathcal U}_1 = \{ 1 \leq r \leq 2 \nu_{l+1} : \Omega_{r}^{1} \neq \Omega_{r}^{0} \}$.

Next, let us show $\Omega_{j}^{1} \in \cup_{\beta^{\prime} \leq \beta_{l+1}} \Delta_{\beta^{\prime},<} \cap \hat{M}_{j}$, 
$\Omega_{k}^{1} \in \Delta_{{\mathfrak n}(\beta_{l+1}),<}^{0} \cap \hat{M}_{k}$
and equation \eqref{equ:step1}.  

\strut

\textbf{Case (a).} If $m_k=m_j+1$, then $\Omega_a \in \Delta_{\beta_{l+1},<}^{0}$ and
$\Omega_b \in \Delta_{\beta,<}$ for some $\beta < \beta_{l+1}$ by Remark
\ref{rem:propl}. Denote by $(c_{{\mathfrak n}^{r}(\beta_{l+1})})_{r \geq 0}$ the $c$-sequence of
$\Omega_a$. Since $\beta < \beta_{l+1}$, notice that
$\Omega_{k}^{0} = f_{l+1}^{m_{j}+1} \Omega_b$ belongs to
$\tilde{\Delta}_{\beta_{l+1}, \leq}^{m_j} \setminus \Delta_{\beta_{l+1},<}^{m_j}$ and its $c$-sequence is
$1, m_{j}+1, \hdots, m_{j}+1, \hdots$ by Corollary \ref{cor:f_dominates_omega}. 
We have 
$\Omega_{j}^{1} = \Omega_{j}^{0} \in \tilde{\Delta}_{\beta_{l+1},<}^{m_j} \cap \hat{M}_{j}$  and that
its $c$-sequence   is
\[
1, c_{{\mathfrak n}(\beta_{l+1})} + m_{j}, c_{{\mathfrak n}^{2}(\beta_{l+1})} + m_{j}, \hdots 
\] 
by Corollary \ref{cor:f_omega}. 
Since
$(c_{{\mathfrak n}^{r}(\beta_{l+1})})_{r \geq 0}$ is strictly increasing, we deduce that
$\Omega_{k}^{1} \in \Delta_{{\mathfrak n}(\beta_{l+1}),<}^{0} \cap \hat{M}_{k}$, that its
$c$-sequence is
\[
((c_{{\mathfrak n}(\beta_{l+1})} - 1)^{-1} (c_{{\mathfrak n}^{r}(\beta_{l+1})} - 1))_{ r \geq 1}
\] 
and equation \eqref{equ:step1} by Lemma
\ref{lem:non_dom}. 

\strut

\noindent\textbf{Case (b).} That is, $m_k=n_{l+1}-1$, $m_j=0$. We know that
$\Omega_{j}^{0} \in \Delta_{\beta,<}$ for some $\beta < \beta_{l+1}$ and
$f_{l+1}^{n_{l+1} -1} \Omega_b \in \Delta_{\beta_{l+1},<}^{n_{l+1} -1}$. Lemma
\ref{lem:dom} implies that
$\Omega_{k}^{1} \in \Delta_{{\mathfrak n}(\beta_{l+1}),<}^{0} \cap \hat{M}_{k}$   and equation (\ref{equ:step1}).  

\strut

It remains to show the last two properties. We have
                $\nu_{\mathcal C} (\Omega_{j}^{0}) \in (e_{l+1})$ for any
                $1 \leq j \leq 2 \nu_{l+1}$.  This equality and equation (\ref{equ:step1})
                imply $\nu_{\mathcal C} (\Omega_{j}^{1}) \in (e_{l+1})$ for any
                $j \in {\mathcal L}_{1}$ and
                $\nu_{\mathcal C} (\Omega_{k}^{1}) - {\mathfrak n}(\beta_{l+1}) \in
                (e_{l+1})$ for any $k \in {\mathcal U}_{1}$.  By construction, we have
\[ [\nu_{\mathcal C}(\Omega_{j}^{1})]_{n}  = [\nu_{\mathcal C}(\Omega_{j}^{0})]_{n}   \neq 
[\nu_{\mathcal C}(\Omega_{j^{\prime}}^{0})]_{n}  = [\nu_{\mathcal C}(\Omega_{j^{\prime}}^{1})]_{n} \]
if $j \neq j^{\prime}$
and $j, j^{\prime} \in {\mathcal L}_{s}$.  Thus $\{ [\nu_{\mathcal C}(\Omega_{j}^{1})]_{n} : j \in {\mathcal L}_{1} \}$ and $[(e_{l+1})]_{n}$ are sets of
cardinal $\nu_{l+1}$ such that the former is included in the latter and as a consequence they coincide.
Analogously, $[\nu_{\mathcal C}(\Omega_{k}^{0})]_{n}  \neq [\nu_{\mathcal C}(\Omega_{k^{\prime}}^{0})]_{n}$ holds if $k \neq k^{\prime}$ and 
$k, k^{\prime} \in {\mathcal U}_1$. This property together with  equation (\ref{equ:step1}) imply that 
$\{ [\nu_{\mathcal C}(\Omega_{k}^{1}) - {\mathfrak n}(\beta_{l+1})]_{n} : k \in {\mathcal U}_{1} \}$ 
is a set of $\nu_{l+1}$ elements contained in $[(e_{l+1})]_{n}$  and hence they coincide.  
\end{proof} 
The step from stage $s$ to stage $s+1$ for $s>0$ is easier. Suppose that the families
$\mathcal{G}_{l+1}^{1},\ldots,\mathcal{G}_{l+1}^s$, with
$\mathcal{G}_{l+1}^r=(\Omega_j^r:j=1,\ldots, 2 {\nu}_{l+1})$ are good 
for $1 \leq r \leq s$   and Property \eqref{cond:good_tech} holds.  
 \begin{defi}
 We define ${\mathcal U}_{s+1} = \{ 1 \leq k \leq 2 \nu_{l+1} : \Omega_{k}^{s+1} \neq \Omega_{k}^{s} \}$ and 
 ${\mathcal L}_{s+1}$ as its complement in $\{ 1, \ldots, 2 \nu_{l+1}\}$.
 \end{defi}
 \begin{pro}
 \label{pro:auxm}
 The family  ${\mathcal G}_{l+1}^{s+1}$ satisfies
 \begin{itemize}
        \item Both ${\mathcal L}_{s+1}$ and ${\mathcal U}_{s+1}$ have $\nu_{l+1}$ elements.
 \item  $\Omega_{j}^{s+1} \in \cup_{\beta^{\prime} < {\mathfrak n}^{s+1} (\beta_{l+1})} \Delta_{\beta^{\prime},<} \cap \hat{M}_{j}$ 
 for any $j \in {\mathcal L}_{s+1}$.
 \item  $\Omega_{k}^{s+1} \in \Delta_{{\mathfrak n}^{s+1}(\beta_{l+1}),<}^{0} \cap \hat{M}_{k}$ for any $k \in {\mathcal U}_{s+1}$.
 \item $\nu_{\mathcal{C}} (\Omega_{k}^{s+1}) - \nu_{\mathcal{C}} (\Omega_{k}^{s}) = 
 {\mathfrak n}^{s+1}(\beta_{l+1}) - {\mathfrak n}^{s} (\beta_{l+1})$
 for any $k \in {\mathcal U}_{s+1}$.
 \end{itemize}
 \end{pro}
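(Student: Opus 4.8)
The plan is to reduce the whole step to a single, uniform application of Lemma~\ref{lem:dom}, exploiting that the good-family hypotheses have already spread the leading variables far enough apart that the delicate case analysis of Proposition~\ref{pro:aux1} is no longer needed. First I would pin down exactly which pairs conflict. Since $\mathcal{G}_{l+1}^s$ is good, Property~\ref{good7} gives that the residues $[\nu_{\mathcal C}(\Omega_j^s)]_n$ for $j \in {\mathcal L}_s$ are pairwise distinct and exhaust $[(e_{l+1})]_n$, while the residues $[\nu_{\mathcal C}(\Omega_k^s)]_n$ for $k \in {\mathcal U}_s$ are pairwise distinct and exhaust $[{\mathfrak n}^s(\beta_{l+1}) + (e_{l+1})]_n$. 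Here the standing hypothesis \eqref{cond:good_tech}, namely ${\mathfrak n}^s(\beta_{l+1}) \in (e_{l+1})$, is crucial: it forces $[{\mathfrak n}^s(\beta_{l+1}) + (e_{l+1})]_n = [(e_{l+1})]_n$, so each residue class in $[(e_{l+1})]_n$ has exactly one representative in ${\mathcal L}_s$ and exactly one in ${\mathcal U}_s$. Consequently the pairs $(j,k)$ with $[\nu_{\mathcal C}(\Omega_j^s)]_n = [\nu_{\mathcal C}(\Omega_k^s)]_n$ are precisely the $\nu_{l+1}$ \emph{cross} pairs, matching within each residue class the unique ${\mathcal L}_s$-index against the unique ${\mathcal U}_s$-index; there are no conflicts internal to ${\mathcal L}_s$ or to ${\mathcal U}_s$.

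Next I would feed each cross pair into Lemma~\ref{lem:dom}. By Property~\ref{good6} its ${\mathcal U}_s$-member lies in $\Delta_{{\mathfrak n}^s(\beta_{l+1}),<}^{0} \subset \Delta_{{\mathfrak n}^s(\beta_{l+1}),<}$ and its ${\mathcal L}_s$-member lies in $\cup_{\beta'<{\mathfrak n}^s(\beta_{l+1})}\Delta_{\beta',<}$, so the hypotheses of Lemma~\ref{lem:dom} hold verbatim (the equal-residue condition being exactly the matching just established). The lemma then modifies the member of larger index---which is the member of larger $\nu_{\mathcal C}$-value, as $\mathcal{G}^s_{l+1}$ is ordered by Property~\ref{good2}---sending it into $\Delta_{{\mathfrak n}^{s+1}(\beta_{l+1}),<}^{0}$ and raising its value by exactly ${\mathfrak n}^{s+1}(\beta_{l+1}) - {\mathfrak n}^s(\beta_{l+1})$, while fixing the member of smaller index. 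Since each of the $\nu_{l+1}$ cross pairs contributes exactly one modified form, this simultaneously yields $\sharp{\mathcal U}_{s+1} = \sharp{\mathcal L}_{s+1} = \nu_{l+1}$ (first bullet), the membership $\Omega_k^{s+1} \in \Delta_{{\mathfrak n}^{s+1}(\beta_{l+1}),<}^{0}$ for $k \in {\mathcal U}_{s+1}$ (leading-variable part of the third bullet), and the value jump of the fourth bullet.

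It then remains to verify the leading-variable statement for the unchanged forms and the module memberships $\cap\,\hat{M}_\bullet$. An unchanged member of a pair is either an ${\mathcal L}_s$-form, hence in $\cup_{\beta'<{\mathfrak n}^s(\beta_{l+1})}\Delta_{\beta',<}$, or a ${\mathcal U}_s$-form, hence in $\Delta_{{\mathfrak n}^s(\beta_{l+1}),<}^{0}$; in both cases, since ${\mathfrak n}^s(\beta_{l+1}) < {\mathfrak n}^{s+1}(\beta_{l+1})$, it sits in $\cup_{\beta'<{\mathfrak n}^{s+1}(\beta_{l+1})}\Delta_{\beta',<}$, which is the second bullet. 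For the modules, the unchanged forms retain $\Omega_j^{s+1} = \Omega_j^s \in \hat{M}_j$ from the ordered-family property~\ref{good2}; for a modified form of (larger) index $k$, the substitution $\Omega_k^{s+1} = C_{\beta',j}\Omega_k^s - C_{\beta'',k} x^d \Omega_j^s$ of Remark~\ref{rem:next} with $j<k$ scales the leading coefficient of $\Omega_k^s \in \hat{M}_k$ by a nonzero constant and adds $x^d\Omega_j^s \in M_j \subset M_{k-1}$, so $\Omega_k^{s+1} \in \hat{M}_k$.

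The only place demanding genuine care---and the reason this is markedly shorter than Proposition~\ref{pro:aux1}---is the bookkeeping of the first step. Once \eqref{cond:good_tech} aligns the ${\mathcal U}_s$ residues with the ${\mathcal L}_s$ residues, every conflict is a clean cross pair of exactly the shape required by Lemma~\ref{lem:dom}, so the split into cases (a) and (b) and the separate appeal to Lemma~\ref{lem:non_dom} that were necessary at stage $s=0$ are entirely avoided.
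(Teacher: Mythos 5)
Your proof is correct and follows essentially the same route as the paper's: both use Property (IV) of good families together with the standing hypothesis ${\mathfrak n}^{s}(\beta_{l+1}) \in (e_{l+1})$ to see that every conflict modulo $n$ is a cross pair between ${\mathcal L}_s$ and ${\mathcal U}_s$ (so the forms in a conflicting pair have different leading variables), and then apply Lemma \ref{lem:dom} to each such pair to get the leading variable, the value jump, and the counts. Your explicit verification of the $\hat{M}_k$-membership via the substitution formula of Remark \ref{rem:next} is a welcome bit of added care, since the paper attributes that membership to Lemma \ref{lem:dom}, which strictly speaking only controls leading variables and $c$-sequences.
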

 \begin{proof}
 As $\mathcal{G}_{l+1}^{s}$ is good,
 $\Omega_{j}^{s} \in \cup_{\beta' \leq {\mathfrak n}^{s}(\beta_{l+1})} \Delta_{\beta',<}$ for any
 $1 \leq j \leq 2 {\nu}_{l+1}$.  Since ${\mathfrak n}^{s}(\beta_{l+1}) \in (e_{l+1})$ by hypothesis, any element  of $[(e_{l+1})]_{n}$ has
 exactly two representatives in $(\nu_{\mathcal C}(\Omega_{j}^{s}))_{1 \leq j \leq 2 \nu_{l+1}}$ by  property \ref{good7} for $s$. 
 Thus ${\mathcal L}_{s+1}$ and ${\mathcal U}_{s+1}$ have both $\nu_{l+1}$ elements.
 
 Suppose  $\nu_{\mathcal{C}} (\Omega_{k}^{s})- \nu_{\mathcal{C}} (\Omega_{j}^{s}) = dn$ for some
 $d \in {\mathbb Z}_{\geq 0}$ and $1 \leq j < k \leq 2 {\nu}_{l+1}$.
 As $\nu_{\mathcal{C}} (\Omega_{j}^{s}) - \nu_{\mathcal{C}} (\Omega_{k}^{s}) \in (n)$,
 the $1$-forms $\Omega_{j}^{s}, \Omega_{k}^{s}$ have different leading
 variables, also by property \ref{good7} 
 for $s$. As a consequence,
 \[
   \Omega_{j}^{s+1} = \Omega_{j}^{s} \in \cup_{\beta' < {\mathfrak n}^{s+1}(\beta_{l+1})}
   \Delta_{\beta',<} \quad \mathrm{and} \quad \Omega_{k}^{s+1} \in \Delta_{{\mathfrak
       n}^{s+1}(\beta_{l+1}),<}^{0} \cap \hat{M}_k
 \]
 by Lemma \ref{lem:dom}.  The last
 property also follows from Lemma \ref{lem:dom}.
\end{proof}  

We can now show that  being good is a hereditary property.
\begin{pro}
 \label{pro:good-to-good}
 Given $\mathcal{G}^{0}_{l+1}, \ldots, \mathcal{G}^{s}_{l+1}$, assume that $\mathcal{G}^{j}_{l+1}$ is good for any
 $ 1 \leq j \leq s$ and 
 Property \eqref{cond:good_tech} holds. Then $\mathcal{G}^{s+1}_{l+1}$ is good.
 %
\end{pro}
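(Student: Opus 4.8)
The plan is to verify, for $\mathcal{G}^{s+1}_{l+1}$, the four defining properties \ref{good1}, \ref{good2}, \ref{good6} and \ref{good7} of Definition \ref{def:good}, splitting the argument into the base step $s=0$, which rests entirely on Proposition \ref{pro:aux1}, and the inductive step $s\geq 1$, where the corresponding statements are supplied by Proposition \ref{pro:auxm}. In both cases the leading-variable memberships, the cardinalities of the blocks $\mathcal{L}$ and $\mathcal{U}$, and the value jumps are read off from those propositions; the substance of the proof is to assemble them correctly and to check the one property that is not recorded there, namely that the family stays ordered.

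For $s=0$ the key combinatorial input is that $\mathcal{T}_l$ contains exactly $\nu_l$ forms with leading variable $<\beta_{l+1}$ and exactly $\nu_l$ with leading variable $\beta_{l+1}$. I would obtain this from Corollary \ref{rem:all-modn-congruences}, which writes $[\nu_{\mathcal{C}}(\mathcal{T}_l)]_n=[(e_l)]_n\cup[\beta_{l+1}+(e_l)]_n$ as a union of two disjoint sets of $\nu_l$ classes each, together with the fact that $\mathcal{T}_l$ is a \nice{} family (so its $2\nu_l$ values realize $2\nu_l$ distinct classes modulo $n$, by condition \ref{cond4}) and the equivalence of Remark \ref{rem:propl} between leading variable $<\beta_{l+1}$ and $\nu_{\mathcal{C}}\in(e_l)$. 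Counting the indices assigned to $\mathcal{L}_1$ and $\mathcal{U}_1$ in their definition then gives $|\mathcal{L}_1|=|\mathcal{U}_1|=\nu_l\,n_{l+1}=\nu_{l+1}$, the balanced partition demanded by \ref{good6}; the membership statements of \ref{good6} (using $\mathfrak{n}^{1}(\beta_{l+1})=\mathfrak{n}(\beta_{l+1})>\beta_{l+1}$) and the two modular identities of \ref{good7} come straight from Proposition \ref{pro:aux1}. Finally \ref{good1} holds because the indices $1,\ldots,2\nu_l$ all fall into $\mathcal{L}_1$ (the case $a=0$, respectively $r=0$, of the definition), so the substitution \eqref{eq:main-substitution} leaves $\Omega_1,\ldots,\Omega_{2\nu_l}$ untouched.

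For $s\geq 1$ Proposition \ref{pro:auxm} directly yields $|\mathcal{L}_{s+1}|=|\mathcal{U}_{s+1}|=\nu_{l+1}$, the memberships $\Omega_j^{s+1}\in\bigcup_{\beta'<\mathfrak{n}^{s+1}(\beta_{l+1})}\Delta_{\beta',<}$ for $j\in\mathcal{L}_{s+1}$ and $\Omega_k^{s+1}\in\Delta_{\mathfrak{n}^{s+1}(\beta_{l+1}),<}^{0}$ for $k\in\mathcal{U}_{s+1}$ (hence \ref{good6}), and the uniform jump $\nu_{\mathcal{C}}(\Omega_k^{s+1})-\nu_{\mathcal{C}}(\Omega_k^{s})=\mathfrak{n}^{s+1}(\beta_{l+1})-\mathfrak{n}^{s}(\beta_{l+1})$ for $k\in\mathcal{U}_{s+1}$. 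Property \ref{good1} is inherited since no coordinate among $1,\ldots,2\nu_l$ is ever altered. For \ref{good7} I would use that, by \eqref{cond:good_tech}, $\mathfrak{n}^{s}(\beta_{l+1})\in(e_{l+1})$, so at stage $s$ each occupied class modulo $n$ has one representative in $\mathcal{L}_s$ and one in $\mathcal{U}_s$; the changed forms are exactly the larger-index (that is, $\mathcal{U}_s$-type) members of the conflicting pairs, whence the $\mathcal{L}_{s+1}$-values are unchanged and each $\mathcal{U}_{s+1}$-value increases by $\mathfrak{n}^{s+1}(\beta_{l+1})-\mathfrak{n}^{s}(\beta_{l+1})$. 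Thus $\nu_{\mathcal{C}}(\Omega_k^{s+1})-\mathfrak{n}^{s+1}(\beta_{l+1})$ reproduces $\nu_{\mathcal{C}}(\Omega_k^{s})-\mathfrak{n}^{s}(\beta_{l+1})$ modulo $n$, and \ref{good7} for $s+1$ follows from \ref{good7} for $s$.

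It remains to prove \ref{good2}, that $\mathcal{G}^{s+1}_{l+1}$ is \tidy{}. Condition \ref{cond1} is immediate from the $\hat{M}_j$-memberships recorded in Propositions \ref{pro:aux1} and \ref{pro:auxm}. The delicate point, which I expect to be the main obstacle, is condition \ref{cond3}: that $\nu_{\mathcal{C}}$ stays non-decreasing in the index. Since the substitution raises precisely the $\mathcal{U}_{s+1}$-values, all by the common amount $\mathfrak{n}^{s+1}(\beta_{l+1})-\mathfrak{n}^{s}(\beta_{l+1})$, and fixes the $\mathcal{L}_{s+1}$-values, monotonicity is automatic for two indices inside one block and for a raised index preceding a fixed one; the only configuration to rule out is $j\in\mathcal{U}_{s+1}$, $k\in\mathcal{L}_{s+1}$ with $j<k$, where one must check that $\nu_{\mathcal{C}}(\Omega_j^{s})+(\mathfrak{n}^{s+1}(\beta_{l+1})-\mathfrak{n}^{s}(\beta_{l+1}))\leq\nu_{\mathcal{C}}(\Omega_k^{s})$. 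I would settle this by comparing the residues modulo $n$ of $\Omega_j^s$ and $\Omega_k^s$ — which are distinct by \ref{good7} at stage $s$ — and arguing that the gain $\mathfrak{n}^{s+1}(\beta_{l+1})-\mathfrak{n}^{s}(\beta_{l+1})$, being the step to the next element of $\mathcal{E}(\mathcal{C})$, never exceeds the value separation already imposed by the ordering of $\mathcal{G}^{s}_{l+1}$. Once condition \ref{cond3} is established, all of \ref{good1}--\ref{good7} hold, so $\mathcal{G}^{s+1}_{l+1}$ is good.
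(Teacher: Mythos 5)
Your overall architecture matches the paper's proof: properties (I), (III) and (IV) of a good family are read off from Proposition \ref{pro:aux1} (case $s=0$) and Proposition \ref{pro:auxm} (case $s\geq 1$), and the real content is the verification that $\mathcal{G}_{l+1}^{s+1}$ stays \tidy{}, i.e. condition \ref{cond3}. You correctly isolate the only dangerous configuration ($j$ modified, $k$ unmodified, $j<k$; note in passing that you first list this same configuration among the ``automatic'' ones, which is a slip). But your resolution of that configuration is a genuine gap: you assert that the increment ${\mathfrak n}^{s+1}(\beta_{l+1})-{\mathfrak n}^{s}(\beta_{l+1})$ ``never exceeds the value separation already imposed by the ordering of $\mathcal{G}_{l+1}^{s}$'', and this does not follow from the ordering. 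The \tidy{} property at stage $s$ only gives $\nu_{\mathcal{C}}(\Omega_{k}^{s})-\nu_{\mathcal{C}}(\Omega_{j}^{s})\geq 0$ (and $\geq 1$ once the classes modulo $n$ are distinct), whereas the increment can be as large as $e_{l+1}$; so the inequality you need is not ``imposed by the ordering'' at all.

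What closes the argument (and is exactly what the paper does) is a divisibility estimate with two ingredients. First, hypothesis \eqref{cond:good_tech} gives ${\mathfrak n}^{s}(\beta_{l+1})\in(e_{l+1})$, and combining this with Property \ref{good7} at stage $s$ (or with Remark \ref{rem:cong_conflict} when $s=0$) shows that \emph{every} value $\nu_{\mathcal{C}}(\Omega_{i}^{s})$ is a multiple of $e_{l+1}$; hence two values which are not congruent modulo $n$ differ by at least $e_{l+1}$ (this is \eqref{equ:folga} in the paper). Second, since ${\mathfrak n}^{s}(\beta_{l+1})$ is a multiple of $e_{l+1}$ not smaller than $\beta_{l+1}$, the integer ${\mathfrak n}^{s}(\beta_{l+1})+e_{l+1}$ itself belongs to $\mathcal{E}(\mathcal{C})$, whence ${\mathfrak n}^{s+1}(\beta_{l+1})-{\mathfrak n}^{s}(\beta_{l+1})\leq e_{l+1}$. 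Together these give
\[
  \nu_{\mathcal{C}}(\Omega_{j}^{s})+\left({\mathfrak n}^{s+1}(\beta_{l+1})-{\mathfrak n}^{s}(\beta_{l+1})\right)
  \leq \nu_{\mathcal{C}}(\Omega_{j}^{s})+e_{l+1}
  \leq \nu_{\mathcal{C}}(\Omega_{k}^{s})=\nu_{\mathcal{C}}(\Omega_{k}^{s+1}),
\]
which is the inequality your sketch leaves unproved. Note also that this is precisely the place where hypothesis \eqref{cond:good_tech} earns its keep in the monotonicity argument, not only in your treatment of Property \ref{good7}; an argument that never invokes the $e_{l+1}$-divisibility of the stage-$s$ values cannot establish condition \ref{cond3}. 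A last, minor, imprecision: the modified member of a conflicting pair is the one of larger \emph{index}, which need not literally be the $\mathcal{U}_{s}$-type member; this is harmless for the congruence bookkeeping in Property \ref{good7} since the two members of a pair are congruent modulo $n$, but as stated your identification is not justified.
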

\begin{proof} 
 Property \ref{good1} in Definition \ref{def:good} and property \ref{cond1} of \tidy{} families hold by construction. 

 Property \ref{good6} for $\mathcal{G}_{l+1}^{s+1}$ (for the leading variables
 and properties of the $c$-sequences) follows for $s=0$ applying
 Proposition \ref{pro:aux1}. 
 For $s>0$,
 apply Proposition \ref{pro:auxm} 
 using that $\mathcal{G}_{l+1}^s$ is good.

 Let us show  Property \ref{good7} for $\mathcal{G}_{l+1}^{s+1}$. It is a consequence of Proposition \ref{pro:aux1} if $s=0$ so we can assume $s \geq 1$.
 By construction,    we obtain 
 \[ \{ [\nu_{\mathcal C}(\Omega_{j}^{s+1})]_{n} : j \in {\mathcal L}_{s+1} \}  = \{ [\nu_{\mathcal C}(\Omega_{j}^{s})]_{n} : j \in {\mathcal L}_s \} = [(e_{l+1})]_{n} \]
 and 
 \[ \{ [\nu_{\mathcal C}(\Omega_{k}^{s}) ]_{n} : k \in {\mathcal U}_{s+1} \}  = 
\{ [\nu_{\mathcal C}(\Omega_{k}^{s}) ]_{n} : k \in {\mathcal U}_{s} \}  . \]
 This property, Proposition \ref{pro:auxm} and Property \ref{good7} for $\mathcal{G}_{l+1}^{s}$  imply
 \[ \{ [\nu_{\mathcal C}(\Omega_{k}^{s+1}) - {\mathfrak n}^{s+1}(\beta_{l+1})]_{n} : k \in {\mathcal U}_{s+1} \}  = 
 \{ [\nu_{\mathcal C}(\Omega_{k}^{s})  - {\mathfrak n}^{s}(\beta_{l+1}) ]_{n} : k \in {\mathcal U}_{s} \}  , \]
 concluding the proof of Property \ref{good7} for $\mathcal{G}_{l+1}^{s+1}$.

 Since ${\mathfrak n}^{s}(\beta_{l+1}) \in (e_{l+1})$, 
 it follows that $\nu_{\mathcal C} (\Omega_{j}^{s}) \in (e_{l+1})$ for any $1 \leq j \leq 2 \nu_{l+1}$ by 
 Property \ref{good7} for $\mathcal{G}_{l+1}^{s}$.  Since
\[
  \nu_{\mathcal{C}} (\Omega_{k}^{s}) - \nu_{\mathcal{C}} (\Omega_{j}^{s}) \in (e_{l+1}) \ \mathrm{and} \ 
  \nu_{\mathcal{C}} (\Omega_{k}^{s}) \geq  \nu_{\mathcal{C}} (\Omega_{j}^{s})
\] 
for all $1 \leq j \leq k \leq 2 {\nu}_{l+1}$, we obtain 
 \begin{equation}
 \label{equ:folga}
 \nu_{\mathcal{C}} (\Omega_{k}^{s}) - \nu_{\mathcal{C}} (\Omega_{j}^{s}) \geq
 \delta_{j,k} e_{l+1}
 \end{equation}
 where $\delta_{j,k} =0$ if $ \nu_{\mathcal{C}} (\Omega_{k}^{s}) - \nu_{\mathcal{C}} (\Omega_{j}^{s}) \in (n)$ and otherwise $\delta_{j,k}=1$.
 Since ${\mathfrak n}^{s+1}(\beta_{l+1}) - {\mathfrak n}^{s}(\beta_{l+1}) \leq e_{l+1}$, we get
 \begin{equation}
 \label{equ:alp_aux}
 \nu_{\mathcal{C}} (\Omega_{k}^{s+1}) \geq \nu_{\mathcal{C}} (\Omega_{k}^{s})
 \quad \mathrm{and} \quad 
 \nu_{\mathcal{C}} (\Omega_{j}^{s+1}) \leq \nu_{\mathcal C} (\Omega_{j}^{s}) + e_{l+1}
 \end{equation}
 and therefore equations \eqref{equ:folga} and \eqref{equ:alp_aux} imply
 \begin{equation}
 \label{equ:alpha}
 \nu_{\mathcal{C}} (\Omega_{k}^{s+1}) - \nu_{\mathcal{C}} (\Omega_{j}^{s+1}) \geq 0
 \end{equation}
 if $1 \leq j \leq k \leq 2 {\nu}_{l+1}$ and $\delta_{j,k} = 1$. Assume $\delta_{j,k}=0$.  
 Since
 \[ 
   \nu_{\mathcal{C}} (\Omega_{k}^{s+1}) > \nu_{\mathcal{C}} (\Omega_{k}^{s})
   \quad \mathrm{and} \quad 
 \nu_{\mathcal{C}} (\Omega_{j}^{s+1}) = \nu_{\mathcal{C}} (\Omega_{j}^{s}),
\]
Equation \eqref{equ:folga} implies equation \eqref{equ:alpha}. 
Therefore, property \ref{cond3} holds, so that $\mathcal{G}_{l+1}^{s+1}$ is \tidy{}, and we are done. 
 \end{proof}

 Finally, we prove that after a finite number of stages one obtains a terminal family.
 \begin{pro}
 \label{pro:from-l-to-next}
 Suppose that $(\Omega_1, \hdots, \Omega_{2 {\nu}_{l}})$ is a terminal family of level
 $l$. Then there exists $s \geq 1$ such that
 $\mathcal{G}_{l+1}^s=(\Omega_{j}^{s}: j=1,\ldots, 2 {\nu}_{l+1})$ is  terminal
 of level $l+1$, with $ \Omega_{j}^{s} = \Omega_j$ for any
 $1 \leq j \leq 2 {\nu}_{l}$. Moreover, if $l+1 < g$ then
 ${\mathfrak n}^{s} (\beta_{l+1})=\beta_{l+2}$.
 \end{pro}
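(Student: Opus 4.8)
The plan is to iterate the stage‐to‐stage passage of Propositions \ref{pro:aux1} and \ref{pro:auxm}, to keep every intermediate family \emph{good} via Proposition \ref{pro:good-to-good}, and to stop at the first stage where condition \ref{cond4} of Definition \ref{def:nice} is met; I then check that this stage already realizes all of \ref{ta}, \ref{tb}, \ref{tc} of Definition \ref{def:level}. First I would pin down the stopping index combinatorially in the case $l+1<g$. The sequence ${\mathfrak n}^{0}(\beta_{l+1}),{\mathfrak n}^{1}(\beta_{l+1}),\ldots$ runs through the consecutive elements of $\mathcal{E}(\mathcal{C})$ starting at $\beta_{l+1}$. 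Since $e_{l+1}\mid e_j$ for $j\leq l$, the definition of $\mathcal{E}(\mathcal{C})$ shows that the elements of $\mathcal{E}(\mathcal{C})$ in $[\beta_{l+1},\beta_{l+2})$ are exactly the multiples of $e_{l+1}$ there, while $\beta_{l+2}$ is the first element that is \emph{not} a multiple of $e_{l+1}$ (this is precisely the gcd-drop defining a characteristic exponent). Hence there is a unique $s\geq 1$ with ${\mathfrak n}^{s}(\beta_{l+1})=\beta_{l+2}$, and ${\mathfrak n}^{r}(\beta_{l+1})\in (e_{l+1})$ for every $r<s$, i.e. \eqref{cond:good_tech} holds through stage $s-1$.

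Next I would argue by induction on $r$ that $\mathcal{G}_{l+1}^{0},\ldots,\mathcal{G}_{l+1}^{s}$ are all defined and good: Lemma \ref{lem:no3} gives the ordered initial family, Proposition \ref{pro:aux1} handles the delicate passage $0\to 1$, and Proposition \ref{pro:good-to-good} together with \eqref{cond:good_tech} through stage $s-1$ propagates goodness up to $\mathcal{G}_{l+1}^{s}$. In particular each such family is ordered, and its first $2{\nu}_l$ entries coincide with those of $\mathcal{T}_l$ by Property \ref{good1}, which gives the asserted equalities $\Omega_{j}^{s}=\Omega_j$ for $1\leq j\leq 2{\nu}_{l}$.

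Then I would check that $\mathcal{G}_{l+1}^{s}$ is terminal when $l+1<g$. By Properties \ref{good6} and \ref{good7} the indices split as $\mathcal{L}_s\cup\mathcal{U}_s$ with $\{[\nu_{\mathcal C}(\Omega_j^s)]_n:j\in\mathcal{L}_s\}=[(e_{l+1})]_n$ and, because now ${\mathfrak n}^{s}(\beta_{l+1})=\beta_{l+2}$, with $\{[\nu_{\mathcal C}(\Omega_k^s)]_n:k\in\mathcal{U}_s\}=[\beta_{l+2}+(e_{l+1})]_n$. As $\beta_{l+2}\notin(e_{l+1})$ and $e_{l+1}\mid n$, these two cosets of $(e_{l+1})$ in $\mathbb{Z}/n$ are disjoint, so all $2{\nu}_{l+1}$ values are pairwise distinct modulo $n$; this is condition \ref{cond4}, and with orderedness it yields a \nice{} family, i.e. \ref{ta}. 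Condition \ref{tc} follows from Proposition \ref{pro:auxm}: the forms in $\mathcal{L}_s$ lie in $\cup_{\beta'<\beta_{l+2}}\Delta_{\beta',<}$ and those in $\mathcal{U}_s$ lie in $\Delta^{0}_{\beta_{l+2},<}$. For \ref{tb} I would track values: each form is a stage-$0$ form (of value at most $n_{l+1}\overline{\beta}_{l+1}$, using $\nu_{\mathcal C}(\Omega_{2\nu_l})\leq\overline{\beta}_{l+1}$) pushed up by the telescoping amount ${\mathfrak n}^{s}(\beta_{l+1})-\beta_{l+1}=\beta_{l+2}-\beta_{l+1}$, so by \eqref{equ:rec_beta} the largest value is at most $n_{l+1}\overline{\beta}_{l+1}+\beta_{l+2}-\beta_{l+1}=\overline{\beta}_{l+2}$, giving \ref{tb} and completing this case.

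The hard part will be the case $l+1=g$. Here $e_g=1$, so \eqref{cond:good_tech} holds vacuously and every $\mathcal{G}_g^{s}$ is good, but the $\mathcal{L}_s$–$\mathcal{U}_s$ collisions never disappear; the family becomes terminal not because all $2n$ residues separate but because only the first $\min(2n,n)=n$ need to, and there $[(e_g)]_n=\mathbb{Z}/n$ already forces the $n$ forms of $\mathcal{L}_s$ to be pairwise distinct modulo $n$. The point is that the forms in $\mathcal{L}$ freeze, since each is the smaller-valued member of its residue class and the substitution \eqref{eq:main-substitution} only acts on its higher-indexed partner, whose value is pushed upward by at least $e_g=1$ at every stage (Proposition \ref{pro:auxm}); meanwhile $\nu_{\mathcal C}(\Omega_k^s)\geq {\mathfrak n}^{s}(\beta_g)\to\infty$ for $k\in\mathcal{U}_s$. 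Choosing $s$ so large that $\min_{k\in\mathcal{U}_s}\nu_{\mathcal C}(\Omega_k^s)>\max_{j\in\mathcal{L}_s}\nu_{\mathcal C}(\Omega_j^s)$, the ordered family places its $\mathcal{L}_s$ forms in the first $n$ slots, yielding \ref{cond4}, while \ref{tc} again comes from Proposition \ref{pro:auxm}. The two delicate points I expect to have to justify carefully are the freezing of the low forms and the preservation of the sorted order after each substitution, both of which rest on the value estimates \eqref{equ:folga}–\eqref{equ:alpha} established in Proposition \ref{pro:good-to-good}.
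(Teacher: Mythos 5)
Your case $l+1<g$ is correct and is essentially the paper's own argument: the same stopping stage $s_0$ with ${\mathfrak n}^{s_0}(\beta_{l+1})=\beta_{l+2}$, goodness propagated through Lemma \ref{lem:no3}, Propositions \ref{pro:aux1} and \ref{pro:good-to-good} under \eqref{cond:good_tech}, separatedness from the disjointness of the cosets $[(e_{l+1})]_n$ and $[\beta_{l+2}+(e_{l+1})]_n$, condition \ref{tb} from the stage-$0$ bound $n_{l+1}\overline{\beta}_{l+1}$ plus the telescoped increment $\beta_{l+2}-\beta_{l+1}$ and \eqref{equ:rec_beta}, and condition \ref{tc} from property \ref{good6}. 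The genuine gap is in the case $l+1=g$, exactly at the step you flag: the claim that ``the forms in $\mathcal{L}$ freeze.''

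That claim, with the justification you give, is false. The sets $\mathcal{L}_s,\mathcal{U}_s$ are defined stage by stage ($\mathcal{U}_{s+1}$ records which indices were just replaced), and what gets replaced in a colliding pair is the \emph{larger-indexed} form (Definition \ref{def:next}, Proposition \ref{pro:well_def_fam}), which need not be the member lying in $\mathcal{U}_s$: after a substitution the pairing changes (the form replaced at stage $s$ joins the residue class shifted by $+1$ and is paired with that class's standing representative), and the newly created form can perfectly well have the \emph{smaller} index of the new pair. Concretely, for $n=6$, $\beta_1=9$, $\beta_2=10$ one has $\overline{\beta}_2=19$, the level-$2$ initial values are $(6,9,16,19,25,28,35,38,44,47,54,57)$, and after stage $1$ the values are $(6,9,16,19,26,29,35,38,45,48,55,58)$ with $\mathcal{U}_1=\{5,6,9,10,11,12\}$; the stage-$1$ collisions include the pairs $(6,7)$ (values $29,35$, both $\equiv 5$) and $(5,8)$ (values $26,38$, both $\equiv 2$), so the next substitution replaces the forms of indices $7$ and $8$, which belong to $\mathcal{L}_1$. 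Hence $\mathcal{L}$-membership is not hereditary, the values over $\mathcal{L}_s$ need not stabilize, and $\max_{j\in\mathcal{L}_s}\nu_{\mathcal{C}}(\Omega_j^s)$ and $\min_{k\in\mathcal{U}_s}\nu_{\mathcal{C}}(\Omega_k^s)$ could a priori grow together, so the step ``choose $s$ so large that the minimum exceeds the maximum'' is unsupported; the estimates \eqref{equ:folga}--\eqref{equ:alpha} only give orderedness and cannot supply it. The paper sidesteps all of this by never invoking $\mathcal{L}_s,\mathcal{U}_s$ in this case: a form of index $k\leq n$ can only be replaced through a collision with a form of smaller index, hence also $\leq n$, so the first $n$ forms evolve autonomously; the paper then tracks the residue set $\{[\nu_{\mathcal{C}}(\Omega_j^s)]_n : j\leq n\}$ and the collision set $\mathcal{B}_s$, and concludes $\mathcal{B}_{s_0}=\emptyset$ by a finiteness/counting argument on subsets of $\mathbb{Z}/n\mathbb{Z}$. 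Your scheme can be repaired in the same spirit by induction on the index: index $1$ is never replaced; once indices $1,\ldots,k-1$ are frozen (necessarily with pairwise distinct residues), an index $k\leq n$ is replaced only finitely often, since each replacement shifts its residue by $+1$ while fewer than $n$ residues are blocked; after the first $n$ indices freeze their residues are pairwise distinct, which gives condition \ref{cond4}, and then \ref{tb}, \ref{tc} follow as you indicate.
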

 \begin{proof}
 We need to verify the three properties of a terminal family (see Definition
 \ref{def:level}) for some $s \geq 1$. Consider a good family $\mathcal{G}_{l+1}^{s}$ of level $l+1$.
 Since any good family is \tidy{} by Lemma \ref{lem:no3} and Definition \ref{def:good}, it follows that 
 $\mathcal{G}_{l+1}^{s}$ satisfies condition \ref{ta}
 for terminal families if and only if satisfies condition \ref{cond4} in Definition \ref{def:nice}.

 We divide the verification of this and the remaining properties in two cases: $l+1<g$ and $l+1=g$.
 
 \strut
 
 \textbf{Case $l+1<g$}. Let $s_{0}$ be such that ${\mathfrak n}^{s_0}(\beta_{l+1})=\beta_{l+2}$. 
 We can construct good families ${\mathcal G}_{l+1}^{1}, \ldots, {\mathcal G}_{l+1}^{s_{0}}$ by applying 
 $s_0$ times Proposition \ref{pro:good-to-good}. Suppose $s < s_0$. 
 Since ${\mathfrak n}^{s} (\beta_{l+1}) \in (e_{l+1})$,  it follows that 
  \begin{equation*}
  \left\{ [\nu_{\mathcal{C}} (\Omega_j^s)]_{n}\ |\ \Omega_j^s \in \mathcal{G}_{l+1}^s \right\} = [(e_{l+1})]_{n}
 \end{equation*}
 by Property \ref{good7} for $\mathcal{G}_{l+1}^{s}$
 and hence $\mathcal{G}_{l+1}^{s}$ is non-terminal because it is non-\nice{}.

 We claim that ${\mathcal G}_{l+1}^{s_0}$ is a terminal family. Since $\beta_{l+2} \not \in (e_{l+1})$, we deduce that 
 all the classes
 modulo $n$ in $\{[\nu_{\mathcal{C}} (\Omega_j^{s_0})]_n\ |\ j=1,\ldots, 2 {\nu}_{l+1}\}$
 are different by Property \ref{good7}. Therefore,   ${\mathcal G}_{l+1}^{s_0}$  is \nice{}, 
 and thus
 condition \ref{ta} in the definition of \emph{terminal family} \ref{def:level} holds.

 By definition, the maximum ${\mathcal C}$-order of a $1$-form in a terminal family
 $\mathcal{T}_l$ of level $l$ is $\overline{\beta}_{l+1}$. Since a $1$-form $\Omega_k^{0}$
 of stage $0$ and level $l+1$ is of the form $f_{l+1}^{a} \Omega$ where
 $0 \leq a < n_{l+1}$ and $\Omega \in \mathcal{T}_l$, we obtain
 \[
   \nu_{\mathcal{C} } (\Omega_k^{0}) \leq \overline{\beta}_{l+1}+ (n_{l+1} -1)
   \overline{\beta}_{l+1} = n_{l+1} \overline{\beta}_{l+1}
 \]
 for any $1 \leq k \leq 2 {\nu}_{l+1}$. As ${\mathfrak n}^{s_0}(\beta_{l+1})=\beta_{l+2}$, the construction gives
 \[
   \nu_{\mathcal{C}} (\Omega_k^{s_0}) \leq \nu_{\mathcal{C}} (\Omega_k^0) +
   (\beta_{l+2}-\beta_{l+1})
   \leq n_{l+1} \overline{\beta}_{l+1} + (\beta_{l+2}-\beta_{l+1}) =
   \overline{\beta}_{l+2}
 \]
 for any $1 \leq k \leq 2 {\nu}_{l+1}$ by equation \eqref{equ:rec_beta}. Thus
 ${\mathcal G}_{l+1}^{s_0}$ satisfies condition \ref{tb}.
 Finally,
 condition \ref{tc} 
 is a consequence of property \ref{good6}.
 
 \strut

 \textbf{Case $l+1=g$}. Since $e_{g}=1$, we obtain good families ${\mathcal G}_{g}^{1}, {\mathcal G}_{g}^{2}, \ldots$ 
 by iterative applications of Proposition \ref{pro:good-to-good}.
 Let ${\mathcal G}_{g}^{s}$ be a good family.
 Condition \ref{tb} is trivially satisfied.
 Let $\mathcal{B}_s$ be the set of congruences $[x]_n$ modulo
 $n$ such that there exist $\Omega^{s}_j, \Omega_k^s$ with
 $[\nu_{\mathcal{C}} (\Omega_j^s)]_n=[\nu_{\mathcal{C}} (\Omega_k^s)]_{n}=[x]_n$ and
 $j < k \leq n$.  
 We claim that $\mathcal{G}_{g}^{s}$ is terminal if and only if
 $\mathcal{B}_s = \emptyset$. Let us show the non-trivial implication, assume  $\mathcal{B}_s = \emptyset$. 
 Since this implies 
  \begin{equation*}
 \# \left\{ [\nu_{\mathcal{C}} (\Omega_j^{s})]_{n}
 \ |\
 1 \leq j \leq n \right\} = n,
 \end{equation*}
 ${\mathcal G}_{g}^{s}$ is separated.
 Condition \ref{tc} is again a consequence of  property
 \ref{good6}.

  Let us conclude by proving that there is some $s_0 \geq 1$ such that  $\mathcal{B}_{s_{0}} = \emptyset$.
  In this case, ${\mathfrak n}^s(\beta_{g})=\beta_{g}+s$ since $e_g =1$. 
  The construction implies
 \begin{equation*}
 \begin{split}
 \{ [\nu_{\mathcal{C}} (\Omega_j^{s+1})]_{n}\ |\ j=1,\ldots, n\} &=\\
 \{ [\nu_{\mathcal{C}}(\Omega_j^{s})]_n\ &|\ j=1,\ldots,n \} \cup
 \{ [x]_n + [1]_n\ |\ [x]_n\in \mathcal{B}_s\}.
 \end{split}
 \end{equation*}
 A finiteness argument shows that there must be an $s_0 \geq 1$ such that $\mathcal{B}_{s_0}=\emptyset$, and we are done.
 \end{proof}

 \subsection{The terminal family of level $g$}
 Thus, beginning with the $1$-forms $\Omega_1 = dx$ and $\Omega_2 = d f_1$ of level $0$,
 we obtain a terminal family of $1$-forms $\mathbf{\Omega}=(\Omega_1, \hdots, \Omega_{2n})$ of level
 $g$ by applying $g$ times Proposition \ref{pro:from-l-to-next}.  
 The next result implies Theorem \ref{teo:nice} by Remark \ref{rem:nice_suff}. 
 \begin{teo} 
 \label{teo:exists_nice}
 The family $\mathbf{\Omega}=(\Omega_1, \hdots, \Omega_{n})$ is a nice basis for ${\mathcal C}$.
 \end{teo}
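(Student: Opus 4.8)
The plan is to verify the four defining conditions of a \emph{nice basis} (Definition \ref{def:nice}) for the truncation $(\Omega_1,\hdots,\Omega_n)$ of the terminal family $\mathcal{T}_g=(\Omega_1,\hdots,\Omega_{2n})$ of level $g$ just constructed. Since $\mathcal{T}_g$ is terminal, it is a \nice{} family of level $g$ by condition \ref{ta} of Definition \ref{def:level}, and because $\nu_g=n$ we have $\min(2\nu_g,n)=n$; hence conditions \ref{cond1} ($\Omega_j\in\hat{M}_j$), \ref{cond3} (monotonicity of the $\nu_{\mathcal{C}}(\Omega_j)$) and \ref{cond4} (the classes $[\nu_{\mathcal{C}}(\Omega_1)]_n,\hdots,[\nu_{\mathcal{C}}(\Omega_n)]_n$ are pairwise distinct) are all inherited by the first $n$ forms directly from the corresponding properties of $\mathcal{T}_g$, so $(\Omega_1,\hdots,\Omega_n)$ is already a \nice{} family of level $g$ with $r=n$. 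The only substantive point is condition \ref{cond5}, the splitting $\hat{\Omega}(\mathbb{C}^2,0)=\bigoplus_{i=1}^n\mathbb{C}[[x]]\Omega_{i,\Gamma}\oplus\mathcal{I}_{\Gamma}$ for generic $\Gamma$.

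To attack \ref{cond5} I would first reduce the ambient module to the finite one $\mathcal{M}_{2n}=\mathcal{M}_{2\nu_g}$. Since the approximate root $f_{g+1,\Gamma}$ is monic of degree $n$ in $y$ and vanishes on $\Gamma$, the series $\Gamma^{*}y$ is integral of degree $n$ over $\mathbb{C}[[x]]=\mathbb{C}[[t^n]]$, so $\Gamma^{*}(y^j)$ is a $\mathbb{C}[[x]]$-combination of $\Gamma^{*}(y^0),\hdots,\Gamma^{*}(y^{n-1})$ for every $j$. Multiplying these relations by $\Gamma^{*}(dx)$ and by $\Gamma^{*}(dy)$ yields $\Gamma^{*}\hat{\Omega}(\mathbb{C}^2,0)=\Gamma^{*}\mathcal{M}_{2n}$. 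On the other hand, condition \ref{cond1} together with Remark \ref{rem:nice_suff} gives $\mathcal{M}_{2n}=\sum_{j=1}^{2n}\mathbb{C}[[x]]\Omega_{j,\Gamma}$ for generic $\Gamma$, so altogether $\Gamma^{*}\hat{\Omega}(\mathbb{C}^2,0)=\sum_{j=1}^{2n}\mathbb{C}[[x]]\Gamma^{*}\Omega_{j,\Gamma}$.

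Next I would show that the first $n$ forms already suffice, by checking that each is minimal in its class modulo $n$. Writing $o_j=\nu_{\mathcal{C}}(\Omega_j)-1$ for the $t$-order of $\Gamma^{*}\Omega_{j,\Gamma}$, the minimal order of $\Gamma^{*}\hat{\Omega}(\mathbb{C}^2,0)$ in a class $c$ equals $\min\{o_j:[o_j]_n=c,\ 1\leq j\leq 2n\}$, since within a fixed class no $\mathbb{C}[[x]]$-combination can lower the order below that of its generators, and leading terms in distinct classes cannot cancel. By monotonicity (condition \ref{cond3}) the sequence $o_1\leq\hdots\leq o_{2n}$ is non-decreasing, and by condition \ref{cond4} the first $n$ forms realize all $n$ classes modulo $n$; hence for each class the representative of smallest index lies among the first $n$ forms, so $o_i$ is the minimal order in its class for $1\leq i\leq n$. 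These $n$ forms therefore realize the Apery set of $\Lambda_{\Gamma}$, and by the leading-term division argument in $\mathbb{C}[[t]]$ recorded after Definition \ref{def:semimodule} they generate $\Gamma^{*}\hat{\Omega}(\mathbb{C}^2,0)$ over $\mathbb{C}[[x]]$, i.e. $\Gamma^{*}\hat{\Omega}(\mathbb{C}^2,0)=\sum_{i=1}^n\mathbb{C}[[x]]\Gamma^{*}\Omega_{i,\Gamma}$.

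Finally I would lift this equality through $\Gamma^{*}$, whose kernel is $\mathcal{I}_{\Gamma}$ by definition: for $\omega\in\hat{\Omega}(\mathbb{C}^2,0)$ one writes $\Gamma^{*}\omega=\sum_i c_i(t^n)\Gamma^{*}\Omega_{i,\Gamma}$ and subtracts $\sum_i c_i\Omega_{i,\Gamma}$ to land in $\mathcal{I}_{\Gamma}$, giving $\hat{\Omega}(\mathbb{C}^2,0)=\sum_{i=1}^n\mathbb{C}[[x]]\Omega_{i,\Gamma}+\mathcal{I}_{\Gamma}$; and if $\sum_i c_i\Omega_{i,\Gamma}\in\mathcal{I}_{\Gamma}$ then $\sum_i c_i(t^n)\Gamma^{*}\Omega_{i,\Gamma}=0$, which forces all $c_i=0$ because the $\Gamma^{*}\Omega_{i,\Gamma}$ have $t$-orders in pairwise distinct classes modulo $n$ and are thus $\mathbb{C}[[t^n]]$-linearly independent. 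This yields the direct sum of condition \ref{cond5}. I expect the main obstacle to be precisely the identification $\Gamma^{*}\hat{\Omega}(\mathbb{C}^2,0)=\Gamma^{*}\mathcal{M}_{2n}$ feeding the passage from the $2n$ generators down to the minimal $n$: once monotonicity and the covering of all residues modulo $n$ are in hand, minimality in each class is automatic, but one must be careful that the minimal order computed from the generators really is the Apery value of the \emph{full} module $\hat{\Omega}(\mathbb{C}^2,0)$, which is exactly what the reduction to $\mathcal{M}_{2n}$ secures.
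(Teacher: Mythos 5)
Your proposal is correct and follows essentially the same route as the paper's proof: reduce $\hat{\Omega}(\mathbb{C}^2,0)$ modulo $\mathcal{I}_{\Gamma}$ to $\mathcal{M}_{2\nu_g}$ using the degree-$n$ equation $f_{g+1,\Gamma}$ of $\Gamma$ (the paper via Weierstrass division, you via integral dependence of $\Gamma^{*}y$ over $\mathbb{C}[[x]]$, which amounts to the same thing), then invoke equation \eqref{equ:submodules} to obtain the $2n$ generators $\Omega_{1,\Gamma},\hdots,\Omega_{2n,\Gamma}$, and finally use monotonicity together with the pairwise distinct classes modulo $n$ of the first $n$ orders to discard $\Omega_{n+1,\Gamma},\hdots,\Omega_{2n,\Gamma}$. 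Your explicit verification of the directness of the sum, and of the minimality of each $\nu_{\Gamma}(\Omega_{i,\Gamma})$ in its class, spells out what the paper leaves as ``it is easy to see''.
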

 \begin{proof}
 Consider $\Gamma \in {\mathcal C}$.
 Given $\hat{f} \in {\mathbb C}[[x,y]]$, there exists
 $R = \sum_{l=0}^{n-1} \kappa_{l}(x) y^{l}$ in ${\mathbb C}[[x]][y]$ such that
 $\hat{f} - R \in (f_{g+1, \Gamma})$ (cf. subsection \ref{subsec:roots}) by the Weierstrass
 division theorem. It follows that
 \begin{equation}
 \label{equ:weierstrass}
 \hat{\Omega} \cn{2}  =  {\mathcal M}_{2 \nu_{g}} + {\mathcal I}_{\Gamma} 
 \end{equation}
 for any $\Gamma \in {\mathcal C}$, cf. Definitions \ref{def:modules} and \ref{def:ideal_g}.
 
 Assume, from now on,  that $\Gamma \in {\mathcal C}$ is generic. Since 
\[ \sum_{l=1}^{2n} \mathbb{C}[[x]] \Omega_{l, \Gamma}   =  {\mathcal M}_{2 \nu_{g}} \]
by equation \eqref{equ:submodules},
we deduce $\hat{\Omega} \cn{2}  = \sum_{l=1}^{2n} \mathbb{C}[[x]] \Omega_{l, \Gamma} + {\mathcal I}_{\Gamma}$ by equation \eqref{equ:weierstrass}. 
Since $\mathbf{\Omega}$ is separated,  
 it is easy to see that 
 \[ \Omega_{j, \Gamma}  \in  \sum_{l=1}^{n} \mathbb{C}[[x]] \Omega_{l, \Gamma} + {\mathcal I}_{\Gamma} \] 
 for any $n < j \leq 2n$ and hence 
\[ \sum_{l=1}^{n} \mathbb{C}[[x]] \Omega_{l, \Gamma} + {\mathcal I}_{\Gamma} = 
\sum_{l=1}^{2n} \mathbb{C}[[x]] \Omega_{l, \Gamma} + {\mathcal I}_{\Gamma}  = \hat{\Omega} \cn{2}   \]
for generic $\Gamma \in \mathcal{C}$.  Therefore, $({\Omega}_{1}, \ldots, {\Omega}_{n})$ is a nice basis for ${\mathcal C}$. 
 \end{proof}
\begin{pro}
\label{pro:omega_max}
Let $\mathbf{\Omega}=(\Omega_1,\ldots,\Omega_n)$ be a nice basis for ${\mathcal C}$
and $1\leq k \leq n$. Then
\[
 \nu_{\mathcal{C}} (\Omega_k) = \max \{ \nu_{\mathcal{C}} (\Omega') : \Omega' \in \hat{M}_{k} \} .
\]
\end{pro}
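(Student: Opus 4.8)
The plan is to establish the two inequalities separately. The inequality $\nu_{\mathcal{C}}(\Omega_k) \leq \max \{ \nu_{\mathcal{C}}(\Omega') : \Omega' \in \hat{M}_k \}$ is immediate, since $\Omega_k \in \hat{M}_k$ by condition \ref{cond1} in the definition of a \nice{} family. Hence the entire content lies in proving that $\nu_{\mathcal{C}}(\Omega') \leq \nu_{\mathcal{C}}(\Omega_k)$ for \emph{every} $\Omega' \in \hat{M}_k$.

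First I would use the triangular structure encoded in condition \ref{cond1}. Writing $\Omega_j = c_j \overline{\Omega}_j + (\text{an element of } M_{j-1})$ with $c_j \in \mathbb{C}[a_{\beta}]_{\beta \in \mathcal{E}(\mathcal{C})}\setminus\{0\}$ for $1 \leq j \leq k$, the change of basis from $(\overline{\Omega}_1,\ldots,\overline{\Omega}_k)$ to $(\Omega_1,\ldots,\Omega_k)$ is lower triangular with diagonal $c_1,\ldots,c_k$. For generic $\Gamma$ these constants are nonzero, so the matrix is invertible over $\mathbb{C}[[x]]$ and $M_{k,\Gamma} = \bigoplus_{i=1}^{k}\mathbb{C}[[x]]\Omega_{i,\Gamma}$. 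Since $\hat{M}_k \subset M_k$, any $\Omega' \in \hat{M}_k$ admits, for generic $\Gamma$, a unique expansion $\Omega'_\Gamma = \sum_{i=1}^{k} g_i(x)\,\Omega_{i,\Gamma}$ with $g_i \in \mathbb{C}[[x]]$. Comparing the coefficients of $\overline{\Omega}_k$ — to which, among $\Omega_1,\ldots,\Omega_k$, only $\Omega_k$ contributes — yields $g_k = c/c_k$, where $c \in \mathbb{C}[a_{\beta}]_{\beta \in \mathcal{E}(\mathcal{C})}\setminus\{0\}$ is the (necessarily $x$-independent) leading coefficient of $\Omega'$; in particular $g_k$ is a nonzero constant and $\mathrm{ord}_x(g_k) = 0$.

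Next I would compute the order. Pulling back gives $t\,\Gamma^{*}\Omega'_\Gamma = \sum_{i=1}^{k} g_i(t^n)\, t\,\Gamma^{*}\Omega_{i,\Gamma}$, and each nonzero summand has $t$-order $n\,\mathrm{ord}_x(g_i) + \nu_{\mathcal{C}}(\Omega_i)$, which is congruent to $\nu_{\mathcal{C}}(\Omega_i)$ modulo $n$. The key input is condition \ref{cond4}: the values $\nu_{\mathcal{C}}(\Omega_1),\ldots,\nu_{\mathcal{C}}(\Omega_n)$ lie in pairwise distinct classes modulo $n$, and since $k \leq n$ the $k$ summands have orders in distinct residue classes. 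Consequently their lowest-order terms cannot cancel, exactly one summand realises the minimal order, and $\nu_{\mathcal{C}}(\Omega') = \min_{i:\, g_i \neq 0}\bigl(n\,\mathrm{ord}_x(g_i) + \nu_{\mathcal{C}}(\Omega_i)\bigr)$. As $\mathrm{ord}_x(g_k) = 0$, the term $i=k$ contributes $\nu_{\mathcal{C}}(\Omega_k)$ to this minimum, whence $\nu_{\mathcal{C}}(\Omega') \leq \nu_{\mathcal{C}}(\Omega_k)$, as required.

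The step I expect to be the main obstacle is making the order computation genuinely rigorous and uniform in $\Gamma$: one has to argue that the expansion and the resulting minimum formula hold for generic $\Gamma$ (so that the relevant value is indeed $\nu_{\mathcal{C}}$), and that distinct residues modulo $n$ force a \emph{unique} summand to attain the minimal $t$-order, ruling out cancellation of leading terms. A secondary point worth stating explicitly is that the leading coefficient $c$ of an element of $\hat{M}_k$ is $x$-independent, a consequence of the definition of $\hat{M}_k$ placing it in $\mathbb{C}[a_{\beta}]_{\beta \in \mathcal{E}(\mathcal{C})}$ rather than $\mathbb{C}[a_{\beta}]_{\beta \in \mathcal{E}(\mathcal{C})}[[x]]$; this is what guarantees $\mathrm{ord}_x(g_k)=0$ and hence that the maximum is attained precisely at $\Omega_k$.
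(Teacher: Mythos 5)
Your proof is correct and takes essentially the same route as the paper's: both use the triangular structure of condition \ref{cond1} to expand an arbitrary $\Omega' \in \hat{M}_k$ in terms of $\Omega_1,\ldots,\Omega_k$ with an $x$-order-zero, nonzero coefficient on $\Omega_k$, and then invoke the pairwise-distinct residues modulo $n$ from condition \ref{cond4} to rule out cancellation, so that $\nu_{\mathcal{C}}(\Omega')$ equals the minimum of the orders of the summands and is therefore at most $\nu_{\mathcal{C}}(\Omega_k)$. The only difference is cosmetic: the paper clears denominators by a polynomial $P \in \mathbb{C}[a_{\beta}]_{\beta \in \mathcal{E}(\mathcal{C})} \setminus \{0\}$ so as to remain in the ring of parametrized forms, whereas you evaluate at a generic $\Gamma$ and invert the triangular change-of-basis matrix over $\mathbb{C}[[x]]$.
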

\begin{proof}
 By construction, given $\Omega'\in \hat{M}_k$ there exists $P \in \mathbb{C}{[a_{\beta}]}_{\beta \in \mathcal{E}(\mathcal{C})} \setminus \{0\}$
 such that $P \Omega'$ is of the form
 \[ c_k \Omega_k + \sum_{j=1}^{k-1} c_{j} \Omega_j , \] 
 where $c_k \in \mathbb{C}{[a_{\beta}]}_{\beta \in \mathcal{E}(\mathcal{C}) } \setminus \{0\}$ and 
 $c_j \in \mathbb{C}{[a_{\beta}]}_{\beta \in \mathcal{E}(\mathcal{C})} [[x]]$ for any $1 \leq j < k$.
 Since $\nu_{\mathcal{C}} (c_b \Omega_b) - \nu_{\mathcal{C}} (c_a \Omega_a) \not \in (n)$ for all
 $1 \leq a < b \leq k$, it follows that
\[ \nu_{\mathcal{C}} (\Omega') = \nu_{\mathcal{C}} (P \Omega') = 
 \min_{1 \leq j \leq k, \ c_j \not \equiv 0} \nu_{\mathcal{C}} (c_j \Omega_j) \leq \nu_{\mathcal{C}} (c_k \Omega_k) = \nu_{\mathcal{C}} (\Omega_k) . \]
Since $\Omega_k \in \hat{M}_k$, we are done.
\end{proof}
\begin{rem}
The analogue result also holds for terminal families of any level.
\end{rem}
\begin{cor}
\label{cor:ind_ord}
The $n$-tuple $( \nu_{\mathcal{C}} (\Omega_1), \ldots, \nu_{\mathcal{C}} (\Omega_n))$ does not depend on the nice basis 
$(\Omega_1, \ldots, \Omega_n)$ for $\mathcal{C}$. 
\end{cor}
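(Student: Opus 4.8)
The plan is to derive this directly from Proposition \ref{pro:omega_max}, whose content is precisely that each value $\nu_{\mathcal{C}}(\Omega_k)$ of a nice basis is an \emph{intrinsic} quantity attached to the module $\hat{M}_k$ rather than to the individual $1$-form $\Omega_k$. Concretely, that proposition asserts
\[
 \nu_{\mathcal{C}}(\Omega_k) = \max\{ \nu_{\mathcal{C}}(\Omega') : \Omega' \in \hat{M}_k \},
\]
and the right-hand side manifestly makes no reference to the chosen basis: the module $\hat{M}_k$ is defined solely from the elementary forms $\overline{\Omega}_1, \ldots, \overline{\Omega}_k$ and the generic curve $\Gamma \in \mathcal{C}$ (cf. the definition on page \pageref{def:Mj}). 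So the whole burden of the corollary has already been discharged by the maximal-contact computation in Proposition \ref{pro:omega_max}.

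First I would fix two nice bases $(\Omega_1, \ldots, \Omega_n)$ and $(\Omega_1', \ldots, \Omega_n')$ for $\mathcal{C}$ and observe that the matching of indices is built into the definition of a nice family: condition \ref{cond1} in Definition \ref{def:nice} forces $\Omega_k \in \hat{M}_k$ and $\Omega_k' \in \hat{M}_k$ for every $1 \leq k \leq n$, so that the two $k$-th elements live in the \emph{same} module $\hat{M}_k$. Then I would apply Proposition \ref{pro:omega_max} to each basis separately at the common index $k$ to obtain
\[
 \nu_{\mathcal{C}}(\Omega_k) = \max\{ \nu_{\mathcal{C}}(\Omega') : \Omega' \in \hat{M}_k \} = \nu_{\mathcal{C}}(\Omega_k').
\]
Since $k$ is arbitrary, the two $n$-tuples coincide componentwise, which is the assertion.

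There is no genuine obstacle remaining at this stage, precisely because all the real work was done in establishing Proposition \ref{pro:omega_max}. The only point worth double-checking is the bookkeeping of the indices: one must make sure the \textbf{minimality} property genuinely places the $k$-th basis element into $\hat{M}_k$ (and not merely into the larger module $M_k$), so that the two maxima being compared are taken over \emph{identical} modules. This is exactly what condition \ref{cond1} guarantees, and it is what lets the comparison go through without any residual dependence on the particular construction used to produce each basis.
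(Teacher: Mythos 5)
Your proposal is correct and follows exactly the paper's (implicit) argument: the corollary is stated as an immediate consequence of Proposition \ref{pro:omega_max}, since the quantity $\max\{\nu_{\mathcal{C}}(\Omega') : \Omega' \in \hat{M}_k\}$ is defined purely in terms of the modules $\hat{M}_k$, which are fixed independently of any basis. Your extra remark that condition \ref{cond1} places the $k$-th element of any nice basis in $\hat{M}_k$ (not merely $M_k$) is precisely the bookkeeping that makes the comparison of the two maxima legitimate.
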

\begin{rem}
 Consider a nice basis $(\Omega_1, \ldots, \Omega_n)$ for $\mathcal{C}$.
 Let $l < g$. We have $d f_{l+1} \in \hat{M}_{2 \nu_l} \cap  \Delta_{\beta_{l+1}, <}^{0}$ and
 $\nu_{\mathcal{C}} (d f_{l+1} ) = \overline{\beta}_{l+1}$. Note that 
 $\nu_{\mathcal{C}} (\Omega_{2 \nu_{l}}) \leq \overline{\beta}_{l+1}$ by the properties of the
 $1$-forms of level $l$ and Corollary \ref{cor:ind_ord}. Since
 \[
 \nu_{\mathcal{C}} (\Omega_{2 \nu_{l}}) = \max \{ \nu_{\mathcal{C}} (\Omega' ): \Omega' \in
 \hat{M}_{2 \nu_{l}} \} \geq \nu_{\mathcal{C}} (d f_{l+1} ) = \overline{\beta}_{l+1}
 \] by
 Proposition \ref{pro:omega_max}, we obtain
 $\nu_{\mathcal{C}} (\Omega_{2 \nu_{l}}) = \overline{\beta}_{l+1}$. Thus, we could replace
 $\Omega_{2 \nu_{l}}$ with $d f_{l+1}$ in the nice basis.
\end{rem}
\section{Case $E \neq \emptyset$} \label{sec:E-non-empty}
We now study the equisingularity class $\mathcal{C}$ of a pair $(\Gamma, E)$ where
$\Gamma$ is a germ of irreducible curve and $E \subset \{xy=0\}$ is a nonempty normal
crossings divisor.  We give a procedure to construct a basis of the module of
K\"ahler differentials preserving $E$ on any generic element in ${\mathcal C}$ which will
allow us to provide a constructive proof of Genzmer's formula \cite{Genzmer-moduli-2020} for the dimension of the
moduli space. This procedure is quite similar to the one for $E=\emptyset$, so that we shall only indicate the differences.

\subsection{Setting for the case $E \neq \emptyset$}
In order to study $\Lambda_{\mathcal C}^E$ (Definition \ref{def:class}), we need to
adapt the technique of the preciding section, as the previously defined $\Omega_1$ and
$\Omega_2$ no longer need to leave $E$ invariant.
\begin{defi}\label{def:adapted-levels}
Let $E = \{x^{\delta_x} y^{\delta_y}=0\}$ be a non-empty normal-crossings divisor
where $\delta_x, \delta_y \in \{0,1\}$.  We define:
\begin{itemize}
\item If $E = \{x=0\}$, then
\begin{equation*}
\overline{\Omega}_{1} = dx,  \quad \overline{\Omega}_2=x d y.
\end{equation*}
\item Else $\{ y=0 \} \subset E$, and then
\begin{equation*}
\overline{\Omega}_1 = x^{\delta_x}dy,\quad \overline{\Omega}_2 = ydx.
\end{equation*}
\end{itemize}
Given $\overline{\Omega}_1$ and $\overline{\Omega}_2$, we define for $j\geq 1$:
\begin{equation*}
\overline{\Omega}_{2j+1} = y^j\overline{\Omega}_1, \quad
\overline{\Omega}_{2j+2} = y^j\overline{\Omega}_2.
\end{equation*}
Finally, $M_{0}^{E}=\{0\}$ and, recursively:
\begin{equation*}
M_{j+1}^{E}=\mathbb{C}[a_{\beta}]_{\beta\in\mathcal{E}(\mathcal{C})}[[x]] \overline{\Omega}_{j+1}
+ M_{j}^{E},\
\hat{M}_{j+1}^{E}=(\mathbb{C}[a_{\beta}]_{\beta\in\mathcal{E}(\mathcal{C})}
\setminus \{0\})
\overline{\Omega}_{j+1} + M_{j}^{E}.
\end{equation*}
\end{defi}
\begin{defi}
The set of $1$-forms of $E$-level $l$ is now defined as $M_{2 {\nu}_l}^{E}$ for
$l\in\{0,\ldots,g\}$. Let $\mathcal{I}_{E,\Gamma}$ be the set of $1$-forms
$\Omega\in \hat{\Omega}_E(\mathbb{C}^2,0)$ such that $\Gamma^{\ast}\Omega\equiv 0$. 
\end{defi}
We need to refine the definition of \emph{\tidy} and \emph{\nice} family:
\begin{defi}\label{def:E-tidy-nice}
A family $\mathbf{\Omega}=(\Omega_1,\ldots,\Omega_{r})$ in
$\Omega_{E,\mathcal{C}}(\mathbb{C}^{2},0)$, with $r \in \{ 2 \nu_{l}, \min (2 \nu_{l}, n) \}$,  
is \emph{$E$-\tidy{}} of level $0\leq l \leq g$
for $\Gamma$ if:
\begin{itemize}
        \item[$E$-(i)] $\Omega_j\in \hat{M}_{j}^{E}$ for $1\leq j \leq r$;
\item[$E$-(ii)]
$\nu_C(\Omega_k) \geq \nu_C(\Omega_j)$ for
$1\leq j \leq k \leq r$.
\end{itemize}
Moreover, $\mathbf{\Omega}$ is \emph{$E$-\nice{}} of level $l$ if it is \tidy{} and
\begin{itemize}
\item[$E$-(iii)]
$\nu_{\mathcal{C}}(\Omega_1),\ldots,\nu_{\mathcal{C}}(\Omega_{\min(2 \nu_{l},n)})$
define pairwise different classes modulo $n$.
\end{itemize}
Finally, for $l=g$ and $r=n$, $\mathbf{\Omega}$ is an \emph{$E$-nice  basis} if, moreover,
\begin{itemize}
\item[$E$-(iv)] $\hat{\Omega}_{E}(\mathbb{C}^2,0)=
\mathbb{C}[[x]]\Omega_{1,\Gamma}+\cdots+\mathbb{C}[[x]]\Omega_{n,\Gamma}+
\mathcal{I}_{E,\Gamma}$ for generic $\Gamma\in \mathcal{C}$.
\end{itemize}
      \end{defi}
      Notice that the only modification is to require that $E$ is invariant, in $E$-(i) and $E$-(ii).

      The definitions of \emph{terminal} and \emph{good} need also adapting to $E$. For technical reasons which will become clear shortly, instead of starting the construction at level $l=0$, we need to set an \emph{initial level} $l_0$, with $l_0\in\{0,1\}$, and we assume, for the moment, that there exists, for that $l_0$, a family satisfying the following conditions:
\begin{defi}\label{def:E-terminal}
A family
$\mathcal{T}_l= (\Omega_1,\ldots,\Omega_{2 {\nu}_l}) \subset
\Omega_{E,\mathcal{C}}(\mathbb{C}^2,0)$, with $l_0 \leq l \leq g$, is \emph{$E$-terminal} if
\begin{itemize}
\item[$E$-(a)] $\mathcal{T}_l$ is an $E$-\nice{} family of level $l$;
\item[$E$-(b)] On one hand
$\nu_{\mathcal{C}}(\Omega_{2 {\nu}_{l_{0}}})\leq n  + \overline{\beta}_{l_0+1}$ if $l_0 < g$, 
and on the other,
\begin{equation*}
\nu_{\mathcal{C}}(\Omega_{2 {\nu}_{l}}) -
\nu_{\mathcal{C}}(\Omega_{2 {\nu}_{l-1}}) \leq
\overline{\beta}_{l+1}-\overline{\beta}_l, \ \ \mathrm{if} \ l_0 <  l < g,
\end{equation*}
(this inequality the same as in Definition \ref{def:level} except for the initial level
$l_0$).
\item[$E$-(c)] The same as (c) in Definition \ref{def:level}:   $\mathcal{T}_{l} \subset \cup_{\beta' \in {\mathcal E}({\mathcal C})} \Delta_{\beta', <}$.
Moreover, we have
$\mathcal{T}_{l}  \subset \Delta_{\beta_{l+1}, <}^{0} \cup \cup_{\beta' \in {\mathcal E}({\mathcal C}), \ \beta' < \beta_{l+1}} \Delta_{\beta', <}$
for $0 \leq l < g$. 
\end{itemize}
      \end{defi}      
      \begin{defi}
        Given an $E$-terminal family $\mathcal{T}_l= (\Omega_1,\ldots,\Omega_{2 {\nu}_l})$ of level $0 \leq l < g$, we define 
        an $E$-initial family ${\mathcal G}_{l+1}^{0} = (\Omega_{1}^{0}, \hdots, \Omega_{2 \nu_{l+1}}^{0})$  of stage $0$ (in level $l+1$) 
        by means of equation \eqref{equ:stage}, exactly as in the case $E=\emptyset$.  
      \end{defi}
Let $\mathcal{G}_{l+1}^s=(\Omega_1^s,\ldots,\Omega^{s}_{2 {\nu}_{l+1}})$ be given with 
$\Omega_i^s\in \Omega_{E,\mathcal{C}}(\mathbb{C}^2,0)$, for $0\leq l <g$ and $s\geq 1$.

\begin{defi}\label{def:E-good}
  The family $\mathcal{G}_{l+1}^s$ is $E$-\emph{good} (in level $l+1$, but we omit this)
  if the following properties hold:
  \begin{enumerate}[label=(\Roman*)]
  \item \label{E-good1} The first $2 {\nu}_{l}$ elements form an $E$-terminal family
    of level $l$. That is, there is a terminal family ${\mathcal T}_l = (\Omega_1, \ldots, \Omega_{2 {\nu}_l})$ of level $l$,
    such that $\Omega_{j}^{s} = \Omega_j$ for any $1 \leq j \leq 2 {\nu}_{l}$.
  \item \label{E-good2} $(\Omega_{1}^{s}, \hdots, \Omega_{2 {\nu}_{l+1}}^{s})$ is
    $E$-\tidy{} of level $l+1$.
  \item \label{E-good-rest} Finally, conditions (III) and (IV)   for good families hold
    for $\mathcal{G}^s_{l+1}$ (Definition \ref{def:good}).
\end{enumerate}
\end{defi}
\subsection{The method for the case $E \neq \emptyset$}
With the above definitions, all the results in subsection \ref{subsec:stos+1} carry over
adding the prefix $E$ to each \tidy{}, \nice{}, terminal and good family, as well as for
a nice basis, as they are only dependent on the notion of leading variable, which has not
changed, and the conditions we have adapted have no bearing on the computations modulo
$n$. However, we need to bootstrap the process at level $l_0$, which is the foundational
step and is still missing. This is what we carry out in what follows.

When $E=\emptyset$, the terminal family of level $0$ was ${\mathcal T}_0 = ( dx, d f_1 )$. From
this family, applying the results in subsections \ref{subsec:ll+1} and \ref{subsec:stos+1}
we ended up with the terminal family ${\mathcal T}_g$.

When $E \neq \emptyset$, the construction is totally similar: given an $E$-terminal family
$\mathcal{T}_l$ of level $l_0 \leq l<g$, we define
${\mathcal G}_{l+1}^{0} = (\Omega_1^0,\ldots,\Omega^{0}_{2 {\nu}_{l+1}})$ as in Definition \ref{def:set-stage0}.
We have
\[
  \nu_{\mathcal C} (\Omega_{2 {\nu}_l +1}^{0}) \geq
  \nu_{\mathcal C} (f_{l+1} dx)  =  n  + \overline{\beta}_{l+1}
  \geq 
  \nu_{\mathcal C} (\Omega_{2 {\nu}_l})  = \nu_{\mathcal C} (\Omega_{2 {\nu}_l}^{0})
\]
and analogously we obtain
$ \nu_{\mathcal C} (\Omega_{2 k {\nu}_l +1}^{0}) \geq \nu_{\mathcal C} (\Omega_{2 k {\nu}_l}^{0})$ 
for any $k \geq 1$.  As a consequence, ${\mathcal G}_{l+1}^{0}$  is an \tidy{} family. 
Then we just apply the results in
subsection \ref{subsec:stos+1} to obtain an $E$-terminal family of level $l+1$. By
repeating this process, we end up with an $E$-terminal family of level $g$ and hence an
$E$-nice basis.
\begin{rem}
\label{rem:suffice}
Thus, we only need to find an $E$-terminal family of some level $l_0$. Actually, it
suffices to find an $E$-good family ${\mathcal G}_{l_0}^{1}$ (with $E$-(I) being empty)
since in such a case the results in subsection \ref{subsec:stos+1} provide an $E$-terminal
family of level $l_0$.
\end{rem} 
The rest of this section is devoted to providing this initial $E$-terminal family. There are three cases to consider: $\delta_y=0$, so that $E=\{x=0\}$; $\delta_y=1$ with $\beta_0\in (n)$; and $\delta_y=1$ with $\beta_0\not\in(n)$.
\subsubsection{Case $E=\{x=0\}$}
In this case, we set:
\begin{equation*}
\Omega_1= dx,\quad
\Omega_2 = xd f_1.
\end{equation*}
which satisfy $\nu (\Omega_1)=0$, $\nu_{\mathcal C}(\Omega_1)=n$, $\nu (\Omega_2)=1$,
$\nu_{\mathcal C}(\Omega_2)=\beta_1+n$.  The family $\mathcal{T}_0= (\Omega_1,\Omega_2 )$
is clearly $E$-terminal of level $l_0=0$ and thus, we obtain an $E$-nice basis $\mathbf{\Omega}$ by Remark \ref{rem:suffice}.
\subsubsection{Case $\delta_y =1$ and $\beta_0 \in (n)$}
\label{subsub:deltay}
Notice that $\beta_0\in (n)$ implies $\beta_0<\beta_1$. We have $\overline{\Omega}_1 = x^{\delta_x} dy$ and $\overline{\Omega}_2 = y dx$ and hence
$\nu_C(\overline{\Omega}_1) \in (n)$, $\nu_C(\overline{\Omega}_2) \in (n)$.  We
set $l_0=0$, $\Omega_{1}^{0} = \overline{\Omega}_1$ and 
$\Omega_{2}^{0} = \overline{\Omega}_2$. Applying
Lemma \ref{lem:non_dom}, with $\beta_{l+1} = \beta_0$,  
we obtain
$\Omega_{1}^{1} = \Omega_{1}^{0}$ and $\Omega_{2}^{1} \in \Delta_{{\mathfrak n}(\beta_0), <}^{0}$
where $\nu_{\mathcal C} (\Omega_{2}^{1}) = n + {\mathfrak n}(\beta_0)$.  The family
${\mathcal G}_{0}^{1} = ( \Omega_{1}^{1}, \Omega_{2}^{1} )$ is an $E$-good family of
level $0$.  Moreover, it satisfies $\nu (\Omega_{1}^{1})=\delta_x$ and
$\nu (\Omega_{2}^{1})=1$.  Remark \ref{rem:suffice} guarantees that we can find an $E$-nice basis starting with $\mathcal{G}_0^1$. 
%
\subsubsection{Case $\delta_y =1$ and $\beta_0 \not \in (n)$}
\label{subsub:notinn}
Notice that $\beta_0 = \beta_1$ in this case. Recall Definition
\ref{def:adapted-levels} for $\overline{\Omega}_1, \overline{\Omega}_2$ and
$\overline{\Omega}_j$. As $\overline{\Omega}_1 = x^{\delta_x} dy$ and
$\overline{\Omega}_2 = y dx$, we have
$[\nu_C(\overline{\Omega}_1)]_n = [\nu_C(\overline{\Omega}_2)]_n = [\beta_1]_n$, so that
$\{ \overline{\Omega}_{1}, \overline{\Omega}_{2} \}$ is not an $E$-\nice{} family of level
$0$. Using Lemma \ref{lem:non_dom} and Proposition \ref{pro:aux1} we obtain
$\Omega_{1}^{1} = \overline{\Omega}_{1}$ and
$\Omega_{2}^{1} \in \Delta_{{\mathfrak n}(\beta_1), <}^{0}$ with
$\nu_{\mathcal C} (\Omega_{2}^{1}) = n + {\mathfrak n}(\beta_1)$. This family
${\mathcal G}_{0}^{1} = ( \Omega_{1}^{1}, \Omega_{2}^{1} )$ is $E$-\nice{} of level $0$
but does not satisfy $E$-(b) or $E$-(c) in Definition \ref{def:E-terminal}, so it is not
an $E$-terminal family of level $0$.  This is why, in this case, we need to start at
$l_0 =1$. By Remark \ref{rem:suffice}, we just need to provide an $E$-good family of level
$1$.

Consider the $E$-\tidy{} family
${\mathcal G}_{1}^{0} = ( \Omega_{1}^{0}, \hdots, \Omega_{2 {\nu}_1}^{0} )$ given by
$\Omega_{j}^{0} = \overline{\Omega}_j$ for $1 \leq j \leq 2 {\nu}_1$, which satisfies
$\nu (\Omega_{j}^{0}) = \lfloor \frac{j + \delta_x}{2} \rfloor$ for
$1 \leq j \leq 2 {\nu}_1$.  Note that
\begin{equation}
  \label{equ:level1} 
  [\nu_C(\overline{\Omega}_{2j+1})]_n =  [\nu_C(\overline{\Omega}_{2j+2})]_n
  =  [(j+1)\beta_1]_n. 
\end{equation}
if $1 \leq 2j+1 < 2j+2 \leq 2 {\nu}_1$. All congruences
$[\beta_1]_n, [2 \beta_1]_n, \hdots, [\nu_1 \beta_1]_n$ are pairwise different, so the unique
situation in which two $1$-forms have congruent ${\mathcal C}$-orders is described in
equation \eqref{equ:level1}.  Since $dy \in \tilde{\Delta}_{\beta_1, <}^{0}$, we have
$\Omega_{2j+1}^{0} \in \tilde{\Delta}_{\beta_1, <}$ by Corollary \ref{cor:f_omega}. 
We also have $\Omega_{2j+2}^{0} \in \tilde{\Delta}_{\beta_1, \leq}$ by Corollary \ref{cor:f_dominates_omega}.
We apply Lemma \ref{lem:non_dom} and Proposition
\ref{pro:aux1} to obtain
\[
  \Omega_{2j+1}^{1} = \Omega_{2j+1}^{0} \in \Delta_{\beta_1, <}   \quad
  \mathrm{and} \quad
  \nu_{\mathcal C} (\Omega_{2j+1}^{1}) = n \delta_x + (j+1) \beta_1
\] 
for $2j+1 \leq 2 {\nu}_1$; we also get 
\[
  \Omega_{2j+2}^{1} \in \Delta_{{\mathfrak n}(\beta_1), <}^{0}, \ \
  \nu (\Omega_{2j+2}^{1}) = j+1 \
  \ \mathrm{and} \ \
  \nu_{\mathcal C} (\Omega_{2j+2}^{1}) = n + j \beta_1 + {\mathfrak n}(\beta_1)
\]
for $2j+2 \leq 2 {\nu}_1$.  As a consequence, the family
${\mathcal G}_{1}^{1} = ( \Omega_{1}^{1}, \hdots, \Omega_{2 {\nu}_1}^{1} )$ is
$E$-good of level $1$ (with $E$-(I) being empty). Note that ${\mathcal L}_1$ is the subset
of odd numbers of $\{1, \ldots, 2 \nu_1\}$ whereas ${\mathcal U}_1$ is its subset of even numbers.
Moreover, we obtain 
$\nu (\Omega_{j}^{1}) = \lfloor \frac{j + \delta_x}{2} \rfloor$ for
$1 \leq j \leq  2 {\nu}_1$ by construction. As stated above, Remark \ref{rem:suffice} ends our
argument.

\subsection{Existence of a nice $E$-basis}
The previous discussion provides the proof for the next result.
\begin{teo}\label{teo:nice-E-basis}
  In any case, starting with the $E$-initial family of level
  $l_0\in \left\{ 0,1 \right\}$, the procedure of section \ref{sec:terminal} provides an
  $E$-terminal family and hence a nice $E$-basis
  $\mathbf{\Omega}=(\Omega_1,\ldots,\Omega_n)$ for the equisingularity class
  ${\mathcal C}$ of $(\Gamma, E)$.
\end{teo}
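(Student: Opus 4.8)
The plan is to assemble the two components already prepared in this section: the case-by-case bootstrapping of an initial family, and the machinery of subsection~\ref{subsec:stos+1}, which (as remarked) transfers verbatim to the $E$-setting, since it depends only on leading variables and on congruences modulo $n$, and the $E$-invariance conditions $E$-(i), $E$-(ii) affect neither. Concretely, I would first invoke Remark~\ref{rem:suffice}: it suffices to produce, for the suitable $l_0\in\{0,1\}$, an $E$-good family $\mathcal{G}_{l_0}^1$ (with condition~\ref{E-good1} empty). Granting such a family, the $E$-analogues of Propositions~\ref{pro:good-to-good} and~\ref{pro:from-l-to-next} yield an $E$-terminal family of level $l_0$, and a further $g-l_0$ applications of Proposition~\ref{pro:from-l-to-next} produce an $E$-terminal family $\mathbf{\Omega}=(\Omega_1,\ldots,\Omega_{2n})$ of level $g$; here the only genuine difference from the case $E=\emptyset$ is the shifted inequality in condition $E$-(b) of Definition~\ref{def:E-terminal} at the initial level, which is precisely what the bootstrapping is tailored to.

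Second, I would supply the initial $E$-good family through the three cases treated in the preceding subsubsections. When $E=\{x=0\}$, the family $(dx,\,x\,df_1)$ is at once $E$-terminal of level $0$. When $\delta_y=1$ and $\beta_0\in(n)$, a single application of Lemma~\ref{lem:non_dom} to $(x^{\delta_x}dy,\,y\,dx)$ gives an $E$-good family of level $0$ (subsection~\ref{subsub:deltay}). When $\delta_y=1$ and $\beta_0\notin(n)$, one starts at $l_0=1$ with $(\overline{\Omega}_1,\ldots,\overline{\Omega}_{2\nu_1})$ and applies Lemma~\ref{lem:non_dom} together with Proposition~\ref{pro:aux1} to reach $\mathcal{G}_1^1$ (subsection~\ref{subsub:notinn}). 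In each case the verification of $E$-(a), $E$-(b), $E$-(c) is exactly the chain of order computations displayed there; the only point specific to $E$ is that $\overline{\Omega}_j\in\hat{M}_j^E$, i.e. that the base forms $\overline{\Omega}_1,\overline{\Omega}_2$ of Definition~\ref{def:adapted-levels} preserve $E$, which holds by their very choice.

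Third, at level $g$ I would argue, exactly as in Theorem~\ref{teo:exists_nice}, that the first $n$ forms of the $E$-terminal family constitute a nice $E$-basis, which amounts to verifying condition $E$-(iv). One first establishes the $E$-decorated Weierstrass identity $\hat{\Omega}_E(\mathbb{C}^2,0)=M_{2\nu_g}^E+\mathcal{I}_{E,\Gamma}$ for generic $\Gamma$; then, since by $E$-(i) (as in Remark~\ref{rem:nice_suff}) the $2n$ forms already span $M_{2\nu_g}^E$ over $\mathbb{C}[[x]]$ and the classes $[\nu_{\mathcal{C}}(\Omega_1)]_n,\ldots,[\nu_{\mathcal{C}}(\Omega_n)]_n$ are pairwise distinct, each $\Omega_{j,\Gamma}$ with $n<j\leq 2n$ can be reduced modulo $\mathcal{I}_{E,\Gamma}$ into $\sum_{l=1}^{n}\mathbb{C}[[x]]\Omega_{l,\Gamma}$ using the form among the first $n$ sharing its class modulo $n$, collapsing the $2n$ generators to $n$.

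The step I expect to require the most care is the $E$-version of the Weierstrass identity. Writing $\omega=A\,dx+B\,dy\in\hat{\Omega}_E(\mathbb{C}^2,0)$, one cannot directly divide $A$ and $B$ by the monic-in-$y$ approximate root $f_{g+1,\Gamma}$ and expect the remainder to preserve $E$: the correct move is to first peel off the factor forced by invariance (a factor $y$ from $A$ when $\{y=0\}\subset E$, a factor $x$ from $B$ when $\{x=0\}\subset E$), then divide the quotient by $f_{g+1,\Gamma}$, and reassemble. The term carrying $f_{g+1,\Gamma}$ lies in $\mathcal{I}_{E,\Gamma}$ because $f_{g+1,\Gamma}\circ\Gamma\equiv 0$, and it retains the divisibility by $x$ or $y$, hence stays $E$-invariant; the remainder lands in $M_{2\nu_g}^E$ precisely because $\overline{\Omega}_1,\overline{\Omega}_2$ already encode those divisibilities. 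Checking that this reassembly respects $E$-invariance in all three configurations of $E$ is the one genuinely new computation relative to the case $E=\emptyset$.
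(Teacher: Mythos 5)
Your proposal is correct and follows essentially the same route as the paper: bootstrap an initial $E$-good (or $E$-terminal) family of level $l_0$ in the three cases, invoke Remark \ref{rem:suffice} together with the $E$-versions of the results of subsection \ref{subsec:stos+1} to reach an $E$-terminal family of level $g$, and then extract a nice $E$-basis from its first $n$ elements. Your final paragraph, spelling out the $E$-invariant Weierstrass division (peeling off the factor $x$ or $y$ forced by $E$-invariance before dividing by $f_{g+1,\Gamma}$ and checking that the discarded term lies in $\mathcal{I}_{E,\Gamma}$), is a correct and worthwhile filling-in of a step the paper leaves implicit in its assertion that the $E=\emptyset$ arguments ``carry over''.
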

\begin{rem}
The analogue of Proposition \ref{pro:omega_max} holds in the general case, i.e. 
\[
  \nu_{\mathcal{C}} (\Omega_k) =
  \max \{ \nu_{\mathcal{C}} (\Omega') : \Omega' \in \hat{M}_{k}^{E} \} .
\]
The proof is the same.
\end{rem}
%
%
\subsection{An explicit algorithm}
As a byproduct of our work, we 
present Algorithm \ref{alg:orders} in pseudo-code which, 
given the sequence $(n, \beta_1,\ldots,\beta_g)$ of the multiplicity and the Puiseux exponents of an equisingularity class ${\mathcal C}$, 
returns the orders $a(1),a(2),\ldots,a(n)$ corresponding to the nice (Apery) basis, $a(i)=\nu_{{\mathcal C}}(\Omega_i)$, for $i=1,\ldots,n$.
In particular, the algorithm provides the generic semimodule $\Lambda_{\mathcal C}$ and gives Theorem \ref{teo:algorithm}.
Indeed, the powerful properties of our construction allow to describe $\Lambda_{\mathcal C}$ without calculating explicitly a nice basis. 
Notice how, despite the
elaborate nature of the previous arguments, 
the algorithm is straightforward and is coding very simple. 
Note that the value $b(i)$ is the leading variable of $\Omega_{i}$.
The algorithm provides ${\Lambda}_{\mathcal C}$ but also gives 
${\Lambda}_{\mathcal C}^{E}$ for $E = \{x^{\delta_x} y^{\delta_y}=0\}$
by making the following minor changes:
\begin{itemize}
\item For $\delta_{y}=0$, we consider $a (1) = b (1) = n$, $a (2) = n \delta_{x}+ \beta_{1}$ and $b (2) = \beta_{1}$;
\item For $\delta_{y} =1$, we consider 
$a (1) = n \delta_{x} + \beta_{0}$, $b(1)=\beta_{0}$, $a (2) = n+ \beta_{1}$ and $b (2) = \beta_{1}$.
\end{itemize}
\begin{algorithm}[h!]
  \caption{Computation of the orders of an Apery basis}
  \label{alg:orders}
  \begin{algorithmic}
    \State{\textsf{Input}: $\beta_0=n,\beta_1,\ldots,\beta_g$}
    \State{\textsf{Values corresponding to $\mathcal{T}_0$}:}
    \State $a(1)\leftarrow n,\,b(1)\leftarrow n,\,
    a(2)\leftarrow \beta_1,\, b(2)\leftarrow \beta_1$
    \For{$l=0,\ldots, g-1$}
    \State{\textsf{Compute the orders corresponding to $\mathcal{G}^0_{l+1}$:}}
    \For{$k=1,\ldots, n_{l+1}$}
    \For{$s=1,\ldots, 2\nu_l$}
    \State{$a(2k\nu_l+s)\leftarrow \overline{\beta}_{l+1}k+a(s)$}
    \State{$b(2k\nu_l+s)\leftarrow \beta_{l+1}$}
    \EndFor
    \EndFor
    \State{$d\leftarrow \beta_{l+1},\,c\leftarrow 1$}
    \While{$d<\beta_{l+2} \land  c\neq 0$}
    \State{\textsf{Compute the orders corresponding
        to $\mathcal{G}^{r+1}_{l+1}$ from those of $\mathcal{G}^{r}_{l+1}$: } }
    \State{$c\leftarrow 0$}
    \While{$\exists k < s\, \big| (a(s)-a(k) \in (n)  \land  \max (b(s), b(k)) =d )$}
    \State{$a(s)\leftarrow a(s)+ {\mathfrak n} (d)- d$}
    \State{$b(s)\leftarrow {\mathfrak n} (d)$}
    \If{$s\leq n$}
    \State{$c\leftarrow c+1$}
    \EndIf
    \EndWhile
    \State{$d\leftarrow \mathfrak{n}(d)$}
    \EndWhile
    \EndFor
    \State{\textsf{Output: }$a(1),\ldots,a(n)$\textsf{, the orders of the nice  (Apery) basis}}
  \end{algorithmic}
\end{algorithm}
\section{Families of equisingular curves} \label{sec:families}
In order to reason using the resolution of singularities of a generic curve in an
equisingularity class, we need to study the relation between an analytic family of curves
and another family in the blow-up of the generic element. It turns out that these families
are intimately related and a nice basis for the former will essentially provide a nice
basis for the latter. The study of this relation is the purpose of this section.
\subsection{Suitable pairs}
Fix a local system of coordinates $(x,y)$ in $(\mathbb{C}^2,0)$.
From now on, we shall use the expression \emph{pair} to refer to a pair
$(\Gamma, E)$ where $\Gamma$ is a germ of irreducible curve at $0 \in \mathbb{C}^{2}$ (or
at any point in a non-singular complex analytic surface) and $E$ is a germ of normal
crossings divisor.
\begin{defi} \label{def:blow} Given a pair $(\Gamma,E)$, let $\pi_1$ be the blow-up of the
  origin $P_0 =(0,0)$ of $\mathbb{C}^{2}$ and let $E_1 = \pi_{1}^{-1} (E)$ and $P_1$ be
  the point in the strict transform $\Gamma_1$ of $\Gamma$ that belongs to the divisor
  $\pi_{1}^{-1} (0,0)$. We consider ${\Gamma}_1$ as a germ of irreducible curve defined in
  a neighborhood of ${P}_{1}$.
\end{defi}
The following is the key notion which will enable us to relate a nice basis before
blow-up with another one after.
\begin{defi}
  We say that the pair $(\Gamma, E)$ is {\it suitable} if
\begin{itemize}
\item  $E \subset  \{xy=0 \}$,
\item $x=0$ is not the tangent cone of $\Gamma$  at the origin and
\item if $E = \{y=0\}$ then $y=0$ is the tangent cone of $\Gamma$ at the origin.
\end{itemize}
A pair $(\Gamma, E)$ which is not suitable will be called
\emph{unsuitable}.
\end{defi}
\begin{rem}
\label{rem:suit}
Let ${\mathcal C}$ be the equisingularity class of a pair $(\Gamma, E)$ (with
$E \subset \{xy=0\}$). The property of being suitable only depends on the order of the
coordinates $(x,y)$. Indeed, $(\Gamma,E)$ is suitable in coordinates
$({\rm x}_{0}, {\rm y}_{0})$, where $({\rm x}_{0}, {\rm y}_{0}) = (x,y)$ if $(\Gamma,E)$
is suitable in coordinates $(x,y)$; otherwise, it is suitable in coordinates $({\rm x}_{0}, {\rm y}_{0}) = (y,x)$.
\end{rem}
Consider the equisingularity class ${\mathcal C}$ of a suitable pair $(\Gamma_{\circ}, E)$ in coordinates $({\rm x}_0, {\rm y}_0) = (x,y)$. Any
curve $\Gamma \in \mathcal{C}$ has a Puiseux parametrization of the form
\begin{equation}
\label{equ:par_gamma}
  \Gamma (t) = \left( t^n, \sum_{\beta \geq \beta_0} a_{\beta, \Gamma} t^{\beta} \right)
\end{equation}
where $n = \nu (\Gamma)$ depends just on ${\mathcal C}$ and so we can write 
$n = \nu ({\mathcal C})$,  $a_{\beta_0, \Gamma} \neq 0$ and $\beta_{0} \geq n$.  That
parametrization will be called a {\it suitable parametrization} of $\Gamma$, leaving $E$
implicit. Consider coordinates $x=x_1$, $y= x_1 y_1$ in the first chart of the blow-up
$\pi_1$ of the origin. Since $x=0$ is not the tangent cone of $\Gamma$, its strict
transform ${\Gamma}_1$ has a parametrization
\begin{equation}
\label{equ:par_gamma1}
  {\Gamma}_1 (t) = \left( t^n, \sum_{\beta \geq \beta_0} a_{\beta, \Gamma} t^{\beta - n} \right)
\end{equation}
in coordinates $(x_1, y_1)$.
We now
relate the main invariants of $(\Gamma,E)$ to those of $(\Gamma_1, \pi_1^{-1}(E))$ in a
neighborhood of $P_1$.

\subsubsection{Approximate roots} \label{subsec:app_root}
Take $1 \leq j \leq g$. Given an approximate root 
\[
  f_{j, \Gamma} = y^{\nu_{j-1}} + a_{\nu_{j-1} -1}(x) y^{\nu_{j-1} -1} + \hdots + a_0 (x)
\]
of $\Gamma \in \mathcal{C}$, the $j$th-approximate root ${f}_{1,j, \Gamma_{1}}$ of ${\Gamma}_1$ in the
local system of coordinates above satisfies
\[
  {f}_{1,j,\Gamma_{1}} = \frac{f_{j, \Gamma} \circ \pi_1}{x_1^{\nu_{j-1}}} .
\]
Let ${\mathcal C}_1$ be the equisingularity class of $({\Gamma}_1, E_1)$.  Since by construction,
$\nu (\Gamma_{1})$ is the multiplicity of $\Gamma_1$ at $P_1$,
and on the other hand the values $\nu_{\Gamma_1} (f_{1,j, \Gamma_{1}})$ depend just on
${\mathcal C}_{1}$, we shall denote them $\nu({\mathcal C}_{1})$ and
$\nu_{{\mathcal C}_1} ({f}_{1,j}) $ respectively, for simplicity.  Certainly, 
\[
  \nu_{{\mathcal C}_1} ({f}_{1,j}) =
  \nu_{\mathcal C} (f_{j}) - \nu_{j-1} n =
  \overline{\beta}_j   - \nu_{j-1} n  
\]
holds.
Moreover, for any $1\leq j \leq g$, the $y$-degree of both $f_j$ and $f_{1,j}$ is equal to
$\nu_{j-1}$, and their leading variables are $\beta_j$, where we are considering coordinates $(x,y)$
and $(x_1, y_1)$ and hence the parametrizations \eqref{equ:par_gamma} and \eqref{equ:par_gamma1}. 
\subsubsection{Preparing the pair $({\Gamma}_1, {E}_1)$}
In this subsection we find coordinates $({\rm x}_1, {\rm y}_1)$ in which $\Gamma_1$ has a suitable parametrization that we relate
with the parametrization \eqref{equ:par_gamma} of $\Gamma$.

The pair $({\Gamma}_1, {E}_1)$ is not necessary suitable in coordinates $(x_1,y_1)$; for instance, if $a_{n} \neq 0$
then $(\Gamma_1,E_1)$ is not even a germ of curve at $(x_{1}, y_{1})=(0,0)$. 
This is easily solved by means of the change of coordinates $(x_1 , y_1^{\prime}) = (x_1, y_1 - a_{n})$.
Thus, 
\begin{equation}
\label{equ:suit1}
 \left( t^n, \sum_{\beta \geq \beta'} a_{\beta, \Gamma} t^{\beta - n} \right)  
\end{equation}
is a parametrization of ${\Gamma}_1$ in coordinates $(x_1 , y_1^{\prime})$, where  
$\beta'$ is the first index, greater than $n$, such that $a_{\beta', \Gamma} \neq 0$.  
Now $({\Gamma}_1, {E}_1)$  is suitable if and only if $\beta' - n \geq n$. 
If this is the case, then we define coordinates $({\rm x}_1 , {\rm y}_1)  = (x_1 , y_1^{\prime}) $ and 
\eqref{equ:suit1} provides a suitable parametrization
\[
  \left( t^n, \sum_{\beta \geq \beta' -n} b_{\beta, \Gamma_1} t^{\beta} \right)
  = \left( t^{\nu ({\mathcal C}_1)},
    \sum_{\beta \geq \beta' -n} b_{\beta, \Gamma_1} t^{\beta} \right) ,
\]
in coordinates $({\rm x}_1 , {\rm y}_1)$, where
\begin{equation} \label{equ:blow_suit}
b_{\beta - \nu ({\mathcal C}_1), \Gamma_1} =  b_{\beta-n, \Gamma_1} = a_{\beta, \Gamma}  
\end{equation}
for any $\beta \in {\mathcal E}_{E}(\mathcal{C}) \setminus \{n\}$.
\begin{rem}
  Notice that $({\Gamma}_1, {E}_1)$ is suitable, in coordinates  $(x_1 , y_1^{\prime})$,
  either for all $\Gamma \in \mathcal{C}$ or
  for none.  If there exists an exponent in ${\mathcal E}_{E}(\mathcal{C})$ greater than
  $n$ and less than $2n$ then $\beta'$ is equal to the first Puiseux exponent $\beta_1$.
  Since $a_{\beta_1}$ never vanishes on $\mathcal{C}$, it follows that
  $({\Gamma}_1, {E}_1)$ is unsuitable for any $\Gamma \in \mathcal{C}$.  If no such a
  exponent exists, clearly $({\Gamma}_1, {E}_1)$ is suitable for any
  $\Gamma \in \mathcal{C}$.
\end{rem}
 
Assume that $({\Gamma}_1, {E}_1)$ is unsuitable in coordinates  $(x_1 , y_1^{\prime})$ 
for $\Gamma \in \mathcal{C}$.  
The pair $({\Gamma}_1, {E}_1)$ is suitable
in coordinates $({\rm x}_1, {\rm y}_1) = (y_1^{\prime} , x_1)$ by Remark \ref{rem:suit}. 
Let us provide a suitable parametrization of $\Gamma_1$ in coordinates $({\rm x}_1, {\rm y}_1)$.
Notice that 
\[
  \left( \sum_{\beta \geq \beta_1} a_{\beta, \Gamma} t^{\beta - n}, t^n  \right)
\]
is a (possibly unsuitable) parametrization of ${\Gamma}_1$ in coordinates $({\rm x}_1, {\rm y}_1)$.
From this one, we can obtain,
by taking $(\beta_1-n)$-th roots of unity, the suitable parametrization
\begin{equation}\label{eq:suitable-param-above}
  \left( t^{\beta_1 -n},   \sum_{\beta \geq n} b_{\beta, \Gamma_1} t^{\beta} \right) =
  \left( t^{ \nu ({\mathcal C}_1)},   \sum_{\beta \geq n} b_{\beta, \Gamma_1} t^{\beta} \right) .
\end{equation}
with the important properties:
\begin{equation}
\label{equ:blow_non_suit1} 
b_{n, \Gamma_1} = a_{\beta_1, \Gamma}^{-\frac{n}{\beta_1 -n}}
\end{equation}
and 
\begin{equation} \label{equ:blow_non_suit2} 
  b_{\beta - \nu ({\Gamma}_1), \Gamma_1} =
  - \frac{n}{\beta_{1} -n}  a_{\beta_1, \Gamma}^{- \frac{\beta-n}{\beta_{1}-n}} a_{\beta, \Gamma} +
  Q_{\beta} (a_{\beta_1, \Gamma}^{\frac{-1}{\beta_{1} -n}},
  a_{{\mathfrak n}(\beta_{1}), \Gamma}, \hdots, a_{{\mathfrak p}(\beta), \Gamma} ),  
\end{equation}
where for any $\beta \in {\mathcal E}_{E}(\mathcal{C})$ with $\beta > \beta_1$,
$Q_{\beta}$ is a polynomial. In particular $b_{\beta - \nu ({\Gamma}_1)}$ depends on
$a_{\beta_1}, a_{{\mathfrak n}(\beta_1)}, \hdots, a_{\beta}$ and in degree
$1$ on $a_{\beta}$.
\begin{rem}
\label{rem:suitable_blow}
Let $\Gamma \in \mathcal{C}$, and consider a
suitable pair $(\Gamma, E)$ in coordinates $({\rm x}_0, {\rm y}_0) = (x,y)$. Let
\begin{equation}\label{eq:desing-Gamma}
  (\mathbb{C}^2,0) \stackrel{\pi_1}{\longleftarrow} \mathcal{X}_1
  \stackrel{\pi_2}{\longleftarrow} \cdots
  \mathcal{X}_{k-1}\stackrel{\pi_k}{\longleftarrow}\mathcal{X}_k
  \stackrel{\pi_{k+1}}{\longleftarrow}\cdots
\end{equation}
be the local blowing-up process following the infinitely near points of
$\Gamma = \Gamma_0$, and denote by $\Gamma_i$ the strict transform of $\Gamma$ in
$\mathcal{X}_{i}$.  Let $E_{0} = E$ and $E_i=\pi_i^{-1}\circ\cdots\circ\pi_1^{-1}(E)$ be
the (germ of the) exceptional divisor in each stage $i \geq 1$. Denote
$P_0 = (0,0) \in \mathbb{C}^{2}$ and let $P_i$ be the point of $\Gamma_{i}$ in $E_i$.  We
find coordinates $({\rm x}_s, {\rm y}_s)$ in which the pair $(\Gamma_s, E_s)$ is
suitable for $s \geq 1$ by  just iterating the construction that was described for
$s=1$.  For any germ of curve $\Gamma'$ at $P_s$ such that $(\Gamma^{\prime}, E_s)$ is in
the equisingularity class ${\mathcal C}_s$ of the pair $(\Gamma_s , E_s)$ there is a
suitable parametrization of $\Gamma'$ in coordinates $({\rm x}_s, {\rm y}_s)$.

In the remainder of this section, we fix the coordinates $({\rm x}_s, {\rm y}_s)$ and suitable parametrizations of 
elements in ${\mathcal C}_s$ are considered in such coordinates for $s \geq 0$.
Given $\Gamma_{s}^{\prime} \in {\mathcal C}_s$, the suitable parametrization of the strict transform $\Gamma_{s+1}^{\prime}$ of  
$\Gamma_{s}^{\prime}$ by $\pi_{s+1}$
is obtained from the suitable parametrization 
of $\Gamma_s^{\prime}$ by either equation \eqref{equ:blow_suit} or equations
\eqref{equ:blow_non_suit1}, \eqref{equ:blow_non_suit2}.
\end{rem}
%
%
%
%
\subsubsection{Fanning exponents}
Consider now a one-parameter analytic family
$(\Gamma^{\epsilon})_{\epsilon \in ({\mathbb C},0)}$ of curves in ${\mathcal C}$: that is,
a family $\Gamma^{\epsilon}$ admitting parametrizations
  \[
    \Gamma^{\epsilon} (t) =
    \left( t^n, \sum_{\beta \geq \beta_0} a_{\beta}(\epsilon) t^{\beta} \right)
  \]
  where $a_{\beta_0}, a_{{\mathfrak n}(\beta_0)}, \hdots$ are analytic in a common neighborhood of
  $\epsilon =0$.  Suppose that the family is non-constant and let
  \[
    \theta(\Gamma^{\epsilon}) =
    \min \{ \beta \in {\mathcal E}_{E}(\mathcal{C}): a_{\beta} \ \mathrm{is} \
    \mathrm{non-constant} \}
  \]
  be the {\it fanning exponent} of the family
  $(\Gamma^{\epsilon})_{\epsilon \in ({\mathbb C},0)}$. Given such a family, we obtain by
  pull-back a corresponding family $(\Gamma^{\epsilon}_1,E_1)$ in $P_1$ with suitable
  parametrizations described above. The next remarks
  follow from the previous discussions.
\begin{rem} \label{rem:coef_tr} \cite[Lemma 3.19]{Fortuny-Ribon-Canadian} There are two possibilities:
\begin{itemize}
\item Either $\theta(\Gamma^{\epsilon}) = n$ and the map
  $\epsilon \mapsto {\Gamma}_{1}^{\epsilon} \cap \pi_{1}^{-1} (0)$ is non-constant: in
  other words the base point of ${\Gamma}_{1}^{\epsilon}$ in $\pi_{1}^{-1} (0)$ varies in
  the family,
\item Or $\theta(\Gamma^{\epsilon}) > n$ and the first non-constant coefficient of the
  suitable parametrization of the family $({\Gamma}_{1}^{\epsilon}, {E}_1)$ is
  $b_{\theta(\Gamma^{\epsilon}) - \nu ({\mathcal C}_1)} (\epsilon)$.
\end{itemize}
\end{rem}
\begin{rem} \label{rem:coef_down_up} Fix a parametrization
  $ (t^{ \nu ({\mathcal C})}, \sum a_{\beta, {\Gamma}} t^{\beta}) $ of
  $\Gamma \in {\mathcal C}$.  Given $\beta' \in {\mathcal E}_{E}({\mathcal C})$, consider
  the family $({\Gamma}_{\beta^{\prime}}^{\epsilon})_{\epsilon \in (\mathbb{C},0)}$ of
  curves in ${\mathcal C}$ such that
  $(t^{ \nu ({\mathcal C})}, \epsilon t^{\beta'} + \sum a_{\beta, {\Gamma}} t^{\beta})$ is
  a parametrization of $\Gamma_{\beta^{\prime}}^{\epsilon}$.  The family of strict
  transforms $({\Gamma}_{1,\beta^{\prime}}^{\epsilon})_{\epsilon \in (\mathbb{C},0)}$ is a
  family of curves in ${\mathcal C}_1$ as long as $\beta' > \nu ({\mathcal C})$.  Remark
  \ref{rem:coef_tr} implies also that if $\beta' > \nu ({\mathcal C})$, then
  $\beta' - \nu ({\mathcal C}_1) \in {\mathcal E}_{E_{1}}({\mathcal C}_1)$.
\end{rem}
\begin{rem} \label{rem:coef_up_down} Take, on the other hand, a suitable curve $\Gamma_1$
  such that  $\pi_1(\Gamma_1)=\Gamma\in \mathcal{C}$, with suitable parametrization
  $ (t^{ \nu ({\mathcal C}_{1})}, \sum b_{\beta, {\Gamma}_{1}} t^{\beta})$ 
  in coordinates $({\rm x}_1, {\rm y}_1)$.
  Given
  $\beta' \in {\mathcal E}_{{E}_{1}}({\mathcal C}_{1})$, consider the family
  $({\Gamma}_{1,\beta^{\prime}}^{\epsilon})_{\epsilon \in (\mathbb{C},0)}$ of curves in
  ${\mathcal C}$ such that
  $ (t^{ \nu ({\mathcal C}_{1})}, \epsilon t^{\beta'} + \sum b_{\beta, {\Gamma}_{1}}
  t^{\beta})$ is a parametrization of ${\Gamma}_{1,\beta^{\prime}}^{\epsilon}$.  This
  defines by blow-down a family $({\Gamma}^{\epsilon})_{\epsilon \in (\mathbb{C},0)}$ of
  equisingular curves such that $\Gamma^{0} = \Gamma$.  As a consequence, the fanning
  coefficient of $(\Gamma^{\epsilon})$ satisfies
  $\theta(\Gamma^{\epsilon}) \in {\mathcal E}_{E}(\mathcal{C}) \setminus \{ \nu ({\mathcal
    C}) \}$. Even more, $\theta(\Gamma^{\epsilon}) = \beta' + \nu ({\mathcal C}_{1})$ by
  Remark \ref{rem:coef_tr}.
\end{rem}
\begin{rem}
Remarks  \ref{rem:coef_down_up} and \ref{rem:coef_up_down} imply that the map
\[ \begin{array}{ccc}
     {\mathcal E}_{E}(\mathcal{C}) \setminus \{ \nu ({\mathcal C}) \}
     & \to &  {\mathcal E}_{{E}_{1}}({\mathcal C}_{1})  \\
 \theta & \mapsto & \theta -  \nu ({\mathcal C}_{1})
\end{array} 
\]
is a bijection.
\end{rem} 
\begin{rem} \label{rem:generic} Let ${\mathcal C}$ be the equisingularity class of a
  suitable pair $(\Gamma, E)$ with $\Gamma$ generic in $\mathcal{C}$. The strict transform
  ${\Gamma}_{1}$ is a generic curve of ${\mathcal C}_{1}$.  This holds because
  $b_{\beta - \nu ({\mathcal C}_{1})}$ depends on the all coefficients
  $(a_{\beta'})_{\nu ({\mathcal C}) < \beta' \leq \beta }$ and in degree
  $1$\ on $a_{\beta}$ for any
  $\beta \in {\mathcal E}_{E}(\mathcal{C}) \setminus \{ \nu ({\mathcal C}) \}$ except when
  $\beta =n + \nu ({\mathcal C}_{1})$ and $({\Gamma}_{1}, {E}_{1})$ is unsuitable in
  coordinates $(x_1, y_1^{\prime})$.  In this case, $b_n$ is a power of the linear
  function $a_{\beta_1} = a_{n + \nu ({\mathcal C}_{1})}$.  This dependence on the highest
  index variable $a_{\beta}$ is preserved by further blow-ups, as is the genericity of the
  strict transforms.
\end{rem}
\subsection{Families of equisingular curves and desingularization} \label{subsec:blow}
Consider a curve $\Gamma \in \mathcal{C}$, a suitable pair $(\Gamma, E)$
and the setting in Remark \ref{rem:suitable_blow}. Let 
$(\Gamma^{\epsilon})_{\epsilon \in ({\mathbb C},0)}$ be a 
a  \emph{non-constant} analytic family
$(\Gamma^{\epsilon})_{\epsilon \in ({\mathbb C},0)}$ of curves with $\Gamma^0=\Gamma$ with
suitable parametrizations of the form
$\Gamma^{\epsilon}(t) = (t^{\nu ({\mathcal C})}, \sum_{\beta \geq \beta_0} a_{\beta}
(\epsilon) t^{\beta})$.  
Define $(\Gamma_0^{\epsilon}) = (\Gamma^{\epsilon})$ and
$(\Gamma_i^{\epsilon})$ the strict transform of $(\Gamma^{\epsilon})$ in
$\mathcal{X}_{i}$.   As
$(\Gamma^{\epsilon})$ is non-constant, there exists an integer
$\iota(\Gamma^{\epsilon}) \geq 1$ such that $\Gamma_i^{\epsilon}$ passes through $P_i$ for
all $0 \leq i < \iota(\Gamma^{\epsilon})$, and $\epsilon$ in a neighborhood of $0$ but
$\Gamma_{\iota}^{\epsilon}$ does not pass through $P_{\iota(\Gamma^{\epsilon})}$ for any
$\epsilon$ in a neighborhood of $0$.
\begin{defi}
  We say that $\iota(\Gamma^{\epsilon})$ is the {\it sliding
    divisor} of the family
  $(\Gamma^{\epsilon})_{\epsilon \in ({\mathbb C},0)}$.
\end{defi}
Denote $D_i = \pi_{i}^{-1} (P_{i -1})$ for $i \geq 1$. The sliding divisor
$\iota(\Gamma^{\epsilon})$ indicates the divisor $D_{\iota(\Gamma^{\epsilon})}$ where the
curves in the family $(\Gamma^{\epsilon})_{\epsilon \in ({\mathbb C},0)}$ have no common
point, hence the name (the point $\Gamma_{\iota(\Gamma^{\epsilon})}^{\epsilon}\cap D_{\iota(\Gamma^{\epsilon})}$ slides along that divisor).
\begin{rem}
  \label{rem:trace_pt}
  Let $\iota = \iota(\Gamma^{\epsilon})$ for clarity in this remark. Since the family
  $(\Gamma_{\iota -1}^{\epsilon}, E_{\iota -1})_{\epsilon \in ({\mathbb C},0)}$ is
  contained in an equisingularity class and their points
  $P_{\iota, \epsilon}=\Gamma_{\iota}^{\epsilon} \cap E_{\iota}$ 
  vary continuously in a non-constant way,  there
  is exactly one irreducible component of $E_{\iota}$ passing through $P_{\iota}$, so that
  $P_{\iota}$ is necessarily a trace point of $E_{\iota}$.
\end{rem}
\begin{rem}
  \label{rem:break_1}
  The sliding divisor $\iota(\Gamma_{\epsilon})$ is equal to $1$ if and only if the
  fanning exponent $\theta(\Gamma^{\epsilon})$ is
  equal to $\nu ({\mathcal C})$, by Remark \ref{rem:coef_tr}.
\end{rem}
We now proceed to show how the sliding divisor determines the fanning exponent. Let
$(\Gamma^{\epsilon})_{\epsilon \in ({\mathbb C},0)}$ be an analytic family 
in $\mathcal{C}$ and assume the notations above for $(\Gamma^{\epsilon}_i)$,
etc. Set $\Gamma_0^{\epsilon}=\Gamma^{\epsilon}$ for simplicity. 
The following definition is closely related to Corollary 3.20 of \cite{Fortuny-Ribon-Canadian}.
\begin{defi} \label{def:theta} Let ${\mathcal C}$ be the equisingularity class of a
suitable pair $(\Gamma^{0}, E)$.  We define $\mathbf{Slid}_{\mathcal C}$ as the set of
sliding divisors associated to analytic families
$(\Gamma^{\epsilon})_{\epsilon \in ({\mathbb C},0)}$ in ${\mathcal C}$. 
We define the map 
\[
\hat{\theta} (\iota) =
\nu_{P_{\iota -1}} (\Gamma_{\iota -1}^{0}) +
\sum_{j=1}^{\iota -1} \nu_{P_j} (\Gamma_{j}^{0}).
\]
for $\iota \in \mathbf{Slid}_{\mathcal C}$ (cf. Definition \ref{def:mult}).
\end{defi}
\begin{lem}
\label{lem:break_det_fanning}
Let $\iota=\iota(\Gamma^{\epsilon})$ be the sliding divisor of
$(\Gamma^{\epsilon})_{\epsilon \in ({\mathbb C},0)}$. Then
\[
  \theta(\Gamma^{\epsilon}):=\theta(\Gamma_0^{\epsilon}) = \hat{\theta} (\iota) 
\]
where an empty sum is $0$.
\end{lem}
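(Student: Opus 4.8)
The plan is to prove the identity by induction on the sliding divisor $\iota = \iota(\Gamma^{\epsilon})$, descending along the blow-up $\pi_1$ from $\mathcal{C}$ to the equisingularity class $\mathcal{C}_1$ of $(\Gamma_1, E_1)$ and invoking the dichotomy of Remark \ref{rem:coef_tr} at each step. For the base case $\iota = 1$, I would simply apply Remark \ref{rem:break_1}, which asserts that $\iota = 1$ is equivalent to $\theta(\Gamma^{\epsilon}) = \nu(\mathcal{C})$. Since $\nu(\mathcal{C}) = n = \nu_{P_0}(\Gamma_0^0)$ and the sum $\sum_{j=1}^{0}$ is empty, the right-hand side is precisely $\hat{\theta}(1)$, so the base case is immediate.

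For the inductive step I would assume $\iota \geq 2$ and first pass to the family $(\Gamma_1^{\epsilon})$ of strict transforms in $\mathcal{C}_1$, taken in the suitable coordinates $({\rm x}_1, {\rm y}_1)$ of Remark \ref{rem:suitable_blow}. The key bookkeeping observation is that the infinitely near points and strict transforms of $\Gamma_1$ are shifted by one: writing $P'_j$ and $(\Gamma_1^{\epsilon})_j$ for the data of $\Gamma_1 = (\Gamma_1)_0$, one has $P'_j = P_{j+1}$ and $(\Gamma_1^{\epsilon})_j = \Gamma_{j+1}^{\epsilon}$. From this I would deduce that $(\Gamma_1^{\epsilon})_j$ passes through $P'_j$ exactly when $\Gamma_{j+1}^{\epsilon}$ passes through $P_{j+1}$, i.e. for $j < \iota - 1$, and fails at $j = \iota - 1$; hence the sliding divisor of $(\Gamma_1^{\epsilon})$ equals $\iota - 1$.

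Applying the induction hypothesis in $\mathcal{C}_1$ then gives $\theta(\Gamma_1^{\epsilon}) = \hat{\theta}_1(\iota - 1)$, where $\hat{\theta}_1$ is the map of Definition \ref{def:theta} for $\mathcal{C}_1$, and the substitution $P'_j = P_{j+1}$, $(\Gamma_1^{\epsilon})_j = \Gamma_{j+1}^{\epsilon}$ rewrites this as $\nu_{P_{\iota-1}}(\Gamma_{\iota-1}^0) + \sum_{k=2}^{\iota-1} \nu_{P_k}(\Gamma_k^0)$. Next, since $\iota \geq 2$ forces $\theta(\Gamma^{\epsilon}) \neq \nu(\mathcal{C})$ by Remark \ref{rem:break_1}, the second alternative of Remark \ref{rem:coef_tr} applies and yields $\theta(\Gamma_1^{\epsilon}) = \theta(\Gamma^{\epsilon}) - \nu(\mathcal{C}_1)$, because the first non-constant coefficient of the suitable parametrization of $(\Gamma_1^{\epsilon}, E_1)$ is $b_{\theta(\Gamma^{\epsilon}) - \nu(\mathcal{C}_1)}(\epsilon)$. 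Combining this with $\nu(\mathcal{C}_1) = \nu_{P_1}(\Gamma_1) = \nu_{P_1}(\Gamma_1^0)$ and absorbing the extra term into the sum gives $\theta(\Gamma^{\epsilon}) = \nu_{P_{\iota-1}}(\Gamma_{\iota-1}^0) + \sum_{k=1}^{\iota-1} \nu_{P_k}(\Gamma_k^0) = \hat{\theta}(\iota)$, closing the induction.

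The main obstacle I anticipate is not any single computation but the careful verification that the index shift under blow-up is consistent across all the objects involved: that the suitable coordinates and suitable parametrizations constructed in Remark \ref{rem:suitable_blow} make $\theta(\Gamma_1^{\epsilon})$ well-defined and compatible with the transported fanning exponent, that the sliding divisor genuinely drops by exactly one (inherited term-by-term from the passage condition of the original family), and that the translation $\theta \mapsto \theta - \nu(\mathcal{C}_1)$ of Remark \ref{rem:coef_tr} is read off in the correct chart. Once this dictionary between the two levels is pinned down, the re-indexing of the sum is routine.
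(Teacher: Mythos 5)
Your proof is correct and is essentially the paper's argument: both rest on exactly the two ingredients Remark \ref{rem:break_1} (sliding divisor $1$ forces $\theta = \nu(\mathcal{C})$) and the second alternative of Remark \ref{rem:coef_tr} ($\theta$ drops by $\nu(\mathcal{C}_1)$ under blow-up). The only difference is presentational---you package the iterated application of Remark \ref{rem:coef_tr} as a formal induction on $\iota$ passing through the strict-transform family, whereas the paper starts at level $\iota-1$ and applies the remark $\iota-1$ times going down to level $0$.
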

\begin{proof}
  The equality is trivial if $\iota=1$ by Remark \ref{rem:coef_tr}, so we can assume
  $\iota > 1$. Since the sliding divisor of the family
  $(\Gamma_{\iota -1}^{\epsilon})$ is equal to one, its fanning exponent is the
  multiplicity $\nu_{P_{\iota -1}} (\Gamma_{\iota -1}^{0}) $ by Remark
  \ref{rem:break_1}. The result follows now applying Remark \ref{rem:coef_tr} to the
  families
  $(\Gamma_{\iota -2}^{\epsilon})_{\epsilon \in ({\mathbb C},0)}, \hdots,
  (\Gamma_{0}^{\epsilon})_{\epsilon \in ({\mathbb C},0)}$.
\end{proof}
Before proceeding, we study which exceptional divisors in the desingularization of a curve are candidates to sliding divisors:
\begin{lem}
\label{lem:break_set}
Let ${\mathcal C}$ be the equisingularity class of a suitable pair $(\Gamma, E)$. 
Then 
\[
  \mathbf{Slid}_{\mathcal C} =
  \{ \iota \geq 1 :
  P_{\iota} \ \mbox{\emph{is a trace point of}}\ E_{\iota} \} .
\]
\end{lem}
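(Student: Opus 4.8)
The plan is to prove the two inclusions separately. The inclusion $\mathbf{Slid}_{\mathcal C}\subseteq\{\iota\ge 1 : P_\iota \text{ is a trace point of } E_\iota\}$ is essentially Remark \ref{rem:trace_pt}: if $\iota=\iota(\Gamma^\epsilon)$ is the sliding divisor of a family, the points $P_{\iota,\epsilon}=\Gamma_\iota^\epsilon\cap E_\iota$ vary non-constantly along a single irreducible component $D_\iota$, which forces $P_\iota$ to lie on exactly one component of $E_\iota$, i.e. to be a trace point. Thus the whole content of the statement lies in the reverse inclusion, and I would devote the proof to it, invoking Remark \ref{rem:trace_pt} only for the easy direction.

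For the reverse inclusion I would fix $\iota$ with $P_\iota$ a trace point of $E_\iota$ and exhibit an explicit family $(\Gamma^\epsilon)_{\epsilon\in(\mathbb C,0)}$ in $\mathcal C$ whose sliding divisor is exactly $\iota$, building it \emph{from the top}. Working in the suitable coordinates $({\rm x}_{\iota-1},{\rm y}_{\iota-1})$ at $P_{\iota-1}$ provided by Remark \ref{rem:suitable_blow}, I would produce a one-parameter family $(\Gamma_{\iota-1}^\epsilon)$ in $\mathcal C_{\iota-1}$ whose base point $\Gamma_\iota^\epsilon\cap D_\iota$ slides along $D_\iota$; by Remark \ref{rem:break_1} such a family has sliding divisor $1$, since its fanning exponent equals $\nu(\mathcal C_{\iota-1})$. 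I would then blow this family down $\iota-1$ times, applying Remark \ref{rem:coef_up_down} at each step: every blow-down yields an equisingular family one level lower whose curves now all pass through the relevant infinitely near base point, and by Lemma \ref{lem:break_det_fanning} the sliding divisor increases by exactly one at each step. After $\iota-1$ blow-downs the resulting family lies in $\mathcal C$ and has sliding divisor $\iota$, so $\iota\in\mathbf{Slid}_{\mathcal C}$. Lemma \ref{lem:break_det_fanning} is used throughout to track the fanning exponent and to certify that the final sliding divisor is $\iota$ and not larger.

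The step I expect to be the main obstacle is precisely the construction of the sliding family at the top level, where the trace-point hypothesis must be used in an essential way: one has to show that $P_\iota$ being a trace point of $E_\iota$ is exactly the condition under which the base point of $\Gamma_\iota$ on $D_\iota$ can be perturbed along $D_\iota$ \emph{within the arranged class} $\mathcal C_{\iota-1}$. A trace point lies on the single component $D_\iota$ and admits nearby free positions producing equisingular configurations, whereas a corner (satellite) point is pinned at the intersection of two components of $E_\iota$ and cannot be moved without altering the topological type. Equivalently, via Lemma \ref{lem:break_det_fanning} together with the bijection ${\mathcal E}_{E}(\mathcal C)\setminus\{\nu(\mathcal C)\}\to{\mathcal E}_{E_1}(\mathcal C_1)$, $\theta\mapsto\theta-\nu(\mathcal C_1)$, this reduces to checking that $\hat\theta(\iota)$ is a genuine exponent of $\mathcal C$, i.e. $\hat\theta(\iota)\in{\mathcal E}_{E}(\mathcal C)$, exactly when $P_\iota$ is a trace point. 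Carefully matching the divisorial trace/satellite dichotomy at each infinitely near point with membership in ${\mathcal E}_{E}(\mathcal C)$ --- keeping track of how the suitable coordinate swaps of Remark \ref{rem:suitable_blow} determine which axis carries a component of $E$ at each level --- is the technical core; once this equivalence is in place, Lemma \ref{lem:break_set} follows by combining it with Lemma \ref{lem:break_det_fanning} and Remark \ref{rem:trace_pt}.
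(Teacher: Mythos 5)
Your main argument is correct and is essentially the paper's own proof. The inclusion $\mathbf{Slid}_{\mathcal C}\subset\{\iota\geq 1 : P_\iota \ \mathrm{trace\ point\ of}\ E_\iota\}$ is indeed just Remark \ref{rem:trace_pt}, and for the reverse inclusion the paper does exactly what you describe: it perturbs a suitable parametrization of $\Gamma_{\iota-1}$ at $P_{\iota-1}$ by adding $\epsilon t^{\mu_{\iota-1}}$ with $\mu_{\iota-1}=\nu_{P_{\iota-1}}(\Gamma_{\iota-1})$ (so the fanning exponent equals the multiplicity and the base point on $D_\iota$ slides, cf. Remark \ref{rem:break_1}), uses the trace-point hypothesis precisely where you say it must be used --- to guarantee that the perturbed pairs stay in the equisingularity class of $(\Gamma_{\iota-1},E_{\iota-1})$ --- and then blows the family down to a family in $\mathcal{C}$ whose sliding divisor is $\iota$ by construction (the paper blows down in a single step rather than tracking the divisor through $\iota-1$ separate blow-downs; this difference is immaterial).

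You should, however, drop the claim in your last paragraph that this is ``equivalent'' to showing that $\hat\theta(\iota)\in{\mathcal E}_{E}(\mathcal C)$ exactly when $P_\iota$ is a trace point: that equivalence is false, and pursuing it would derail the proof. The map $\hat\theta$, extended by its defining formula to every index $\iota\geq 1$, is not injective, so its value cannot detect the trace/satellite dichotomy. For the cusp $\Gamma(t)=(t^2,t^3)$ with $E=\emptyset$ the multiplicity sequence is $2,1,1,\ldots$, hence $\hat\theta(1)=2=\hat\theta(2)$, while $P_1$ is a trace point and $P_2$ is a corner point; so whichever convention one adopts for ${\mathcal E}_{E}(\mathcal C)$, the two sides of your equivalence disagree at one of these two indices. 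Injectivity of $\hat\theta$ is available only on $\mathbf{Slid}_{\mathcal C}$ (Lemma \ref{lem:break_to_fanning_monotone}), and that lemma is proved after --- and its proof uses the family constructed in the proof of --- the present lemma, so it could not be invoked here in any case. Your first formulation of the key step (a trace point of $E_\iota$ can be moved along $D_\iota$ without changing the equisingularity class of the pair, whereas a corner point cannot) is the correct one, and it is all the paper's proof needs.
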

\begin{proof}
  Let us recall that $P_{i}$ is the $i$th-infinitely near point of $\Gamma$. Notice that
  whether or not $P_i$ is a trace point of $E_i$ does not depend on the choice of
  $\Gamma \in {\mathcal C}$.  By Remark \ref{rem:trace_pt}, if $\iota$ is a sliding
  divisor then $P_{\iota}$ is a trace point of $E_{\iota}$, which gives the direct
  inclusion.

  To prove the reverse inclusion, assume that $P_{\iota}$ is a trace point of $E_{\iota}$
  and let $\mu_{\iota -1} = \nu_{P_{\iota -1}} (\Gamma_{\iota -1})$.  Consider a suitable
  parametrization
  $(t^{\mu_{\iota -1}}, \sum_{\beta \geq \mu_{\iota -1}} a_{\beta, \Gamma} t^{\beta})$
  of $(\Gamma_{\iota -1}, E_{\iota -1})$ in coordinates $({\rm x}_{\iota -1}, {\rm y}_{\iota -1})$, 
  and define the analytic family
  $(\Gamma_{\iota -1}^{\epsilon})_{\epsilon \in ({\mathbb C},0)}$ with
  $\Gamma_{\iota -1}^{\epsilon}$ defined in the neighborhood of
  $P_{\iota -1}$ having suitable parametrization
  \[
    \Gamma^{\epsilon}_{\iota -1 } \equiv \bigg( t^{\mu_{\iota -1}}, \epsilon t^{\mu_{\iota
        -1}} + \sum_{\beta \geq \mu_{\iota -1}} a_{\beta, \Gamma^{0}} t^{\beta} \bigg).
  \]
  Obviously, $\Gamma_{\iota-1}^0=\Gamma_{\iota-1}$. Since $P_{\iota}$ is a trace point,
  the curves in the family belong to the equisingularity class of
  $(\Gamma_{\iota -1}, E_{\iota -1})$.  The family $(\Gamma_{\iota -1}^{\epsilon})$
  induces by blow-down another family
  $(\Gamma_{0}^{\epsilon})_{\epsilon \in ({\mathbb C},0)}$ defined in a neighborhood of
  the origin, and each curve $\Gamma_0^{\epsilon}$ belongs to $\mathcal{C}$.
  By construction, $\iota$ is the sliding divisor of such a family, and we are done.
\end{proof}
\begin{lem}
\label{lem:break_to_fanning_monotone}
Let ${\mathcal C}$ be the equisingularity class of a suitable pair $(\Gamma, E)$.
Consider $\iota, \iota' \in \mathbf{Slid}_{\mathcal C} $ with $\iota < \iota'$. Then
$ \hat{\theta} (\iota)< \hat{\theta} (\iota')$.
\end{lem}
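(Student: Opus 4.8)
The plan is to reduce the inequality to a single base case by peeling off blow-ups one at a time, exploiting that $\hat{\theta}$ transforms additively under $\pi_1$. First I would introduce the abbreviations $\mu_i = \nu_{P_i}(\Gamma_i^{0})$ for the multiplicities along the tower, so that Definition \ref{def:theta} reads $\hat{\theta}_{\mathcal C}(\iota) = \mu_{\iota-1} + \sum_{j=1}^{\iota-1}\mu_j$. Writing the infinitely near points of $(\Gamma_1, E_1)$ as $P^{(1)}_k = P_{k+1}$ and $E^{(1)}_k = E_{k+1}$, a direct reindexing of this formula gives, for every $\iota \geq 2$, the recursion
\[
\hat{\theta}_{\mathcal C}(\iota) = \hat{\theta}_{\mathcal C_1}(\iota-1) + \nu(\mathcal C_1),
\]
where $\nu(\mathcal C_1) = \mu_1$. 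Moreover, Lemma \ref{lem:break_set} together with the identification $P^{(1)}_k = P_{k+1}$, $E^{(1)}_k = E_{k+1}$ shows that $\mathbf{Slid}_{\mathcal C_1} = \{\iota - 1 : \iota \in \mathbf{Slid}_{\mathcal C},\ \iota \geq 2\}$. Hence, if $2 \leq \iota < \iota'$ are sliding divisors, then $\iota - 1 < \iota' - 1$ lie in $\mathbf{Slid}_{\mathcal C_1}$, and the inequality for $\mathcal C$ is equivalent to the one for $\mathcal C_1$, since the two values differ by the common constant $\nu(\mathcal C_1)$. As $(\Gamma_1, E_1)$ is again the class of a suitable pair (Remark \ref{rem:suitable_blow}) and its resolution is one blow-up shorter, iterating this reduction leaves only the case $\iota = 1$ to be treated (inside some $\mathcal C_s$).

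For the base case $\iota = 1 < \iota'$ I would argue with fanning exponents. By Definition \ref{def:theta} the empty sum gives $\hat{\theta}_{\mathcal C}(1) = \mu_0 = \nu(\mathcal C)$. On the other hand, by Lemma \ref{lem:break_det_fanning} the value $\hat{\theta}_{\mathcal C}(\iota')$ is the fanning exponent of a family whose sliding divisor is $\iota'$; hence it lies in ${\mathcal E}_E(\mathcal C)$ and therefore $\hat{\theta}_{\mathcal C}(\iota') \geq \beta_0 \geq \nu(\mathcal C)$. Finally, because $\iota' \geq 2$, Remark \ref{rem:break_1} forbids this fanning exponent from equaling $\nu(\mathcal C)$; consequently $\hat{\theta}_{\mathcal C}(\iota') > \nu(\mathcal C) = \hat{\theta}_{\mathcal C}(1)$, which closes the reduction.

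The only genuinely delicate point is this base case, and specifically the strictness $\hat{\theta}(\iota') > \nu(\mathcal C)$. A purely arithmetic attempt, expanding $\hat{\theta}(\iota') - \hat{\theta}(1) = (\mu_{\iota'-1} - \mu_0) + \sum_{j=1}^{\iota'-1}\mu_j$, is inconclusive on its own, since the multiplicities are only non-increasing and the bracketed term may be negative. This is exactly where the geometry must enter: the characterization of a fanning exponent as an element of ${\mathcal E}_E(\mathcal C)$ bounded below by $\nu(\mathcal C)$, combined with the rigidity of Remark \ref{rem:break_1} (sliding divisor $1$ if and only if fanning exponent $\nu(\mathcal C)$), supplies the strict inequality that naive multiplicity bookkeeping misses. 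All the remaining steps — the reindexing recursion and the transfer of the inequality through $\pi_1$ — are routine once this is in place.
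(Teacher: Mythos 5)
Your proof is correct, and it rests on exactly the same two pillars as the paper's argument: additivity of the fanning exponent/$\hat{\theta}$ under blow-down (Remark \ref{rem:coef_tr}) and the dichotomy that the sliding divisor equals $1$ if and only if the fanning exponent equals the multiplicity (Remark \ref{rem:break_1}), applied at level $\iota-1$. The paper does this in one pass — it takes the family realizing $\iota'$ from Lemma \ref{lem:break_set}, notes that its strict transform at level $\iota-1$ has sliding divisor $>1$, hence fanning exponent $>\mu_{\iota-1}$, and blows down $\iota-1$ times — while you repackage the identical content as an induction via the recursion $\hat{\theta}_{\mathcal C}(\iota)=\hat{\theta}_{{\mathcal C}_1}(\iota-1)+\nu({\mathcal C}_1)$ together with the (correct) transfer $\mathbf{Slid}_{{\mathcal C}_1}=\{\iota-1 : \iota\in\mathbf{Slid}_{\mathcal C},\ \iota\geq 2\}$, which is a purely organizational difference.
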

\begin{proof}
  Denote $\mu_j = \nu_{P_j} (\Gamma_{j})$ for $j \geq 0$ and consider the family
  $(\Gamma_{0}^{\epsilon})_{\epsilon \in ({\mathbb C},0)}$ associated to $\iota'$ in the
  proof of Lemma \ref{lem:break_set} with $\Gamma^0_0=\Gamma$.  Since the sliding divisor
  $\iota(\Gamma_{\iota -1}^{\epsilon})$ is greater than $1$ by definition, we have,
  by Remark \ref{rem:coef_tr}:
  \[
    \theta(\Gamma_{\iota-1}^{\epsilon}) \in
    {\mathcal E}_{E_{\iota-1}}(\Gamma_{\iota -1}^{0}) \setminus \{ \mu_{\iota -1} \}
  \]
  and, in particular, $\theta(\Gamma^{\epsilon}_{\iota-1})>\mu_{\iota-1}$.
  By applying Remark
  \ref{rem:coef_tr} $\iota -1$ times, we conclude that
  \[
    \hat{\theta}(\iota^{\prime}) > \mu_{\iota-1} + \sum_{j=1}^{\iota -1} \mu_j
  \]
  Hence,
  $ \hat{\theta} (\iota)< \hat{\theta} (\iota')$ by Lemma \ref{lem:break_det_fanning}.
\end{proof}
\begin{pro}
\label{pro:bij_break_fanning}
Let ${\mathcal C}$ be the equisingularity class of a suitable pair $(\Gamma, E)$.  Then
$\hat{\theta}: \mathbf{Slid}_{\mathcal C} \to {\mathcal E}_{E}(\mathcal{C})$ is a monotone
bijection.
\end{pro}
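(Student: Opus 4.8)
The plan is to assemble the three preceding lemmas, since the Proposition is essentially their common corollary. First I would dispose of monotonicity and injectivity at once: these are exactly the content of Lemma~\ref{lem:break_to_fanning_monotone}, which asserts that $\iota<\iota'$ in $\mathbf{Slid}_{\mathcal C}$ forces $\hat\theta(\iota)<\hat\theta(\iota')$. Thus $\hat\theta$ is strictly increasing, hence injective and monotone, and the only real content left is that $\hat\theta$ takes values in ${\mathcal E}_E(\mathcal C)$ and that it is onto.

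For well-definedness into the target I would invoke the construction carried out in the proof of Lemma~\ref{lem:break_set}: to each $\iota\in\mathbf{Slid}_{\mathcal C}$ there is associated an analytic family $(\Gamma^{\epsilon})_{\epsilon\in(\mathbb C,0)}$ in $\mathcal C$ whose sliding divisor is precisely $\iota$. By Lemma~\ref{lem:break_det_fanning} the fanning exponent of that family equals $\hat\theta(\iota)$, and by the very definition of the fanning exponent one has $\theta(\Gamma^{\epsilon})\in{\mathcal E}_E(\mathcal C)$. Hence $\hat\theta(\iota)\in{\mathcal E}_E(\mathcal C)$ for every $\iota\in\mathbf{Slid}_{\mathcal C}$, so the map is well defined into the claimed codomain.

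Surjectivity is the crux, and I would prove it by reversing the same correspondence. Fix $\theta\in{\mathcal E}_E(\mathcal C)$; note that every such exponent satisfies $\theta\geq\beta_0\geq\nu(\mathcal C)$. Starting from a generic suitable parametrization $\Gamma(t)=(t^{\nu(\mathcal C)},\sum_{\beta\geq\beta_0}a_{\beta}t^{\beta})$, consider the family $(\Gamma^{\epsilon}_{\theta})_{\epsilon\in(\mathbb C,0)}$ given by $(t^{\nu(\mathcal C)},\,\epsilon t^{\theta}+\sum_{\beta\geq\beta_0}a_{\beta}t^{\beta})$, exactly as in Remark~\ref{rem:coef_down_up}. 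Because $\theta\in{\mathcal E}_E(\mathcal C)$, this family stays in $\mathcal C$ for $\epsilon$ near $0$ (the coefficients $a_{\beta_0},\ldots,a_{\beta_g}$ remain nonzero), all its coefficients of index below $\theta$ are constant in $\epsilon$, while the coefficient of $t^{\theta}$ is non-constant; therefore its fanning exponent is exactly $\theta$. Letting $\iota$ be its sliding divisor, which belongs to $\mathbf{Slid}_{\mathcal C}$ by definition, Lemma~\ref{lem:break_det_fanning} yields $\hat\theta(\iota)=\theta$. Thus $\hat\theta$ is onto, and combined with strict monotonicity it is a monotone bijection.

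I expect the main obstacle to be the bookkeeping of the extreme exponent $\theta=\nu(\mathcal C)$ together with the verification that the perturbed families genuinely remain equisingular. The value $\nu(\mathcal C)$ lies in ${\mathcal E}_E(\mathcal C)$ precisely when $\beta_0=\nu(\mathcal C)$; in that case perturbing $a_{\nu(\mathcal C)}$ produces a family whose sliding divisor is $1$ by Remark~\ref{rem:break_1}, with $\hat\theta(1)=\nu_{P_0}(\Gamma)=\nu(\mathcal C)$. When instead $\beta_0>\nu(\mathcal C)$, no family in $\mathcal C$ can have fanning exponent $\nu(\mathcal C)$, which is consistent with $\nu(\mathcal C)\notin{\mathcal E}_E(\mathcal C)$ and $1\notin\mathbf{Slid}_{\mathcal C}$. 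Both the equisingularity of the perturbations and this base-exponent dichotomy are controlled by the defining property of ${\mathcal E}_E(\mathcal C)$ as the exact set of admissible Puiseux exponents of curves in $\mathcal C$, so once that is made explicit the argument closes.
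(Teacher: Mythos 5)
Your proposal is correct and follows essentially the same route as the paper's proof: monotonicity (hence injectivity) is taken directly from Lemma~\ref{lem:break_to_fanning_monotone}, and surjectivity is obtained by perturbing a single coefficient as in Remark~\ref{rem:coef_down_up} and identifying the fanning exponent of the resulting family with $\hat{\theta}$ of its sliding divisor via Lemma~\ref{lem:break_det_fanning}. Your additional explicit verification that $\hat{\theta}$ lands in ${\mathcal E}_{E}(\mathcal{C})$ and your discussion of the extreme exponent $\nu(\mathcal{C})$ are sound but are left implicit in the paper.
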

\begin{proof}
  By Lemma \ref{lem:break_to_fanning_monotone}, it suffices to prove that $\hat{\theta}$
  is surjective. This is clear since, by Remark \ref{rem:coef_down_up}, any
  $k \in {\mathcal E}_{E}(\mathcal{C})$ is the fanning exponent of a family
  $(\Gamma^{\epsilon})_{\epsilon \in ({\mathbb C},0)}$ in ${\mathcal C}$.
\end{proof} 
\subsection{Contact orders between curves and vector fields}
Our study of the analytic moduli requires us to restrict the analytic families we
have studied to be associated to flows of vector fields. This is what we introduce in this
subsection. 

Consider the equisingularity class ${\mathcal C}$ of a suitable pair $(\Gamma,E)$. Recall
that $P_0, P_1, \hdots$ is the sequence of infinitely near points of $\Gamma$ following
the blow-ups $\pi_i$ for $i\geq 1$.  
\begin{defi}
  For any integer $i \geq 0$, we denote by ${\mathfrak X}_{E,i}$ the set of analytic
  vector fields defined in a neighborhood of $P_i$ in $\mathcal{X}_i$ that
\begin{itemize}
\item leave invariant every irreducible component of $E_i$ at $P_i$ (see subsection
  \ref{subsec:blow} for notation), and
\item are singular at $P_i$ for $i=0$.
\end{itemize}
We denote by ${\mathfrak X}_{E,i}^{\circ}$ the subset of ${\mathfrak X}_{E,i}$ consisting of
vector fields singular at $P_i$.
\end{defi}
 Given $X \in {\mathfrak X}_{E,i}$, with $X(P_i)=0$ or $i \geq 1$, such that
$\Gamma_{i}$ is not $X$-invariant, we consider the flow
$(\phi^{\epsilon})_{\epsilon \in ({\mathbb C},0)}$ of $X$ in a neighborhood of $P_i$ and
define the curve $\Gamma_{i}^{\epsilon} = \phi^{\epsilon} (\Gamma_{i})$ for
$\epsilon \in \mathbb{C}$ in a neighborhood of $0$. The pair
$(\Gamma_{i}^{\epsilon} , E_i)$ is in the same equisingularity class as
$(\Gamma_{i}, E_i)$ for $\epsilon$ in a neighborhood of $0$.  By blowing-down we obtain
the family $(\Gamma^{\epsilon})_{\epsilon \in ({\mathbb C},0)}$ induced by $X$, which is
an analytic family in the equisingularity class of $\Gamma^{}\in {\mathcal C}$.

\textbf{Notation.}  The fanning exponent of the family $(\Gamma^{\epsilon})$ just
constructed will be denoted $\theta_{X} (\Gamma^{})$ in order to stress that
$\Gamma^{\epsilon}$ is defined by $X$.

\begin{defi} \label{def:flow-fanning}
For any $i\geq 0$, we define the sets of fanning exponents associated to analytic vector fields and to singular analytic vector fields:
\[
  \begin{split}
    &\Theta_{i+1} (\Gamma^{}) =
    \{ \theta_{X} (\Gamma^{}) : X \in {\mathfrak X}_{E,i+1} \},
    \\
    &\Theta_{i}^{\circ} (\Gamma^{}) =
    \{ \theta_{X} (\Gamma^{}) : X \in {\mathfrak X}_{E,i}^{\circ}\} .
\end{split}
\]
Both are included in ${\mathcal E}_{E}({\mathcal C}) $. For completeness, we set
$\Theta_0 (\Gamma^{}) = \Theta_{0}^{\circ} (\Gamma^{}) $. We shall call them the sets of
\emph{flow-fanning exponents} and the \emph{singular flow-fanning exponents} at indices
$i+1$ and $i$, respectively.
\end{defi}
\begin{rem} \cite[Corollary 4.11 and Lemma 4.13]{Fortuny-Ribon-Canadian} 
  \label{rem:gen_dim}
  When $E=\emptyset$, the number
  $\sharp ( {\mathcal E}_{\emptyset}({\mathcal C}) \setminus \Theta_{0}^{\circ} (\Gamma))$ is the 
  dimension of
  the space of analytic classes of elements in ${\mathcal C}$ 
  for any generic $\Gamma \in {\mathcal C}$. Moreover, we have  
  \[
    \Theta_{0}^{\circ} (\Gamma) =
    \{ \nu_{\Gamma} (\omega) - \nu ({\mathcal C})  \ | \  w \in \hat{\Omega}
    (\mathbb{C}^2,0)  \ \mathrm{and} \ \omega (0)=0 \}  
  \]
  by \cite[Theorem 3.21]{Fortuny-Ribon-Canadian}.
\end{rem}
The following long result is the foundation of the enumeration of the elements in $\Lambda_{\Gamma}^{E}$ 
which will be done in the next section.
\begin{pro} \label{pro:breaking} Consider the equisingularity class ${\mathcal C}$ of a
  suitable pair $(\Gamma^{},E)$. Then:
\begin{enumerate}
\item \label{p3t} Singular flow-fanning exponents at index $i$ are flow-fanning
  exponents at index $i+1$:
  $\Theta_{i}^{\circ} (\Gamma^{}) \subset \Theta_{i+1} (\Gamma^{})$ for $i \geq 0$.
\item \label{p1t} For indices not corresponding to sliding divisors, flow-fanning
  exponents and singular flow-fanning exponents are the same:
  $\Theta_i (\Gamma^{}) = \Theta_{i}^{\circ} (\Gamma^{}) $ if
  $i \in \mathbb{Z}_{\geq 1} \setminus \mathbf{Slid}_{\mathcal C}$.
\item \label{p2t} Assume $i$ is a sliding divisor. Then flow-fanning exponents of index
  $i$ are greater than or equal to $\hat{\theta}(i)$. Moreover, $\hat{\theta}(i)$ is the
  unique non-singular flow-fanning exponent of index $i$: if
  $i\in \mathbf{Slid}_{\mathcal{C}}$, then
  $\Theta_i (\Gamma^{}) \subset \mathbb{Z}_{\geq \hat{\theta}(i)}$ and
  $\Theta_i (\Gamma^{}) \setminus \Theta_{i}^{\circ} (\Gamma^{}) = \{ \hat{\theta} (i)
  \}$.
\item \label{p8t} For any sliding divisor $i$ and $j>i$, the flow-fanning exponents of
  index $j$ are strictly greater than $\hat{\theta}(i)$:
  $\Theta_{j} (\Gamma^{}) \subset \mathbb{Z}_{> \hat{\theta}(i)}$ if
  $i \in \mathbf{Slid}_{\mathcal C}$ and $i < j$.
\item \label{p4t} Flow-fanning exponents of index $i+1$ which are not singular
  flow-fanning exponents of index $i$ are the same thing as flow-fanning exponents of
  index $i+1$ which are not of index $i$, for $i\geq 0$:
  $ \Theta_{i+1} (\Gamma^{}) \setminus \Theta_{i}^{\circ} (\Gamma^{}) = \Theta_{i+1}
  (\Gamma^{}) \setminus \Theta_{i} (\Gamma^{}) $ for $i \geq 0$.
\item \label{p5t} The set of exponents $\mathcal{E}_{E}(\mathcal{C})$ is the union of the
  flow-fanning exponents of all indices:
  ${\mathcal E}_{E}({\mathcal C}) = \cup_{i \geq 0} \Theta_{i} (\Gamma^{})$.
\item \label{p6t} Actually, one has the following union where $\sqcup$ indicates disjoint
  union:
  \[
    \mathcal{E}_{E}(\mathcal{C}) =
    \Theta_{0}^{\circ} (\Gamma^{})
    \bigsqcup
    \sqcup_{i \geq 0}
    (\Theta_{i+1} (\Gamma^{}) \setminus  \Theta_{i}^{\circ} (\Gamma^{}) ).
    \]
  \item \label{p7t} Finally, whenever $\Gamma_i$ is transverse to a nonempty $E_i$ (in
    particular $P_i$ is a trace point of $E_i$ and $\Gamma_i$ is non-singular at $P_i$),
    then all flow-fanning exponents of index $i+1$ correspond to singular flow-fanning
    exponents of index $i$. That is, if $\tau$ is the first index such that
    $\Gamma_{\tau}$ is transverse to a nonempty $E_{\tau}$, then
    $\Theta_{i+1} (\Gamma^{}) \setminus \Theta_{i}^{\circ} (\Gamma^{}) = \emptyset$ for
    $i \geq \tau$.
\end{enumerate}
\end{pro}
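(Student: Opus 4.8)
The unifying principle is that every flow-fanning exponent is, by Lemma \ref{lem:break_det_fanning}, the value $\hat{\theta}(\iota_X)$ attached to the sliding divisor $\iota_X$ of the family $(\Gamma^{\epsilon})$ induced by the flow of $X$. The plan is to first isolate the local dichotomy that drives all eight items: for a vector field $Y \in \mathfrak{X}_{E,m}$ whose flow moves $\Gamma_m$, the induced family has sliding divisor $\iota_Y > m$ exactly when $Y(P_m)=0$, and $\iota_Y = m$ exactly when $Y$ is non-singular at $P_m$. Indeed, if $Y(P_m)=0$ the flow fixes $P_m$, so $\Gamma_m^{\epsilon}$ keeps passing through $P_m$ and through all infinitely near points below it; whereas if $Y(P_m) \neq 0$ then $P_m$ must be a trace point (only there can $Y$ preserve the local branches of $E_m$ while being non-singular), the flow slides $P_m$ along $D_m \subset E_m$, and by Remarks \ref{rem:coef_tr} and \ref{rem:break_1} the sliding divisor is exactly $m$. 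I would pair this with two transfer operations: lifting a field singular at $P_i$ to a holomorphic field at $P_{i+1}$ through $\pi_{i+1}$, and blowing a field at $P_{i+1}$ that preserves $D_{i+1}$ down to a field singular at $P_i$; in both cases the induced family on $\Gamma$, hence the fanning exponent, is unchanged.

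With this in hand items \eqref{p3t}--\eqref{p8t} are quick. For \eqref{p3t}, lifting a singular $X \in \mathfrak{X}_{E,i}^{\circ}$ yields $\tilde{X} \in \mathfrak{X}_{E,i+1}$ with the same family, so $\theta_X(\Gamma) \in \Theta_{i+1}(\Gamma)$. For \eqref{p1t}, if $i \notin \mathbf{Slid}_{\mathcal{C}}$ then $P_i$ is a corner point by Lemma \ref{lem:break_set}, so any $Y \in \mathfrak{X}_{E,i}$ preserving both components of $E_i$ must vanish at $P_i$; thus $\mathfrak{X}_{E,i} = \mathfrak{X}_{E,i}^{\circ}$ and $\Theta_i = \Theta_i^{\circ}$. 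For \eqref{p2t}, when $i \in \mathbf{Slid}_{\mathcal{C}}$ the non-singular fields realize $\iota_Y = i$, hence the value $\hat{\theta}(i)$, while every singular field has $\iota_Y > i$ and therefore, by strict monotonicity of $\hat{\theta}$ (Lemma \ref{lem:break_to_fanning_monotone}, Proposition \ref{pro:bij_break_fanning}), gives a value $> \hat{\theta}(i)$; this yields both $\Theta_i \subset \mathbb{Z}_{\geq \hat{\theta}(i)}$ and $\Theta_i \setminus \Theta_i^{\circ} = \{\hat{\theta}(i)\}$. For \eqref{p8t}, any $Y \in \mathfrak{X}_{E,j}$ with $j > i$ has $\iota_Y \geq j > i$, so monotonicity again forces $\theta_Y(\Gamma) > \hat{\theta}(i)$.

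The combinatorial items \eqref{p4t}--\eqref{p6t} then follow formally. Item \eqref{p4t} is immediate: by \eqref{p1t} and \eqref{p2t} the set $\Theta_i \setminus \Theta_i^{\circ}$ is either empty or $\{\hat{\theta}(i)\}$, and in the latter case \eqref{p8t} with $j=i+1$ shows $\hat{\theta}(i) \notin \Theta_{i+1}$, so $\Theta_{i+1} \setminus \Theta_i^{\circ} = \Theta_{i+1} \setminus \Theta_i$. Item \eqref{p5t} holds because $\hat{\theta}$ is onto $\mathcal{E}_E(\mathcal{C})$ (Proposition \ref{pro:bij_break_fanning}) with each $\hat{\theta}(\iota) \in \Theta_{\iota}$ by \eqref{p2t}, while $\Theta_i \subset \mathcal{E}_E(\mathcal{C})$ by definition. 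For \eqref{p6t} I would use the telescoping identity $\Theta_0^{\circ} \sqcup \bigsqcup_{i=0}^{N-1}(\Theta_{i+1} \setminus \Theta_i^{\circ}) = \bigcup_{i=0}^{N} \Theta_i$, whose union step uses $\Theta_i^{\circ} \subset \Theta_{i+1}$ from \eqref{p3t} and whose disjointness is the only real content: a value $k = \hat{\theta}(\iota) \in \Theta_{N+1} \setminus \Theta_N^{\circ}$ has $\iota \geq N+1$ by \eqref{p8t}, and if it also lay in some $\Theta_i$ with $i \leq N$ it would arise from a singular field (as $\iota > i$), hence lie in $\Theta_i^{\circ}$; repeatedly lifting propagates it to $\Theta_N^{\circ}$, a contradiction. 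Passing to the limit and invoking \eqref{p5t} gives the disjoint decomposition (only finitely many difference terms survive, the infinite tail being carried by $\Theta_0^{\circ}$, cf. Remark \ref{rem:gen_dim}).

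Finally, \eqref{p7t} is where the blow-down transfer is essential, and is the step I expect to be the main obstacle. When $\Gamma_i$ is transverse to a non-empty $E_i$, the point $P_i$ is a trace point and $P_{i+1}$ lies only on $D_{i+1}$, so every $Y \in \mathfrak{X}_{E,i+1}$ automatically preserves $D_{i+1}$; blowing $Y$ down through $\pi_{i+1}$ produces $Z \in \mathfrak{X}_{E,i}^{\circ}$, singular at $P_i$ because $D_{i+1}$ collapses to $P_i$ and preserving the component of $E_i$ through $P_i$ because near $P_{i+1}$ that component pulls back to $D_{i+1}$, inducing the same family; hence $\theta_Y(\Gamma) \in \Theta_i^{\circ}$ and $\Theta_{i+1} \setminus \Theta_i^{\circ} = \emptyset$. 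The delicate points, which I would treat with care, are verifying that the blown-down field genuinely leaves the relevant component of $E_i$ invariant in a full neighborhood of $P_i$ (not merely off the collapsed locus), and that transversality forces $P_{i+1}$ to avoid the strict transform of $E_i$. Together with the local dichotomy of the first paragraph, these computations on the blow-up are the technical heart of the proof, everything else being bookkeeping with the monotone bijection $\hat{\theta}$.
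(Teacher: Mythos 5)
Your treatment of items \eqref{p3t}, \eqref{p1t}, \eqref{p2t}, \eqref{p8t}, \eqref{p4t}, \eqref{p5t} and \eqref{p6t} is correct and is essentially the paper's proof: lifting singular fields through $\pi_{i+1}$, the corner-point argument forcing singularity off $\mathbf{Slid}_{\mathcal C}$, the dichotomy ``sliding divisor $=i$ iff the field is non-singular at $P_i$'' combined with the monotone bijection $\hat{\theta}$ of Proposition \ref{pro:bij_break_fanning}, and then purely set-theoretic bookkeeping (your telescoping identity for \eqref{p6t} is the same argument as the paper's claim that $k\in\Theta_i(\Gamma)\cap\Theta_j(\Gamma)$ with $i<j$ forces $k\in\Theta_l^{\circ}(\Gamma)$ for $i\leq l<j$).

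The genuine gap is in \eqref{p7t}. The transfer operation you rely on --- ``blowing $Y$ down through $\pi_{i+1}$ produces $Z\in{\mathfrak X}_{E,i}^{\circ}$'' --- does not exist for germs. First, $\pi_{i+1}$ maps a small neighborhood $U$ of $P_{i+1}$ onto a horn-shaped region (in coordinates $(x,y)=(x_1,x_1y_1)$ with $P_{i+1}$ the origin of the $(x_1,y_1)$-chart, the image is $\{|x|<\delta,\ |y|<\delta|x|\}\cup\{(0,0)\}$), which is not a neighborhood of $P_i$; so the push-forward is not even a germ at $P_i$. Second, the formal push-forward need not be holomorphic and in general does not extend: the field $(x_1+y_1^2)\,\partial/\partial y_1$ belongs to ${\mathfrak X}_{E,i+1}$ (it preserves $D_{i+1}=\{x_1=0\}$) and moves $\Gamma_{i+1}=\{y_1=0\}$, but its push-forward is $\left(x^2+y^2/x\right)\partial/\partial y$, which admits no holomorphic extension to any neighborhood of $P_i$. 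This is precisely why the whole proposition is asymmetric: only the lift of \emph{singular} fields (your other transfer operation, and the paper's) is legitimate, and your worry about invariance ``off the collapsed locus'' misses that the field itself fails to exist near $P_i$. The paper proves \eqref{p7t} differently, by identifying both sets: once $\Gamma_i$ is smooth and transverse to $E_i$ at a trace point, $\Theta_{i+1}(\Gamma)$ and $\Theta_i^{\circ}(\Gamma)$ are both equal to $\mathbb{Z}_{\geq\hat{\theta}(i+1)}$. The inclusions $\subset$ follow from your own dichotomy plus monotonicity of $\hat{\theta}$ (any family induced at $P_{i+1}$, or by a singular field at $P_i$, has sliding divisor $\geq i+1$); the inclusions $\supset$ require exhibiting fields realizing every such exponent, e.g.\ in adapted coordinates with $E_i=\{x=0\}$ and $\Gamma_i=\{y=0\}$ the singular fields $x^m\,\partial/\partial y$ ($m\geq 1$) at $P_i$, and $x_1^m\,\partial/\partial y_1$ ($m\geq 0$) at $P_{i+1}$, whose fanning exponents are then transported by Remark \ref{rem:coef_tr} and Lemma \ref{lem:break_det_fanning}. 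Replacing your blow-down step by this direct computation closes the gap.
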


\begin{proof}
  Property \eqref{p3t} holds because any $X \in {\mathfrak X}_{E,i}^{\circ}$ can be lifted
  by $\pi_{i+1}$ to a vector field in $ {\mathfrak X}_{E,i+1}$ because $X$ is singular at
  $P_i$.

  Let $i \in \mathbb{Z}_{\geq 1} \setminus \mathbf{Slid}_{\mathcal C}$ be a non-sliding
  divisor. By Lemma \ref{lem:break_set}, $P_{i}$ is not a trace point of $E_i$, so that
  $E_i$ has two irreducible components at $P_i$. By definition, any
  $X \in {\mathfrak X}_{E,i}$ preserves both components, and must be singular, which gives
  Property \eqref{p1t}.

  To prove \eqref{p2t}, let $i\in \mathbf{Slid}_{\mathcal{C}}$ and
  $k \in \Theta_i (\Gamma^{})$. By definition, $k = \theta_{X}(\Gamma^{})$ for some
  $X \in {\mathfrak X}_{E,i}$. The flow of $X$ induces a family
  $(\Gamma^{\epsilon})_{\epsilon \in ({\mathbb C},0)}$ whose sliding divisor $i'$ is
  $i' = i$ when $X \not \in {\mathfrak X}_{E,i}^{\circ}$, and $i' > i$ otherwise.  In any
  case, $k = \hat{\theta} (i') \geq \hat{\theta} (i)$ by Proposition
  \ref{pro:bij_break_fanning}, which is the first assertion. We also know that
  $\Theta_i (\Gamma^{}) \setminus \Theta_{i}^{\circ} (\Gamma^{}) \subset \{ \hat{\theta}
  (i) \}$ and $ \Theta_{i}^{\circ} (\Gamma^{}) \subset \mathbb{Z}_{> \hat{\theta}(i)}$. As
  $\hat{\theta} (i)$ is equal to $\theta_{X} (\Gamma^{})$ for any
  $X \in {\mathfrak X}_{E,i} \setminus {\mathfrak X}_{E,i}^{\circ}$ it belongs
  to $\Theta_i (\Gamma^{}) $, and we have the second assertion.

  For \eqref{p8t}, take $k \in \Theta_j (\Gamma^{})$ with $i < j$ so that
  $k = \theta_{X} (\Gamma^{})$ for some $X \in {\mathfrak X}_{E,j}$. The family induced
  by the flow of $X$ on $\Gamma^{}$ has a sliding divisor $j' > i$, which gives
  $k = \hat{\theta}(j') > \hat{\theta} (i)$, and we are done.

  Consider \eqref{p4t}. The inclusion $\supset$ is clear since
  $ \Theta_{i}^{\circ} (\Gamma^{}) \subset \Theta_{i} (\Gamma^{})$.  Let us prove the
  direct inclusion $\subset$.  Take
  $k \in \Theta_{i+1} (\Gamma^{}) \setminus \Theta_{i}^{\circ} (\Gamma^{}) $. It suffices
  to show $k \not \in \Theta_{i} (\Gamma^{}) $. If it were not so, then
  $k \in \Theta_{i} (\Gamma^{}) \setminus \Theta_{i}^{\circ} (\Gamma^{})$, so that
  $i \in \mathbf{Slid}_{\mathcal C}$ and $k = \hat{\theta} (i)$ by \eqref{p1t} and
  \eqref{p2t}.  Since $ \Theta_{i+1} (\Gamma^{}) \subset \mathbb{Z}_{> \hat{\theta} (i)}$ by
  \eqref{p8t}, we obtain a contradiction.

  The inclusion $\supset$ in \eqref{p5t} is obvious. For the direct inclusion, take
  $k \in {\mathcal E}_{E}({\mathcal C})$. By Proposition \ref{pro:bij_break_fanning} it is
  of the form $k = \hat{\theta} (i)$, and thus it belongs to $\Theta_i (\Gamma^{}) $.

  Property \eqref{p6t} is the longest to prove. First, we claim that if
  $k \in \Theta_{i} (\Gamma^{}) \cap \Theta_{j} (\Gamma^{})$ with $0 \leq i < j$ then
  $k \in \Theta_{i}^{\circ} (\Gamma^{}) \cap \Theta_{i+1} (\Gamma^{})$.  By \eqref{p3t},
  it suffices to show that $k \in \Theta_{i}^{\circ} (\Gamma^{})$. This holds for $i=0$ by
  definition, so we assume $i \geq 1$. Suppose, aiming at contradiction, that
  $k \not \in \Theta_{i}^{\circ} (\Gamma^{})$. Now, by \eqref{p1t}, we get
  $i \in \mathbf{Slid}_{\mathcal C}$; by \eqref{p2t},
  $k \in \Theta_{i} (\Gamma^{}) \setminus \Theta_{i}^{\circ} (\Gamma^{}) = \{ \hat{\theta} (i) \}$; and, finally, by \eqref{p8t},
  $k \in \Theta_{j} (\Gamma^{}) \subset \mathbb{Z}_{> \hat{\theta} (i)}$, which is the desired
  contradiction. Thus, $k \in \Theta_i^{\circ}(\Gamma^{})$.

  Now if $k \in \Theta_{i} (\Gamma^{}) \cap \Theta_{j} (\Gamma^{})$ for some
  $0 \leq i < j$, then $k \in \Theta_{l}^{\circ} (\Gamma^{})$ for any $ i \leq l < j$ by
  the above claim and induction on $l$. We can now prove \eqref{p6t}. The equality follows
  from \eqref{p5t}. It remains to show that the union is disjoint.  Given
  $k \in \Theta_{j} (\Gamma^{}) \setminus \Theta_{j-1}^{\circ} (\Gamma^{})$, if it
  belonged to $\Theta_{i} (\Gamma^{}) \setminus \Theta_{i-1}^{\circ} (\Gamma^{})$ for
  $i < j$ or to $ \Theta_{0}^{\circ} (\Gamma^{}) $, then
  $k \in \Theta_{j-1}^{\circ} (\Gamma^{})$, which is impossible. Hence, the union is
  disjoint.

  Finally, Property \eqref{p7t} holds because the sets $\Theta_{i+1} (\Gamma^{})$
  and $\Theta_{i}^{\circ} (\Gamma^{})$ are both equal to $\mathbb{Z}_{\geq \hat{\theta} (i+1)}$.
\end{proof} 
\begin{defi}
  Let $\Gamma$ be a germ of irreducible curve and $E$ a germ of normal crossings divisor.
  We define
  \[
    \Lambda_{\Gamma, \circ}^{E} =
    \{ \nu_{\Gamma} (\omega) \ |
    \  w \in \hat{\Omega}_E  (\mathbb{C}^2,0)  \ \mathrm{and} \ \omega (0)=0
    \} .
  \]
\end{defi}  
\begin{pro}
\label{pro:contact_kahler}
Consider the equisingularity class ${\mathcal C}$ of a suitable pair $(\Gamma^{},E)$.
Then
 \[
   \begin{split}
     &\Theta_{i}^{\circ} (\Gamma^{})  =
     -  \nu_{P_i} (\Gamma_{i}^{})  +  \sum_{l=1}^{i}  \nu_{P_l} (\Gamma_{l}^{})   +
     \Lambda_{\Gamma_{i}^{},\circ}^{E_{i}}, \\
     &\Theta_{i+1} (\Gamma^{})  =
     \sum_{l=1}^{i} \nu_{P_l} (\Gamma_{l}^{})  + \Lambda_{\Gamma_{i+1}^{}}^{E_{i+1}}
 \end{split}
\] 
 for any $i \geq 0$.    
\end{pro}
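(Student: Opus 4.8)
The plan is to reduce both identities to the case $i=0$ of the first one, which is the $E$-invariant version of \cite[Theorem 3.21]{Fortuny-Ribon-Canadian} (stated for $E = \emptyset$ in Remark \ref{rem:gen_dim}): for the equisingularity class $\mathcal{C}'$ of any suitable pair $(\Gamma',E')$ one has $\Theta_{0}^{\circ}(\Gamma') = -\nu(\mathcal{C}') + \Lambda_{\Gamma',\circ}^{E'}$. The bridge between consecutive levels is the blow-down translation of fanning exponents in Remark \ref{rem:coef_tr}: if the family $(\Gamma_l^{\epsilon})$ does not slide at level $l$, then $\theta(\Gamma_{l-1}^{\epsilon}) = \theta(\Gamma_{l}^{\epsilon}) + \nu(\mathcal{C}_{l})$. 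Throughout I use $\nu(\mathcal{C}_l) = \nu_{P_l}(\Gamma_l)$.

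First I would settle the identity for $\Theta_{i}^{\circ}(\Gamma)$. Take $X \in \mathfrak{X}_{E,i}^{\circ}$; since $X$ is singular at $P_i$ the induced family stays through $P_0, \hdots, P_i$, so its sliding divisor exceeds $i$ and $\theta(\Gamma_{l}^{\epsilon}) > \nu(\mathcal{C}_l)$ for every $l \leq i$. Iterating Remark \ref{rem:coef_tr} down to the origin gives $\theta_{X}(\Gamma) = \theta_{X}(\Gamma_i) + \sum_{l=1}^{i}\nu(\mathcal{C}_l)$, where $\theta_X(\Gamma_i)$ is the fanning exponent computed in $\mathcal{C}_i$ at $P_i$. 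As $X$ runs over $\mathfrak{X}_{E,i}^{\circ}$, the value $\theta_X(\Gamma_i)$ runs over $\Theta_0^{\circ}(\Gamma_i)$, whence $\Theta_i^{\circ}(\Gamma) = \sum_{l=1}^{i}\nu(\mathcal{C}_l) + \Theta_0^{\circ}(\Gamma_i)$. Substituting the base case $\Theta_0^{\circ}(\Gamma_i) = -\nu(\mathcal{C}_i) + \Lambda_{\Gamma_i,\circ}^{E_i}$ yields the first formula.

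For $\Theta_{i+1}(\Gamma)$ I would split $\mathfrak{X}_{E,i+1}$ into its singular and non-singular parts. The singular part produces exactly $\Theta_{i+1}^{\circ}(\Gamma)$, which the already-proven first formula (at index $i+1$) rewrites as $\sum_{l=1}^{i}\nu(\mathcal{C}_l) + \Lambda_{\Gamma_{i+1},\circ}^{E_{i+1}}$. For the non-singular part, Lemma \ref{lem:break_set} together with Proposition \ref{pro:breaking} \eqref{p1t} and \eqref{p2t} shows that it is empty when $P_{i+1}$ is a corner of $E_{i+1}$, and equals the single value $\hat{\theta}(i+1) = \sum_{l=1}^{i}\nu(\mathcal{C}_l) + \nu_{P_i}(\Gamma_i)$ (by Definition \ref{def:theta}) when $P_{i+1}$ is a trace point, i.e. $i+1 \in \mathbf{Slid}_{\mathcal C}$. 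Hence the second formula follows at once from the following Key Lemma: $\Lambda_{\Gamma_{i+1}}^{E_{i+1}} \setminus \Lambda_{\Gamma_{i+1},\circ}^{E_{i+1}}$ equals $\{\nu_{P_i}(\Gamma_i)\}$ when $P_{i+1}$ is a trace point of $E_{i+1}$, and is empty when $P_{i+1}$ is a corner.

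The main obstacle is this Key Lemma, which I would prove by a direct local computation exploiting that $E_{i+1}$ contains the exceptional divisor $D_{i+1}$ and that $(\Gamma_{i+1}, D_{i+1})_{P_{i+1}} = \nu_{P_i}(\Gamma_i)$. Writing a preserving $1$-form $\omega$ in the suitable coordinates of Remark \ref{rem:suitable_blow} and imposing invariance of $D_{i+1}$, the lowest-order term of $\Gamma_{i+1}^{*}\omega$ of a non-vanishing $\omega$ is governed by the minimal elementary form $\overline{\Omega}_1$ of Definition \ref{def:adapted-levels}; a one-line order estimate shows that this term is never cancelled, so $\nu_{\Gamma_{i+1}}(\omega) = (\Gamma_{i+1}, D_{i+1})_{P_{i+1}} = \nu_{P_i}(\Gamma_i)$ for \emph{every} such form. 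Meanwhile every $\omega$ vanishing at $P_{i+1}$ has strictly larger $\Gamma_{i+1}$-order, and in the corner case the invariance of two transverse components of $E_{i+1}$ forces $\omega(0)=0$, so no non-vanishing contribution exists. Assembling the singular part, the non-singular part and the Key Lemma then gives $\Theta_{i+1}(\Gamma) = \sum_{l=1}^{i}\nu(\mathcal{C}_l) + \Lambda_{\Gamma_{i+1}}^{E_{i+1}}$, completing the proof.
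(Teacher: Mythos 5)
Your proposal is correct and follows essentially the same route as the paper's proof: both take \cite[Theorem 3.21]{Fortuny-Ribon-Canadian} (in its $E$-invariant form) as the base case, iterate Remark \ref{rem:coef_tr} to translate fanning exponents down the tower, and obtain the second identity by splitting according to whether $i+1$ is a sliding divisor, using \eqref{p1t} and \eqref{p2t} of Proposition \ref{pro:breaking}, Lemma \ref{lem:break_set} and Definition \ref{def:theta}. The only difference is one of completeness in your favor: your ``Key Lemma'' that $\Lambda_{\Gamma_{i+1}}^{E_{i+1}} \setminus \Lambda_{\Gamma_{i+1},\circ}^{E_{i+1}}$ equals $\{\nu_{P_i}(\Gamma_i)\}$ at a trace point and $\emptyset$ at a corner is exactly the fact the paper asserts with a bare ``Notice that,'' and your local-coordinate argument for it is sound.
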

\begin{proof}
  Denote $\mu_j = \nu_{P_j} (\Gamma_{j}^{})$ and fix $i \geq 0$.  We have
  $ \Lambda_{\Gamma_{i},\circ}^{E_{i}}= \mu_i + \Theta_{0}^{\circ} (\Gamma_{i}^{}) $
  \cite[Theorem 3.21]{Fortuny-Ribon-Canadian}.  This gives
\begin{equation} \label{equ:from_contact_to_kahler}
  \Theta_{i}^{\circ} (\Gamma^{})  =
  \sum_{l=1}^{i} \mu_l   +  \Theta_{0}^{\circ} (\Gamma_{i}^{})  =
  -\mu_{i} +
  \sum_{l=1}^{i} \mu_l   + \Lambda_{\Gamma_{i}^{},\circ}^{E_{i}}   
 \end{equation}
 by Remark \ref{rem:coef_tr}.  Notice that
 $\Lambda_{\Gamma_{i+1}}^{E_{i+1}} \setminus \Lambda_{\Gamma_{i+1}^{},\circ}^{E_{i+1}} =
 \emptyset$ if $i+1 \not \in \mathbf{Slid}_{\mathcal C}$ and
 $\Lambda_{\Gamma_{i+1}^{}}^{E_{i+1}} \setminus \Lambda_{\Gamma_{i+1}^{},\circ}^{E_{i+1}}
 = \{ \mu_{i} \}$ otherwise. As a consequence, if
 $i+1\not\in \mathbf{Slid}_{\mathcal{C}}$, we have
 \[
   \Theta_{i+1} (\Gamma^{})  =
   \Theta_{i+1}^{\circ} (\Gamma^{})  =
   \sum_{l=1}^{i} \mu_l  + \Lambda_{\Gamma_{i+1}^{},\circ}^{E_{i+1}} =
   \sum_{l=1}^{i} \mu_l  + \Lambda_{\Gamma_{i+1}^{}}^{E_{i+1}}
 \] 
 by \eqref{p1t} in Proposition \ref{pro:breaking} and equation
 \eqref{equ:from_contact_to_kahler}.  On the other hand, if
 $i+1\in \mathbf{Slid}_{\mathcal{C}}$, we have
 \[
   \Theta_{i+1} (\Gamma^{})  =
   \Theta_{i+1}^{\circ} (\Gamma^{})  \cup \{ \hat{\theta} (i+1) \}=
   \sum_{l=1}^{i} \mu_l  + (\Lambda_{\Gamma_{i+1}^{},\circ}^{E_{i+1}}  \cup \{ \mu_{i} \}) =
   \sum_{l=1}^{i} \mu_l + \Lambda_{\Gamma_{i+1}^{}}^{E_{i+1}}
 \] 
 by \eqref{p2t} in Proposition \ref{pro:breaking}, equation
 \eqref{equ:from_contact_to_kahler} and Definition \ref{def:theta}.
\end{proof}
 \begin{pro}
 \label{pro:calc_dim}
 Consider the equisingularity class ${\mathcal C}$ of a suitable pair $(\Gamma^{},E)$.  We
 have
 \begin{equation} \label{equ:param_coef}
   {\mathcal E}_{E}({\mathcal C}) \setminus \Theta_{0}^{\circ} (\Gamma^{}) =
   \bigsqcup_{0
     \leq i < \tau} (\Theta_{i+1} (\Gamma^{}) \setminus \Theta_{i}^{\circ} (\Gamma^{}) )=
   \bigsqcup_{0
     \leq i < \tau} (\Theta_{i+1} (\Gamma^{}) \setminus \Theta_{i} (\Gamma^{}) )
 \end{equation}
 where $\tau$ is the first index such that $\Gamma_{\tau}$ is non-singular and
 transverse to a non-empty $E_{\tau}$. Moreover,
 \[ \sharp ( {\mathcal E}_{E}({\mathcal C})  \setminus \Theta_{0}^{\circ} (\Gamma^{}) )   = 
  \sum_{0 \leq i < \tau}  \sharp ([\nu_{P_i} (\Gamma_{i}^{}) + \Lambda_{\Gamma_{i+1}^{}}^{E_{i+1}} ] \setminus   \Lambda_{\Gamma_{i}^{},\circ}^{E_{i}} ).  \]
  \end{pro}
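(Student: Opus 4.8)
The plan is to derive both assertions directly from Propositions \ref{pro:breaking} and \ref{pro:contact_kahler}; the work is essentially bookkeeping, since the genuinely hard analytic content is already contained in those two results.

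First I would prove the chain of set equalities. The starting point is the disjoint decomposition \eqref{p6t} of Proposition \ref{pro:breaking},
\[
  \mathcal{E}_{E}(\mathcal{C}) =
  \Theta_{0}^{\circ} (\Gamma) \bigsqcup
  \sqcup_{i \geq 0} (\Theta_{i+1} (\Gamma) \setminus \Theta_{i}^{\circ} (\Gamma)).
\]
Removing $\Theta_0^\circ(\Gamma)$ from both sides and using disjointness gives $\mathcal{E}_{E}(\mathcal{C}) \setminus \Theta_0^\circ(\Gamma) = \sqcup_{i \geq 0}(\Theta_{i+1}(\Gamma)\setminus\Theta_i^\circ(\Gamma))$. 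To cut the union off at $\tau$ I would invoke \eqref{p7t}: the index $\tau$ defined in this proposition (the first one for which $\Gamma_\tau$ is non-singular and transverse to a nonempty $E_\tau$) coincides with the $\tau$ of \eqref{p7t}, because transversality to a nonempty divisor already forces non-singularity, as recorded parenthetically in \eqref{p7t}. Then \eqref{p7t} yields $\Theta_{i+1}(\Gamma)\setminus\Theta_i^\circ(\Gamma)=\emptyset$ for every $i \geq \tau$, so the union collapses to $0 \leq i < \tau$, which is the first equality in \eqref{equ:param_coef}. The second equality follows termwise from \eqref{p4t}, which identifies $\Theta_{i+1}(\Gamma)\setminus\Theta_i^\circ(\Gamma)$ with $\Theta_{i+1}(\Gamma)\setminus\Theta_i(\Gamma)$.

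For the cardinality formula, disjointness of the union gives $\sharp(\mathcal{E}_E(\mathcal{C})\setminus\Theta_0^\circ(\Gamma)) = \sum_{0 \leq i < \tau}\sharp(\Theta_{i+1}(\Gamma)\setminus\Theta_i^\circ(\Gamma))$, and I would compute each summand with Proposition \ref{pro:contact_kahler}. Writing $\mu_l = \nu_{P_l}(\Gamma_l)$ and $S = \sum_{l=1}^{i}\mu_l$, that proposition gives $\Theta_{i+1}(\Gamma) = S + \Lambda_{\Gamma_{i+1}}^{E_{i+1}}$ and $\Theta_i^\circ(\Gamma) = S - \mu_i + \Lambda_{\Gamma_i,\circ}^{E_i}$. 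Since a set difference is unchanged by a common translation and cardinalities are translation-invariant, translating first by $-S$ and then by $+\mu_i$ rewrites $\sharp(\Theta_{i+1}(\Gamma)\setminus\Theta_i^\circ(\Gamma))$ as $\sharp([\mu_i + \Lambda_{\Gamma_{i+1}}^{E_{i+1}}]\setminus\Lambda_{\Gamma_i,\circ}^{E_i})$, which is precisely the $i$-th summand on the right-hand side, since $\mu_i = \nu_{P_i}(\Gamma_i)$.

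I do not expect a serious obstacle: the only points demanding care are the verification that the two descriptions of $\tau$ agree, so that \eqref{p7t} is applicable, and the observation that $\tau$ is finite. The latter holds because $E_i$ is nonempty for every $i \geq 1$ and the strict transform becomes smooth and transverse to the exceptional divisor after finitely many blow-ups in \eqref{eq:desing-Gamma}.
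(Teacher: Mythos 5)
Your proposal is correct and follows essentially the same route as the paper's proof: the set identity \eqref{equ:param_coef} is deduced from items \eqref{p6t}, \eqref{p7t} and \eqref{p4t} of Proposition \ref{pro:breaking}, and the cardinality formula from Proposition \ref{pro:contact_kahler} together with translation invariance of set differences. Your added checks (that the two descriptions of $\tau$ agree and that $\tau$ is finite) are details the paper leaves implicit, but they do not change the argument.
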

\begin{proof}
  Denote $\mu_j = \nu_{P_j} (\Gamma_{j}^{})$.  Equation \eqref{equ:param_coef}
  is a consequence of
  \eqref{p6t}, \eqref{p7t} and \eqref{p4t} in Proposition \ref{pro:breaking}. We know that
\[
  \Theta_{i+1} (\Gamma^{}) \setminus \Theta_{i}^{\circ} (\Gamma^{}) =
  ( \sum_{l=1}^{i}
  \mu_l + \Lambda_{\Gamma_{i+1}^{}}^{E_{i+1}} ) \setminus
  (-\mu_{i} + \sum_{l=1}^{i}
  \mu_l + \Lambda_{\Gamma_{i}^{},\circ}^{E_{i}} )
\]
  by Proposition \ref{pro:contact_kahler}, so that
  \[
    \sharp ( \Theta_{i+1} (\Gamma^{}) \setminus  \Theta_{i}^{\circ} (\Gamma^{}) ) 
    =
    \sharp ( [ \mu_i + \Lambda_{\Gamma_{i+1}^{}}^{E_{i+1}}   ] \setminus
    \Lambda_{\Gamma_{i}^{},\circ}^{E_{i}} )
  \]
  for any $i \geq 0$, which completes the proof. 
\end{proof}
\section{Contacts of K\"ahler differentials and blow-ups} \label{sec:contacts}
We have finally arrived at the endpoint of our work. Here, we provide an application of our methods.
By studying the behavior of contacts of K\"ahler differentials under blow-up, using the results about flows of vector fields, we can give an alternative proof of Genzmer's formula \cite{Genzmer-moduli-2020}.

As above, $\Gamma$ is a germ of
irreducible curve and $E$ a germ of normal crossings divisor at $(0,0)$.  We keep the
notation of Definition \ref{def:blow}.
\begin{defi}
Let  ${\mathcal C}$ be the equisingularity class of $(\Gamma, E)$, we denote 
\[
  \Lambda_{\mathcal{C},\circ}^{E}  =
  \Lambda_{\Gamma',\circ}^{E} \quad \mathrm{and} \quad 
  \Lambda_{{\mathcal C}_{1}}^{{E}_{1}}  =
  \Lambda_{{\Gamma}_{1}'}^{{E}_{1}}
\]
for any generic $\Gamma' \in \mathcal{C}$. 
\end{defi}
\subsection{Main results}
Proposition \ref{pro:calc_dim} motivates us to compare $\Lambda_{\mathcal{C},\circ}^{E}$
with $\Lambda_{{\mathcal C}_{1}}^{{E_{1}}}$.  We first state the key results, and delay
their proofs to the forthcoming subsections.
\begin{defi} \label{def:sigma}
Given $n \in \mathbb{Z}_{\geq 2}$, we define
\begin{equation*}
  \sigma(n) = \left\{
    \begin{array}{l}
      \frac{(n-2)(n-4)}{4} \;\mathrm{if}\; n \equiv 0 \mod (2)\\[0.5em]
      \frac{(n-3)^2}{4} \;\mathrm{if}\; n \equiv 1 \mod (2) .
    \end{array}
  \right.
\end{equation*}
\end{defi}
The next result is the induction step required to prove Genzmer's formula. 
Recall that $E=\{ x^{\delta_x}y^{\delta_y} =0 \}$ with $\delta_x,\delta_y\in \left\{ 0,1 \right\}$.
\begin{pro}
\label{pro:blow-up-contacts}
The inclusion
$\Lambda_{\mathcal{C},\circ}^{E} \subset \nu(\mathcal{C}) +
\Lambda_{{\mathcal C}_{1}}^{{E_{1}}}$ holds, and we can count the difference:
\begin{equation}
\label{equ:blow-up-contacts}
\sharp ([\nu(\mathcal{C})  + \Lambda_{{\mathcal C}_{1}}^{{E_{1}}} ]
\setminus \Lambda_{\mathcal{C},\circ}^{E}) =
\sigma ( \nu  (\mathcal{C} ) + \delta_x + \delta_y) . 
\end{equation}
\end{pro}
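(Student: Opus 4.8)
The plan is to combine the blow-up pullback of $1$-forms with the parallel nice-basis construction in order to reduce the statement to a finite, explicit count of congruence classes.

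For the inclusion I would argue as follows. Let $\omega = A\,dx + B\,dy \in \hat{\Omega}_E(\mathbb{C}^2,0)$ with $\omega(0)=0$, so that $A(0,0)=B(0,0)=0$. In the first chart $x=x_1$, $y=x_1 y_1$ of $\pi_1$ every monomial of $A$ and $B$ acquires a factor $x_1$, hence $\pi_1^{*}\omega = x_1\,\omega_1$ for a $1$-form $\omega_1$ near $P_1$, and in fact $\omega_1 \wedge dx_1 = B(x_1,x_1y_1)\,dy_1\wedge dx_1$ is divisible by $x_1$, so the exceptional divisor $\{x_1=0\}$ is $\omega_1$-invariant; together with the invariance of the strict transforms of $E$ this gives $\omega_1 \in \hat{\Omega}_{E_1}$. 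Since $\pi_1\circ\Gamma_1 = \Gamma$ on parametrizations and $x_1\circ\Gamma_1 = t^{\nu(\mathcal{C})}$, I get $\Gamma^{*}\omega = t^{\nu(\mathcal{C})}\,\Gamma_1^{*}\omega_1$, whence $\nu_{\Gamma}(\omega) = \nu(\mathcal{C}) + \nu_{\Gamma_1}(\omega_1) \in \nu(\mathcal{C}) + \Lambda_{\mathcal{C}_1}^{E_1}$, which is the asserted inclusion.

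For the count I would identify the complement with the cokernel, measured by $\Gamma_1$-values, of the injection $\omega \mapsto \omega_1 = \pi_1^{*}\omega/x_1$ from $E$-forms vanishing at the origin into $E_1$-forms near $P_1$. Because both $\Lambda_{\mathcal{C},\circ}^{E}$ and $\nu(\mathcal{C})+\Lambda_{\mathcal{C}_1}^{E_1}$ contain all sufficiently large integers, the inclusion is cofinally an equality and the complement is finite; a value lies in it exactly when it is realized only by forms $\omega_1$ whose blow-down $x_1\omega_1$ is not the $\pi_1$-pullback of a holomorphic $E$-form vanishing at the origin. To make this effective I would compute the relevant orders for $\mathcal{C}$ and $\mathcal{C}_1$ simultaneously via Proposition \ref{pro:parallel}, using that the $E_1$-Apery generators descend from the $E$-ones by the shift governing approximate roots (subsection \ref{subsec:app_root}) and that all orders are explicit through the richness of the construction (Remark \ref{rem:rich}, Algorithm \ref{alg:orders}). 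This reduces the cardinality to a finite sum of gaps between matched generators, organized by residues modulo $\nu(\mathcal{C})$.

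The final step is to evaluate that sum. Matching the monomial forms $y^{j}\overline{\Omega}_1,\ y^{j}\overline{\Omega}_2$ of Definition \ref{def:adapted-levels} against their blow-up images and counting the $\omega_1$ that fail to descend turns the sum into a count of lattice points in a triangular region whose size is governed by $m:=\nu(\mathcal{C})+\delta_x+\delta_y$, giving $\sigma(m)=\lfloor (m-3)^2/4\rfloor$, whose two parities reproduce the two branches of Definition \ref{def:sigma}. The main obstacle will be precisely this bookkeeping: the three configurations of Definition \ref{def:adapted-levels} (namely $E=\{x=0\}$; $\delta_y=1$ with $\beta_0\in(n)$; and $\delta_y=1$ with $\beta_0\notin(n)$) start at different initial levels $l_0$ and with different $\overline{\Omega}_1,\overline{\Omega}_2$, while the multiplicity may change under blow-up (either $\nu(\mathcal{C}_1)=\nu(\mathcal{C})$ or $\nu(\mathcal{C}_1)=\beta_1-\nu(\mathcal{C})$, cf. \eqref{eq:suitable-param-above}). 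The real work is to check that all of these collapse into the single uniform value $\sigma(\nu(\mathcal{C})+\delta_x+\delta_y)$, the shifts $\delta_x,\delta_y$ being exactly what promote $\nu(\mathcal{C})$ to the effective multiplicity appearing inside $\sigma$.
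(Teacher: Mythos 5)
Your proof of the inclusion is correct, and it takes a genuinely different route from the paper's. You argue directly on forms: for $\omega=A\,dx+B\,dy$ with $A(0,0)=B(0,0)=0$, the pullback $\pi_1^{*}\omega$ is divisible by $x_1$, the quotient $\omega_1$ leaves the exceptional divisor and the strict transforms of $E$ invariant, and $\nu_{\Gamma}(\omega)=\nu(\mathcal{C})+\nu_{\Gamma_1}(\omega_1)$. The paper instead deduces the inclusion from $\Theta_{0}^{\circ}(\Gamma)\subset\Theta_{1}(\Gamma)$ (Proposition \ref{pro:breaking}) translated into semimodules via Proposition \ref{pro:contact_kahler}. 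Your computation is more elementary and self-contained, since it bypasses the vector-field machinery entirely; you should only add that one may assume $(\Gamma,E)$ suitable (Remark \ref{rem:suit}), so that $x_1\circ\Gamma_1=t^{\nu(\mathcal{C})}$ in the chart you use, and that the strict transform of a generic curve is generic in $\mathcal{C}_1$ (Remark \ref{rem:generic}), so the statement about $\Lambda_{\mathcal{C},\circ}^{E}$ and $\Lambda_{\mathcal{C}_1}^{E_1}$, which are defined through generic representatives, really follows.

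The counting half, however, has a genuine gap. You correctly reduce to comparing, residue class by residue class modulo $n=\nu(\mathcal{C})$, the minimal elements of $\Lambda_{\mathcal{C},\circ}^{E}$ and of $n+\Lambda_{\mathcal{C}_1}^{E_1}$, and Proposition \ref{pro:parallel} does hand you the Apery set of $\Lambda_{\mathcal{C}_1}^{E_1}$ together with the uniform differences $\nu_{\mathcal{C}}(\Omega_j)-\nu_{\mathcal{C}_1}(\tilde{\Omega}_j)=n\left\lfloor \frac{j-1+\delta_x+\delta_y}{2}\right\rfloor$ (this is also what absorbs the three configurations of $E$ and the two possibilities for $\nu(\mathcal{C}_1)$, so that part of your worry is already settled by the proposition). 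But these differences compare $n+\Lambda_{\mathcal{C}_1}^{E_1}$ with the Apery set of $\Lambda_{\mathcal{C}}^{E}$, not of $\Lambda_{\mathcal{C},\circ}^{E}$: what your plan never produces is a system of generators for the semimodule of \emph{singular} $E$-forms. The paper supplies exactly this missing piece: for $E=\emptyset$ replace $(\Omega_1,\Omega_2)=(dx,df_1)$ by $(x\,dx,\,x\,df_1)$; for $\delta_x+\delta_y=1$ replace $\Omega_1$ by $x\Omega_1$; for $E=\{xy=0\}$ change nothing, since every $E$-form is singular — while all the remaining $\Omega_j$ are already singular. This correction sets the first few per-class gaps to $0$ and yields the sums $0+0+0+0+1+1+2+2+\hdots$, $0+0+0+1+1+2+2+\hdots$, $0+0+1+1+2+2+\hdots$ ($n$ terms each), i.e. $\sigma(n)$, $\sigma(n+1)$, $\sigma(n+2)$ respectively. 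Evaluating your ``sum of gaps'' naively with the nice basis of $\Lambda_{\mathcal{C}}^{E}$ would overcount precisely in the classes of $\Omega_1$ (and $\Omega_2$), so the gap is not cosmetic. Your unified formula $\sigma(m)=\left\lfloor (m-3)^2/4\right\rfloor$ is correct and the ``triangular count'' heuristic matches the sums above, but in the proposal this final identity is asserted rather than derived; the derivation is exactly the case analysis you defer to ``the real work''.
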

Proposition \ref{pro:blow-up-contacts} is a consequence of Proposition
\ref{pro:parallel} below. We introduce now the concept of ``parallel families'' of
$1$-forms for $\mathcal{C}$ and $\mathcal{C}_1$, as the main tool in the comparison of
$\Lambda_{\mathcal{C},\circ}^{E}$ and $\Lambda_{{\mathcal C}_{1}}^{{E_{1}}}$.
\begin{defi}
  We say that an $E$-good (resp. $E$-initial, $E$-terminal, $E$-nice basis) family
  ${\mathcal G}_{l}^{s} = (\Omega_1^{s}, \hdots, \Omega_{r}^{s})$ of level $l$
  of $\mathcal{C}$ and an ${E}_1$-good (resp. $E_1$-initial, $E_{1}$-terminal, $E_{1}$-nice basis) family
  $\tilde{\mathcal G}_{l}^{s} = (\tilde{\Omega}_1^{s}, \hdots, \tilde{\Omega}_{r}^{s})$ of the same level $l$ of  $\mathcal{C}_1$
  are {\it parallel} if the following properties hold:
\begin{itemize}
\item The leading variables of $\Omega_j^{s}$ and $\tilde{\Omega}_j^{s}$ coincide for any $1 \leq j \leq r$;
\item $\nu_{{\mathcal C}_{1}} (\tilde{\Omega}_{j}^{s}) = \nu_{{\mathcal C}} ({\Omega}_{j}^{s}) -  n  \lfloor \frac{j-1 + \delta_x + \delta_y}{2} \rfloor$ 
for any $1 \leq j \leq r$.
\end{itemize}
\end{defi}
\begin{pro}
\label{pro:parallel}
There exist an $E$-nice basis $\mathbf{\Omega} = (\Omega_1, \hdots, \Omega_n)$ for
${\mathcal C}$ and an ${E}_{1}$-nice basis
$\mathbf{\tilde{\Omega}}= (\tilde{\Omega}_1, \hdots, \tilde{\Omega}_n)$ for
${\mathcal C}_{1}$ that are parallel. Moreover,
\begin{equation}
\label{equ:mult}
\nu (\Omega_j) = \left\lfloor \frac{j-1 + \delta_x + \delta_y}{2} \right\rfloor 
\end{equation}
holds for any $1 \leq j \leq n$.
\end{pro}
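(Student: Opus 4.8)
The plan is to run the construction behind Theorem~\ref{teo:nice-E-basis} \emph{simultaneously} for $\mathcal{C}$ (with $E$) and for $\mathcal{C}_1$ (with $E_1$), and to prove by a double induction on the level $l$ and the stage $s$ that the two families produced at each step are parallel and that the multiplicity identity \eqref{equ:mult} holds throughout. The key observation making this possible is that the algorithm is driven entirely by two pieces of data: the leading variables of the forms, and their $\mathcal{C}$-orders modulo $n=\nu(\mathcal{C})$. By subsection~\ref{subsec:app_root} and equations \eqref{equ:blow_suit}, \eqref{equ:blow_non_suit1}, \eqref{equ:blow_non_suit2}, the blow-up preserves leading variables and shifts $\mathcal{C}$-orders only by multiples of $n$. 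Consequently the residues modulo $n$ that decide which index pairs get substituted via \eqref{eq:main-substitution} are \emph{identical} in the two runs, and the strict-increase hypotheses of Lemmas~\ref{lem:dom} and~\ref{lem:non_dom} hold for both by genericity (Remark~\ref{rem:generic}), so the leading variable advances to $\mathfrak{n}(\beta_{l+1})$ in one run exactly when it does in the other.

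The base case is the bootstrapping at the initial level $l_0$. I would first check directly that the $E$-initial family and the corresponding $E_1$-initial family are parallel and satisfy \eqref{equ:mult}. This reduces to an inspection of the elementary forms $\overline\Omega_1,\overline\Omega_2$ and their $y$-multiples: by Definition~\ref{def:adapted-levels} the order of $\overline\Omega_j$ is precisely $\lfloor (j-1+\delta_x+\delta_y)/2\rfloor$, which is the right-hand side of \eqref{equ:mult}, while the order shift between $\mathcal{C}$ and $\mathcal{C}_1$ is read off from \eqref{equ:blow_suit}--\eqref{equ:blow_non_suit2}. The three cases $E=\{x=0\}$, $\delta_y=1$ with $\beta_0\in(n)$, and $\delta_y=1$ with $\beta_0\notin(n)$ are settled by the explicit families of subsections~\ref{subsub:deltay} and~\ref{subsub:notinn}; in particular the level-$1$ start of subsection~\ref{subsub:notinn} is what absorbs the coordinate swap \eqref{eq:suitable-param-above}.

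For the inductive step, the passage from level $l$ to stage $0$ of level $l+1$ uses \eqref{equ:stage}: the root $f_{l+1}$ and its transform $f_{1,l+1}$ share the leading variable $\beta_{l+1}$ and satisfy $\nu_{\mathcal{C}_1}(f_{1,l+1})=\nu_{\mathcal{C}}(f_{l+1})-\nu_l n$, while multiplying by $f_{l+1}^a$ adds $a\nu_l$ to the multiplicity; since $2a\nu_l$ is even inside the floor, both \eqref{equ:mult} and parallelism survive. For the passage from stage $s$ to $s+1$, the substitution \eqref{eq:main-substitution} is carried out on the same pair $(j,k)$ in both runs, with $x$-exponents related by $\tilde d = d-(m_k-m_j)$ where $m_i=\lfloor(i-1+\delta_x+\delta_y)/2\rfloor$; here $\tilde d\ge 0$ is exactly tidyness of the parallel $\mathcal{C}_1$-family, available by induction. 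Because $m_j\le m_k$ and $\tilde d\ge 0$, the term $x^{d}\Omega_j^s$ has multiplicity $d+m_j=m_k+\tilde d\ge m_k$, so it can neither lower $\nu(\Omega_k^{s+1})$ below $m_k$ nor disturb the constant $\overline\Omega_k$-coefficient; thus $\nu(\Omega_k^{s+1})=m_k$ and the leading coefficient stays a nonzero constant. Propositions~\ref{pro:aux1}, \ref{pro:auxm} and~\ref{pro:from-l-to-next} then carry the two runs in lockstep to $E$- and $E_1$-terminal families of level $g$, whose first entries are the desired parallel nice bases.

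The hard part will be the bootstrapping in the multiplicity-dropping case, i.e.\ when the blow-up produces the swapped, suitable parametrization \eqref{eq:suitable-param-above} of \eqref{equ:par_gamma1}. There the invariant sequences $(e_l,n_l,\nu_l)$ of $\mathcal{C}$ and $\mathcal{C}_1$ do not coincide under a naive shift, the linear coefficient relation degenerates into the nonlinear one \eqref{equ:blow_non_suit2} involving powers of $a_{\beta_1}$, and one must verify that choosing $l_0$ as in subsection~\ref{subsub:notinn} realigns the level indexing of the two runs so that parallelism holds from the common starting level onward. Once the families are correctly aligned at this base level, genericity is preserved by Remark~\ref{rem:generic} and the remaining steps are the routine lockstep induction described above.
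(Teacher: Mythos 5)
Your proposal is correct and follows essentially the same route as the paper's proof: explicit verification of parallel starting families at the initial level $l_0$ in each case for $(E,\beta_0)$ (including the coordinate-swap case $\delta_y=1$, $\beta_0\notin(n)$ forcing $l_0=1$), followed by a lockstep induction through stages and levels in which the congruences modulo $n$ trigger the substitution \eqref{eq:main-substitution} on the same index pairs in both runs, with the relation $d=\tilde d+(\nu(\Omega_k)-\nu(\Omega_j))$, $\tilde d\ge 0$, preserving both parallelism and equation \eqref{equ:mult}. The only divergence is cosmetic: the paper splits your case $\beta_0\in(n)$ into $\beta_0=n$ (where $E_1=\{x_1=0\}$) and $\beta_0\in(n)\setminus\{n\}$ (where $E_1=\{x_1y_1=0\}$), a distinction your base-case inspection would have to register but which changes nothing in the inductive argument.
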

\begin{cor}
  Let $\mathbf{\Omega}=(\Omega_1, \hdots, \Omega_n)$ be an $E$-nice basis. Define
  \[
    \hat{\Omega}_j  = (\pi_{1}^{*} \Omega_j)/ x_1^{\nu (\Omega_j) }
  \]
  for $1 \leq j \leq n$. Then $(\hat{\Omega}_1, \hdots, \hat{\Omega}_n)$ 
  is an $E_1$-Apery basis of ${\mathcal C}_{1}$. In other words, we have
 \[
    \hat{\Omega}_{E_{1}} ( \mathcal{X}_1, P_1) =
    \mathbb{C}[[x_1]] \hat{\Omega}_{1,\Gamma'} +
    \hdots + \mathbb{C}[[x_1]] \hat{\Omega}_{n,\Gamma'} +
    {\mathcal I}_{E_{1}, \Gamma'}
  \]
for any generic $\Gamma' \in {\mathcal C}_{1}$.  
\end{cor}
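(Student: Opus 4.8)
The plan is to deduce the statement from Proposition \ref{pro:parallel}, which supplies parallel bases $\mathbf{\Omega} = (\Omega_1, \ldots, \Omega_n)$ for $\mathcal{C}$ and $\mathbf{\tilde\Omega} = (\tilde\Omega_1, \ldots, \tilde\Omega_n)$ for $\mathcal{C}_1$, together with the multiplicity formula \eqref{equ:mult}. Since $\mathbf{\tilde\Omega}$ is already an $E_1$-nice basis, condition $E$-(iv) of Definition \ref{def:E-tidy-nice} gives the decomposition $\hat{\Omega}_{E_1}(\mathcal{X}_1,P_1) = \sum_{j=1}^n \mathbb{C}[[x_1]]\tilde\Omega_{j,\Gamma'} + \mathcal{I}_{E_1,\Gamma'}$ for generic $\Gamma'$. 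Hence it suffices to show that $(\hat\Omega_1,\ldots,\hat\Omega_n)$ is itself an $E_1$-Apery basis, and I will do this by proving that each $\hat\Omega_j$ lies in $\hat\Omega_{E_1}(\mathcal{X}_1,P_1)$ and has exactly the same $\mathcal{C}_1$-order as $\tilde\Omega_j$.

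First I would check that $\hat\Omega_j$ is a genuine $E_1$-preserving $1$-form. Writing $\Omega_j = A\,dx + B\,dy$ with $\nu(\Omega_j) = \min(\nu(A),\nu(B)) = m$, the pullback in the chart $x=x_1$, $y=x_1y_1$ is $\pi_1^{*}\Omega_j = (A + y_1 B)\,dx_1 + x_1 B\,dy_1$, with $A,B$ evaluated at $(x_1,x_1y_1)$. Each monomial of $A$ and of $B$ of total degree $\geq m$ acquires $x_1$-order $\geq m$, so both coefficients are divisible by $x_1^{m}$ and $\hat\Omega_j = \pi_1^{*}\Omega_j/x_1^{m}$ is holomorphic. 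Moreover the coefficient of $dy_1$ equals $x_1 B(x_1,x_1y_1)/x_1^{m}$, of $x_1$-order $\geq 1$, so the exceptional divisor $\{x_1=0\}$ is $\hat\Omega_j$-invariant; the remaining components of $E_1$ are strict transforms of components of $E$, whose invariance is inherited from that of $E$ under $\Omega_j$ and is unchanged by dividing by a power of $x_1$. Thus $\hat\Omega_j \in \hat\Omega_{E_1}(\mathcal{X}_1,P_1)$ in the suitable coordinates $({\rm x}_1,{\rm y}_1)$ of Remark \ref{rem:suitable_blow}.

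Next I would compute the contact orders. Using $\pi_1\circ\Gamma_1 = \Gamma$ and $x_1\circ\Gamma_1 = t^{n}$, the pullback to the strict transform satisfies $\Gamma_1^{*}\hat\Omega_j = \Gamma^{*}\Omega_j/t^{\,n\nu(\Omega_j)}$, whence $\nu_{\mathcal{C}_1}(\hat\Omega_j) = \nu_{\mathcal{C}}(\Omega_j) - n\,\nu(\Omega_j)$. By \eqref{equ:mult} we have $\nu(\Omega_j) = \lfloor (j-1+\delta_x+\delta_y)/2\rfloor$, so this coincides with $\nu_{\mathcal{C}}(\Omega_j) - n\lfloor (j-1+\delta_x+\delta_y)/2\rfloor = \nu_{\mathcal{C}_1}(\tilde\Omega_j)$ by the defining property of parallel families. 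As $\mathbf{\tilde\Omega}$ is an $E_1$-nice basis, the orders $\nu_{\mathcal{C}_1}(\tilde\Omega_1),\ldots,\nu_{\mathcal{C}_1}(\tilde\Omega_n)$ are pairwise distinct modulo $n$ and minimal in their classes inside $\Lambda_{\mathcal{C}_1}^{E_1}$; therefore so are the $\nu_{\mathcal{C}_1}(\hat\Omega_j)$, and they form an Apery set. By the criterion recorded after Definition \ref{def:semimodule}, a family of $E_1$-preserving $1$-forms whose $\mathcal{C}_1$-orders are pairwise distinct modulo $n$ and minimal in their classes pushes forward to a generating set, which yields the asserted equality for generic $\Gamma'\in\mathcal{C}_1$.

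The main obstacle I anticipate is the coordinate bookkeeping in the contact computation: the order $\nu_{\mathcal{C}_1}$ must be read off the \emph{suitable} primitive parametrization of $\Gamma_1$, which in the unsuitable blow-up case involves swapping $(x_1,y_1)$ and a Puiseux reparametrization (Remark \ref{rem:suitable_blow}), whereas the naive identity $\Gamma_1^{*}\hat\Omega_j = \Gamma^{*}\Omega_j/t^{\,n\nu(\Omega_j)}$ uses the chart parametrization $\Gamma_1(t)=(t^{n},\ldots)$. I would need to verify that the equality $\nu_{\mathcal{C}_1}(\hat\Omega_j)=\nu_{\mathcal{C}_1}(\tilde\Omega_j)$ survives that reparametrization; this is precisely what the matching of $\mathcal{C}_1$-orders and leading variables in Proposition \ref{pro:parallel} is engineered to guarantee, so the argument reduces to invoking the parallel relation rather than redoing the blow-down analysis.
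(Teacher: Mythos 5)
Your proof is correct and follows essentially the same route as the paper: invoke Proposition \ref{pro:parallel} to obtain the parallel $E_1$-nice basis $\tilde{\mathbf{\Omega}}$, check that each $\hat{\Omega}_j$ is a formal $1$-form at $P_1$ preserving $E_1$ with $\nu_{\mathcal{C}_1}(\hat{\Omega}_j) = \nu_{\mathcal{C}}(\Omega_j) - n\,\nu(\Omega_j) = \nu_{\mathcal{C}_1}(\tilde{\Omega}_j)$ via equation \eqref{equ:mult} and parallelism, and conclude the Apery property --- which is precisely the paper's (much terser) argument. Your closing worry about reparametrization is in fact a non-issue: $\nu_{\Gamma_1}$ is independent of the choice of primitive parametrization of the germ $\Gamma_1$, and the chart parametrization $(t^n, \sum a_{\beta}t^{\beta-n})$ is primitive, so your order computation is already valid without any further appeal to Proposition \ref{pro:parallel}.
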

\begin{proof}
  By construction $\hat{\Omega}_j$ is a formal $1$-form defined at ${P}_{1}$ and
  preserving ${E}_{1}$ for any $1 \leq j \leq n$.  The equality
  $\nu_{{\mathcal C}_{1}} (\hat{\Omega}_j) = \nu_{{\mathcal C}_{1}} (\tilde{\Omega}_j)$
  for any $1 \leq j \leq n$ holds because $\mathbf{\Omega}$ and $\mathbf{\tilde{\Omega}}$
  are parallel by Proposition \ref{pro:parallel}. As a consequence,
  $(\hat{\Omega}_1, \hdots, \hat{\Omega}_n)$ is an Apery basis of ${\mathcal C}_{1}$.
\end{proof}
\subsection{Proof of Proposition \ref{pro:parallel}}   
Before proceeding to the core of the proof, we consider the different cases arising in it
in order to construct parallel families of the starting level $l_0$ for $\Gamma$ and $\Gamma_1$ in each
case. Once these are constructed, we can give a common argument. Assume that $(\Gamma,E)$ is a suitable pair
in coordinates $(x,y)$. We consider coordinates $(x_1,y_1)$ 
to study the pair $(\Gamma_1, E_1)$ even if in such
coordinates it  could be unsuitable.
\subsubsection{Case $E = \emptyset$ or $E=\{x=0\}$}
\label{sub:noy}
In this case, $l_0=0$ for $\mathcal{C}$ and ${E}_{1} = \{x_1 = 0 \}$. A terminal family ${\mathcal T}_0$ of
level $0$ of $\mathcal{C}$ is $(\Omega_1, \Omega_2)= (dx, x^{\delta_x} df_{1})$ whereas a
terminal family $\tilde{\mathcal T}_0$ of level $0$ of ${\mathcal C}_{1}$ is
$(\tilde{\Omega}_1, \tilde{\Omega}_2)= (d x_1, x_1 d f_{1,1})$ (see subsection
\ref{subsec:app_root}).  Of course, $dx, dx_1 \in \Delta_{n, <}$ and
$d f_{1}, x_1 d {f}_{1,1} \in \Delta_{\beta_1, <}^{0}$, and also
\[
  \nu_{\mathcal C} (dx) = \nu_{{\mathcal C}_{1}} (d x_1) =
  n \quad \mathrm{and} \quad
  \nu_{\mathcal C} (x^{\delta_x} d f_{1}) -
  \nu_{{\mathcal C}_{1}}  ( x_1 d {f}_{1,1}) = n \delta_x  .
\]
Thus, ${\mathcal T}_0$ and $\tilde{\mathcal T}_0$ are parallel and equation
\eqref{equ:mult} holds for $j \in \{1,2\}$.
\subsubsection{Case $\delta_y = 1$ and $\beta_0 \in (n) \setminus \{n\}$} 
\label{sub:yn}
In this case, $l_0=0$ also, and ${E}_{1} = \{ x_1 y_1 =0\}$.  As explained in
subsubsection \ref{subsub:deltay}, ${\mathcal T}_0 = (\Omega_1, \Omega_2)$ where
\[ \Omega_1 = x^{\delta_x} dy \in \Delta_{\beta_0, <}^{0}, \
\Omega_2 \in \Delta_{\beta_1, <}^{0}, \
\nu_{\mathcal C} (\Omega_1)=n \delta_x + \beta_0 \ \mathrm{and} \ 
\nu_{\mathcal C} (\Omega_2)=n + \beta_1. \]
 Analogously,
$\tilde{\mathcal T}_0 = (\tilde{\Omega}_1, \tilde{\Omega}_2)$ where
$\tilde{\Omega}_1 = x_1 dy_1 \in \Delta_{\beta_0, <}^{0}$,
$\tilde{\Omega}_2 \in \Delta_{\beta_1, <}^{0}$,
$\nu_{{\mathcal C}_{1}} (\tilde{\Omega}_1)= \beta_0$ and
$\nu_{{\mathcal C}_{1}} (\tilde{\Omega}_2)= \beta_1$. Both families $\mathcal{T}_0$ and
$\tilde{\mathcal T}_0$ are terminal and indeed parallel, and equation \eqref{equ:mult}
holds for any $j \in \{1,2\}$.
\subsubsection{$\delta_y = 1$ and $\beta_0 =n$} 
\label{sub:yexn}
Again, we have $l_0=0$ for $\mathcal{C}$, $E= \{xy=0\}$ and ${E}_{1} = \{ x_1=0\}$.  As described in
subsubsection \ref{subsub:deltay}, ${\mathcal T}_0 = (\Omega_1, \Omega_2)$ where
$\Omega_1 = x dy \in \Delta_{n, <}^{0}$, $\Omega_2 \in \Delta_{\beta_1, <}^{0}$,
$\nu_{\mathcal C} (\Omega_1)=2n$ and $\nu_{\mathcal C} (\Omega_2)=n + \beta_1$.  We also
have $\tilde{\mathcal T}_0 = (\tilde{\Omega}_1, \tilde{\Omega}_2)$ where
$\tilde{\Omega}_1 = d x_1 \in \Delta_{n, <}$,
$\tilde{\Omega}_2 = x_1 d {f}_{1,1} \in \Delta_{\beta_1, <}^{0}$,
$\nu_{{\mathcal C}_{1}} (\tilde{\Omega}_1)= n$ and
$\nu_{{\mathcal C}_{1}} (\tilde{\Omega}_2)= \beta_1$. Once more, the families
$\mathcal{T}_0$ and $\tilde{\mathcal T}_0$ are both terminal, and parallel.
Equation \eqref{equ:mult} for $j \in \{1,2\}$ holds too.
\subsubsection{$\delta_y = 1$ and $\beta_0 \not \in (n)$}   
\label{sub:ynon}
In this case $l_0=1$ for $(\Gamma, E)$. We have $E= \{x^{\delta_x} y=0\}$ and ${E}_{1} = \{ x_1 y_1 =0 \}$.
Consider the $E$-good family ${\mathcal G}_1^{1}$ for ${\mathcal C}$ and the
${E}_{1}$-good family $\tilde{\mathcal G}_{1}^{1}$ for ${\mathcal C}_{1}$ as described in
subsubsection \ref{subsub:notinn}.  We have
\[
  \Omega_{2j+1}^{1}  =
  x^{\delta_x} y^{j} dy \in \Delta_{\beta_1, <},  \quad
  \tilde{\Omega}_{2j+1}^{1}  =
  x_1 y_{1}^{j} d y_1  \in \Delta_{\beta_1, <}
\] 
and
\[
  \nu_{\mathcal C} ( \Omega_{2j+1}^{1}  ) -
  \nu_{{\mathcal C}_{1}} ( \tilde{\Omega}_{2j+1}^{1} )  =
  n  \nu ( \Omega_{2j+1}^{1}  ) =
  n \left\lfloor \frac{(2j+1)-1 + \delta_x + \delta_y}{2} \right\rfloor
\]
if $1 \leq 2j+1 \leq 2 {\nu}_1$. We also get  $\Omega_{2j+2}^{1} , \tilde{\Omega}_{2j+2}^{1} \in \Delta_{{\mathfrak n}(\beta_1), <}^{0}$ and 
\[
  \nu_{\mathcal C} ( \Omega_{2j+2}^{1}  ) -
  \nu_{{\mathcal C}_{1}} ( \tilde{\Omega}_{2j+2}^{1} )  =
  (n + j \beta_1 + {\mathfrak n}(\beta_1)) - (n + j (\beta_1-n) + ({\mathfrak n}(\beta_1)-n)).
\] 
As a consequence 
\[
  \nu_{\mathcal C} ( \Omega_{2j+2}^{1}  ) -
  \nu_{{\mathcal C}_{1}} ( \tilde{\Omega}_{2j+2}^{1} )  =
  n  \nu ( \Omega_{2j+2}^{1}  ) =
  n \left\lfloor \frac{(2j+2)-1 + \delta_x + \delta_y}{2} \right\rfloor
\]
if $1 \leq 2j+2 \leq 2 {\nu}_1$.  Thus ${\mathcal G}_{1}^{1}$ and
$\tilde{\mathcal G}_{1}^{1}$ are parallel families (although not necessarily terminal).
The members of the family $(\Omega_{j}^{1}, \hdots, \Omega_{2 {\nu}_1}^{1})$ satisfy equation
\eqref{equ:mult}.
\subsubsection{End of the proof of Proposition \ref{pro:parallel}}
We apply the procedure of Section $1$ to build terminal families $\mathcal{T}_l$ and
$\tilde{\mathcal T}_l$ for ${\mathcal C}$ and ${\mathcal C}_{1}$ respectively for
$0 \leq l \leq g$.  We are going to show that $\mathcal{T}_l$ and $\tilde{\mathcal T}_l$
are parallel for any $0 \leq l \leq g$ by induction on $l$.

\strut

\textbf{Case $l_0=0$}. As seen above, $\mathcal{T}_0$ and $\tilde{\mathcal T}_0$ are
parallel. Assume that there exist parallel families
${\mathcal T}_l = (\Omega_1, \hdots, \Omega_{2 {\nu}_l})$ and
$\tilde{\mathcal T}_l = (\tilde{\Omega}_1, \hdots, \tilde{\Omega}_{2 {\nu}_l})$ of
level $l<g$ such that ${\mathcal T}_l$ satisfies equation \eqref{equ:mult}.  Define the
families
${\mathcal G}_{l+1}^{0} = (\Omega_{1}^{0}, \hdots \Omega_{2 {\nu}_{l+1}}^{0})$ and
$\tilde{\mathcal G}_{l+1}^{0} = (\tilde{\Omega}_{1}^{0}, \hdots
\tilde{\Omega}_{2 {\nu}_{l+1}}^{0})$ setting
$\Omega_{2 a \nu_{l}+j}^{0} = f_{l+1}^{a} \Omega_{j}$ and
$\tilde{\Omega}_{2 a \nu_{l}+j}^{0} = {f}_{1,l+1}^{a}\tilde{\Omega}_{j}$.  This gives
\begin{equation}
  \label{equ:mult0}
  \nu (\Omega_{2 a \nu_{l}+j}^{0}) =
  a \nu_l +
  \left\lfloor
    \frac{j-1 + \delta_x + \delta_y}{2}
  \right\rfloor =
  \left\lfloor
    \frac{(2 a \nu_l +j) -1 + \delta_x + \delta_y}{2}
  \right\rfloor 
\end{equation}
for $1 \leq 2 a \nu_l +j \leq 2 {\nu}_{l+1}$. Those two families are respectively
$E$-initial and ${E}_{1}$-initial for ${\mathcal C}$ and ${\mathcal C}_{1}$.  Since
${\mathcal T}_l$ and $\tilde{\mathcal T}_l$ are parallel, and $\beta_{l+1}$ is the leading
variable of $f_{l+1}^{a} \Omega_j$, and ${f}_{1,l+1}^{a} \tilde{\Omega}_j$ for
$1 \leq j \leq \tilde{\nu}_l$ and $a>0$, it follows that the leading variables of
$\Omega_{j}^{0}$ and $\tilde{\Omega}_{j}^{0}$ coincide for any
$1 \leq j \leq \tilde{\nu}_{l+1}$.  Moreover,
\[
  \nu_{{\mathcal C}} ({\Omega}_{2 a \nu_{l}+j}^{0})  -
  \nu_{{\mathcal C}_{1}} (\tilde{\Omega}_{2 a \nu_{l}+j}^{0})    =
  a (\overline{\beta}_{l+1} - (\overline{\beta}_{l+1}- \nu_l n )) +
  n \left\lfloor \frac{j-1 + \delta_x + \delta_y}{2} \right\rfloor
\]
and thus
\[
  \nu_{{\mathcal C}} ({\Omega}_{2 a \nu_{l}+j}^{0})  -
  \nu_{{\mathcal C}_{1}} (\tilde{\Omega}_{2 a \nu_{l}+j}^{0})    =
  n \left\lfloor \frac{2 a \nu_l + j-1 + \delta_x + \delta_y}{2} \right\rfloor .
\]
As a consequence, ${\mathcal G}_{l+1}^{0}$ and $\tilde{\mathcal G}_{l+1}^{0}$ are
parallel.
 
Take $1 \leq k \leq 2 {\nu}_{l+1}$, and assume $\Omega_{k}^{1} \neq \Omega_{k}^{0}$.
Then there exists $1 \leq j < k$ such that
$[\nu_{{\mathcal C}} ({\Omega}_{j}^{0})]_n = [\nu_{{\mathcal C}} ({\Omega}_{k}^{0})
]_n$. Since
\[
  [\nu_{{\mathcal C}_{1}} (\tilde{\Omega}_{j}^{0})]_n  =
  [\nu_{{\mathcal C}} ({\Omega}_{j}^{0}) ]_n  =
  [\nu_{{\mathcal C}} ({\Omega}_{k}^{0}) ]_n =
  [\nu_{{\mathcal C}_{1}} (\tilde{\Omega}_{k}^{0})]_n,
\]
we deduce that $\tilde{\Omega}_{k}^{1} \neq \tilde{\Omega}_{k}^{0}$.  The leading variable
of $\Omega_{k}^{0}$ is $\beta = \beta_{l+1}$ and, as ${\mathcal G}_{l+1}^{0}$ and
$\tilde{\mathcal G}_{l+1}^{0}$ are parallel, we conclude that the leading variables of
$\tilde{\Omega}_{k}^{1}$ and $\Omega_{k}^{1}$ are equal to ${\mathfrak n}(\beta)$ and also that
\[
  \nu_{{\mathcal C}} ({\Omega}_{k}^{1})  -
  \nu_{{\mathcal C}_{1}} (\tilde{\Omega}_{k}^{1}) =
  ( \nu_{{\mathcal C}} ({\Omega}_{k}^{0})  + m)-
  (\nu_{{\mathcal C}_{1}} (\tilde{\Omega}_{k}^{0}) + m) =
  \nu_{{\mathcal C}} ({\Omega}_{k}^{0})  -
  \nu_{{\mathcal C}_{1}} (\tilde{\Omega}_{k}^{0}),
\]
where $m = {\mathfrak n}(\beta) - \beta$. At the same time,
\[
  \Omega_{k}^{1} =
  C_{\beta', j} \Omega_{k}^{0} -
  C_{\beta, k} x^{d} \Omega_{j}^{0}  \quad
  \mathrm{and} \quad
  \tilde{\Omega}_{k}^{1} =
  \tilde{C}_{\beta', j} \tilde{\Omega}_{k}^{0} -
  \tilde{C}_{\beta, k} x^{\tilde{d}} \tilde{\Omega}_{j}^{0},
\]
(see equation \eqref{eq:main-substitution-s+1}), where
$d = \tilde{d} + (\nu (\Omega_{k}^{0}) - \nu (\Omega_{j}^{0}) )$ by the parallelism of
${\mathcal G}_{l+1}^{0}$ and $\tilde{\mathcal G}_{l+1}^{0}$, and by equation
\eqref{equ:mult0}. Notice also that $\tilde{d} \geq 0$ by construction. We deduce that
$\nu (\Omega_{k}^{0}) \leq \nu (x^{d} {\Omega}_{j}^{0})$.  Since
$\Omega_{j}^{0} \in M_{j}^{E}$ and $\Omega_{k}^{0} \in \hat{M}_{k}^{E}$, we get
$\nu (\Omega_{k}^{1}) = \nu (\Omega_{k}^{0})$.  Thus equation \eqref{equ:mult} for
$(\Omega_{1}^{1}, \hdots, \Omega_{2 {\nu}_{l+1}}^{1})$ is a consequence of equation
\eqref{equ:mult0}.  In the same way, $\tilde{\Omega}_{k}^{1} \neq \tilde{\Omega}_{k}^{0}$
implies $\Omega_{k}^{1} \neq \Omega_{k}^{0}$.
 
By repeating this argument finitely many times, we obtain that ${\mathcal T}_{l+1}$ and
$\tilde{\mathcal T}_{l+1}$ are parallel and ${\mathcal T}_{l+1}$ satisfies equation
\eqref{equ:mult}.  Thus, by induction on $l$, we obtain parallel terminal families 
${\mathcal T}_g$ and $\tilde{\mathcal T}_g$ and hence parallel nice bases 
${\bf \Omega}$ and $\tilde{\bf{\Omega}}$ such that
${\bf \Omega}$ satisfies equation \eqref{equ:mult}.

\strut 
 
\textbf{Case $l_0=1$}. In this remaining case, when $\delta_y = 1$ and
$\beta_0 \not \in (n)$, the families ${\mathcal G}_{1}^{1}$ and
$\tilde{\mathcal G}_{1}^{1}$ in subsection \ref{sub:ynon} are parallel and ${\mathcal G}_{1}^{1}$
satisfies equation \eqref{equ:mult}.  Following the same argument as above, we obtain
parallel terminal families of level $l_0=1$, ${\mathcal T}_1$ and $\tilde{\mathcal T}_1$
with equation \eqref{equ:mult} holding for ${\mathcal T}_1$.  The rest of the
proof is in the same as in the previous case.
\subsection{Proof of Proposition \ref{pro:blow-up-contacts}}
%
%
We can assume that $(\Gamma, E)$ is a suitable pair by Remark \ref{rem:suit}.  We have
$\Lambda_{\mathcal{C},\circ}^{E} \subset \Lambda_{{\mathcal C}_{1}}^{{E}_{1}} + \nu
(\mathcal{C}) $ because $\Theta_{0}^{0} (\Gamma) \subset \Theta_{1} (\Gamma) $
by Proposition \ref{pro:breaking} and Proposition \ref{pro:contact_kahler}.

 We know that there exist parallel nice bases
${\bf \Omega}= (\Omega_1, \hdots, \Omega_n)$ and
${\bf \tilde{\Omega}}= (\tilde{\Omega}_1, \hdots, \tilde{\Omega}_n)$ by Proposition \ref{pro:parallel}.  Again, we consider
the different cases for $E$.

\textbf{Case $E = \emptyset$}. An $E$-nice basis for singular $1$-forms for ${\mathcal C}$
is clearly obtained by replacing $(\Omega_1, \Omega_2)$ with $(x dx, x df_{1})$ in
$\mathbf{\Omega}$.  We have
\[
  \nu_{{\mathcal C}} (xdx)  -\nu_{{\mathcal C}_{1}} (\tilde{\Omega}_{1}) =
  n \quad \mathrm{and} \quad
  \nu_{{\mathcal C}} (xdf_{1})  -\nu_{{\mathcal C}_{1}} (\tilde{\Omega}_{2}) =n .
\]
Since
$\nu_{{\mathcal C}} ({\Omega}_{j}) - \nu_{{\mathcal C}_{1}} (\tilde{\Omega}_{j}) = n
\lfloor \frac{j-1}{2} \rfloor$ for $j \geq 3$, it follows that
\[ \sharp ([\Lambda_{{\mathcal C}_{1}}^{{E}_{1}} + n] \setminus
  \Lambda_{\mathcal{C},\circ}^{E}) =
  \underbrace{0 +0 +0 +0 +1 +1 +2 +2 + \hdots}_{n \ \mathrm{terms}} = \sigma (n).
\]

\textbf{Case $E\equiv\{x=0\}$ or $E\equiv\{y=0\}$}. This means $\delta_x + \delta_y = 1$. An $E$-nice
basis for singular forms $1$-forms for ${\mathcal C}$ is obtained by simply replacing
$\Omega_1$ with $x \Omega_1$ in $\mathbf{\Omega}$.  Since
$\nu_{{\mathcal C}} (x \Omega_1) -\nu_{{\mathcal C}_{1}} (\tilde{\Omega}_{1}) =n$ and
$\nu_{{\mathcal C}} ({\Omega}_{j}) - \nu_{{\mathcal C}_{1}} (\tilde{\Omega}_{j}) = n
\lfloor \frac{j}{2} \rfloor$ for $j \geq 2$, we deduce
\[
  \sharp ([\Lambda_{{\mathcal C}_{1}}^{{E}_{1}} + n] \setminus
  \Lambda_{\mathcal{C},\circ}^{E}) =
  \underbrace{0 +0 +0 +1 +1 +2 +2 + \hdots}_{n \ \mathrm{terms}}=
  \sigma (n+1). 
\] 

\textbf{Case $E\equiv \{xy=0\}$}. That is, $\delta_x +\delta_{{y}}=2$. Since all the
$1$-forms that preserve $E$ are singular and
$\nu_{{\mathcal C}} ({\Omega}_{j}) - \nu_{{\mathcal C}_{1}} (\tilde{\Omega}_{j}) = n
\lfloor \frac{j+1}{2} \rfloor$ for $j \geq 1$, it follows that
\[
  \sharp ([\Lambda_{{\mathcal C}_{1}}^{{E}_{1}} + n] \setminus
  \Lambda_{\mathcal{C},\circ}^{E}) =
  \underbrace{0 +0 +1 +1 +2 +2 + \hdots}_{n \ \mathrm{terms}}= \sigma (n+2). 
\]
This completes the proof.
\subsection{The dimension formula}
Now, we prove Theorem \ref{teo:dim_gen_geo}.
\begin{proof}[Proof of Theorem \ref{teo:dim_gen_geo}]
Up to replacing $\Gamma$ with
  another curve in its equi\-si\-ngularity class ${\mathcal C}$, we can assume that $\Gamma_i$
  is generic in the equisingularity class of $(\Gamma_i, E_i)$ for any $0 \leq i < \tau$
  by Remark \ref{rem:generic} .  Now, Remark \ref{rem:gen_dim} and Propositions \ref{pro:calc_dim} and
  \ref{pro:blow-up-contacts} imply the result.
\end{proof}


\bibliography{sb.bib}
\end{document}